\definecolor{newblue}{rgb}{0.2, 0.3, 0.85}
\numberwithin{equation}{section}
\newcommand{\N}{\mathbb{N}}
\newcommand{\R}{\mathbb{R}}
\newcommand{\hh}{\mathcal{H}}
\renewcommand{\epsilon}{\varepsilon}
\renewcommand{\theta}{\vartheta}
\renewcommand{\rho}{\varrho}
\renewcommand{\phi}{\varphi}
\newcommand{\de}{\,{\rm d}}
\renewcommand{\d}{{\rm d}}
\newcommand{\st}{\ensuremath{\ :\ }} 
\newcommand{\mres}{\mathbin{\vrule height 1.6ex depth 0pt width
0.13ex\vrule height 0.13ex depth 0pt width 1.3ex}}
\theoremstyle{plain}
\newtheorem{theorem}{Theorem}[section]
\newtheorem{proposition}[theorem]{Proposition}
\newtheorem{lemma}[theorem]{Lemma}
\newtheorem{corollary}[theorem]{Corollary}
\theoremstyle{remark}
\newtheorem{remark}[theorem]{Remark}
\theoremstyle{definition}
\newtheorem{definition}[theorem]{Definition}
\begin{document}

\title[Quantitative isoperimetric inequalities for classical capillarity problems]{Quantitative isoperimetric inequalities\\ for classical capillarity problems}

\author{Giulio Pascale}
\address{Dipartimento di Matematica e Applicazioni "Renato Caccioppoli", Universit\'a degli Studi di Napoli "Federico II", via Cintia - Monte Sant'Angelo, 80126 Napoli, Italy} \email{giulio.pascale@unina.it \orcidlink{0000-0003-1680-3425}}
\author{Marco Pozzetta}
\address{Dipartimento di Matematica e Applicazioni "Renato Caccioppoli", Universit\'a degli Studi di Napoli "Federico II", via Cintia - Monte Sant'Angelo, 80126 Napoli, Italy} \email{marco.pozzetta@unina.it 
\orcidlink{0000-0002-2757-0826}}

\date{\today}
\subjclass{Primary: 49J40, 49Q10. Secondary: 49Q20, 28A75, 26B30.}
\keywords{Isoperimetric problem, isoperimetric inequality, capillarity, quantitative isoperimetric inequality, perimeter}

\begin{abstract}
We consider capillarity functionals which measure the perimeter of sets contained in a Euclidean half-space assigning a constant weight $\lambda \in (-1,1)$ to the portion of the boundary that touches the boundary of the half-space. Depending on $\lambda$, sets that minimize this capillarity perimeter among those with fixed volume are known to be suitable truncated balls lying on the boundary of the half-space.

We first give a new proof based on an ABP-type technique of the sharp isoperimetric inequality for this class of capillarity problems. Next we prove two quantitative versions of the inequality: a first sharp inequality estimates the Fraenkel asymmetry of a competitor with respect to the optimal bubbles in terms of the energy deficit; a second inequality estimates a notion of asymmetry for the part of the boundary of a competitor that touches the boundary of the half-space in terms of the energy deficit.

After a symmetrization procedure, the quantitative inequalities follow from a novel combination of a quantitative ABP method with a selection-type argument.
\end{abstract}

\maketitle

\vspace{-0.8cm}
\tableofcontents
\vspace{-0.8cm}

\section{Introduction}

The classical isoperimetric problem in the Euclidean space $\R^n$, for $n \ge 2$, aims at minimizing the $(n-1)$-dimensional area of boundaries of sets having fixed finite volume. More precisely, given $v > 0$, one aims to characterize minimizers to the problem
\begin{equation}\label{classical:isopprob}
\inf\left\{P(E) \st E \subset \R^n, |E| = v\right\},
\end{equation}
where $P(E)$ denotes the perimeter of $E$ (see \cref{sec:SetsFinitePerimeter}) and $|E|$ the Lebesgue measure of $E$. It is well-known that balls (uniquely) minimize \eqref{classical:isopprob}, cf. \cite{DeGiorgiIsoperimetrico} or \cite[Chapter 14]{MaggiBook}, and this is encoded in the classical isoperimetric inequality \begin{equation}\label{classical:isopineq}
P(E)\ge n \omega_n^{\frac{1}{n}}|E|^{\frac{n - 1}{n}}, \end{equation}
where $\omega_n$ denotes the measure of the unit ball in $\R^n$.
To prove a \emph{quantitative} version of \eqref{classical:isopineq} means to estimate the distance of a competitor from the set of minimizers in terms of the energy deficit of the competitor with respect to the infimum of the problem. The first quantitative isoperimetric inequality for \eqref{classical:isopprob} with sharp exponents was proved in \cite{FuscoMaggiPratelli}, and it reads \begin{equation}\label{classical:quantitative}
\alpha(E)^2 \le C(n) D(E),
\end{equation}
where $\alpha(E)$ and $D(E)$ are respectively the Fraenkel asymmetry and the isoperimetric deficit of $E$, i.e., \begin{equation*}
\alpha(E) := \inf \left\{\frac{E \Delta B(|E|, x)}{|E|} \st x \in \R^n\right\} 
\qquad
 D(E) := \frac{P(E) - P(B(|E|))}{P(B(|E|))}, \end{equation*}
where $B(v, x)$  denotes the ball in $\R^n$ with volume $v$ centered at $x$, for $v > 0$ and $x \in \R^n$, and $B(v):=B(v,0)$.
The inequality~\eqref{classical:quantitative} improves the previous non-sharp inequality proved in \cite{HallQuantitative}, after \cite{HallHaymanWeitsman, Fuglede}.
We also mention \cite{FigalliMaggiPratelli, CicaleseLeonardi, FigalliIndrei, FuscoJulin, IndreiNurbekyan} for further quantitative isoperimetric inequalities for possibly anisotropic perimeters, \cite{BogeleinDuzaarScheven, CavallettiMaggiMondino, ChodoshEngelsteinSpolaor} for quantitative isoperimetric inequalities on manifolds, and \cite{CianchiFuscoMaggiPratelli, BrascoDePhilippisRuffini, BarchiesiBrancoliniJulin, CGPROS, FuscoLaManna23} about weighted quantitative isoperimetric inequalities. 

\medskip

In this paper we prove quantitative isoperimetric inequalities for the following classical capillarity problems.
If $E$ is a measurable set in the half-space $\{x_n > 0\}\subset \R^n$ and $\lambda \in (-1,1)$, we define the weighted perimeter functional
\begin{equation*}
P_\lambda(E) := P(E, \{x_n>0\}) - \lambda \hh^{n - 1}(\partial^*E \cap \{x_n=0\} ),
\end{equation*}
where $\hh^{k}$, for $k \ge 0$, denotes the $k$-dimensional Hausdorff measure in $\R^n$, and $\partial^*E$ denotes the reduced boundary of $E$ (see \cref{sec:SetsFinitePerimeter}). Interpreting the perimeter as a measure of the surface tension of a liquid drop, the constant $\lambda$ basically represent the relative adhesion coefficient between a liquid drop and the solid walls of the container given by $\{x_n>0\}$.

If $v > 0$, we consider the isoperimetric capillarity problem \begin{equation}\label{weighted:isopprob} \inf\left\{P_\lambda(E) \st E \subset \{x_n>0\}, |E| = v\right\}. \end{equation}
Minimizers for~\eqref{weighted:isopprob}, below called isoperimetric sets for~\eqref{weighted:isopprob}, are given by suitably truncated balls lying on the boundary of the half-space. More precisely, if $B^\lambda = \{x \in B_1(0)\subset\R^n : \Braket{x, e_n} > \lambda\}$, and for $v>0$ we set
\[ B^\lambda(v) := \frac{v^{\frac{1}{n}}}{|B^\lambda|^{\frac1n}}(B^\lambda - \lambda e_n) \]
then minimizers for~\eqref{weighted:isopprob} are sets of the form
\begin{equation}\label{eq:OptimalBubbles}
	B^\lambda(v,x) := B^\lambda(v)  +x,
\end{equation}
for $x \in \{ x_n=0\}$, see \cite[Theorem 19.21]{MaggiBook} and \cref{fig:BolleOttime}.

\begin{figure}[h]
		\begin{center}
			\begin{tikzpicture}[scale=1.6]
			\draw[->](-1.5,0)--(1.5,0);
			\draw[->](0,-1.1)--(0,1.7);
			\draw [ultra thick,domain=30:150] plot ({cos(\x)}, {-0.5+sin(\x)});
			\path[font=\small]
			(0,-0.5)node{$\bullet$};
			\path[font=\small]
			(0,-0.5)node[right]{$-\lambda_1 e_n$};
			\path[font=\small]
			(0,1.6)node[right]{$x_n$};
			\filldraw[fill=white, pattern=north east lines]  (-0.85,0) -- (0.85,0) arc (30:150:1cm);
			\end{tikzpicture}
			\qquad
			\begin{tikzpicture}[scale=1.6]
			\filldraw[fill=gray!20]  (-0.86,-0.5) -- (0.86,-0.5) arc (-30:210:1cm);
			\draw[->](-1.5,0)--(1.5,0);
			\draw[->](0,-1.1)--(0,1.7);
			\draw [dashed,domain=210:330] plot ({cos(\x)}, {sin(\x)});
			\draw [ultra thick,domain=30:150] plot ({cos(\x)}, {sin(\x)});
			\draw[dotted](-1.4,0.5)--(1.4,0.5);
			\draw[dotted](-1.4,-0.5)--(1.4,-0.5);
			\path[font=\small]
			(1.25,0.45)node[above]{$x_n=\lambda_1$};
			\path[font=\small]
			(1.25,-0.45)node[below]{$x_n=\lambda_2$};
			\path[font=\small]
			(0,1.6)node[right]{$x_n$};
			\filldraw[fill=white, pattern=north east lines]  (-0.85,0.5) -- (0.85,0.5) arc (30:150:1cm);
			\end{tikzpicture}
			\qquad
			\begin{tikzpicture}[scale=1.6]
			\filldraw[fill=gray!20]  (-0.86,0) -- (0.86,0) arc (-30:210:1cm);
			\draw[->](-1.5,0)--(1.5,0);
			\draw[->](0,-1.1)--(0,1.7);
			\path[font=\small]
			(0,0.5)node{$\bullet$};
			\path[font=\small]
			(0,0.5)node[right]{$-\lambda_2 e_n$};
			\path[font=\small]
			(0,1.6)node[right]{$x_n$};
			\end{tikzpicture}
		\end{center}
\caption{Left: $B^{\lambda_1}(|B^{\lambda_1}|)$ for some $\lambda_1>0$. Middle: Diagonally striped set = $B^{\lambda_1}$, Gray set = $B^{\lambda_2}$, for some $\lambda_1>0, \lambda_2<0$. Right: $B^{\lambda_2}(|B^{\lambda_2}|)$ for some $\lambda_2<0$.}\label{fig:BolleOttime}
\end{figure}
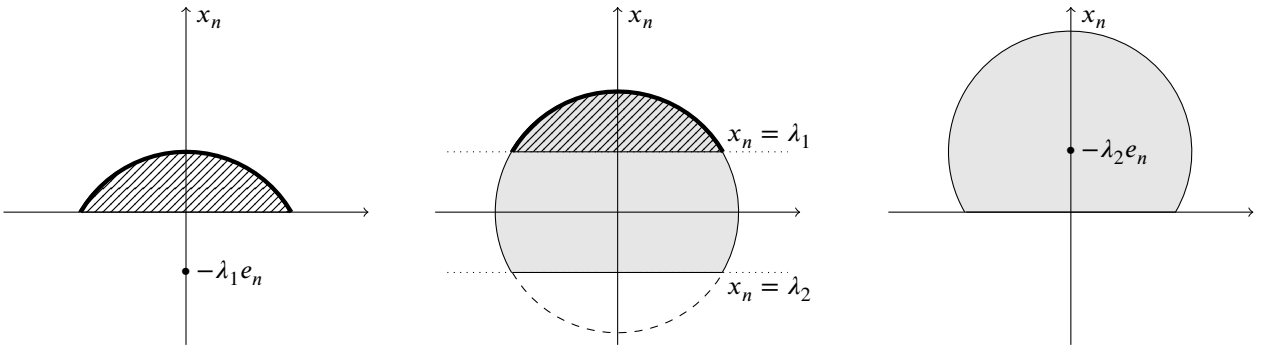

The first variational results regarding capillarity problems go back to works by Giusti, Gonzalez, Massari and Tamanini who established existence, symmetry and regularity results for the isotropic sessile drop problem, where an additional potential energy representing gravity is added to the minimization of $P_\lambda$ (see \cite{Gonzalez1976, Gonzalez1977, GonzalezTamanini, GonzalezMassariTamanini, Giusti1980, Giusti1981}; see also \cite{Finn1980} where uniqueness results for the symmetric sessile drop were established).
We refer to \cite{Finn1986} and \cite[Chapters 19, 20]{MaggiBook} for a more complete treatment regarding classical results. 
More recently, in \cite{Baer2015} the shape of liquid drops and crystals, resting on a horizontal surface and under the influence of gravity, are described in the anisotropic setting. The shape and the fine regularity of volume contrained minimizers of weighted perimeters like $P_\lambda$, where the weight on the interface touching the boundary of the container may be nonconstant and where an additional potential term is present, are addressed in \cite{MaggiMihaila, DePhilippisMaggi2015, ChodoshEdelenLi}. In \cite{DePhilippisMaggi2015} the perimeter functional measuring the area of the interface that does not touch the container is also possibly anisotropic.
Recently, the isoperimetric problem for the relative perimeter of sets contained in the complement of a convex set had been addressed in \cite{ChoeGhomiRitoreInequality}, where a sharp isoperimetric inequality is established, and in \cite{FuscoMorini}, where the rigidity of the inequality is addressed in the generality of measurable sets. Extensions of \cite{ChoeGhomiRitoreInequality} to higher codimension have been considered in \cite{LiuWangWeng, Krummel}.

\medskip

The minimality of sets $B^\lambda(v,x)$ for \eqref{weighted:isopprob} comes with an isoperimetric inequality for $P_\lambda$, see \cref{thm:IsopIneq} below. In order to prove a quantitative isoperimetric inequality for~\eqref{weighted:isopprob}, we define the corresponding Fraenkel asymmetry and isoperimetric deficit by setting
\begin{equation*} \alpha_\lambda(E) := \inf\left\{\frac{|E \Delta B^\lambda(v,x)|}{v} : x \in \{x_n=0\}\right\},
\qquad
 D_\lambda(E) := \frac{P_\lambda(E) - P_\lambda(B^\lambda(v))}{P_\lambda(B^\lambda(v))},
\end{equation*}
for any $E\subset\{x_n>0\}$ with volume $|E|=v$. The infimum defining the asymmetry is, in fact, a minimum.
 
The first main result of the paper is the following 

\begin{theorem}\label{thm:FinalQuantitativeInequality}
	Let $\lambda \in (-1, 1)$ and $n\in \N$ with $n\ge2$.
	There exists a constant $c_{\rm iso}=c_{\rm iso}(n, \lambda)>0$ such that for any measurable set $E\subset \R^n \cap \{x_n>0\}$ with finite measure there holds
	\begin{equation}\label{eq:FinalQuantitativeInequality}
	\alpha_\lambda(E)^2 \le c_{\rm iso}  D_\lambda(E).
	\end{equation}
\end{theorem}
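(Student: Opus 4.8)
The plan is to reduce the capillarity quantitative inequality to the classical one in $\mathbb{R}^n$ (or to a suitable relative version) via a reflection/symmetrization procedure, and then to close the argument with a quantitative ABP-type analysis combined with a selection argument, exactly in the spirit announced in the abstract. First I would set up a reflection doubling: given a competitor $E \subset \{x_n > 0\}$, one would like to associate to it a set $\widetilde E \subset \mathbb{R}^n$ obtained by reflecting across $\{x_n = 0\}$, in such a way that $P_\lambda(E)$ controls (a multiple of) $P(\widetilde E)$ up to the weight $\lambda$, and such that the optimal bubbles $B^\lambda(v,x)$ reflect to genuine balls. Since $\lambda \in (-1,1)$ is a fixed constant, the contact term $-\lambda \mathcal H^{n-1}(\partial^* E \cap \{x_n=0\})$ is linear in the contact area, and this is what makes the truncated balls optimal; the reflected-ball picture only works cleanly when $\lambda = 0$, so for general $\lambda$ I expect the symmetrization to be a Schwarz-type symmetrization in the $x_n$ variable (or in the spherical slices about the expected contact point) rather than a naive reflection. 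The point of the symmetrization step is to replace $E$ by a set $E^\sharp$ which is symmetric decreasing in the appropriate sense, whose deficit $D_\lambda(E^\sharp)$ is no larger than $D_\lambda(E)$, and whose asymmetry $\alpha_\lambda(E^\sharp)$ is comparable to $\alpha_\lambda(E)$ (or at least: controlling $\alpha_\lambda(E^\sharp)$ suffices, because the symmetrization only moves mass in a controlled way).

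Next I would run the quantitative ABP method on the symmetrized competitor. Recall that the ABP (Alexandrov–Bakelman–Pucci) proof of the isoperimetric inequality, following Cabré and the refinements used for sharp quantitative statements, constructs a function $u$ solving a Neumann problem on $E^\sharp$ (with $\Delta u = $ const in $E^\sharp$ and $\partial_\nu u = 1$ on the free boundary, and the natural capillarity-type condition $\partial_\nu u = \lambda$ on the wetted part $\partial^* E^\sharp \cap \{x_n=0\}$), and then compares the measure of the lower contact set of $u$ with $|E^\sharp|$ via the area formula for the gradient map $\nabla u$. The isoperimetric inequality comes out of bounding the Jacobian $\det D^2 u$ by the arithmetic–geometric mean inequality; the *deficit* $D_\lambda(E^\sharp)$ then controls, quantitatively, the defect in all the inequalities used — the AM–GM gap (which forces $D^2 u \approx \mathrm{Id}$ on a large portion of the contact set), the gap in the Cauchy–Schwarz/normal-component estimate on the free boundary, and the loss in the coarea slicing. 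This is the technical heart: one must convert "$D^2u$ is close to the identity on most of the contact set, and $|\nabla u| \le 1$ maps a large-measure set onto almost all of $B^\lambda(v)$" into "$E^\sharp$ is $L^1$-close to a single bubble $B^\lambda(v,x)$."

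The selection argument is what glues the ABP output into an $L^1$ statement: the contact set of $u$ could, a priori, be spread among several well-separated near-balls, and the ABP estimate alone only gives closeness in a weaker (e.g. trace or barycentric) sense. Here I would use a selection principle — either the Cicalese–Leonardi regularity/penalization selection principle, or the more elementary "select the largest connected cluster" argument tailored to the ABP contact set — to extract a single translate $B^\lambda(v,x_0)$ against which $|E^\sharp \,\Delta\, B^\lambda(v,x_0)|$ is controlled by $\sqrt{D_\lambda(E^\sharp)}$, with the sharp square-root exponent coming from the quadratic nature of the AM–GM and Cauchy–Schwarz defects. Combining with $D_\lambda(E^\sharp) \le D_\lambda(E)$ and $\alpha_\lambda(E)^2 \lesssim \alpha_\lambda(E^\sharp)^2 + (\text{symmetrization error}) \lesssim D_\lambda(E^\sharp) \le D_\lambda(E)$ (after checking that the symmetrization error is itself controlled by the deficit) yields \eqref{eq:FinalQuantitativeInequality}.

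I expect the main obstacle to be twofold. The sharp exponent $2$ is delicate: every inequality in the ABP chain must be exploited at its exact quantitative order, and in the capillarity setting there is an extra boundary term (the $\lambda$-weighted wetted part and the associated Neumann-type condition $\partial_\nu u = \lambda$ on $\{x_n=0\}$) whose defect must be tracked and shown not to degrade the exponent — this is presumably the "quantitative ABP for the capillarity free boundary" contribution. Second, the interaction between the symmetrization (which is not a rigid motion and can a priori destroy the single-bubble structure or inflate the asymmetry) and the selection step requires care: one must ensure that the symmetrized set does not split, or that splitting is quantitatively penalized, which is precisely where the "selection-type argument" announced in the abstract does the work. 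The degenerate limits $\lambda \to \pm 1$ (where the bubble degenerates to a full ball or to a point on the wall) need not be uniform, consistent with the statement allowing $c_{\rm iso}$ to depend on $\lambda$.
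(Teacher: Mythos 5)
Your high-level outline---symmetrize, run a quantitative ABP, then close with a selection argument---does match the paper's strategy, but several of the pivotal mechanisms are either imprecisely described or would not work as stated.

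On the symmetrization step: you are right to dismiss reflection doubling (it degenerates for $\lambda\neq 0$), but ``a Schwarz-type symmetrization'' alone does not reduce the problem. The paper runs a chain of reductions, each with a quantitative error estimate: first a truncation-to-a-cube lemma (the analogue of \cite[Lemma 5.1]{FuscoMaggiPratelli}, where the $x_n$-direction requires a new slice lower bound, \cref{lem:LowerBoundAreaFette}), then a reduction to $(n-1)$-fold reflective symmetry, and only then Schwarz symmetrization. Crucially, Schwarz symmetrization here is only permitted about the fixed $n$-th axis (the half-space breaks all other translational symmetry), which is precisely the obstruction the paper flags to the Fusco--Maggi--Pratelli and Maggi--Gromov routes. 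The passage from $(n-1)$-symmetric to Schwarz-symmetric sets uses the quantitative isoperimetric inequality in $\R^{n-1}$ on slices; the error $|E\Delta E^*|\lesssim\sqrt{D_\lambda(E)}$ is then quantitative, which you tacitly assume but do not establish.

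On the ABP step: your description of a Neumann problem with $\partial_\nu u=\lambda$ on the wetted face has the wrong sign (the paper's condition is $\partial_\nu u=-\lambda$, which is what makes $\nabla u(\Gamma_u)\supset B^\lambda$). More substantively, the quantitative ABP output in the paper is not directly an estimate on $u$; it is an estimate on a \emph{coupling} function $\Psi$ (the $K$-envelope of $u$, $K=\overline{B^\lambda}$), namely $\int_E|\nabla^2\Psi-\mathrm{id}|\lesssim\sqrt{D_\lambda}$ and $\int_{\partial^*E}(1-|\nabla\Psi|)\lesssim D_\lambda$. Turning those into an $L^1$ asymmetry estimate requires a trace inequality on $\partial E\cap\{x_n>0\}$, which one cannot expect for a generic Schwarz-symmetric competitor; it is established only for sets that are small $C^1$-perturbations of $B^\lambda(|B^\lambda|)$.

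On the selection step: you describe the role of selection as gluing together ``several well-separated near-balls'' in the contact set, but that is not the obstruction. In the paper, the ABP + coupling argument already gives the sharp inequality, but only for Schwarz-symmetric sets that are $C^1$-close to an optimal bubble. The selection principle (an auxiliary minimization $\min\{P_\lambda(F)+|\alpha_\lambda(F)-\alpha_\lambda(E_j)|+\Lambda||F|-|E_j||\}$ over Schwarz-symmetric $F$ in a fixed cube) exists to \emph{upgrade regularity}: minimizers are shown to be local $(\Lambda_1,r_0)$-minimizers, hence $C^{1,1/2}$ up to the half-space boundary after an argument using the bounded-mean-curvature property and the axially-symmetric curvature formula, and hence $C^1$-close to $B^\lambda(|B^\lambda|)$. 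Without this step, the ABP estimate simply does not apply to the contradicting sequence. So while the three ingredients you name are all present, their roles and the bridges between them are different enough that your sketch, as written, would not close the argument.
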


As for the classical \eqref{classical:isopineq}, perturbing the boundary of an optimal bubble only inside the container $\{x_n>0\}$, it is possible to check that exponents in \eqref{eq:FinalQuantitativeInequality} are sharp.

\medskip

In the context of these capillarity problems it is also spontaneous to consider a notion of asymmetry for the part of the boundary of a set that touches the half-plane $\{x_n=0\}$.
For a measurable set $E\subset \{x_n>0\}$, we define
\begin{equation*}
    \beta_\lambda(E) := \inf \left\{ \frac{\hh^{n-1}\left(\partial^*E \cap\{x_n=0\} \,\, \Delta \,\,\partial^* B^\lambda(|E|,x) \cap \{x_n=0\}\right)}{\hh^{n-1}\left(\partial^* B^\lambda(|E|,x) \cap \{x_n=0\}\right)} 
    \st 
    x \in \{x_n=0\}
    \right\}.
\end{equation*}
The previous quantity measures the asymmetry of the set $\partial^*E \cap\{x_n=0\}$ with respect to $(n-1)$-dimensional balls in $\{x_n=0\}$ having volume equal to the one of the trace of the optimal bubble corresponding to the volume of $E$. We establish the following second quantitative isoperimetric inequality, that provides a quantitative estimate on $\beta_\lambda$.

\begin{theorem}\label{thm:QuantitativaBagnata}
    Let $\lambda \in (-1, 1)$ and $n\in \N$ with $n\ge2$. There exists a constant $c'_{\rm iso}=c'_{\rm iso}(n, \lambda)>0$ such that for any measurable set $E\subset \R^n \cap \{x_n>0\}$ with finite measure there holds
	\begin{equation}\label{eq:QuantitativaBagnata}
	\beta_\lambda(E) \le c'_{\rm iso} \max\left\{ D_\lambda(E) , D_\lambda(E)^{\frac{1}{2n}} \right\}.
	\end{equation}
\end{theorem}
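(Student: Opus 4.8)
The plan is to deduce the estimate on $\beta_\lambda$ from the already-established quantitative inequality \eqref{eq:FinalQuantitativeInequality} on $\alpha_\lambda$, by a geometric argument relating the symmetric difference of the wetted parts $\partial^* E \cap \{x_n = 0\}$ to the symmetric difference of the bulk sets $E$ and $B^\lambda(|E|, x)$, together with the energy deficit itself. First I would normalize so that $|E| = |B^\lambda|$, i.e.\ the optimal bubble is $B^\lambda$ up to translation, since both $\beta_\lambda$ and $D_\lambda$ are scale invariant; let $x_0 \in \{x_n = 0\}$ be an (almost) optimal center for $\alpha_\lambda(E)$, so that $|E \Delta B^\lambda(|E|, x_0)| \le c_{\rm iso}^{1/2}\, |B^\lambda|\, D_\lambda(E)^{1/2}$ by \cref{thm:FinalQuantitativeInequality}. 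The key point is then: the $(n-1)$-dimensional trace $\partial^* E \cap \{x_n = 0\}$ cannot differ too much from the disk $\partial^* B^\lambda(|E|, x_0) \cap \{x_n = 0\}$, because (i) the portion of the trace that lies far from the disk forces a large relative perimeter in the slab $\{0 < x_n < \delta\}$, hence a large deficit, by a De~Giorgi-type argument bounding $\hh^{n-1}$ of a horizontal slice of $\partial^* E$ by the perimeter of $E$ in a thin slab above it; and (ii) the bulk control $|E \Delta B^\lambda(|E|, x_0)|$ together with the isoperimetric inequality in the half-space controls how much mass of $E$ can sit above a region where the traces disagree.

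The main technical step is step (i), which I would carry out as follows. Write $A := \big(\partial^* E \cap \{x_n = 0\}\big) \,\Delta\, \big(\partial^* B^\lambda(|E|,x_0) \cap \{x_n=0\}\big) \subset \{x_n = 0\}$ and set $m := \hh^{n-1}(A)$. For the ``extra wetted'' part $A_+ := \big(\partial^* E \setminus \partial^* B^\lambda(|E|,x_0)\big) \cap \{x_n=0\}$, the reduced boundary of $E$ over $A_+$ has outer normal $-e_n$, so $E$ occupies a definite amount of the slab above $A_+$: by the relative isoperimetric inequality (or a direct slicing/coarea estimate) there is a dimensional constant with
\begin{equation*}
P\big(E, \{0 < x_n < t\}\big) \ge \hh^{n-1}(A_+) - C(n)\, t^{-1} \big| E \cap \{0 < x_n < t\} \big|
\end{equation*}
for a.e.\ $t > 0$; a symmetric estimate holds for the ``missing wetted'' part $A_- := A \setminus A_+$, using that over $A_-$ the bubble is present but $E$ is (in $\hh^{n-1}$-measure) absent, so that $\big(B^\lambda(|E|,x_0) \setminus E\big)$ fills the slab above $A_-$ and thus $\big|E \Delta B^\lambda(|E|,x_0)\big|$ bounds the relevant volume term. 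Combining, for a.e.\ $t$,
\begin{equation*}
P_\lambda(B^\lambda) \, D_\lambda(E) = P_\lambda(E) - P_\lambda(B^\lambda) \ge c(n,\lambda)\Big( m - C(n)\, t^{-1} \big( |E \Delta B^\lambda(|E|,x_0)| + t \cdot \hh^{n-1}(A) \cdot \mathbf 1_{\{\dots\}} \big)\Big),
\end{equation*}
where the precise bookkeeping must be done carefully to avoid reabsorbing $m$ on the wrong side; optimizing the free parameter $t$ against $|E \Delta B^\lambda(|E|,x_0)| \le C\, D_\lambda(E)^{1/2}$ yields $m \le C(n,\lambda)\, D_\lambda(E)^{1/(2n)}$ in the regime where $D_\lambda(E)$ is small, after using $|E \Delta B^\lambda|^{(n-1)/n}$-type interpolation between the $(n-1)$-volume lost by the trace and the $n$-volume lost by the bulk. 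Since $\hh^{n-1}(\partial^* B^\lambda(|E|,x_0)\cap\{x_n=0\})$ is a fixed positive constant (after normalization), dividing gives $\beta_\lambda(E) \le C(n,\lambda) D_\lambda(E)^{1/(2n)}$.

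Finally I would handle the two remaining points. When $D_\lambda(E)$ is not small — say $D_\lambda(E) \ge \eta_0(n,\lambda)$ — the inequality \eqref{eq:QuantitativaBagnata} is trivial because $\beta_\lambda(E) \le 2$ always (it is a normalized symmetric difference) while the right-hand side is bounded below by a positive constant; this is why the $\max$ with the linear term $D_\lambda(E)$ appears, absorbing the large-deficit regime. And the case $\lambda \le 0$ versus $\lambda > 0$ only affects the sign of the weight $-\lambda \hh^{n-1}(\partial^* E \cap \{x_n=0\})$ in $P_\lambda$; in both cases the slab argument above produces a genuine contribution to the \emph{positive} part $P(E, \{x_n > 0\})$ of the energy, so the lower bound on the deficit survives — one only needs $|\lambda| < 1$ to ensure $P_\lambda(B^\lambda) > 0$ and that trace terms cannot cancel the gain. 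The step I expect to be the true obstacle is the slicing estimate (i): making the constant uniform and correctly separating the contribution of the trace asymmetry from the bulk asymmetry without circularity, in particular ruling out configurations where $E$ has a thin ``tongue'' hugging $\{x_n = 0\}$ that wets a large area while contributing little volume and little perimeter — this is exactly where the power $D_\lambda(E)^{1/(2n)}$ (rather than $D_\lambda(E)^{1/2}$) is forced, and where a careful coarea/optimization in $t$ is essential.
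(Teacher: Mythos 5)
Your proposal takes a genuinely different route from the paper. You want a direct argument: normalize $|E|=|B^\lambda|$, invoke \cref{thm:FinalQuantitativeInequality} to get $|E\Delta B^\lambda(|E|,x_0)|\lesssim D_\lambda(E)^{1/2}$, then bound $\hh^{n-1}(A)$ (the trace asymmetry) via a slicing estimate of the form $P(E,\{0<x_n<t\})\gtrsim \hh^{n-1}(A)-t^{-1}\,(\text{bulk volume error})$, and optimize in $t$. The paper instead first proves (\cref{lem:StimaBetaQuasiminimal}) a Hausdorff-distance bound $d_\hh(\overline{\partial E\setminus H},\overline{\partial B^\lambda\setminus H})\lesssim \alpha_\lambda(E)^{1/n}$ for $(K,r_0)$-quasiminimal sets using the density estimates of \cref{thm:RegolaritaQuasiminimal}, which immediately yields $\beta_\lambda(E)\lesssim D_\lambda(E)^{1/(2n)}$ in that class; it then transfers this to general sets by a selection-type argument (\cref{lem:QuantitativaBetaDeficitBasso}), via the auxiliary problem \eqref{eq:ProbMinimoQuasiminimal} whose minimizers are shown to be quasiminimal, supported by the strict-convergence \cref{lem:ConvergenzaForteSuccessioni}. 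Your approach, if it worked, would be more elementary and self-contained; the paper's buys rigor by importing regularity from quasiminimality rather than extracting it from a bare slicing bound.

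However, there is a genuine gap in your argument, which you yourself flag in step (i). Your slicing estimate gives, roughly, $\hh^{n-1}(A)\lesssim P(E,\{0<x_n<t\})+t^{-1}|E\Delta B^\lambda(|E|,x_0)|$, but the term $P(E,\{0<x_n<t\})$ is \emph{not} controlled by the deficit: it is of order $t$ even for the optimal bubble itself (the lateral surface in a slab of width $t$ has measure $\approx ct$), so it does not go to zero with $D_\lambda(E)$. To close the argument you would need a refined identity separating out the bubble's contribution in the slab, i.e.\ a bound of the type $P(E,\{0<x_n<t\})\le ct+(\text{small function of }D_\lambda)$, together with the complementary isoperimetric bound on the upper part $E\cap\{x_n>t\}$ — and this bookkeeping is exactly what you do not carry out. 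Moreover, for the ``missing wetted'' set $A_-$ there is a boundary layer near $|x'-x_0|\approx r_\lambda$ where $B^\lambda$ has height $O(\sqrt{r_\lambda-|x'-x_0|})\ll t$, so the claim that ``$B^\lambda\setminus E$ fills the slab above $A_-$'' fails there; and the final optimization you gesture at does not obviously produce the exponent $1/(2n)$ without a mechanism replacing the density estimates (which in the paper's proof is precisely what forces the power $1/n$ on $\alpha_\lambda$). Your concern about the thin tongue is, incidentally, not the obstruction: a tongue of cross-section $D'$ and height $\epsilon$ contributes at least $(1-\lambda)\hh^{n-1}(D')$ to the deficit through its top face, so tongues do cost deficit proportional to the area they wet; the real difficulty is the uncontrolled $P(E,\text{slab})$ term and the optimization step, which your sketch leaves unresolved.
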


\bigskip
\noindent\textbf{Strategy of the proof and comments.} 
Observing that, roughly speaking, the minimization problem \eqref{weighted:isopprob} is symmetric with respect to the first $n-1$ axes, it is possible to adapt arguments in the spirit of \cite{FuscoMaggiPratelli} to see that, in order to prove \cref{thm:FinalQuantitativeInequality}, it is sufficient to prove \eqref{eq:FinalQuantitativeInequality} in the class of Schwarz-symmetric sets, see \cref{prop:ScharzSymmSufficienti}. Here a set $E$ is said to be Schwarz-symmetric (\cref{def:SchwarzSymmetrization}) if the intersection $E \cap \{x_n=t\}$ is an $(n-1)$-dimensional ball in $\{x_n=t\}$ centered at $(0,t)$, for any $t$.
However, we point out that it seems not possible to push the strategy of \cite{FuscoMaggiPratelli} to the very end to prove \cref{thm:FinalQuantitativeInequality}. Indeed the arguments in \cite{FuscoMaggiPratelli} require to Schwarz-symmetrize a competitor with respect to a preferred axis depending on the competitor, while in our case it is only possible to symmetrize with respect to the $n$-th axis. One finds an analogous obstruction also in a possible adaptation of the proof via symmetrization revised in \cite{MaggiMethodsQuantitative08} (see also \cite{FuscoDispensa}); in \cite{MaggiMethodsQuantitative08} the quantitative isoperimetric inequality for Schwarz-symmetric sets is eventually obtained performing a quantitative version of Gromov's proof \cite{MilmanSchechtmanAppendixGromov} of the isoperimetric inequality, but again after having symmetrized a competitor with respect to a convenient axis.

\medskip

The proof of \eqref{eq:FinalQuantitativeInequality} in the class of Schwarz-symmetric sets is achieved here with a new combination of the so-called selection principle \cite{AcerbiFuscoMorini, CicaleseLeonardi} with an Alexandrov--Bakelman--Pucci-type technique in the spirit of \cite{CGPROS}.

In the recent \cite{CGPROS}, the authors prove sharp quantitative isoperimetric inequalities for a class of isoperimetric problems in cones where volume and perimeter are weighted in terms of a function satisfying suitable homogeneity and concavity properties.
The proof in \cite{CGPROS} stems from the fact that the isoperimetric inequality for the corresponding problem was proved in \cite{CabreRosOtonSerra} by a so-called ABP argument.
The methods that go under the name of ABP techniques were originally employed to derive regularity estimates for second order elliptic equations \cite[Chapter 9]{GilbargTrudinger} and they were applied to give a new direct proof of the classical isoperimetric inequality in \cite{CabreIsopCatalano, CabreIsopPubblicata} (see \cite{CabreSurvey} for a detailed account on the method). More precisely, for the classical isoperimetric problem, if $E \subset \R^n$ is a smooth connected open set, one would consider a solution $u$ to
\begin{equation}\label{eq:IntroProbABP}
    \begin{cases}
        \Delta u = \frac{P(E)}{|E|} & \text{ on } E,\\
        \partial_\nu u =1  & \text{ on } \partial E,\\
    \end{cases}
\end{equation}
where $\partial_\nu u$ denotes outward normal derivative. It is immediate to check that $\nabla u(E') \supset B_1(0)$ where $E':=\{ x \in E \st \nabla^2u(x) \ge0\}$, hence the area formula together with the arithmetic-geometric mean inequality readily imply the Euclidean isoperimetric inequality, indeed
\[
\omega_n = |B_1(0)| \le \int_{E'} \det\nabla^2 u \le \int_E \left(\frac{\Delta u}{n} \right)^n = \frac{P(E)^n}{n^n|E|^{n-1}}. 
\]
Now the rough idea is that a control on the energy deficit should control the "asymmetry" of the solution $u$ with respect to the solution corresponding to the optimal shape $B_1(0)$, that is the radially symmetric parabola $|x|^2/2$. In fact, this is achieved in \cite{CGPROS} by controlling the asymmetry of a \emph{coupling} function which is defined as a suitable convex envelope of $u$.
Adapting arguments from \cite{FigalliMaggiPratelli}, in \cite{CGPROS} the authors then show that it is possible to employ trace-type theorems to estimate the asymmetry of a competitor set in terms of the asymmetry of the coupling function, which is in turn estimated by the energy deficit.

In \cref{thm:IsopIneq} below we will give an ABP proof of the isoperimetric inequality for the problem \eqref{weighted:isopprob} by analyzing an elliptic problem analogous to \eqref{eq:IntroProbABP}, see \eqref{cabsur:3.2}. We are then in position to consider a coupling function as done in \cite{CGPROS} and we can quantitatively estimate its asymmetry, which will be achieved in \cref{prop:EsistenzaCoupling}. Moreover, Schwarz-symmetric sets that are sufficiently small $C^{1}$-perturbations (in the sense of \cref{def:VicinanzaC1Schwarz}) of an optimal bubble \eqref{eq:OptimalBubbles} readily verify the needed trace-type inequalities that relate asymmetry of the competitor with the asymmetry of the coupling. Hence this establishes the quantitative inequality \eqref{eq:FinalQuantitativeInequality} for Schwarz-symmetric $C^1$-perturbations of optimal bubbles, see \cref{quantitative:nearball}. Observe that, in our setting, isoperimetric sets are just Lipschitz-regular and a set $E$ is $C^1$-close to an optimal bubble if just the relative boundary $\partial E \cap \{x_n>0\}$ is close to the relative boundary of an optimal bubble as $C^1$-hypersurfaces with boundary.

Once \eqref{eq:FinalQuantitativeInequality} is proved for $C^1$-perturbations of optimal bubbles (\cref{quantitative:nearball}), we want apply a selection-type argument in the spirit of \cite{AcerbiFuscoMorini, CicaleseLeonardi} in the class of Schwarz-symmetric sets in order to extend the validity of the quantitative inequality to the whole class of Schwarz-symmetric sets. In this way we also avoid the implementation of further technical results that in \cite{FigalliMaggiPratelli, CGPROS} allow to reduce to just consider sets that enjoy the required trace-type inequalities.\\
Roughly speaking, in a selection-type argument one argues by contradiction assuming existence of sets contradicting the quantitative isoperimetric inequality and one uses such sets to define an auxiliary minimization problem, cf. \eqref{eq:ProblemSelectionPrinciple}. Minimizers to the previous problem still contradict the quantitative isoperimetric inequality, but at the same time they are shown to be small $C^1$-perturbations of some isoperimetric set, contradicting the inequality already proved for sets given by small perturbations of optimal ones.\\
In our case, we will prove that minimizers $E$ to the auxiliary minimization problem are $C^1$-perturbations of optimal bubbles up to the boundary of the half-space $\{x_n>0\}$ as a consequence of the classical interior regularity of $(\Lambda, r_0)$-minimizers of the perimeter (\cref{def:LambdaMin}), see \cite{Tamanini} and \cite[Chapter 26]{MaggiBook}, together with a simple variational argument that allows us to propagate the regularity up to the boundary of the half-space $\{x_n>0\}$. This essentially follows from the fact that a Schwarz-symmetric local $(\Lambda, r_0)$-minimizer $E$ in $\{x_n>0\}$ is locally of class $C^1$ and has bounded mean curvature (in a generalized sense, see \cref{prop:meancurvature}); hence a uniform bound on the whole second fundamental form on a portion of boundary $\partial E \cap \{0<x_n<\epsilon\}$ follows just by showing that the set $\partial E \cap \{0<x_n<\epsilon\}$ is far from the axis of revolution of $E$ (see \cref{curvature:revolution}), and the latter holds in the proof of \cref{thm:FinalQuantitativeInequality} by an energy estimate holding for minimizers to the auxiliary minimization problems.

\medskip

We stress that in our case it is not clear how to apply a Fuglede-type argument \cite{CicaleseLeonardi, Fuglede} to prove the quantitative inequality for sets given by small $C^1$-perturbations of optimal ones. Indeed, the classical Fuglede's method relies on the precise knowledge of the eigenvalues of the Laplace--Beltrami operator, which is not available for the operator on spherical caps corresponding to optimal bubbles \eqref{eq:OptimalBubbles} for generic $\lambda \in(-1,1)$. Moreover, observe that in our case it is not possible to globally parametrize $C^{1}$-close boundaries one on the other as normal graphs in general, introducing a further nontrivial technical difficulty in the implementation of a Fuglede-type argument.

\medskip

Once \cref{thm:FinalQuantitativeInequality} is proved, for the proof of \cref{thm:QuantitativaBagnata} we argue as follows. First we establish a quantitative inequality that estimates the Hausdorff distance between the relative boundary in $\{x_n>0\}$ of a competitor $E$ and the relative boundary of some bubble in terms of the Fraenkel asymmetry of $E$, under the assumption that $E$ is a so-called $(K,r_0)$-quasiminimal set, see \cref{def:QuasiminimalSets} and \cref{lem:StimaBetaQuasiminimal}. This is achieved since quasiminimal sets enjoy uniform density estimates at boundary points, see \cref{thm:RegolaritaQuasiminimal}. Exploiting \cref{thm:FinalQuantitativeInequality}, the previous quantitative inequality yields an inequality of the form \eqref{eq:QuantitativaBagnata} in the class of quasiminimal sets. Eventually, \cref{thm:QuantitativaBagnata} follows by applying again a selection-type argument where now $\beta_\lambda$ plays the role of the Fraenkel asymmetry.

\bigskip
\noindent\textbf{Organization.} 
In \cref{sec:Preliminaries} we collect definitions and facts on sets of finite perimeter and we prove some preliminary results on the capillarity functional. In \cref{sec:IsopIneqAsymmetryDeficit} we establish the sharp isoperimetric inequality for $P_\lambda$ via ABP method and we prove preliminary facts on the Fraenkel asymmetry and on the deficit. \cref{sec:Reductions} is devoted to a series of technical results that allow to reduce the analysis to Schwarz-symmetric sets. In \cref{sec:Quantitative} we complete the proof of \cref{thm:FinalQuantitativeInequality} by proving \eqref{eq:FinalQuantitativeInequality} on Schwarz-symmetric sets. Finally in \cref{sec:SecondQuantitative} we prove \cref{thm:QuantitativaBagnata}. In \cref{sec:Appendix} we recall some facts about $(\Lambda,r_0)$-minimizers and we recall a formula for the mean curvature of axially symmetric hypersurfaces.

\bigskip
\noindent\textbf{Addendum.} After this work was completed, we became aware of the independent \cite{KreutzSchmidt}, where the authors exploit the quantitative isoperimetric inequality for anisotropic perimeters provided in \cite{FigalliMaggiPratelli} to give a result analogous to \cref{thm:FinalQuantitativeInequality} in the anisotropic setting.

\section{Preliminaries}
\label{sec:Preliminaries}

\begin{center}
\emph{From now on and for the rest of the paper it is assumed that $\lambda \in (-1,1)$ and $n\in \N$ with $n\ge 2$ are fixed.}    
\end{center}

\medskip
\noindent\textbf{List of symbols.} Throughout the paper we shall adopt the following notation.

\begin{itemize}
    \item $|\cdot|$ denotes Lebesgue measure in $\R^n$.
    
    \item $B_r:= B_r(0) \subset \R^n$ for $r>0$, $B:=B_1$.
    
    \item $B^\lambda = \{x \in B : \Braket{x, e_n} > \lambda\}$.
    
    \item $B^\lambda(v) := \frac{v^{\frac{1}{n}}}{|B^\lambda|^{\frac1n}}(B^\lambda - \lambda e_n)$, for any $v>0$.
    
    \item $B^\lambda(v,x) := B^\lambda(v)  +x$, for any $x \in \{x_n=0\}$. In particular $B^\lambda(v)=B^\lambda(v,0)$.

    \item $c(n,\lambda), C(n,\lambda)$ denote strictly positive constants, depending on $n,\lambda$ only, that may change from line to line.

    \item $d_\hh$ denotes Hausdorff distance in $\R^n$.
    
    \item $H:=\{x_n\le0\}$.

    \item $\hh^d$ denotes $d$-dimensional Hausdorff measure in $\R^n$, for $d\ge0$.
    
    \item $P_\lambda(B^\lambda):= P_\lambda(B^\lambda(|B^\lambda|, x)) =  P(B, \{x_n>\lambda\}) -\lambda \hh^{n-1}(B\cap \{x_n=\lambda\})$.

    \item $Q_r:= [-r,r]^n \subset \R^n$, for any $r>0$.

    \item $r_\lambda := \min\left\{\sqrt{1 - \lambda^2}, 1 - \lambda\right\}$, $R_\lambda:= \max\left\{\sqrt{1 - \lambda^2}, 1 - \lambda\right\}$.
\end{itemize}

\subsection{Sets of finite perimeter}\label{sec:SetsFinitePerimeter}
We recall basic definitions and properties regarding sets of finite perimeter, referring to \cite{AmbrosioFuscoPallara, MaggiBook} for a complete treatment on the subject.
The perimeter of a measurable set $E\subset\R^n$ in an open set $A\subset \R^n$ is defined by
\begin{equation}\label{eq:DefPerimetro}
P(E,A):=\sup\left\{\int_E {\rm div}\, T(x) \de x \st T \in C_c^1(A;\R^n), \,\, \|T\|_\infty \le1\right\}. \end{equation}
Denoting $P(E):=P(E,\R^n)$, we say that $E$ is a set of finite perimeter if $P(E)<+\infty$. In such a case, the characteristic function $\chi_E$ has a distributional gradient $D\chi_E$ that is a vector-valued Radon measure on $\R^n$ such that \begin{equation*}
\int_E {\rm div}\, T(x) \de x = - \int_{\R^n} T  \de D\chi_E , \quad \forall T \in C_c^1(\R^n; \R^n). \end{equation*} 
It can be proved that the set function $P(E,\cdot)$ defined in \eqref{eq:DefPerimetro} is the restriction of a nonnegative Borel measure to open sets. The measure $P(E,\cdot)$ coincides with the total variation $|D\chi_E|$ of the distributional gradient, and it is concentrated on the reduced boundary
\begin{equation*}
\partial^*E : = \left\{ x \in {\rm spt} |D\chi_E| \st \exists\, \nu^E(x):=-\lim_{r\to0} \frac{D\chi_E(B_r(x))}{|D\chi_E(B_r(x))|} \text{ with } |\nu^E(x)|=1 \right\}.
\end{equation*}
Introducing the sets of density $t\in[0,1]$ points for $E$ defined by
\begin{equation*}
    E^{(t)} := \left\{ x \in \R^n \st \lim_{r\to0} \frac{|E \cap B_r(x)|}{|B_r(x)|}= t \right\},
\end{equation*}
we have that the reduced boundary coincides both with $\R^n\setminus(E^{(1)} \cup E^{(0)})$ and with the set $E^{(1/2)}$ up to $\hh^{n-1}$-negligible sets. The vector $\nu^E$ is called the generalized outer normal of $E$.
Moreover $P(E,\cdot) = \hh^{n-1}\mres \partial^*E$, and the distributional gradient can be written as $D\chi_E = - \nu^E \hh^{n-1}\mres \partial^*E$.

We conclude by recalling the definition of Schwarz symmetrization and of Schwarz-symmetric set.

\begin{definition}\label{def:SchwarzSymmetrization}
    Let $E\subset \R^n$ be a Borel set. Then its Schwarz symmetrization (with respect to the $n$-th axis) is the set
    \[
    E^* := \{ (x',t) \in \R^{n-1}\times \R \st \omega_{n-1}|x'|^{n-1}<\hh^{n-1}(E \cap \{x_n=t\}), \,\, t \in \R \}.
    \]
    A Borel set $E\subset \R^n$ is said to be Schwarz-symmetric if it coincides with its Schwarz symmetrization, up to negligible sets.
\end{definition}

\subsection{Preliminary results on the capillarity functional}

In this section we prove some preparatory results on the functional $P_\lambda$. From now on, $H$ denotes the closed half-space $H:=\{x_n\le 0\}$.

\begin{remark}\label{rem:PlambdaConDivergenza}
Let $E \subset \R^n \setminus H$ be a measurable set. We observe that
\[
P_\lambda(E) = \int_{\partial^*E \setminus H}1 - \lambda \Braket{e_n, \nu^E} \de \hh^{n-1},
\]
where $\nu^E$ is the generalized outer normal to $E$. In particular, since $|\lambda|<1$, we have that $P_\lambda(E)\ge0$. The previous identity follows from the divergence theorem, indeed
\begin{equation*} 0 = \int_E \text{div}\, e_n \de x = - \hh^{n - 1}(\partial^*E \cap \partial H) + \int_{\partial^*E \setminus H} \Braket{e_n, \nu^E}  \de \hh^{n-1}.
\end{equation*}
\end{remark}

We now aim at proving an approximation result for sets of finite perimeter contained in $\R^n \setminus H$ by sequences of sets having smooth boundary relative in $\R^n\setminus H$. We will employ the following lemma.

\begin{lemma}\label{lemma:predensity}
Let $E \subset \R^n \setminus H$ be a set of finite perimeter with finite measure. For any $t \ge 0$ define the diffeomorphism $\varphi_t: \{x_n\ge 0\} \to \{x_n\ge 0\}$ given by $\phi_t(x', x_n) := ( x' , x_n (1 + t e^{-|x'|^2}))$, where we wrote $x=(x',x_n) \in \R^{n-1}\times \R$ for any $x \in \{x_n\ge 0\} \subset \R^n$.
Then \begin{enumerate} 
\item for at most countably many $t \in [0,+\infty)$ there holds
\begin{equation*} \hh^{n - 1}\left(\left\{x \in \partial^*(\phi_t(E)) \setminus H \st \nu^{\phi_t(E)}(x) = \pm e_n \right\}\right) > 0; \end{equation*}

\item for at most countably many $\nu \in \mathbb{S}^{n - 1}$ there holds
\begin{equation*} \hh^{n - 1}\left(\left\{x \in \partial^*E \st \nu^{E}(x) = \pm \nu \right\}\right) > 0. \end{equation*} \end{enumerate}
\end{lemma}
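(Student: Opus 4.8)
The plan is to establish the two items separately. Item (2) is soft and uses only finiteness of the perimeter: pushing forward the finite measure $\hh^{n-1}\mres\partial^*E$ through the Borel map $\nu^E\colon\partial^*E\to\mathbb{S}^{n-1}$ produces a finite Borel measure $\mu$ on $\mathbb{S}^{n-1}$ with total mass $\mu(\mathbb{S}^{n-1})=P(E)<+\infty$. Since $\hh^{n-1}(\{x\in\partial^*E:\nu^E(x)=\pm\nu\})=\mu(\{\nu\})+\mu(\{-\nu\})$, this quantity is positive precisely when $\nu$ or $-\nu$ is an atom of $\mu$; a finite measure has at most countably many atoms and the antipodal map is a bijection of $\mathbb{S}^{n-1}$, so the set of such $\nu$ is at most countable.

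For item (1), the idea is to track the outer normal through the diffeomorphism $\phi_t$ and then run a counting argument on the $E$-side. Since $E\subset\{x_n>0\}$ we also have $\phi_t(E)\subset\{x_n>0\}$, and $\phi_t$ is a $C^1$-diffeomorphism of $\R^n$ whose restriction to $\{x_n>0\}$ is bi-Lipschitz on each slab $\{0<x_n<M\}$, with constants depending on $M$ and $t$. Hence, by the standard behaviour of sets of finite perimeter under $C^1$-diffeomorphisms (see \cite[Chapter 17]{MaggiBook}), $\phi_t(E)$ has locally finite perimeter in $\{x_n>0\}$, one has $\partial^*(\phi_t(E))\cap\{x_n>0\}=\phi_t(\partial^*E\cap\{x_n>0\})$ up to $\hh^{n-1}$-negligible sets, the normal satisfies $\nu^{\phi_t(E)}(\phi_t(x))\parallel(D\phi_t(x))^{-\mathsf{T}}\nu^E(x)$ for $\hh^{n-1}$-a.e.\ $x$, and $\phi_t$ maps $\hh^{n-1}$-negligible subsets of $\{x_n>0\}$ to $\hh^{n-1}$-negligible sets and conversely. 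Setting $\Sigma_t:=\{x\in\partial^*(\phi_t(E))\setminus H:\nu^{\phi_t(E)}(x)=\pm e_n\}$ and $N_t:=\{x\in\partial^*E\cap\{x_n>0\}:\nu^E(x)\parallel(D\phi_t(x))^{\mathsf{T}}e_n\}$, the previous facts give $\Sigma_t=\phi_t(N_t)$ up to $\hh^{n-1}$-negligible sets, hence $\hh^{n-1}(\Sigma_t)>0$ if and only if $\hh^{n-1}(N_t)>0$.

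It then remains to count the $t$'s for which $\hh^{n-1}(N_t)>0$. A direct computation gives $(D\phi_t(x))^{\mathsf{T}}e_n=v_t(x):=\bigl(-2tx_ne^{-|x'|^2}x',\,1+te^{-|x'|^2}\bigr)\in\R^{n-1}\times\R$. On $\{x'=0\}$ one has $v_t(x)=(1+t)e_n$ for every $t$, but $\{x'=0\}\cap\partial^*E$ is contained in a line, hence $\hh^{n-1}$-negligible because $n\ge2$; so $N_t$ and $N_t\cap\{x'\ne0\}$ differ by a null set. For $x'\ne0$ and $x_n>0$, writing $v_t(x)=(1+te^{-|x'|^2})\,(s(t)x',\,1)$ with $s(t):=-2tx_ne^{-|x'|^2}/(1+te^{-|x'|^2})$, one has $s'(t)=-2x_ne^{-|x'|^2}/(1+te^{-|x'|^2})^2<0$; therefore the lines spanned by $v_t(x)$, $t\ge0$, are pairwise distinct, so $\nu^E(x)\parallel v_t(x)$ for at most one value of $t$. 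Consequently the Borel sets $\bigl\{x\in\partial^*E\cap\{x_n>0\}:x'\ne0,\ \nu^E(x)\parallel v_t(x)\bigr\}$, $t\in[0,+\infty)$, are pairwise disjoint subsets of $\partial^*E\cap\{x_n>0\}$, whose $\hh^{n-1}$-measure is finite because $P(E)<+\infty$; hence at most countably many of them are nonnegligible, and together with the negligible set $\{x'=0\}$ and the equivalence above this proves item (1).

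The only genuine difficulty is the bookkeeping around the change of variables: one must make sure that the transformation rule for the normal and the preservation of $\hh^{n-1}$-negligible sets are legitimately applied on the relative boundary $\partial^*E\cap\{x_n>0\}$, which may be unbounded; I would deal with this by exhausting $\{x_n>0\}$ with the slabs $\{0<x_n<M\}$, on which $\phi_t$ is bi-Lipschitz. All the remaining ingredients — the sign of $s'(t)$ and the disjointness-plus-finiteness counting — are elementary.
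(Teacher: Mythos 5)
Your proof takes essentially the same route as the paper's.  For item (1), both proofs transport the condition $\nu^{\phi_t(E)}=\pm e_n$ back through $\phi_t$ to the equivalent condition $\nu^E(x)\parallel (D\phi_t(x))^{\mathsf T}e_n=(x_n\nabla f_t(x'),f_t(x'))$ on $\partial^*E$, observe that for $x'\ne0$ these directions are pairwise non-parallel as $t$ varies (your strict monotonicity of $s(t)$ is the paper's explicit disjointness computation for $A_r,A_s$ in a different guise), and then use finiteness of $P(E)$ to conclude that at most countably many of the resulting pairwise-disjoint subsets of $\partial^*E$ can carry positive $\hh^{n-1}$-measure.  Your item (2), counting atoms of the pushforward of $\hh^{n-1}\mres\partial^*E$ under $\nu^E$, is the paper's counting argument on the sets $N_k$ restated in the language of atoms; no real difference.

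There is, however, one incorrect step in your treatment of the slice $\{x'=0\}$, where $v_t(x)=(1+t)e_n$ for every $t$.  You assert that $\{x'=0\}\cap\partial^*E$ is $\hh^{n-1}$-negligible ``because $n\ge2$''.  That is correct for $n\ge3$, since $\{x'=0\}$ is then a line of Hausdorff dimension $1<n-1$.  But for $n=2$ the set $\{x'=0\}=\{x_1=0\}$ is a line with locally positive $\hh^1$-measure, and $\hh^1(\partial^*E\cap\{x_1=0\})$ can be strictly positive (e.g.\ when $\partial E$ contains a vertical segment on the $x_2$-axis).  What your argument actually needs, and what is true for all $n\ge2$, is the weaker statement that $\partial^*E\cap\{x'=0\}\cap\{\nu^E=\pm e_n\}$ is $\hh^{n-1}$-negligible.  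For $n=2$ this requires a genuinely different justification: by De Giorgi's structure theorem, at $\hh^{1}$-a.e.\ point $x$ of this set the blow-ups $r^{-1}\hh^{1}\mres\bigl(\tfrac{\partial^*E-x}{r}\bigr)$ converge to $\hh^{1}\mres\{y_2=0\}$, a horizontal line, which is transversal to $\{x_1=0\}$; testing against cut-offs supported near $\{x_1=0\}$ shows $\hh^1\bigl(\partial^*E\cap\{x_1=0\}\cap B_r(x)\bigr)=o(r)$, ruling out $\hh^1$-density one along $\{x_1=0\}$, so the set must be $\hh^1$-null.  This fix leaves the architecture of your proof unchanged.  (For what it is worth, the paper's own proof also defines $A_t$ only on $\{x'\ne0\}$ and then passes directly to the conclusion, so the same point is left implicit there as well.)
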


\begin{proof} We begin by proving (1).
Define $f_t:\R^{n-1}\to \R$ by $ f_t(x') := 1 + t e^{- |x'|^2}$,
and let
\begin{equation*}
\begin{split}
    A_t := \Big\{(x', x_n) \in \partial^*E \cap \R^n \setminus H \; &{\rm with}\; x' \in \R^{n - 1}\setminus \{0\} \st \nu^E(x', x_n) \text{ is proportional to } (x_n \nabla f_t(x'), f_t(x')) \Big\}. 
\end{split}
\end{equation*}
We claim that, if $s$, $r >0$, with $s \neq r$, then $A_r \cap A_s = \emptyset$. 
Indeed, if $A_r \cap A_s \neq \emptyset$, there exist $(\bar{x}', \bar{x}_n) \in \partial^*E \cap \R^n \setminus H $ and $\bar{\alpha} \in \R\setminus\{0\}$ such that \begin{equation*} (\bar{x}_n \nabla f_r(\bar{x}'), f_r(\bar{x}')) = \bar{\alpha} (\bar{x}_n \nabla f_s(\bar{x}'), f_s(\bar{x}')).
\end{equation*}
Since $x_n>0$ and $x'\neq0$, then $\bar{x}_n \nabla f_r(\bar{x}')=\bar{\alpha} \bar{x}_n \nabla f_s(\bar{x}') $ implies $r=\bar\alpha \,s$. Thus $f_r(\bar{x}') = \bar\alpha f_s(\bar{x}')$ implies $1=\bar\alpha$, which in turn implies $r=s$, contradiction.\\
For $k \in \N_{\ge 1}$ let $M_k := \left\{t>0 \st \hh^{n - 1}(A_t) > 1/k\right\}$.
Since $A_r \cap A_s = \emptyset$ for $r\neq s$ and since $P(E)<+\infty$, it follows that $M_k$ is finite for any $k\ge1$, and thus $\left\{t >0 \st \hh^{n - 1}(A_t) > 0\right\} = \bigcup_k M_k$ is countable.

Since the differential of $\varphi_t$ is represented by the matrix
\[
\d \varphi_t (x',x_n)=\begin{pmatrix}
    {\rm Id}_{\R^{n-1}} & 0 \\
    x_n\nabla f_t(x') & f_t(x')
\end{pmatrix},
\]
requiring that $\nu^{\phi_t(E)}(\phi_t(x)) = \pm e_n$ for some $x=(x',x_n)$ means that
\begin{equation*} \nu^E(x', x_n) \text{ is proportional to } (x_n \nabla f_t(x'), f_t(x')), \end{equation*}
see \cite[Proposition 17.1]{MaggiBook} and \cite[Proposition 2.88]{AmbrosioFuscoPallara}.
Therefore for any $t \not \in \cup_k M_k$ there holds
\begin{equation*} \hh^{n - 1}\left(\left\{x \in \partial^*(\phi_t(E)) \setminus H \st \nu^{\phi_t(E)}(x) = \pm e_n \right\}\right)  =0. \end{equation*}

The proof of (2) is analogous, noticing that $N_k := \left\{\nu \in \mathbb{S}^{n - 1} \st \hh^{n - 1}\left(\left\{x \in \partial^*E \st \nu^E = \nu\right\}\right) > 1/k\right\}$ is finite for any $k \in \N_{\ge1}$, hence $\cup_k N_k$ is at most countable.
\end{proof}

\begin{lemma}[Approximation with regular sets]\label{lemma:Density}
Let $E\subset \R^n\setminus H$ be a set of finite perimeter with finite measure. Then there exists a sequence of sets $E_i\subset \R^n \setminus H$ such that
\begin{enumerate}
    \item $E_i$ is a bounded set such that $\partial E_i\setminus\partial H$ is a smooth hypersurface (possibly with smooth boundary) such that either $\partial E_i \cap \partial H=\emptyset$ or $\partial E_i\setminus\partial H$ intersects $\partial H$ orthogonally;

    \item $E_i \to E$ in $L^1$, $P( E_i , \R^n\setminus H) \to P( E , \R^n\setminus H)$, $\hh^{n-1}(\partial E_i \cap \partial H) \to \hh^{n-1}(\partial^* E \cap \partial H)$, as $i\to +\infty$.\\
    In particular, $P(E_i) \to P(E)$ and $P_\lambda(E_i) \to P_\lambda(E)$ as $i\to +\infty$, for any $\lambda \in (-1,1)$;

    \item $\hh^{n-1}(\{x \in \partial^* E_i \st \nu^{E_i}(x)=\pm e_j\}) =0$ for any $j=1,\ldots, n-1$;

    \item $\hh^{n-1}(\{x \in \partial^* E_i \setminus H \st \nu^{E_i}(x)=\pm e_n\}) =0$.    
\end{enumerate}
\end{lemma}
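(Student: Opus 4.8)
The plan is to combine three approximation steps: first make $E$ bounded, then mollify its characteristic function to obtain a smooth boundary, and finally fix the orthogonality at $\partial H$ using the diffeomorphisms $\varphi_t$ from \cref{lemma:predensity}. These steps must be combined with a diagonal argument, and the two countability statements in \cref{lemma:predensity} guarantee that the parameters can be chosen so that items (3) and (4) of the thesis are preserved.

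\textbf{Step 1 (truncation).} First I would intersect $E$ with balls $B_R$ and note that for a.e.\ $R>0$ the set $E\cap B_R$ has finite perimeter with $P(E\cap B_R,\R^n\setminus H)\to P(E,\R^n\setminus H)$ and $\hh^{n-1}(\partial^*(E\cap B_R)\cap\partial H)\to\hh^{n-1}(\partial^*E\cap\partial H)$ as $R\to+\infty$, by the standard coarea/slicing estimates (cf.\ \cite[Lemma 15.12]{MaggiBook}); since $E$ has finite measure we also have $E\cap B_R\to E$ in $L^1$. So from now on we may assume $E$ is bounded.

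\textbf{Step 2 (interior smoothing).} Then I would take a standard mollification $u_\epsilon=\chi_E*\rho_\epsilon$ and, for suitable regular values $s\in(0,1)$ of $u_\epsilon$ (a.e.\ $s$ works by Sard), set $E_\epsilon^s:=\{u_\epsilon>s\}$. These sets are bounded, have smooth boundary, and satisfy $E_\epsilon^s\to E$ in $L^1$ and $P(E_\epsilon^s,A)\to P(E,A)$ for the open set $A:=\{x_n>0\}$ (lower semicontinuity for one inequality, and the mollification estimate $\int|Du_\epsilon|\le|D\chi_E|*\rho_\epsilon$ together with the coarea formula for the other, as in \cite[Theorem 13.8]{MaggiBook}). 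However, at this stage $\partial E_\epsilon^s$ generically meets $\partial H$ non-orthogonally, and the convergence of the wetted area $\hh^{n-1}(\partial E_\epsilon^s\cap\partial H)$ to $\hh^{n-1}(\partial^*E\cap\partial H)$ is not yet clear --- this is the main obstacle and is handled in Step 3. To also arrange (3) and (4) for the approximating sets I would invoke part (2) of \cref{lemma:predensity} applied to $E_\epsilon^s$: a generic rotation of coordinates (close to the identity, so as not to disturb the other properties) removes the countably many bad normal directions; equivalently one checks directly that for a.e.\ $s$ the level set $\{u_\epsilon=s\}$ has no flat pieces orthogonal to a fixed $e_j$, so a small perturbation suffices.

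\textbf{Step 3 (orthogonality at the wall and wetted-area convergence).} The key remaining point is to correct the contact angle while controlling the wetted area. Here I would use the diffeomorphisms $\phi_t(x',x_n)=(x',x_n(1+te^{-|x'|^2}))$ from \cref{lemma:predensity}. The crucial observation is that near $\partial H$ the map $\phi_t$ stretches the $x_n$ direction, so composing with $\phi_t$ for $t$ large forces $\partial(\phi_t(F))\setminus\partial H$ to become more and more vertical near $\partial H$; more precisely, one can choose a $t$-dependent further modification (or simply observe that $\phi_t$ fixes $\partial H$ pointwise while its differential degenerates there in the tangential directions) so that after composition the boundary meets $\partial H$ orthogonally --- or else does not touch $\partial H$ at all. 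The point is to apply this to the smooth sets $E_\epsilon^s$ from Step 2 and to estimate how $P(\cdot,\{x_n>0\})$ and $\hh^{n-1}(\cdot\cap\partial H)$ change. Since $\phi_t$ fixes $\partial H$, one has $\hh^{n-1}(\partial(\phi_t F)\cap\partial H)=\hh^{n-1}(\partial F\cap\partial H)$; and $P(\phi_t F,\{x_n>0\})\to P(F,\{x_n>0\})$ as $t\to0$ by smoothness of $t\mapsto\phi_t$, while the area lost in making the boundary vertical can be quantified by the area formula for $\d\phi_t$, which is bounded and tends to the identity uniformly on $\{0<x_n<\epsilon\}$ as $\epsilon\to0$. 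Finally, using \cref{rem:PlambdaConDivergenza} and the divergence theorem I would note that $\hh^{n-1}(\partial^*F\cap\partial H)=\int_{\partial^*F\setminus H}\Braket{e_n,\nu^F}\de\hh^{n-1}$, so once $P(F_i,\{x_n>0\})\to P(E,\{x_n>0\})$ and $F_i\to E$ in $L^1$ (with the $e_n$-components of the normals converging weakly, which follows from the structure theorem and the strict convergence of perimeters), the wetted-area convergence $\hh^{n-1}(\partial F_i\cap\partial H)\to\hh^{n-1}(\partial^*E\cap\partial H)$ is automatic, hence $P(F_i)\to P(E)$ and $P_\lambda(F_i)\to P_\lambda(E)$ for every $\lambda\in(-1,1)$. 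Items (3) and (4) are preserved under $\phi_t$ for all but countably many $t$ by the very conclusion of \cref{lemma:predensity}. A diagonal argument over the parameters $R$, $\epsilon$, $s$, $t$ then produces the desired sequence $E_i$.
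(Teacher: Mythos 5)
Your overall plan (truncate, smooth in the interior, then fix the contact angle) is reasonable, but Step 3 contains a genuine gap: the diffeomorphism $\phi_t(x',x_n)=(x',x_n(1+te^{-|x'|^2}))$ cannot make the boundary meet $\partial H$ orthogonally. Its differential at a point $(x',0)\in\partial H$ is the diagonal matrix $\mathrm{diag}(\mathrm{Id}_{\R^{n-1}},\,1+te^{-|x'|^2})$, which fixes the tangential directions and merely stretches the normal one; nothing degenerates there, contrary to what you assert. Consequently, if $\partial E_\epsilon^s$ meets $\partial H$ at an angle $\theta\in(0,\pi/2)$, then $\partial(\phi_t(E_\epsilon^s))$ meets $\partial H$ at an angle that tends to $\pi/2$ only as $t\to\infty$, without ever equaling $\pi/2$ for finite $t$; and for $t$ large the perimeter $P(\phi_t(E_\epsilon^s),\R^n\setminus H)$ no longer approximates $P(E,\R^n\setminus H)$. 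The vague ``$t$-dependent further modification'' is precisely the missing content. (Relatedly, mollifying $\chi_E$ directly produces level sets that stick into $H$, so that $E_\epsilon^s\setminus H$ has a genuine corner along $\partial H$ — you never even have a smooth hypersurface to straighten.)

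The clean fix, and what the paper actually does, is a reflection trick that you should make explicit. Let $F$ be the union of $E$ with its reflection across $\{x_n=0\}$; mollify $\chi_F$ with a mollifier that is symmetric in $x_n$ and take a.e.\ superlevel sets $\tilde F_i$. Because the construction is symmetric, $\tilde F_i$ is symmetric across $\{x_n=0\}$, hence at any point of $\partial\tilde F_i\cap\partial H$ the outer normal is invariant under the reflection, so it is orthogonal to $e_n$ — i.e.\ $\partial\tilde F_i$ meets $\partial H$ orthogonally (or misses it entirely). Setting $F_i:=\tilde F_i\setminus H$ gives $P(F_i,\R^n\setminus H)=\tfrac12 P(\tilde F_i)\to\tfrac12 P(F)=P(E,\R^n\setminus H)$, and then the convergence of the wetted area follows either from your divergence-theorem identity or, as in the paper, from Reshetnyak continuity applied to $f(v)=|v|-\lambda\braket{e_n,v}$. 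In this scheme $\phi_t$ has only the narrow role that \cref{lemma:predensity} gives it — eliminating, for a.e.\ $t$, flat boundary pieces with normal $\pm e_n$ (item~(4)), after which small rotations around the $n$-th axis give item~(3) — not correcting the contact angle.
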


\begin{proof}
Since $P(E), |E| <+\infty$, by a diagonal argument we can assume without loss of generality that $E$ is bounded.

\emph{Step 1.} We first construct a sequence $F_i \subset \R^n\setminus H$ such that 1. and 2. hold with $F_i$ in place of $E_i$.
Let us denote by $F$ the union of $E$ with the reflection of $E$ with respect to the hyperplane $\{x_n = 0\}$. There exists a sequence of smooth sets $\tilde F_i\subset \R^n$, symmetric with respect to $\{x_n = 0\}$, such that they converge to $F$ in $L^1(\R^n)$ and $P(\tilde F_i) \to P(F)$ (see, e.g., \cite[Theorem 3.42]{AmbrosioFuscoPallara}). The fact that $\tilde F_i$ is symmetric with respect to $\{x_n = 0\}$ follows as $\tilde F_i$ can be obtained as superlevel set of a convolution of $\chi_F$ with a symmetric mollifier. 
In particular, if $\partial \tilde F_i \cap \partial H \neq \emptyset$, then $\partial \tilde F_i$ intersects $H$ orthogonally, and thus $\partial \tilde F_i \cap \partial H$ is a smooth $(n-2)$-dimensional manifold.
Let $F_i := \tilde F_i \setminus H$.
Then $F_i \to E$ in $L^1$ and \begin{equation*} P(F_i, \R^n \setminus H) = \frac{1}{2} P(\tilde F_i) \to \frac{1}{2} P(F) = P(E, \R^n \setminus H). \end{equation*}
Let us define the function \begin{equation*} \begin{split} & f :  \R^n \to [0, + \infty) \\ & f(v) = |v| - \lambda \Braket{e_n, v}. \end{split} \end{equation*} Note that $f$ is continuous; moreover $f(tv) = t f(v)$ for any $t\ge0 $ and $f$ is convex.
Let us set \begin{equation*} \begin{split} & \mu_i := \nu^{F_i} \hh^{n - 1}\mres (\partial^*F_i \cap (\R^n \setminus H)) \\ & \mu := \nu^E \hh^{n - 1}\mres(\partial^*E \cap (\R^n \setminus H)). \end{split} \end{equation*} 
Since $\nu^{F_i} \hh^{n - 1}\mres \partial^*F_i \to \nu^E \hh^{n - 1}\mres \partial^*E $ weakly$^*$ in $\R^n$, then $\mu_i \to \mu$ weakly$^*$ in $\R^n \setminus H$.
Since we already know that $|\mu_i|(\R^n \setminus H) \to |\mu|(\R^n \setminus H)$, by Reshetnyak continuity theorem (see, e.g., \cite[Theorem 2.39]{AmbrosioFuscoPallara}) we deduce $P_\lambda(F_i) = \int f(\mu_i/|\mu_i|) \de |\mu_i| \to \int f(\mu/|\mu|) \de |\mu| = P_\lambda(E)$.

\emph{Step 2.} Let us consider the flow $\phi_t : \R^n\setminus H \to \R^n\setminus H$ such that \begin{align*} \phi_t(x', x_n) &:= ( x' , x_n (1 + t e^{-|x'|^2}) ), \end{align*}
for $t\ge0$.
By \cref{lemma:predensity} for a.e. $t$ we have that $\phi_t(F_i)$ satisfies (4).
Moreover, for any $t\ge 0$ there exists a sequence of rotations $\mathcal{R}_{j,t}:(x',x_n)\mapsto(R_{j,t}(x'), x_n)$ along the $n$-th axis converging to the identity such that
\begin{equation*}
\mathcal{H}^{n - 1}(\{x \in \partial^*(\mathcal{R}_{j,t}( \phi_t(F_i)) : \nu^{\mathcal{R}_{j,t}(\phi_t(F_i))}(x) = \pm e_l\}) = 0 \end{equation*} for all $l = 1$, $\dots$, $n - 1$. By a diagonal argument, since $\varphi_t(F_i)$ maintains orthogonal intersection with $\partial H$, one extract the desired sequence $E_i$.
\end{proof}

\begin{corollary}\label{remark:positivity}
Let $E\subset \R^n\setminus H$ be a set of finite perimeter with finite measure. Then
\begin{equation*} P_\lambda(E) \ge \frac{1-\lambda}{2}P(E).
\end{equation*}
\end{corollary}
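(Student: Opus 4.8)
The plan is to use the approximation result of \cref{lemma:Density} to reduce to the case of a regular set, for which the geometric meaning of the quantities involved is transparent, and then pass to the limit. So first I would fix a set of finite perimeter $E\subset\R^n\setminus H$ with finite measure and invoke \cref{lemma:Density} to obtain a sequence $E_i\subset\R^n\setminus H$ with $\partial E_i\setminus\partial H$ a smooth hypersurface meeting $\partial H$ orthogonally (when it meets it at all), such that $P(E_i)\to P(E)$, $P_\lambda(E_i)\to P_\lambda(E)$, $P(E_i,\R^n\setminus H)\to P(E,\R^n\setminus H)$ and $\hh^{n-1}(\partial E_i\cap\partial H)\to\hh^{n-1}(\partial^* E\cap\partial H)$. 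Since both $E\mapsto P_\lambda(E)$ and $E\mapsto P(E)$ are continuous along this sequence, it suffices to prove the inequality for each $E_i$; equivalently, it suffices to prove it for bounded sets whose boundary relative to $\R^n\setminus H$ is smooth.

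For such a regular set $F$, write $P(F)=P(F,\{x_n>0\})+\hh^{n-1}(\partial F\cap\partial H)$, i.e. the total perimeter splits as the relative perimeter $A:=P(F,\{x_n>0\})$ plus the area $W:=\hh^{n-1}(\partial F\cap\partial H)$ of the ``wetted'' part. The key observation is that these two pieces are not independent: the portion of $\partial F$ lying in $\{x_n>0\}$ must ``close up'' the wetted disk, so $W$ cannot exceed $A$. Precisely, from \cref{rem:PlambdaConDivergenza} (which gives $P_\lambda(F)=\int_{\partial^* F\setminus H}(1-\lambda\Braket{e_n,\nu^F})\de\hh^{n-1}$) together with the divergence identity $\hh^{n-1}(\partial^* F\cap\partial H)=\int_{\partial^* F\setminus H}\Braket{e_n,\nu^F}\de\hh^{n-1}$ recalled in the same remark, one gets
\begin{equation*}
W=\int_{\partial^* F\setminus H}\Braket{e_n,\nu^F}\de\hh^{n-1}\le\int_{\partial^* F\setminus H}1\de\hh^{n-1}=A,
\end{equation*}
since $\Braket{e_n,\nu^F}\le 1$ pointwise. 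Hence $A\ge W$.

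Now combine: using $P_\lambda(F)=A-\lambda W$ and $P(F)=A+W$, and treating the two sign cases of $\lambda$,
\begin{equation*}
P_\lambda(F)=A-\lambda W\ge A-|\lambda|W\ge \frac{1-|\lambda|}{2}(A+W)=\frac{1-|\lambda|}{2}P(F),
\end{equation*}
where the middle inequality $A-|\lambda|W\ge\tfrac{1-|\lambda|}{2}(A+W)$ rearranges to $\tfrac{1+|\lambda|}{2}A\ge\tfrac{1+|\lambda|}{2}W$, i.e. $A\ge W$, which we just proved. This gives the claim with $\tfrac{1-|\lambda|}{2}\ge\tfrac{1-\lambda}{2}$ on the right-hand side; since $\tfrac{1-\lambda}{2}\le\tfrac{1-|\lambda|}{2}$ when $\lambda\ge0$ and the bound with $1-\lambda$ is the weaker (hence still valid) one, the stated inequality $P_\lambda(E)\ge\tfrac{1-\lambda}{2}P(E)$ follows. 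Finally, pass to the limit $i\to\infty$ in $P_\lambda(E_i)\ge\tfrac{1-\lambda}{2}P(E_i)$ using the convergences from \cref{lemma:Density} to conclude for general $E$.

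I do not expect a genuine obstacle here: the only point requiring a little care is making sure the divergence identity and \cref{rem:PlambdaConDivergenza} are applied with the correct bookkeeping of $\partial^* F\setminus H$ versus $\partial F\cap\{x_n>0\}$ and the trace on $\partial H$ — but both are already established in the excerpt and hold for sets of finite perimeter, so in fact the approximation step is not even strictly necessary and one could argue directly from \cref{rem:PlambdaConDivergenza} for arbitrary $E$. I would nonetheless present the direct argument, as it is cleaner and avoids invoking the approximation lemma.
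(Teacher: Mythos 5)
Your route to the key inequality $A \ge W$ (where $A = P(F,\{x_n>0\})$ and $W = \hh^{n-1}(\partial^* F \cap \partial H)$) is different from the paper's and is correct: you derive it from the divergence identity in \cref{rem:PlambdaConDivergenza} together with the pointwise bound $\Braket{e_n,\nu^F}\le 1$, whereas the paper gets it from the fact that the orthogonal projection onto $\partial H$ is a $1$-Lipschitz surjection from $\partial E_i\setminus\partial H$ onto $\partial E_i\cap\partial H$. Both arguments are sound; yours has the additional virtue, which you correctly note, of holding directly for an arbitrary set of finite perimeter $E$, so the approximation step via \cref{lemma:Density} is not strictly needed, whereas the paper's projection argument is phrased for the smooth approximants $E_i$ and then passed to the limit.

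However, there is a genuine gap in the final algebraic step. After establishing $A\ge W$ you pass through $P_\lambda(F)=A-\lambda W \ge A-|\lambda|W \ge \tfrac{1-|\lambda|}{2}(A+W)$, and this only yields the constant $\tfrac{1-|\lambda|}{2}$. For $\lambda\ge 0$ this equals $\tfrac{1-\lambda}{2}$, but for $\lambda<0$ one has $\tfrac{1-|\lambda|}{2}=\tfrac{1+\lambda}{2}<\tfrac{1-\lambda}{2}$, so the bound you obtain is strictly weaker than the claimed one, and your closing remark (that the stated bound is ``the weaker one'') addresses only the case $\lambda\ge 0$. The step $A-\lambda W\ge A-|\lambda|W$ is the culprit: for $\lambda<0$ it throws away exactly the sign information one needs. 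The fix is to skip the detour through $|\lambda|$ entirely: the desired inequality $A-\lambda W\ge\tfrac{1-\lambda}{2}(A+W)$ rearranges to $\tfrac{1+\lambda}{2}(A-W)\ge 0$, which holds for every $\lambda\in(-1,1)$ by $A\ge W$ and $1+\lambda>0$. This is essentially the content of the paper's decomposition
\begin{equation*}
P_\lambda(E_i) = \frac{1+\lambda}{2}\bigl(P(E_i,\R^n\setminus H)-\hh^{n-1}(\partial^*E_i\cap\partial H)\bigr)+\frac{1-\lambda}{2}P(E_i),
\end{equation*}
where the first summand is manifestly nonnegative once $A\ge W$ is known.
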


\begin{proof}
We can assume that $E$ is as in the conclusion of \cref{lemma:Density}. The claim readily follows from the fact that the orthogonal projection on $\partial H$ is $1$-Lipschitz and surjective from $\partial E \cap (\R^n \setminus H)$ onto $\partial E \cap \partial H$, recalling that $1$-Lipschitz maps do not increase the Hausdorff measure (see, e.g., \cite[Proposition 2.49]{AmbrosioFuscoPallara}).
\end{proof}

\section{Isoperimetric inequality for \texorpdfstring{$P_\lambda$}{}, Fraenkel asymmetry and deficit}\label{sec:IsopIneqAsymmetryDeficit}

\subsection{Isoperimetric inequality via ABP method}

In this section we give a proof of the isoperimetric inequality for the capillarity functional $P_\lambda$ exploiting an ABP method. We start by computing the capillarity perimeter of bubbles $B^\lambda(v)$.

\begin{lemma}\label{lem:ValuePlambda}
There holds $P_\lambda(B^\lambda(v)) = n \lvert B^\lambda\rvert^{\frac1n} v^{\frac{n-1}{n}}$, for any $v\ge0$ and $\lambda\in(-1,1)$.
\end{lemma}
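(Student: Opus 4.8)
The plan is to compute $P_\lambda(B^\lambda)$ directly, i.e. for the reference bubble with $v = |B^\lambda|$, and then upgrade to general $v$ by the scaling behavior of the functional. First I would record the two homogeneities. Since $B^\lambda(v) = \frac{v^{1/n}}{|B^\lambda|^{1/n}}(B^\lambda - \lambda e_n)$ is obtained from $B^\lambda - \lambda e_n$ by a dilation of factor $t := v^{1/n}/|B^\lambda|^{1/n}$, and since $P_\lambda$ scales like an $(n-1)$-dimensional area (both the term $P(\cdot,\{x_n>0\})$ and the term $\mathcal H^{n-1}(\partial^*\cdot \cap \{x_n=0\})$ are $(n-1)$-homogeneous under dilation, and a vertical translation by $-\lambda e_n$ of a set already lying in $\{x_n > \lambda\}$ only moves it rigidly inside the half-space), we get $P_\lambda(B^\lambda(v)) = t^{n-1} P_\lambda(B^\lambda(|B^\lambda|)) = \frac{v^{(n-1)/n}}{|B^\lambda|^{(n-1)/n}} P_\lambda(B^\lambda)$. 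Hence it suffices to show $P_\lambda(B^\lambda) = n|B^\lambda|$.

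For the identity $P_\lambda(B^\lambda) = n|B^\lambda|$ I would use the definition of $P_\lambda(B^\lambda)$ recorded in the list of symbols: $P_\lambda(B^\lambda) = P(B,\{x_n>\lambda\}) - \lambda\,\mathcal H^{n-1}(B \cap \{x_n = \lambda\})$. The first term is the area of the spherical cap $\partial B \cap \{x_n > \lambda\}$, namely $\mathcal H^{n-1}(\mathbb S^{n-1} \cap \{x_n > \lambda\})$; the second is $\lambda$ times the area of the $(n-1)$-ball of radius $\sqrt{1-\lambda^2}$, i.e. $\lambda\,\omega_{n-1}(1-\lambda^2)^{(n-1)/2}$. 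The cleanest route to $P_\lambda(B^\lambda) = n|B^\lambda|$ is the divergence-theorem / cone argument: writing the vector field $X(x) = x$, one has $\operatorname{div} X = n$, so $n|B^\lambda| = \int_{B^\lambda}\operatorname{div} X = \int_{\partial B^\lambda}\langle x, \nu^{B^\lambda}(x)\rangle\,\d\mathcal H^{n-1}$. On the spherical portion $\partial B \cap \{x_n > \lambda\}$ one has $\nu = x$ and $\langle x, \nu\rangle = |x|^2 = 1$, contributing $P(B,\{x_n>\lambda\})$; on the flat portion $B \cap \{x_n=\lambda\}$ the outer normal of $B^\lambda$ is $-e_n$ and $\langle x, -e_n\rangle = -\lambda$, contributing $-\lambda\,\mathcal H^{n-1}(B\cap\{x_n=\lambda\})$. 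Summing gives exactly $P_\lambda(B^\lambda)$, so $P_\lambda(B^\lambda) = n|B^\lambda|$, and combining with the scaling identity yields $P_\lambda(B^\lambda(v)) = n|B^\lambda|^{1/n}v^{(n-1)/n}$.

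Alternatively — and this is presumably closer to the intended proof given that Lemma 1.8 immediately follows Remark 1.7 — one can feed the profile function of $B^\lambda(v)$ into the formula $P_\lambda(E) = (n-1)\omega_{n-1}\int_0^\infty\bigl(\sqrt{1+(f')^2}\,f^{n-2} + \lambda f'f^{n-2}\bigr)\,\d t$. For $B^\lambda(v)$ the profile is (a rescaling of) $f(t) = \sqrt{r^2 - (t+\lambda r)^2}$ on $t \in (0, (1-\lambda)r)$ with $r = v^{1/n}/|B^\lambda|^{1/n}$, so $f f' = -(t+\lambda r)$, $1 + (f')^2 = r^2/f^2$, hence $\sqrt{1+(f')^2}\,f^{n-2} = r f^{n-3}$ and $\lambda f' f^{n-2} = -\lambda(t+\lambda r)f^{n-3}$; the integrand becomes $\bigl(r - \lambda(t+\lambda r)\bigr)f^{n-3}$, and evaluating $\int_0^{(1-\lambda)r}\bigl(r-\lambda(t+\lambda r)\bigr)(r^2-(t+\lambda r)^2)^{(n-3)/2}\,\d t$ via the substitution $s = t + \lambda r$ produces, after recognizing the resulting integral as $n|B^\lambda| r^{n-1}/((n-1)\omega_{n-1})$ (which one checks by the same divergence computation, or by noting $|B^\lambda| = \omega_{n-1}\int_0^{1-\lambda}(1-(s)^2)^{\cdots}$-type formula), the value $n|B^\lambda|^{1/n}v^{(n-1)/n}$. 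The only mild obstacle is bookkeeping the limits of integration and the factor coming from the dilation by $r$; the geometric divergence-theorem argument above avoids this entirely and is the one I would write down.
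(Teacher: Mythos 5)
Your proposal is correct, but it takes a genuinely different route from the paper. The paper first verifies the base case $\lambda=0$ (where $P_0(B^0)=\tfrac12P(B)=n|B^0|$) and then differentiates $P_\lambda(B^\lambda)-n|B^\lambda|$ in $\lambda$, using the profile-function formula from the preceding remark to show the derivative vanishes identically; this requires computing $\tfrac{\d}{\d\lambda}P_\lambda(B^\lambda)$ and $\tfrac{\d}{\d\lambda}|B^\lambda|$ separately via coarea and an integration by parts on the profile $\phi(t)=(1-t^2)^{1/2}$. Your primary argument instead applies the divergence theorem with the position field $X(x)=x$ on $B^\lambda$ directly: $n|B^\lambda|=\int_{\partial B^\lambda}\langle x,\nu\rangle$, the spherical portion contributes $P(B,\{x_n>\lambda\})$ since $\langle x,\nu\rangle=|x|^2=1$ there, and the flat disc $B\cap\{x_n=\lambda\}$ contributes $-\lambda\mathcal H^{n-1}(B\cap\{x_n=\lambda\})$ since $\nu=-e_n$ and $x_n=\lambda$; summing gives exactly the definition of $P_\lambda(B^\lambda)$. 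Your scaling reduction to $v=|B^\lambda|$ matches the paper's first line. The divergence-theorem argument is shorter, avoids the profile-function calculus entirely, and is closer in spirit to the ABP computations used elsewhere in the paper (cf.\ the identity in \cref{rem:PlambdaConDivergenza}); the paper's differential argument has the modest advantage of reusing the machinery it had just set up in \cref{remark:rotational}. Both are valid.
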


\begin{proof}
By scale invariance, it is sufficient to prove that $P(B^\lambda, \{x_n>\lambda\}) - \lambda \hh^{n-1}(\partial B^\lambda \cap \{x_n=\lambda\})$ is equal to $n|B^\lambda|$. Indeed, let $u(x) = \tfrac12 |x|^2$. Then
\begin{equation*}
	\begin{split}
	n|B^\lambda| = \int_{B^\lambda} \Delta u = P(B^\lambda, \{x_n>\lambda\}) + \int_{\partial B^\lambda \cap \{x_n=\lambda\}}
	\Braket{- e_n, x} =  P(B^\lambda, \{x_n>\lambda\}) - \lambda \hh^{n-1}(\partial B^\lambda \cap \{x_n=\lambda\}).
	\end{split}
\end{equation*}
\end{proof}

\begin{remark}\label{remark:regularity}
Let $E\subset \R^n\setminus H$ be a connected open set such that $\partial E \setminus H$ is a smooth hypersurface with boundary that intersects $\partial H$ orthogonally. Then the Neumann problem
\begin{equation}\label{cabsur:3.2bis} \begin{cases}  \Delta u = \frac{P_\lambda(E)}{\lvert E\rvert} \qquad &\text{in} \quad E, \\ 
\frac{\partial u}{\partial \nu} = 1 & \text{on} \quad \partial E\setminus\partial H ,\\ 
\frac{\partial u}{\partial \nu} = - \lambda & \text{on} \quad \partial E\cap\partial H, \end{cases} \end{equation}
has a solution $u \in C^1(\overline{E}) \cap C^\infty(E)$.

Indeed, existence of a weak solution of~\eqref{cabsur:3.2bis} follows by classical arguments exploiting the Riesz representation theorem.
By \cite[Proposition 3.6]{NittkaRegularity} there exists $\gamma > 0$ such that every weak solution is in $C^{0, \gamma}(E)$.
Hence we can apply \cite[Theorem 1]{LiebermanOblique} to the equivalent problem
\begin{equation*} \begin{cases}  \Delta u -  u = \frac{P_\lambda(E)}{\lvert E\rvert} -  u =:f \qquad &\text{in} \quad E, \\ 
\frac{\partial u}{\partial \nu} = 1 & \text{on} \quad \partial E\setminus\partial H ,\\ 
\frac{\partial u}{\partial \nu} = - \lambda & \text{on} \quad \partial E\cap\partial H \end{cases} \end{equation*}
getting that a weak solution is in fact $C^1(\overline{E})$.
\end{remark}

\begin{theorem}[Isoperimetric inequality for $P_\lambda$]\label{thm:IsopIneq}
Let $E \subset \R^n \setminus H$ be a set of finite perimeter with $|E|\in(0,+\infty)$. Then
\begin{equation}\label{eq:IsoperimetricaLambda}
\frac{P_\lambda(E)}{|E|^{\frac{n - 1}{n}}} \ge \frac{P_\lambda(B^\lambda)}{|B^\lambda|^{\frac{n - 1}{n}}} = n |B^\lambda|^{\frac1n}.
\end{equation} 
Moreover, equality occurs in~\eqref{eq:IsoperimetricaLambda} if and only if $E = B^\lambda(|E|)$ up to a translation and up to negligible sets.
\end{theorem}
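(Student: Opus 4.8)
The plan is to adapt the Cabré ABP scheme to the capillarity setting, using the weighted Neumann problem \eqref{cabsur:3.2bis} in place of \eqref{eq:IntroProbABP}. First I would reduce to a nice competitor: by \cref{lemma:Density} it suffices to prove \eqref{eq:IsoperimetricaLambda} for sets $E$ that are bounded with $\partial E\setminus\partial H$ a smooth hypersurface meeting $\partial H$ orthogonally, and by splitting into connected components (and using that the map $v\mapsto v^{(n-1)/n}$ is subadditive while $P_\lambda$ is additive on disjoint pieces) I would further assume $E$ is connected and open. Then invoke \cref{remark:regularity} to get $u\in C^1(\overline E)\cap C^\infty(E)$ solving \eqref{cabsur:3.2bis}. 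The core claim is that the \emph{contact set} $E':=\{x\in E : \nabla^2 u(x)\ge 0,\ \nabla u(x)\in B^\lambda\}$ satisfies $\nabla u(E')\supset B^\lambda$. To see this, fix $p\in B^\lambda$ and minimize $x\mapsto u(x)-\langle p,x\rangle$ over $\overline E$; if the minimum is attained at an interior point $x_0$ then $\nabla u(x_0)=p$ and $\nabla^2u(x_0)\ge0$, so $x_0\in E'$. The point is that the minimum \emph{cannot} be attained on $\partial E$: on $\partial E\setminus\partial H$ the outward normal derivative of $u-\langle p,x\rangle$ is $1-\langle p,\nu^E\rangle > 0$ since $|p|<1$ (as $p\in B^\lambda\subset B_1$), and on $\partial E\cap\partial H$ it is $-\lambda-\langle p, -e_n\rangle=\langle p,e_n\rangle-\lambda>0$ precisely because $p\in B^\lambda$ means $\langle p,e_n\rangle>\lambda$ — wait, one must be careful: $B^\lambda(|B^\lambda|,0)=B^\lambda-\lambda e_n$, so the relevant normalization is $\langle p,e_n\rangle>0$ after the translation; I would set up the problem so that the gradient image to be covered is exactly $B^\lambda-\lambda e_n$, i.e. test against $p+\lambda e_n$ with $p+\lambda e_n\in B^\lambda$, making the boundary derivative on $\partial H$ equal to $\langle p+\lambda e_n,e_n\rangle-\lambda=\langle p,e_n\rangle>0$ for interior $p$, hence no boundary minimum. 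This gives $\nabla u(E')\supset B^\lambda-\lambda e_n$, a set of measure $|B^\lambda|$.

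Next comes the measure-theoretic estimate. On $E'$, $\nabla u$ is the gradient of the convex function (locally) and $0\le\nabla^2 u$, so by the area formula and the arithmetic–geometric mean inequality
\begin{equation*}
|B^\lambda| = |B^\lambda-\lambda e_n| \le \int_{E'}\det\nabla^2 u\,\d x \le \int_{E'}\left(\frac{\Delta u}{n}\right)^n\d x \le \int_E\left(\frac{\Delta u}{n}\right)^n\d x = \frac{1}{n^n}\left(\frac{P_\lambda(E)}{|E|}\right)^n|E|,
\end{equation*}
which rearranges to $P_\lambda(E)\ge n|B^\lambda|^{1/n}|E|^{(n-1)/n}$, and the equality $P_\lambda(B^\lambda)/|B^\lambda|^{(n-1)/n}=n|B^\lambda|^{1/n}$ is exactly \cref{lem:ValuePlambda}. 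Here one must handle the usual technical point that $u$ need not be $C^2$ up to the contact set — this is dealt with, as in \cite{CabreIsopPubblicata, CGPROS}, by working with the convex envelope of (a suitable extension of) $u$, or by an approximation/Alexandrov-type argument; I would cite the relevant lemma rather than redo it.

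Finally, the rigidity statement. If equality holds, then tracing back through the chain of inequalities all of them must be equalities: the AM–GM step forces $\nabla^2 u=(\Delta u/n)\,\mathrm{Id}$ a.e. on $E'$, the inclusion $E'\subset E$ must be an equality up to null sets with $\Delta u$ constant, the divergence-theorem computation of $P_\lambda$ must be tight, and the boundary minimization must degenerate in a controlled way. From $\nabla^2 u=c\,\mathrm{Id}$ on a full-measure set and connectedness one gets $u(x)=\tfrac{c}{2}|x-x_0|^2+\text{const}$, whence $E'=\nabla u^{-1}(B^\lambda-\lambda e_n)=x_0+\tfrac1c(B^\lambda-\lambda e_n)$ is (a rescaled, translated) optimal bubble; matching volumes gives $E=B^\lambda(|E|)$ up to translation, and the constraint $E\subset\{x_n>0\}$ with the orthogonality condition pins the translation to lie in $\{x_n=0\}$. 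For a general (non-smooth) set of finite perimeter attaining equality one passes through the approximation of \cref{lemma:Density} together with lower semicontinuity, or invokes the known characterization of equality cases; I expect the rigidity part, and specifically making the ABP equality-case analysis rigorous without $C^2$ regularity, to be the main obstacle — the inequality itself is a fairly direct transcription of the Cabré argument once the sign of the boundary derivatives on the two pieces of $\partial E$ is checked against the defining inequality $\langle x,e_n\rangle>\lambda$ of $B^\lambda$.
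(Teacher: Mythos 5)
Your proof of the inequality follows essentially the same ABP route as the paper: reduce to bounded, connected, smooth sets meeting $\partial H$ orthogonally via \cref{lemma:Density}, solve \eqref{cabsur:3.2bis}, show the gradient image of the lower contact set covers $B^\lambda$, and close with the area formula plus AM--GM. Two remarks on points where you over-complicated or second-guessed a correct computation, and one on scope.

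First, the mid-paragraph ``wait, one must be careful'' detour is unnecessary and, as written, yields the wrong identification of the covered set. Your \emph{initial} check was already the right one: for $p\in B^\lambda$ (so $\langle p,e_n\rangle>\lambda$), a boundary minimum on $\partial E\cap\partial H$ would force $\partial u/\partial\nu=-\lambda\le\langle p,-e_n\rangle<-\lambda$, a contradiction. There is no need to replace $B^\lambda$ by $B^\lambda-\lambda e_n$, and in fact your replacement is internally inconsistent: if you test $u(y)-\langle q,y\rangle$ with $q=p+\lambda e_n\in B^\lambda$, an interior minimum $x_0$ satisfies $\nabla u(x_0)=q$, so the conclusion is $\nabla u(E')\supset B^\lambda$, not $\nabla u(E')\supset B^\lambda-\lambda e_n$. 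The measure is the same, so the final estimate is unaffected, but you should simply keep the clean target $B^\lambda$; the defining inequality $\langle x,e_n\rangle>\lambda$ of $B^\lambda$ is precisely what makes the boundary check on $\partial H$ work.

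Second, the worry about $u$ not being $C^2$ on the contact set, and the proposed fix via convex envelopes or Alexandrov-type arguments, is moot here. The boundary-minimum exclusion shows the contact set $\Gamma_u$ is contained in the \emph{open} set $E$, where $u\in C^\infty$ by \cref{remark:regularity}. So the area formula applies directly, with no envelope needed. Third, on rigidity: the paper does \emph{not} characterize equality via the ABP argument — it explicitly proves only the inequality and cites \cite[Theorem 19.21]{MaggiBook} for the equality case. Your attempt to extract rigidity from equality in the chain of inequalities is more ambitious than what is done here, and you are right to flag it as the main obstacle (in particular, ``$\Gamma_u=E$ up to null sets with $\nabla^2u=c\,\mathrm{Id}$'' requires care to justify, and does not obviously survive the passage through the approximation of \cref{lemma:Density}); the paper sidesteps all of that by invoking the known characterization.
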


\begin{proof}
We just give a proof of the inequality \eqref{eq:IsoperimetricaLambda} here, referring to \cite[Theorem 19.21]{MaggiBook} for an alternative proof comprising the characterization of minimizers.

By the standard isoperimetric inequality, we can assume that $\hh^{n-1}(\partial^* E \cap \partial H)>0$. By \cref{lemma:Density}, we can further assume that $E$ is a bounded set such that $\partial E\setminus\partial H$ is smooth and intersects $\partial H$ orthogonally.

Let us further assume for the moment that $E$ is connected.
Let $u$ be the solution of the Neumann problem \begin{equation}\label{cabsur:3.2} \begin{cases}  \Delta u = \frac{P_\lambda(E)}{\lvert E\rvert} \qquad &\text{in} \quad E, \\
\frac{\partial u}{\partial \nu} = 1 & \text{on} \quad \partial E\setminus\partial H, \\ 
\frac{\partial u}{\partial \nu} = - \lambda & \text{on} \quad \partial E\cap\partial H, \end{cases} \end{equation}
where $\partial u /\partial \nu$ denotes the outer normal derivative of $u$ on $\partial E$. 
Observe that such a solution exists and $u \in C^1(\overline{E}) \cap C^\infty(E)$ (see \cref{remark:regularity}).
We consider the ``lower contact set'' of $u$ defined by 
\begin{equation}\label{cabsur:3.3} \Gamma_u := \left\{x \in E: u(y) \ge u(x) + \Braket{\nabla u(x), y - x} \; \text{for all} \; y \in \overline{E}\right\}. \end{equation} We claim that \begin{equation}\label{cabsur:3.4} 
B^\lambda \subset \nabla u(\Gamma_u). \end{equation}
To show~\eqref{cabsur:3.4}, take any $p \in B^\lambda$. Let $x \in \overline{E}$ be a point such that
\begin{equation*} \min_{y \in \overline{E}} \left\{u(y) - \Braket{p, y}\right\} = u(x) - \Braket{p, x}. \end{equation*}
If $x \in \partial E \setminus \partial H$ then the exterior normal derivative of $u(y) - \Braket{p, y}$ at $x$ would be nonpositive and hence $(\partial u /\partial \nu)(x) \le \lvert p\rvert < 1$, a contradiction with~\eqref{cabsur:3.2}. Similarly, if $x \in \partial E \cap \partial H$ then $(\partial u /\partial \nu)(x) \le \Braket{p, -e_n} < -\lambda$, a contradiction with~\eqref{cabsur:3.2}. It follows that $x \in E$ and, therefore, that $x$ is an interior minimum of the function $u(y) - \Braket{p, y}$ over $\overline{E}$. In particular $p = \nabla u(x)$ and $x \in \Gamma_u$, hence Claim~\eqref{cabsur:3.4} is now proved.

From~\eqref{cabsur:3.4}, since $u \in C^\infty(E)$ and $\Gamma_u\subset E$, we can apply the area formula on $\nabla u$ to deduce \begin{equation}\label{cabsur:3.5} \lvert B^\lambda\rvert \le \lvert \nabla u (\Gamma_u)\rvert = \int_{\nabla u (\Gamma_u)} \de p \le \int_{\Gamma_u} |\text{det} \, \nabla^2u(x)| \de x.
\end{equation}
Since points $x\in \Gamma_u$ are interior minima for $y \mapsto u(y)-\Braket{\nabla u(x), y}$, then $\nabla^2u(x)$ is positively semi-definite. Hence by the arithmetic-geometric mean inequality
 \begin{equation*}\label{cabsur:3.6} |\text{det}\, \nabla^2u|=\text{det}\, \nabla^2u \le \left(\frac{\Delta u}{n}\right)^n \qquad \text{in } \Gamma_u.
\end{equation*}
Hence
\[
|B^\lambda| \le \int_{\Gamma_u}  \text{det} \, \nabla^2u \de x \le \int_{\Gamma_u} \left(\frac{\Delta u}{n}\right)^n \de x \le \int_E \left(\frac{\Delta u}{n}\right)^n \de x,
\]
since $\Delta u \equiv P_\lambda(E)/|E|$.
Plugging in the value of $\Delta u$, the claimed inequality follows.\\
It remains to consider the case when $E$ is not connected, hence when $E$ is a disjoint union of finitely many bounded sets $E_i$, for $i=1,\ldots, k$, such that $\partial E_i \setminus \partial H$ is smooth and intersects $\partial H$ orthogonally. 
Summing over $i=1,\ldots, k$ the isoperimetric inequality that we just proved for $P_\lambda$ on each component $E_i$, and exploiting the subadditivity of $t\mapsto t^{\frac{n-1}{n}}$, the final inequality follows.
\end{proof}

\subsection{Asymmetry and deficit}

We now define the Fraenkel asymmetry with respect to optimal bubbles $B^\lambda(v,x)$ and the deficit corresponding to the functional $P_\lambda$, proving some preliminary properties on these quantities.

\begin{definition}[Fraenkel asymmetry]\label{def:Asymmetry}
Let $E \subset \R^n \setminus H$ be a Borel set with measure $|E|=v\in(0,+\infty)$. We define
\begin{equation*} \alpha_\lambda(E) := \inf\left\{\frac{|E \Delta B^\lambda(v,x)|}{v} : x \in \{x_n=0\}\right\}. \end{equation*}
It is readily checked that the Fraenkel asymmetry of $E$ is a minimum. 
\end{definition}

\begin{definition}[Isoperimetric deficit] 
Let $E \subset \R^n \setminus H$ be a Borel set with measure $|E|=v\in(0,+\infty)$. We define
\begin{equation*} D_\lambda(E) := \frac{P_\lambda(E) - P_\lambda(B^\lambda(v))}{P_\lambda(B^\lambda(v))}. \end{equation*}
\end{definition}

\begin{lemma}\label{lemma:DeficitPiccolo}
    There exists $\bar c=\bar c(n,\lambda)>0$ such that if a Borel set $E\subset \R^n\setminus H$ satisfies $D_\lambda(E) < \bar c$ then $P(E,\partial H)>0$.
\end{lemma}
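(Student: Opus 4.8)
The statement to prove is \cref{lemma:DeficitPiccolo}: there is $\bar c = \bar c(n,\lambda) > 0$ such that $D_\lambda(E) < \bar c$ forces $P(E, \partial H) > 0$, i.e. the set $E$ must genuinely touch the wall $\{x_n = 0\}$. The natural strategy is contrapositive: suppose $P(E, \partial H) = 0$, meaning $\hh^{n-1}(\partial^* E \cap \partial H) = 0$, and show that $D_\lambda(E)$ is bounded below by a positive constant depending only on $n$ and $\lambda$. When the wetted part of the boundary is null, the capillarity functional reduces to the ordinary perimeter: $P_\lambda(E) = P(E, \{x_n > 0\}) = P(E)$ (the last equality because a set of finite perimeter contained in $\{x_n \ge 0\}$ with $\hh^{n-1}(\partial^* E \cap \partial H) = 0$ satisfies $P(E) = P(E, \{x_n>0\})$). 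So for such $E$ we have $P_\lambda(E) = P(E) \ge n\omega_n^{1/n} |E|^{(n-1)/n}$ by the classical isoperimetric inequality \eqref{classical:isopineq}.

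Now I compare this with $P_\lambda(B^\lambda(v)) = n|B^\lambda|^{1/n} v^{(n-1)/n}$ from \cref{lem:ValuePlambda}, where $v = |E|$. We get
\[
D_\lambda(E) = \frac{P_\lambda(E)}{P_\lambda(B^\lambda(v))} - 1 \ge \frac{n\omega_n^{1/n} v^{(n-1)/n}}{n|B^\lambda|^{1/n} v^{(n-1)/n}} - 1 = \left(\frac{\omega_n}{|B^\lambda|}\right)^{1/n} - 1.
\]
The key point is the strict inequality $|B^\lambda| < \omega_n$: the set $B^\lambda = \{x \in B_1(0) : \langle x, e_n\rangle > \lambda\}$ is a proper spherical cap strictly contained in the unit ball $B$ whenever $\lambda \in (-1,1)$, so $|B^\lambda| < |B| = \omega_n$. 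Therefore $\left(\omega_n/|B^\lambda|\right)^{1/n} - 1 =: \bar c(n,\lambda) > 0$, and setting this quantity as $\bar c$ completes the argument: if $D_\lambda(E) < \bar c$, then necessarily $P(E, \partial H) > 0$.

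The only genuinely delicate point is justifying $P(E) = P(E, \{x_n > 0\})$ when $\hh^{n-1}(\partial^* E \cap \partial H) = 0$ for $E \subset \{x_n > 0\}$; this is standard — one writes $P(E, \R^n) = P(E, \{x_n>0\}) + P(E, \partial H) + P(E, \{x_n < 0\})$, the last term vanishes since $E \subset \{x_n \ge 0\}$, and $P(E, \partial H) = \hh^{n-1}(\partial^* E \cap \partial H) = 0$ by hypothesis. (Equivalently one may invoke \cref{rem:PlambdaConDivergenza}, which gives $P_\lambda(E) = \int_{\partial^* E \setminus H} 1 - \lambda\langle e_n, \nu^E\rangle \de \hh^{n-1}$, together with $\hh^{n-1}(\partial^* E \cap \partial H) = 0$ to identify $P_\lambda(E)$ with $P(E)$.) Everything else is an immediate consequence of the classical isoperimetric inequality \eqref{classical:isopineq}, the explicit value in \cref{lem:ValuePlambda}, and the strict set inclusion $B^\lambda \subsetneq B$; no further technical machinery is needed.
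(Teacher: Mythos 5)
Your proof is correct and essentially identical to the paper's: both invoke the classical isoperimetric inequality, the explicit value of $P_\lambda(B^\lambda(v))$ from \cref{lem:ValuePlambda}, and the strict inequality $|B^\lambda| < \omega_n$, differing only in that you argue by contrapositive and name the explicit constant $\bar c = (\omega_n/|B^\lambda|)^{1/n} - 1$, while the paper runs the same chain of inequalities as a proof by contradiction with $\bar c$ left implicit.
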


\begin{proof}
    If $E$ is a Borel set such that $D_\lambda(E) < \bar c$ and $P (E, \partial H) = 0$, then the standard isoperimetric inequality together with \cref{lem:ValuePlambda} imply
    \begin{equation*}
    \begin{split} n \omega_n^{\frac{1}{n}} |E|^{\frac{n - 1}{n}} & \le P(E)  = P_\lambda(E)  < (1 + \bar c) P_\lambda(B^\lambda(|E|)) = n |B^\lambda|^{\frac{1}{n}} (1 + \bar c) |E|^{\frac{n - 1}{n}}. \end{split} \end{equation*}
    Since $|E|>0$ for $\bar c$ small enough, we get a contradiction if $\bar c$ is sufficiently small.
\end{proof}

\begin{lemma}\label{lemma:LowerSemicontinuity}
If $\{E_i\}_{i \in \N}$ and $E$ are sets of finite perimeter in $\R^n \setminus H$ with finite measure such that $E_i\to E$ in $L^1_{\rm loc}$.
Then \begin{equation*} \liminf_{i\to+\infty} P_\lambda(E_i) \ge P_\lambda(E),\qquad \liminf_{i\to+\infty} D_\lambda(E_i) \ge D_\lambda(E). \end{equation*}
\end{lemma}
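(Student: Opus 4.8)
The statement to prove is the lower semicontinuity of $P_\lambda$ and of $D_\lambda$ under $L^1_{\mathrm{loc}}$ convergence. The plan is to reduce both assertions to the lower semicontinuity of the \emph{relative} perimeter $P(\cdot,\{x_n>0\})$ in the open set $\R^n\setminus H$, combined with a careful treatment of the term $\hh^{n-1}(\partial^*E\cap\partial H)$. The key identity is the rewriting of $P_\lambda$ from \cref{remark:positivity}, namely
\[
P_\lambda(E)=\frac{1+\lambda}{2}\bigl(P(E,\R^n\setminus H)-\hh^{n-1}(\partial^*E\cap\partial H)\bigr)+\frac{1-\lambda}{2}P(E),
\]
together with the divergence-theorem identity from \cref{rem:PlambdaConDivergenza}, which gives $\hh^{n-1}(\partial^*E\cap\partial H)=\int_{\partial^*E\setminus H}\Braket{e_n,\nu^E}\de\hh^{n-1}$. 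Substituting this into $P_\lambda(E)=\int_{\partial^*E\setminus H}(1-\lambda\Braket{e_n,\nu^E})\de\hh^{n-1}$ shows that $P_\lambda(E)$ is an integral over the \emph{relative} boundary $\partial^*E\cap\{x_n>0\}$ of the continuous, one-homogeneous, convex integrand $f(\nu)=|\nu|-\lambda\Braket{e_n,\nu}$ applied to $\nu^E$. Since $|\lambda|<1$ this integrand is comparable to $|\nu|$, so $P_\lambda$ is (up to the boundary term being absorbed) an anisotropic relative perimeter on the open set $\R^n\setminus H$.

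\textbf{Main steps.} First I would observe that $D\chi_{E_i}\mres(\R^n\setminus H)\rightharpoonup D\chi_E\mres(\R^n\setminus H)$ weakly$^*$ as $\R^n$-valued Radon measures on the open set $\R^n\setminus H$; this is the standard consequence of $L^1_{\mathrm{loc}}$ convergence of characteristic functions (test against $T\in C^1_c(\R^n\setminus H;\R^n)$ and integrate by parts, exactly as in the definition \eqref{eq:DefPerimetro}). Second, I would apply the Reshetnyak-type lower semicontinuity theorem for the functional $\mu\mapsto\int f(\de\mu/\de|\mu|)\de|\mu|$ associated to the convex one-homogeneous integrand $f(v)=|v|-\lambda\Braket{e_n,v}$ (see, e.g., \cite[Theorem 2.38]{AmbrosioFuscoPallara}), applied on the \emph{open set} $\R^n\setminus H$: this yields directly
\[
\liminf_{i\to\infty}\int_{\partial^*E_i\cap\{x_n>0\}}f(\nu^{E_i})\de\hh^{n-1}\ \ge\ \int_{\partial^*E\cap\{x_n>0\}}f(\nu^{E})\de\hh^{n-1},
\]
which is precisely $\liminf_i P_\lambda(E_i)\ge P_\lambda(E)$ once I invoke \cref{rem:PlambdaConDivergenza} to identify both sides with $P_\lambda$. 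Third, the statement for $D_\lambda$ is then immediate: since $E_i\to E$ in $L^1_{\mathrm{loc}}$ and all sets have finite (hence, after noting the energy bound, uniformly bounded — or at least with $|E_i|\to|E|$ on bounded pieces) measure, one has $|E_i|\to|E|=:v$, and $P_\lambda(B^\lambda(v))=n|B^\lambda|^{1/n}v^{(n-1)/n}$ by \cref{lem:ValuePlambda} depends continuously on $v$; therefore
\[
\liminf_{i\to\infty}D_\lambda(E_i)=\liminf_{i\to\infty}\frac{P_\lambda(E_i)-P_\lambda(B^\lambda(|E_i|))}{P_\lambda(B^\lambda(|E_i|))}\ \ge\ \frac{P_\lambda(E)-P_\lambda(B^\lambda(v))}{P_\lambda(B^\lambda(v))}=D_\lambda(E).
\]

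\textbf{The delicate point.} The only genuine subtlety is that $L^1_{\mathrm{loc}}$ convergence does \emph{not} by itself control the measures $|E_i|$, so one must either assume or extract that $|E_i|\to|E|$; the hypothesis "finite measure" suggests the intended reading is that this holds (or that the sets live in a fixed bounded region, as is the case in all applications in the paper), and I would state the needed convergence $|E_i|\to|E|$ explicitly at the start. A second, more technical, point is making sure the boundary term $\hh^{n-1}(\partial^*E\cap\partial H)$ is handled correctly: it is crucial that one works with the \emph{relative} perimeter in the \emph{open} half-space and never needs lower semicontinuity of the trace term $\hh^{n-1}(\partial^*\cdot\cap\partial H)$ in isolation (which can jump up or down under $L^1_{\mathrm{loc}}$ convergence) — the whole point of the rewriting via \cref{rem:PlambdaConDivergenza} is that $P_\lambda$ \emph{is} intrinsically a relative anisotropic perimeter and the boundary term disappears from the picture. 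With that in mind the proof is short: weak$^*$ convergence of gradients restricted to $\R^n\setminus H$, plus Reshetnyak lower semicontinuity for $f$, plus continuity of $v\mapsto P_\lambda(B^\lambda(v))$.
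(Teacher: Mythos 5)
Your proposal is correct and follows essentially the same route as the paper: rewrite $P_\lambda(E)=\int_{\partial^*E\setminus H}f(\nu^E)\,\de\hh^{n-1}$ with $f(v)=|v|-\lambda\Braket{e_n,v}$ via \cref{rem:PlambdaConDivergenza}, pass from $L^1_{\rm loc}$ convergence to weak$^*$ convergence of $D\chi_{E_i}\mres(\R^n\setminus H)$ (along a subsequence realizing the liminf, so that by \cref{remark:positivity} the total variations are bounded), and apply Reshetnyak's lower semicontinuity theorem; this is exactly what the paper's one-line proof invokes. You are also right that the deficit assertion requires $|E_i|\to|E|$, which $L^1_{\rm loc}$ convergence alone does not give (e.g., $E_i=E\cup B^\lambda(v_i,x_i)$ with $v_i\to\infty$ and $|x_i|\to\infty$ makes $D_\lambda(E_i)\to0$ while $E_i\to E$ in $L^1_{\rm loc}$); the paper's proof is silent on this, but in all its applications the sets are uniformly bounded with fixed measure, so the hypothesis you would add is automatically satisfied there.
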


\begin{proof}
Exploiting \cref{rem:PlambdaConDivergenza} and Reshetnyak lower semicontinuity theorem (see, e.g., \cite[Theorem 2.38]{AmbrosioFuscoPallara}), one easily checks that $P_\lambda$ is lower semicontinuous with respect to $L^1_{\rm loc}$ convergence (see the proof of \cref{lemma:Density}).
\end{proof}

\begin{lemma}\label{lem:ContinuityAlpha}
If $\{E_i\}_{i \in \N}$ and $E$ are sets of finite perimeter in $\R^n \setminus H$ with finite measure such that $|E|>0$ and such that $E_i \to E$ in $L^1(\R^n\setminus H)$.
Then \begin{equation*} \lim_{i\to+\infty}\alpha_\lambda(E_i) = \alpha_\lambda(E) 
\end{equation*}
\end{lemma}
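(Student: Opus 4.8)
The plan is to reduce the statement to the continuity under $L^1$ convergence of the auxiliary quantity
\[
m(F) := \max\left\{ |F\cap B^\lambda(|F|,x)| \st x \in \{x_n=0\} \right\},
\]
which is well defined since, for a set $F$ of finite measure, the map $x\mapsto |F\cap B^\lambda(|F|,x)|$ is continuous and vanishes as $|x|\to+\infty$. Writing $v_i:=|E_i|$ and $v:=|E|$, the $L^1$ convergence gives $v_i\to v>0$, so $v_i\in[v/2,2v]$ for $i$ large. Since $|B^\lambda(w,x)|=w$, one has $|F\,\Delta\,B^\lambda(|F|,x)|=2|F|-2|F\cap B^\lambda(|F|,x)|$, hence $\alpha_\lambda(F)=2-2m(F)/|F|$; as $v_i\to v$, it therefore suffices to prove $m(E_i)\to m(E)$.

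Two elementary facts will be used repeatedly. First, for measurable sets one has $\big||F\cap A|-|F'\cap A'|\big|\le |F\,\Delta\,F'|+|A\,\Delta\,A'|$. Second, $|B^\lambda(w,x)\,\Delta\,B^\lambda(v,x)|$ is independent of $x$ by translation invariance and tends to $0$ as $w\to v$ (by the explicit scaling formula defining $B^\lambda(v)$, together with dominated convergence applied to characteristic functions), while $|B^\lambda(v,y)\,\Delta\,B^\lambda(v,x)|\to0$ as $y\to x$ by $L^1$-continuity of translations. Combining these, for any $x_i\to x_\infty$ in $\{x_n=0\}$ one obtains $|E_i\cap B^\lambda(v_i,x_i)|\to|E\cap B^\lambda(v,x_\infty)|$, and similarly $|E_i\cap B^\lambda(v_i,x)|\to|E\cap B^\lambda(v,x)|$ for fixed $x$.

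For the lower bound, let $\bar x$ realize $m(E)=|E\cap B^\lambda(v,\bar x)|$; then $m(E_i)\ge|E_i\cap B^\lambda(v_i,\bar x)|\to m(E)$, so $\liminf_i m(E_i)\ge m(E)$. For the upper bound, pick $x_i$ with $m(E_i)=|E_i\cap B^\lambda(v_i,x_i)|$ and pass to a subsequence realizing $\limsup_i m(E_i)$. If along a further subsequence $x_i\to x_\infty$, then $m(E_i)\to|E\cap B^\lambda(v,x_\infty)|\le m(E)$. Otherwise $|x_i|\to+\infty$, and here I use tightness of $\{E_i\}$: given $\epsilon>0$ choose $R$ with $|E\setminus B_R|<\epsilon/2$ and note $|E_i\setminus B_R|\le|E_i\,\Delta\,E|+|E\setminus B_R|<\epsilon$ for $i$ large; since $v_i\le 2v$, the set $B^\lambda(v_i,x_i)$ lies in a ball of fixed radius $\rho=\rho(n,\lambda,v)$ about $x_i$, so $B^\lambda(v_i,x_i)\cap B_R=\emptyset$ once $|x_i|>R+\rho$, giving $m(E_i)\le|E_i\setminus B_R|<\epsilon$. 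Thus $\limsup_i m(E_i)\le\epsilon$ for every $\epsilon>0$, i.e. $\limsup_i m(E_i)=0\le m(E)$. In all cases $\limsup_i m(E_i)\le m(E)$, hence $m(E_i)\to m(E)$ and therefore $\alpha_\lambda(E_i)=2-2m(E_i)/v_i\to 2-2m(E)/v=\alpha_\lambda(E)$.

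The only genuinely delicate point is the dichotomy in the last paragraph — controlling what happens when the optimal translations $x_i$ escape to infinity — which is handled by the finiteness of $|E|$ (yielding tightness of $\{E_i\}$) together with the uniform diameter bound on the competitor bubbles; the remaining steps are routine measure-theoretic estimates based on the two elementary facts above.
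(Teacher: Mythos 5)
Your proof is correct and follows essentially the same route as the paper's: establish $\liminf$ and $\limsup$ bounds by comparing with competitor caps, and treat the dichotomy between the optimal translates $x_i$ converging along a subsequence and escaping to infinity, using tightness of $\{E_i\}$ (inherited from $|E|<\infty$) in the escaping case. The one presentational difference is that you first pass to the auxiliary quantity $m(F)=\max_x|F\cap B^\lambda(|F|,x)|$ via the identity $\alpha_\lambda(F)=2-2m(F)/|F|$, and then prove continuity of $m$. This neatly absorbs the paper's separate preliminary case $\alpha_\lambda(E)=2$ into the main argument (it becomes $m(E)=0$ and is handled by the same $\limsup$ estimate), whereas the paper treats it as a standalone case before assuming $\alpha_\lambda(E)<2$ to argue that the optimal centers $\{x_i\}$ stay bounded. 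Both approaches rest on the same two elementary facts — $\bigl||F\cap A|-|F'\cap A'|\bigr|\le|F\Delta F'|+|A\Delta A'|$ and $L^1$-continuity of $B^\lambda(w,x)$ in $(w,x)$ — so the gain from your reformulation is a modest streamlining rather than a different method.
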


\begin{proof}
The claim is a standard exercise that follows from the fact that the $L^1$-convergence holds on the whole $\R^n\setminus H$.
\end{proof}

\begin{corollary}\label{lemma:precompattezza}
If $\{E_i\}_{i \in \N}$ are sets of finite perimeter in $\R^n\setminus H$ such that $|E_i \setminus K|=0$ for any $i$ for some compact set $K\subset \R^n$, and if
\begin{equation*} \begin{split} & \sup_{i \in \N} |E_i| + P_\lambda(E_i) < \infty,
\end{split} \end{equation*}
then there exists $E $ of finite perimeter in $\R^n\setminus H$ and $i_k \to \infty$ as $k \to \infty$ such that 
\begin{equation*} E_{i_k} \xrightarrow[]{L^1} E \qquad \liminf_kP_\lambda(E_{i_k}) \ge P_\lambda(E). \end{equation*}
\end{corollary}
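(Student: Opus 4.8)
The statement to prove is \cref{lemma:precompattezza}: a compactness-plus-lower-semicontinuity result for sequences of sets of finite perimeter in $\R^n\setminus H$ that are all contained in a fixed compact set $K$ and have uniformly bounded measure and $P_\lambda$-energy. The strategy is to first upgrade the uniform bound on $P_\lambda(E_i)$ to a uniform bound on the Euclidean perimeter $P(E_i)$, then invoke the classical compactness theorem for sets of finite perimeter, and finally invoke the lower semicontinuity of $P_\lambda$ already established in the excerpt.

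\emph{Step 1: uniform perimeter bound.} By \cref{remark:positivity} we have $P(E_i)\le \tfrac{2}{1-\lambda}P_\lambda(E_i)$ for every $i$, so the hypothesis $\sup_i |E_i|+P_\lambda(E_i)<\infty$ yields $\sup_i |E_i|+P(E_i)<\infty$. (Strictly speaking \cref{remark:positivity} was stated for sets $E\subset\R^n\setminus H$ of finite perimeter and finite measure, which is exactly our situation since each $E_i$ has finite $P_\lambda$-energy hence finite perimeter by \cref{remark:positivity} itself — there is no circularity, as \cref{remark:positivity} is proved via the approximation \cref{lemma:Density} independently.)

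\emph{Step 2: compactness.} Since all the $E_i$ satisfy $|E_i\setminus K|=0$ for a fixed compact $K$, the sequence $\{\chi_{E_i}\}_i$ is bounded in $BV(\R^n)$ and supported in a fixed bounded set. By the standard compactness theorem for sets of finite perimeter (see, e.g., \cite[Theorem 3.23]{AmbrosioFuscoPallara} or \cite[Corollary 12.27]{MaggiBook}) there exist a subsequence $\{E_{i_k}\}_k$ and a set $E$ of finite perimeter with $E_{i_k}\to E$ in $L^1(\R^n)$. Because $|E_{i_k}\setminus K|=0$ for all $k$, passing to the limit gives $|E\setminus K|=0$; in particular $E\subset\R^n\setminus H$ up to negligible sets (more precisely $|E\cap H|=\lim_k|E_{i_k}\cap H|=0$), so $E$ is admissible, and $E$ has finite measure.

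\emph{Step 3: lower semicontinuity.} Since $L^1$ convergence implies $L^1_{\rm loc}$ convergence, \cref{lemma:LowerSemicontinuity} applies and gives $\liminf_k P_\lambda(E_{i_k})\ge P_\lambda(E)$, which is the remaining claim.

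The proof is essentially routine given the earlier results; the only point requiring a moment's care is observing that \cref{remark:positivity} makes the $P_\lambda$-bound equivalent to a genuine $BV$-bound, after which classical $BV$-compactness does the work. There is no real obstacle here — this corollary is a packaging lemma consolidating \cref{remark:positivity}, the standard compactness theorem, and \cref{lemma:LowerSemicontinuity}.
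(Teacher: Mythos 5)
Your proof is correct and follows the same route as the paper's: apply \cref{remark:positivity} to upgrade the $P_\lambda$-bound to a Euclidean perimeter bound, invoke classical $BV$ compactness for sets supported in the fixed compact $K$, and then apply \cref{lemma:LowerSemicontinuity}. The added remarks (no circularity, admissibility of the limit set) are sound and merely make explicit what the paper leaves implicit.
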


\begin{proof}
By \cref{remark:positivity} we have that $ P_\lambda(E_i) \ge \frac{1 - \lambda}{2} P(E)$. Then $\sup_{i \in \N} P(E_i) < \infty$. Hence by classical precompactness of sets of finite perimeter (see, e.g., \cite[Theorem 12.26]{MaggiBook}) and recalling \cref{lemma:LowerSemicontinuity}, the claim follows.
\end{proof}

\section{Reduction to bounded symmetric sets}\label{sec:Reductions}

In the following arguments, in order to prove \cref{thm:FinalQuantitativeInequality}, we will repeatedly reduce ourselves to consider sets $E$ having isoperimetric deficit smaller than some chosen constant.
This reduction is always possible.\\ Indeed, let $\delta>0$ be some positive constant; if $E$ is a set of finite perimeter such that $D_\lambda(E) \ge \delta$, since $\alpha_\lambda(E) \le 2$, we immediately get
\begin{equation*} \begin{split} & \alpha_\lambda^2(E) \le \frac{4}{\delta}\delta \le  \frac{4}{\delta} D_\lambda(E). \end{split} \end{equation*}
Therefore, if \cref{thm:FinalQuantitativeInequality} is proved on sets with deficit $\le \delta$, then it is proved for any set.\\
Hence,
\[
    \text{{\em within this section we will assume that $D_\lambda(E)< \bar c$ for any competitor $E$ involved, where $\bar c$ is given by \cref{lemma:DeficitPiccolo}.}}
\]
\[
    \text{{\em In particular $P(E,\partial H)>0$. }}
\]

\subsection{Reduction to bounded sets}

In this section we prove that, in order to prove \cref{thm:FinalQuantitativeInequality}, it is sufficient to prove the quantitative isoperimetric inequality \eqref{eq:FinalQuantitativeInequality} among suitably uniformly bounded sets.

From now on, we shall denote $Q_l:= [-l,l]^n \subset \R^n$. We start by proving an estimate on the area of horizontal slices of a set in terms of its deficit.

\begin{lemma}\label{lem:LowerBoundAreaFette}
Let $E\subset \R^n\setminus H$ be a bounded set of finite perimeter such that $\partial E \cap \R^n\setminus H$ is a smooth hypersurface (possibly with smooth boundary) with $|E|=|B^\lambda|$ and such that $\hh^{n-1}(\{x \in \partial^* E \setminus H : \nu^E(x) = \pm e_n\})=0$. Then
\begin{equation}\label{eq:LowerBoundAreaFette}
    \hh^{n-1}( E \cap \{x_n=t\}) \ge \frac12 P_\lambda(B^\lambda) \left(  \left( \frac{\omega_n}{|B^\lambda|} \right)^{\frac1n} \left( 1- \frac{|E \cap \{x_n<t\}|}{|B^\lambda|} \right)^{\frac{n-1}{n}} -1 - D_\lambda(E) \right),
\end{equation}
for every $t>0$. In particular
\begin{equation}\label{eq:LowerBoundAreaBagnata}
    \hh^{n-1}(\partial^*E \cap \partial H) \ge  \frac12 P_\lambda(B^\lambda) \left(  \left( \frac{\omega_n}{|B^\lambda|} \right)^{\frac1n}  -1 - D_\lambda(E) \right).
\end{equation}
Moreover, if $F\subset \R^n\setminus H$ is a set of finite perimeter with $|F|=|B^\lambda|$, then \eqref{eq:LowerBoundAreaFette} holds with $F$ in place of $E$ for almost every $t>0$, and \eqref{eq:LowerBoundAreaBagnata} holds with $F$ in place of $E$.
\end{lemma}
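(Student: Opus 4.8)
The plan is to combine the isoperimetric inequality for $P_\lambda$ (\cref{thm:IsopIneq}) applied to $E\cap\{x_n>t\}$ with the classical Euclidean isoperimetric inequality applied to $E\cap\{x_n<t\}$, and then add the resulting inequalities, exploiting the fact that the flat slice $E\cap\{x_n=t\}$ is counted \emph{twice}: once as (part of) the boundary of the upper piece lying on the ``floor'' $\{x_n>t\}$, and once as (part of) the boundary of the lower piece. First I would fix $t>0$ and set $E_t^+:=E\cap\{x_n>t\}$ and $E_t^-:=E\cap\{x_n<t\}$. For a.e.\ $t$ (and for every $t$ under the smoothness hypothesis on $\partial E$, together with the condition $\hh^{n-1}(\{\nu^E=\pm e_n\})=0$ which guarantees that the slicing is well-behaved and that $\hh^{n-1}(E\cap\{x_n=t\})=\hh^{n-1}(\partial^*E_t^+\cap\{x_n=t\})=\hh^{n-1}(\partial^*E_t^-\cap\{x_n=t\})$), one has the decomposition
\[
P_\lambda(E) = P_\lambda\!\big(E_t^+ - t e_n\big) + P\big(E_t^-,\{x_n<t\}\big) - \hh^{n-1}(E\cap\{x_n=t\}) - \lambda\,\hh^{n-1}(\partial^*E\cap\partial H),
\]
where $E_t^+-te_n\subset\{x_n>0\}$ is the vertical translate, whose $P_\lambda$ includes the term $-\lambda\,\hh^{n-1}(E\cap\{x_n=t\})$ coming from its flat part — wait, more carefully: $P_\lambda(E_t^+-te_n)=P(E_t^+,\{x_n>t\})-\lambda\hh^{n-1}(E\cap\{x_n=t\})$, and $P(E,\{x_n>0\})=P(E_t^+,\{x_n>t\})+P(E_t^-,\{x_n>0\})+\big(\hh^{n-1}(E\cap\{x_n=t\}) \text{ counted twice}\big)$ minus the doubled flat slice; I would write out this bookkeeping identity cleanly at the start.

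The key estimates are then: by \cref{thm:IsopIneq}, $P_\lambda(E_t^+-te_n)\ge n|B^\lambda|^{1/n}|E_t^+|^{(n-1)/n}$; by the Euclidean isoperimetric inequality \eqref{classical:isopineq} applied to $E_t^-$, $P(E_t^-)\ge n\omega_n^{1/n}|E_t^-|^{(n-1)/n}$, and since $P(E_t^-)=P(E_t^-,\{x_n<t\})+\hh^{n-1}(E\cap\{x_n=t\})$ we get $P(E_t^-,\{x_n<t\})\ge n\omega_n^{1/n}|E_t^-|^{(n-1)/n}-\hh^{n-1}(E\cap\{x_n=t\})$. Substituting both into the decomposition identity, the two copies of $\hh^{n-1}(E\cap\{x_n=t\})$ on the right of the identity and the one appearing through $P(E_t^-,\{x_n<t\})$ combine so that, after rearranging, $\hh^{n-1}(E\cap\{x_n=t\})$ is bounded below by a constant times $n\omega_n^{1/n}|E_t^-|^{(n-1)/n}+n|B^\lambda|^{1/n}|E_t^+|^{(n-1)/n}-P_\lambda(E)-\lambda\hh^{n-1}(\partial^*E\cap\partial H)$; then I would drop the term $n|B^\lambda|^{1/n}|E_t^+|^{(n-1)/n}\ge0$, use $|E_t^-|=|B^\lambda|-|E_t^+|$ so that $|E_t^-|^{(n-1)/n}=|B^\lambda|^{(n-1)/n}(1-|E\cap\{x_n<t\}|/|B^\lambda|)^{(n-1)/n}$, and use $P_\lambda(E)=(1+D_\lambda(E))P_\lambda(B^\lambda)$ together with $P_\lambda(B^\lambda)=n|B^\lambda|^{1/n}|B^\lambda|^{(n-1)/n}$ from \cref{lem:ValuePlambda} to rewrite $n\omega_n^{1/n}|B^\lambda|^{(n-1)/n}=(\omega_n/|B^\lambda|)^{1/n}P_\lambda(B^\lambda)$. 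Collecting the factors of $\tfrac12 P_\lambda(B^\lambda)$ gives exactly \eqref{eq:LowerBoundAreaFette}. Letting $t\to0^+$ (so $|E\cap\{x_n<t\}|\to0$ and $\hh^{n-1}(E\cap\{x_n=t\})\to\hh^{n-1}(\partial^*E\cap\partial H)$, using the smoothness/orthogonal-intersection structure) yields \eqref{eq:LowerBoundAreaBagnata}.

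For the final sentence about a general set of finite perimeter $F$ with $|F|=|B^\lambda|$, the plan is to invoke the approximation \cref{lemma:Density}: take smooth sets $F_i\to F$ with $P(F_i,\R^n\setminus H)\to P(F,\R^n\setminus H)$, $\hh^{n-1}(\partial F_i\cap\partial H)\to\hh^{n-1}(\partial^*F\cap\partial H)$, $P_\lambda(F_i)\to P_\lambda(F)$, and (by items (3)–(4) of that lemma) $\hh^{n-1}(\{\nu^{F_i}=\pm e_n\})=0$; rescale slightly so that $|F_i|=|B^\lambda|$ without affecting the limits; apply \eqref{eq:LowerBoundAreaFette} to each $F_i$; and pass to the limit using the coarea formula / slicing to get convergence of $\hh^{n-1}(F_i\cap\{x_n=t\})\to\hh^{n-1}(F\cap\{x_n=t\})$ and of $|F_i\cap\{x_n<t\}|\to|F\cap\{x_n<t\}|$ for a.e.\ $t$ (and of the wetted areas for \eqref{eq:LowerBoundAreaBagnata}).

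The main obstacle I expect is the careful bookkeeping of the flat slice $E\cap\{x_n=t\}$ in the decomposition of $P_\lambda(E)$ — getting the combinatorics of \emph{which} copies of $\hh^{n-1}(E\cap\{x_n=t\})$ cancel and which survive exactly right, so that the surviving coefficient is the $\tfrac12$ appearing in \eqref{eq:LowerBoundAreaFette} rather than some other constant; the hypothesis $\hh^{n-1}(\{\nu^E=\pm e_n\})=0$ is precisely what makes all three incarnations of this slice quantity equal and makes the decomposition identity hold for \emph{every} $t$ rather than just a.e.\ $t$.
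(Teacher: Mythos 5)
There is a genuine error in the plan. The factor $(1-|E\cap\{x_n<t\}|/|B^\lambda|)^{(n-1)/n}$ in \eqref{eq:LowerBoundAreaFette} equals $(|E_t^+|/|B^\lambda|)^{(n-1)/n}$ with $E_t^+:=E\cap\{x_n>t\}$, and the coefficient $(\omega_n/|B^\lambda|)^{1/n}>1$ in front of it is precisely the payoff of applying the \emph{Euclidean} isoperimetric inequality to the \emph{upper} piece $E_t^+$. The whole content of the lemma is that this factor exceeds $1$, so that the right-hand side of \eqref{eq:LowerBoundAreaFette} stays positive for small $t$ and small $D_\lambda(E)$. You instead apply the Euclidean inequality to the \emph{lower} piece $E_t^-$, which produces $|E_t^-|^{(n-1)/n}$; since $|E_t^-|=|E\cap\{x_n<t\}|\to 0$ as $t\to 0^+$, the resulting bound on $\hh^{n-1}(E\cap\{x_n=t\})$ degenerates to something nonpositive precisely in the regime the lemma is invoked. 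The substitution ``$|E_t^-|^{(n-1)/n}=|B^\lambda|^{(n-1)/n}(1-|E\cap\{x_n<t\}|/|B^\lambda|)^{(n-1)/n}$'' in your plan is simply false ($|E_t^-|$ equals $|E\cap\{x_n<t\}|$, not $|B^\lambda|-|E\cap\{x_n<t\}|$), and this incorrect identification is what conceals the mismatch. Your bookkeeping identity for $P_\lambda(E)$ is also off: a correct count gives $P_\lambda(E_t^+-te_n)+P(E_t^-,\{x_n<t\})=P_\lambda(E)+(1+\lambda)\hh^{n-1}(\partial^*E\cap\partial H)-\lambda\,\hh^{n-1}(E\cap\{x_n=t\})$, with a spurious $(1+\lambda)\hh^{n-1}(\partial^*E\cap\partial H)$ that has no a priori upper bound.

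Your ``two isoperimetric inequalities'' strategy can be made to work, but with the assignments swapped. Apply the Euclidean inequality to $E_t^+$, namely $P(E,\{x_n>t\})+\hh^{n-1}(E\cap\{x_n=t\})=P(E_t^+)\ge n\omega_n^{1/n}|E_t^+|^{(n-1)/n}$; and apply the capillarity inequality \cref{thm:IsopIneq} to $E_t^-$ viewed as a subset of $\R^n\setminus H$, namely $P(E,\{0<x_n<t\})+\hh^{n-1}(E\cap\{x_n=t\})-\lambda\hh^{n-1}(\partial^*E\cap\partial H)=P_\lambda(E_t^-)\ge 0$. Adding, using $P_\lambda(B^\lambda)=n|B^\lambda|$ from \cref{lem:ValuePlambda} and $P(E,\{x_n>t\})+P(E,\{0<x_n<t\})-\lambda\hh^{n-1}(\partial^*E\cap\partial H)=P_\lambda(E)=(1+D_\lambda(E))P_\lambda(B^\lambda)$, gives \eqref{eq:LowerBoundAreaFette}, with the factor $\tfrac12$ arising because the slice area appears once in each inequality. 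The paper's own proof uses the Euclidean inequality on $E_t^+$ in the same way, but treats the lower piece by an even more elementary projection argument: for each $x'\in\partial^*E\cap\partial H$, the vertical segment from $(x',0)$ to $(x',t)$ either crosses $\partial E\cap\{0<x_n\le t\}$ or lands in $E\cap\{x_n=t\}$, so $P(E,\{0<x_n\le t\})+\hh^{n-1}(E\cap\{x_n=t\})\ge\hh^{n-1}(\partial^*E\cap\partial H)$, which combined with $\lambda<1$ gives the same $\ge-\hh^{n-1}(E\cap\{x_n=t\})$ bound. Either route is fine, and your approximation plan for general $F$ matches the paper's.
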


The estimates given by \cref{lem:LowerBoundAreaFette} are clearly nontrivial only when the deficit is sufficiently small. On the other hand, if the deficit $D_\lambda(E)$ is sufficiently small, since $\omega_n/|B^\lambda|>1$, \eqref{eq:LowerBoundAreaFette} and \eqref{eq:LowerBoundAreaBagnata} essentially yield a quantitative version of \cref{lemma:DeficitPiccolo}, nontrivial also for slices $\hh^{n-1}(E\cap\{x_n=t\})$ with $t>0$ as long as $|E \cap \{x_n<t\}|$ is small.

\begin{proof}[Proof of \cref{lem:LowerBoundAreaFette}]
Let $v_E(t):=\hh^{n-1}( E \cap \{x_n=t\})$ for any $t>0$, and let $g(t) := |E \cap \{x_n<t\}|/|B^\lambda|$. By the standard isoperimetric inequality we have that
\begin{equation}\label{eq:zzStimaTaglio1}
\begin{split}
    P(E, \{x_n>t\}) + v_E(t) &= P( E \cap \{x_n>t\}) \ge n\omega_n^{\frac1n} |E \cap \{x_n>t\}|^{\frac{n-1}{n}}  = P_\lambda(B^\lambda)  \left( \frac{\omega_n}{|B^\lambda|} \right)^{\frac1n} (1-g(t))^{\frac{n-1}{n}},
\end{split}
\end{equation}
for any $t>0$. Moreover, for any $t>0$, we observe that for any $x' \in \partial^*E \cap \partial H$, the halfline $[0,t] \ni x_n \mapsto (x',x_n)$ either intersects $\partial^* E \cap \{0<x_n\le t\}$ or it intersects $E \cap \{x_n=t\}$. Therefore
\begin{equation}\label{eq:zzStimaTaglio2}
    \begin{split}
        P(E, \{0<x_n\le t\}) + v_E(t)
        & \ge \hh^{n-1}(\partial^*E \cap \partial H),
    \end{split}
\end{equation}
for any $t>0$.
Hence we conclude that
\begin{equation*}
    \begin{split}
        P_\lambda(B^\lambda)(1+D_\lambda(E) )
        &= P_\lambda(E) = P(E, \{x_n>t\}) + P(E,\{0<x_n\le t\}) - \lambda\hh^{n-1}(\partial^*E \cap \partial H) \\
        &\overset{\eqref{eq:zzStimaTaglio2}}{\ge}
         P(E, \{x_n>t\}) - v_E(t) \\
        &\overset{\eqref{eq:zzStimaTaglio1}}{\ge}
        P_\lambda(B^\lambda)  \left( \frac{\omega_n}{|B^\lambda|} \right)^{\frac1n} (1-g(t))^{\frac{n-1}{n}} - 2 v_E(t),
    \end{split}
\end{equation*}
for any $t>0$, which yields \eqref{eq:LowerBoundAreaFette}. By \cite[Theorem 18.11]{MaggiBook} (see also \cite[Theorem 6.1]{FuscoMaggiPratelli}), the function $v_E$ belongs to $W^{1,1}(0,+\infty)$, thus \eqref{eq:LowerBoundAreaBagnata} follows by letting $t\to0^+$ in \eqref{eq:LowerBoundAreaFette}.

Now if $F\subset \R^n\setminus H$ is as in the assumptions, let $E_i$ be given by \cref{lemma:Density} applied to $F$, and let $\tilde E_i:= (|B^\lambda|/|E_i|)^{\frac1n} E_i$. Hence the inequality \eqref{eq:LowerBoundAreaBagnata} and the right hand side of \eqref{eq:LowerBoundAreaFette} applied with $E= \tilde E_i$ pass to the limit as $i\to\infty$. Moreover
\[
|\tilde E_i \Delta E| = \int_0^{+\infty} \hh^{n-1} ( \tilde E_i \Delta E \cap \{x_n=t\} ) \de t 
\ge \int_0^{+\infty} \left| \hh^{n-1}(\tilde E_i \cap \{x_n=t\}) - \hh^{n-1}( E \cap \{x_n=t\}) \right| \de t.
\]
Since $|\tilde E_i \Delta E| \to 0$, the left hand side of \eqref{eq:LowerBoundAreaFette} passes to the limit as well as $i\to\infty$, for a.e. $t>0$.
\end{proof}

We are ready to prove the claimed reduction to bounded sets. The proof follows the line of \cite[Lemma 5.1]{FuscoMaggiPratelli}, essentially truncating a competitor with coordinate slabs having estimated width. To give a bound for the truncation in the $n$-th direction will need to modify the argument in \cite[Lemma 5.1]{FuscoMaggiPratelli} and we will exploit \cref{lem:LowerBoundAreaFette}.

\begin{lemma}[Reduction to bounded sets]\label{lem:ReductionToBounded}
There exist $l = l(n, \lambda)>0$ and $C_1 = C_1(n, \lambda)>0$ such that, if $E \subset \R^n \setminus H$ is a set of finite perimeter with $|E|\in(0,+\infty)$, then there exists a set of finite perimeter $E' \subset Q_l \cap (\R^n\setminus H)$ such that $|E'|=|B^\lambda|$ and
\begin{equation}\label{fmp:5.3} \alpha_\lambda(E) \le \alpha_\lambda(E') + C_1 D_\lambda(E), \qquad D_\lambda(E') \le C_1 D_\lambda(E). \end{equation}
\end{lemma}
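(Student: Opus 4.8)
The plan is to follow the strategy of \cite[Lemma 5.1]{FuscoMaggiPratelli}, truncating $E$ first with $n-1$ vertical slabs in the directions $e_1,\ldots,e_{n-1}$ (where we have full freedom) and then with a horizontal slab $\{0<x_n<L\}$ (where the argument must be modified, since cutting near $\partial H$ is constrained). We may assume by scaling that $|E|=|B^\lambda|$, and by the reduction fixed at the start of \cref{sec:Reductions} that $D_\lambda(E)<\bar c$, so in particular $P(E,\partial H)>0$ and, by \cref{remark:positivity}, $P(E)\le \tfrac{2}{1-\lambda}P_\lambda(E)\le \tfrac{2}{1-\lambda}(1+\bar c)P_\lambda(B^\lambda)$ is bounded by a constant $M=M(n,\lambda)$. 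First I would handle the horizontal (i.e. $e_j$, $j\le n-1$) truncations. For a fixed $j$ and $s\in\R$ set $m_j(s):=|E\cap\{x_j<s\}|$ and $p_j(s):=P(E,\{x_j<s\})-\lambda\hh^{n-1}(\partial^*E\cap\partial H\cap\{x_j<s\})$. By the coarea formula and Fubini, $\int_{\R}\hh^{n-1}(\partial^*(E\cap\{x_j<s\})\cap\{x_j=s\})\de s=|E|$, so for a suitable choice of $s$ in a set of positive measure the slice has small area; more precisely, arguing exactly as in \cite[Lemma 5.1]{FuscoMaggiPratelli} one can choose two levels $a_j<b_j$ with $b_j-a_j\le L_0:=L_0(n,\lambda)$ such that the truncated set $E_j:=E\cap\{a_j<x_j<b_j\}$ satisfies $|E\setminus E_j|\le C D_\lambda(E)|E|$ and $P_\lambda(E_j)\le P_\lambda(E)+C D_\lambda(E) P_\lambda(B^\lambda)$; here one uses that the extra boundary created by the two cuts is controlled by the slice areas, which in turn are controlled by the isoperimetric deficit via the standard isoperimetric inequality applied to $E\cap\{x_j<s\}$ and $E\cap\{x_j>s\}$ together with the subadditivity of $t\mapsto t^{(n-1)/n}$. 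Iterating over $j=1,\ldots,n-1$ produces a set, still called $E$, contained in a slab $\{|x_j|\le L_0,\ j=1,\ldots,n-1\}$ (after a horizontal translation, which does not affect $\alpha_\lambda$, $D_\lambda$) with $|E\setminus E|\le CD_\lambda(E)|E|$ and $D_\lambda$ increased by at most $CD_\lambda(E)$.

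For the vertical truncation I would use \cref{lem:LowerBoundAreaFette}. The point is that \eqref{eq:LowerBoundAreaFette}, applied to $E$ (via the last assertion of that lemma, which covers general sets of finite perimeter with $|E|=|B^\lambda|$), gives for a.e. $t>0$
\[
\hh^{n-1}(E\cap\{x_n=t\})\ \ge\ \tfrac12 P_\lambda(B^\lambda)\Big(\big(\tfrac{\omega_n}{|B^\lambda|}\big)^{\frac1n}\big(1-\tfrac{|E\cap\{x_n<t\}|}{|B^\lambda|}\big)^{\frac{n-1}{n}}-1-D_\lambda(E)\Big).
\]
Since $\omega_n/|B^\lambda|>1$ is a fixed constant $>1$, if $D_\lambda(E)$ is small the right-hand side is bounded below by a positive constant $\kappa=\kappa(n,\lambda)$ as long as $|E\cap\{x_n<t\}|\le (1-\eta)|B^\lambda|$ for a fixed $\eta=\eta(n,\lambda)>0$; hence the "bad'' set of heights $t$ at which the slice is smaller than $\kappa$ is contained in $\{t>0: |E\cap\{x_n<t\}|>(1-\eta)|B^\lambda|\}$, which has measure at most $|B^\lambda|/(\kappa)$ by Chebyshev applied to the monotone function $t\mapsto|E\cap\{x_n<t\}|$ — wait, more simply: on the set where the slice is $\ge\kappa$ the coarea identity $\int_0^\infty \hh^{n-1}(E\cap\{x_n=t\})\,\chi_{\{\text{slice}\ge\kappa\}}\de t\le |E|$ forces that set to have measure $\le |E|/\kappa$. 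Wait, that's not quite the bound I want either; rather, combining the two observations, the set $G:=\{t>0:\ |E\cap\{x_n<t\}|\le(1-\eta)|B^\lambda|\}$ has the property that on $G$ every slice is $\ge\kappa$, hence $\kappa|G|\le\int_G \hh^{n-1}(E\cap\{x_n=t\})\de t\le|E|$, so $|G|\le|E|/\kappa=:L$, a constant depending only on $n,\lambda$. Therefore $L=\sup G$ satisfies $|E\cap\{x_n<L\}|\ge(1-\eta)|B^\lambda|$, i.e. all but an $\eta$-fraction of the mass of $E$ lies in $\{0<x_n<L\}$. Then I truncate $E'':=E\cap\{x_n<L\}$; the extra boundary created is $\hh^{n-1}(E\cap\{x_n=L\})$, which by \eqref{eq:LowerBoundAreaFette} and the standard isoperimetric inequality applied to $E\setminus\{x_n<L\}$ can be bounded by $C D_\lambda(E)P_\lambda(B^\lambda)$ up to possibly adjusting $L$ slightly (choosing the truncation level in a positive-measure set of good heights just above $L$, as in the horizontal case), so that $P_\lambda(E'')\le P_\lambda(E)+CD_\lambda(E)P_\lambda(B^\lambda)$ and $|E\setminus E''|\le CD_\lambda(E)|E|$. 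After this, $E''\subset Q_l\cap(\R^n\setminus H)$ with $l=\max\{L_0,L\}+1=l(n,\lambda)$.

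Finally I would restore the volume constraint and collect the estimates. Rescaling $E'':=(|B^\lambda|/|E''|)^{1/n}E''$ changes $l$ by a bounded factor (since $|E''|\ge(1-CD_\lambda(E))|B^\lambda|\ge\tfrac12|B^\lambda|$) and, by \cref{lem:ValuePlambda} and the homogeneity of $P_\lambda$, changes $P_\lambda$ by a factor $1+O(D_\lambda(E))$, so $D_\lambda(E'')\le C_1 D_\lambda(E)$. For the asymmetry estimate: $|E\Delta E''|=|E\setminus E''|+|E''\setminus E|\le CD_\lambda(E)|B^\lambda|$ before rescaling, and rescaling adds another $O(D_\lambda(E))|B^\lambda|$, so $|E\Delta E''|\le CD_\lambda(E)|B^\lambda|$. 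Then for any optimal bubble $B^\lambda(|B^\lambda|,x)$ realizing $\alpha_\lambda(E'')$, the triangle inequality for symmetric differences gives $|E\Delta B^\lambda(|B^\lambda|,x)|\le|E''\Delta B^\lambda(|B^\lambda|,x)|+|E\Delta E''|$, whence $\alpha_\lambda(E)\le\alpha_\lambda(E'')+CD_\lambda(E)$. Renaming $E'':=E'$ and taking $C_1$ to be the maximum of the constants produced along the way yields \eqref{fmp:5.3}. I expect the main obstacle to be the horizontal (vertical in physical language) truncation: unlike in \cite{FuscoMaggiPratelli}, one cannot cut symmetrically from both sides since $\{x_n=0\}$ is a hard wall, so one must cut only from above, and the quantitative control that the cut level can be chosen close to the top while keeping the slice area $O(D_\lambda(E))$ is exactly what \cref{lem:LowerBoundAreaFette} is designed to provide — the bookkeeping that turns the lower bound on slice areas into an upper bound on the height $L$ and then into the smallness of the extra cut is the delicate point.
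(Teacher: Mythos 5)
Your treatment of the lateral truncations along $e_1,\dots,e_{n-1}$ is fine and matches the paper's (both follow \cite[Lemma 5.1]{FuscoMaggiPratelli}). However, the vertical truncation argument has a genuine gap, and in fact misuses \cref{lem:LowerBoundAreaFette}.

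Concretely: for the lower bound in \eqref{eq:LowerBoundAreaFette} to produce a positive constant $\kappa$, you need $\left(\frac{\omega_n}{|B^\lambda|}\right)^{1/n}\eta^{(n-1)/n}>1+D_\lambda(E)$, which forces $\eta$ to exceed the fixed threshold $\eta_0:=\left(\frac{|B^\lambda|}{\omega_n}\right)^{1/(n-1)}\in(0,1)$; in particular $\eta$ is a constant of order one (and close to $1$ for large $n$ or $\lambda$ near $-1$), not a quantity tending to $0$ with the deficit. Your set $G=\{t:|E\cap\{x_n<t\}|\le(1-\eta)|B^\lambda|\}$ therefore captures only the bottom $(1-\eta)$-fraction of the mass, and $\sup G$ is the height at which the mass below is exactly $(1-\eta)|B^\lambda|$ — so at least $\eta|B^\lambda|$, a fixed positive fraction, lies \emph{above} $\sup G$. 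Cutting at $L=|E|/\kappa$ (or just above) can therefore throw away a constant fraction of the mass; the claimed $|E\setminus E''|\le CD_\lambda(E)|E|$ does not follow, and the subsequent rescaling would not be $1+O(D_\lambda(E))$. Moreover, \eqref{eq:LowerBoundAreaFette} is a \emph{lower} bound on slice areas, so it cannot be combined with the isoperimetric inequality to deduce $\hh^{n-1}(E\cap\{x_n=L\})\le CD_\lambda(E)P_\lambda(B^\lambda)$; that is an \emph{upper} bound on the slice, a different kind of statement.

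The paper fixes this by running the full Fusco--Maggi--Pratelli $\psi$-argument also along the $n$-th axis: from $\bar v(t)\ge\frac12 P_\lambda(B^\lambda)(\psi(\bar g(t))-D_\lambda(E))$ one defines $\bar t_1<\bar t_2$ by $\psi(\bar g(\bar t_i))=2D_\lambda(E')$, which gives both the usual control on the discarded mass and on the created cut (small because $\psi(\bar g)<2D_\lambda$ past $\bar t_2$), plus the length bound $\bar t_2-\bar t_1\le\frac4n\int_0^1\frac{\d s}{\psi(s)}$. \cref{lem:LowerBoundAreaFette} is then used for a \emph{different} purpose than in your proposal: since $\bar g(\bar t_1)$ is small when $D_\lambda$ is small, it yields $\bar v(t)\ge A/2$ on $(0,\bar t_1)$, and hence the absolute bound $\bar t_1\le\frac{2}{A}\int_0^{\bar t_1}\bar v\le\frac{2|B^\lambda|}{A}$. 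It is this combination — $\psi$-argument for the FMP-type estimates and \cref{lem:LowerBoundAreaFette} only to bound $\bar t_1$ — that your proof is missing; invoking the lemma alone cannot close the vertical step.
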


\begin{proof}
By scale-invariance of the asymmetry and of the deficit, it is sufficient to prove the claim assuming also $|E|=|B^\lambda|$.
First of all we observe that we may prove the claim assuming that $\partial E \cap \R^n\setminus H$ is smooth and \begin{equation}\label{fmp:5.4}
\mathcal{H}^{n - 1}(\{x \in \partial^* E \cap \R^n\setminus H : \nu^E(x) = \pm e_i\}) = 0 \end{equation}
for all $i = 1$, $\dots$, $n$. Indeed, if $E$ is a generic set of finite perimeter, then by \cref{lemma:Density} there exists a sequence of smooth sets $\{E_i\}_{i \in \N}$ converging to $E$ such that~\eqref{fmp:5.4} holds. If we know that the claim holds for $E_i$, we get the existence of $E_i'\subset Q_l \cap (\R^n \setminus H)$ such that \eqref{fmp:5.3} holds with $E, E'$ replaced by $E_i, E'_i$. Hence we can apply \cref{lemma:precompattezza} on the sequence $E_i'$, and by \cref{lemma:LowerSemicontinuity} and \cref{lem:ContinuityAlpha} the inequalities~\eqref{fmp:5.3} pass to the limit.

Without loss of generality, we can further assume that
\begin{equation}\label{eq:zzSmallDeficit}
    D_\lambda(E) < (2^{1/n} - 1)/4.
\end{equation}

Let us consider the axis $x_1$ first. Thanks to~\eqref{fmp:5.4}, by \cite[Theorem 18.11]{MaggiBook} (see also \cite[Theorem 6.1]{FuscoMaggiPratelli}) we deduce that \begin{equation*} v_E(t) := \mathcal{H}^{n - 1}(\{x' \in \R^{n - 1} : (t, x') \in E\}) \qquad \text{for } t \in \R \end{equation*} belongs to $W^{1, 1}(\R)$, hence we may assume that $v_E$ is continuous. Setting
\begin{equation*} E_t^- := \{x \in E : x_1 < t\} \end{equation*}
and
\begin{equation*}
P_\lambda(E, \{x_1 < t\}) := P(E, \{x \in \R^n \setminus H : x_1 < t\}) - \lambda \hh^{n - 1}(\{x \in \partial^*E\cap\partial H : x_1 < t\}) \end{equation*}
for all $t \in \R$, by smoothness of $E$ we have that
\begin{equation}\label{fmp:5.5} P_\lambda(E_t^-) = P_\lambda(E, \{x_1 < t\}) + v_E(t), \qquad P_\lambda(E\setminus E_t^-) = P_\lambda(E, \{x_1 > t\}) + v_E(t),
\end{equation}
where $P_\lambda(E, \{x_1 > t\})$ is defined analogously.
Let us now define the function $g : \R \to [0, + \infty)$ given by
\begin{equation*} g(t) := \frac{|E_t^-|}{|B^\lambda|}.
\end{equation*}
Hence $g$ is a nondecreasing $C^1$ function with $g'(t) = v_E(t)/|B^\lambda|$. Let $-\infty \le a < b \le + \infty$ be such that $\{t: 0 < g(t) < 1\} = (a, b)$. If $t \in (a, b)$, then by~\eqref{eq:IsoperimetricaLambda} we have
\begin{equation*}
P_\lambda(E_t^-) \ge g(t)^{\frac{n - 1}{n}}P_\lambda(B^\lambda).
\end{equation*}
Similarly,
\begin{equation*}
P_\lambda(E \setminus E_t^-) \ge (1 - g(t))^{\frac{n - 1}{n}}P_\lambda(B^\lambda).
\end{equation*}
Therefore, from~\eqref{fmp:5.4} and~\eqref{fmp:5.5} we get that \begin{equation*} P_\lambda(E) + 2 v_E(t) \ge P_\lambda(B^\lambda)\left(g(t)^{\frac{n - 1}{n}} + (1 - g(t))^{\frac{n - 1}{n}}\right) \end{equation*} for all $t \in (a, b)$. Since by definition we have $P_\lambda(E) = P_\lambda(B^\lambda) (1 + D_\lambda(E))$, we obtain
\begin{equation}\label{fmp:5.6}
v_E(t) \ge \frac{1}{2} P_\lambda(B^\lambda)\left(g(t)^{\frac{n - 1}{n}} + (1 - g(t))^{\frac{n - 1}{n}} - 1 - D_\lambda(E)\right). \end{equation}
Let us now define the concave function \begin{equation*} \begin{split} \psi : [0, 1] \to [0, + \infty) \qquad\qquad \psi(t) := t^{\frac{n - 1}{n}} + (1 - t)^{\frac{n - 1}{n}} - 1. \end{split} \end{equation*} Note that $\psi(0) = \psi(1) = 0$ and $\psi$ achieves its maximum at $\psi(1/2) = 2^{1/n} - 1$, hence by concavity
\begin{equation}\label{fmp:5.7} \psi(t) = \psi\left( 2t \frac12 + 0 \right) \ge 2t \psi(1/2) + 0 = 2(2^{1/n} - 1)t \qquad \forall t \in \left[0, \frac{1}{2}\right]. \end{equation}
Recall that by \eqref{eq:zzSmallDeficit} there holds $2D_\lambda(E) < \psi (1/2)$. Let $a < t_1 < t_2 < b$ be such that $g(t_1) = 1 - g(t_2)$ and $\psi(g(t_1)) = \psi(g(t_2)) = 2 D_\lambda(E)$. Then \begin{equation}\label{fmp:5.8}
\psi(g(t)) \ge 2 D_\lambda(E) \qquad \forall t \in (t_1, t_2) \end{equation}
and, by~\eqref{fmp:5.7}, \begin{equation}\label{fmp:5.9} 
g(t_1) = 1 - g(t_2) \le \frac{D_\lambda(E)}{2^{1/n} - 1}. \end{equation}
For any $t_1 \le t \le t_2$ we have \begin{equation}\label{fmp:5.10}
\begin{split} v_E(t) & 
\overset{\eqref{fmp:5.6}}{\ge} \frac{1}{2} P_\lambda(B^\lambda)(\psi(g(t)) - D_\lambda(E))  = \frac{1}{4} P_\lambda(B^\lambda)\psi(g(t)) + \frac{1}{4} P_\lambda(B^\lambda) (\psi(g(t)) - 2 D_\lambda(E)) \\ & \overset{\eqref{fmp:5.8}}{\ge} \frac{n|B^\lambda|}{4}\psi(g(t)). \end{split} \end{equation}
Since $v_E(t) = |B^\lambda| g'(t)$, we have
\begin{equation}\label{fmp:5.11} t_2 - t_1 \overset{\eqref{fmp:5.10}}{\le} \frac{4}{n} \int_{t_1}^{t_2}\frac{g'(t)}{\psi(g(t))}\de t = \frac{4}{n}\int_{g(t_1)}^{g(t_2)}\frac{1}{\psi(s)}\de s \le \frac{4}{n}\int_0^1\frac{1}{\psi(s)}\de s =: \alpha, \end{equation}
for some $\alpha = \alpha(n)>0$.

Let
\begin{equation*} \begin{split} & \tau_1 = \max\left\{t \in (a, t_1] : v_E(t) \le \frac{n |B^\lambda| D_\lambda(E)}{2}\right\}, \\ & \tau_2 = \min \left\{t \in [t_2, b) : v_E(t) \le \frac{n |B^\lambda| D_\lambda(E)}{2}\right\}. \end{split} \end{equation*}
Note that $\tau_1$ and $\tau_2$ are well defined since $v_E$ is continuous and $ v_E(t) \to 0$ as $t\to a$ or $t\to b$; moreover, by~\eqref{fmp:5.8} and~\eqref{fmp:5.10}, $v_E(\tau_1) = v_E(\tau_2) = \frac{n |B^\lambda| D_\lambda(E)}{2}$. Moreover, from~\eqref{fmp:5.9} and by definition of $\tau_1$, we have
\begin{equation*}
t_1 - \tau_1 \le \frac{2}{n |B^\lambda| D_\lambda(E)}\int_{\tau_1}^{t_1}v_E(t)\de t = \frac{2}{n D_\lambda(E)}\int_{\tau_1}^{t_1}g'(t)\de t \le \frac{2g(t_1)}{nD_\lambda(E)} \le \frac{2}{n(2^{1/n} - 1)} ,
\end{equation*}
and an analogous estimate holds for $\tau_2 - t_2$.

We consider the truncation $\tilde{E} := E \cap\{x : \tau_1 < x_1 < \tau_2\}$. From the above estimate and~\eqref{fmp:5.11}, we have that $\tau_2 - \tau_1 < \beta$ for some $\beta=\beta(n)>0$. Moreover by~\eqref{fmp:5.5} and~\eqref{fmp:5.9}, by the definition of $\tau_1$, $\tau_2$, and since $P(E,\{x_1<\tau_1, x_1>\tau_2\})\ge \lambda\hh^{n-1}(\partial^* E \cap \partial H \cap \{x_1<\tau_1, x_1>\tau_2\})$ (see the proof of \cref{remark:positivity}) we can estimate
\begin{equation}\label{fmp:5.12}
|\tilde{E}| \ge \left|B^\lambda\right|\left(1 - 2\frac{D_\lambda(E)}{2^{1/n} - 1}\right), \qquad \qquad 
P_\lambda(\tilde{E}) \le P_\lambda(E) + n|B^\lambda| D_\lambda(E).
\end{equation}
We finally define
\begin{equation*} 
\sigma := \left(\frac{|B^\lambda|}{|\tilde{E}|}\right)^{1/n}, \qquad \qquad E' := \sigma\tilde{E}. \end{equation*}
Clearly, $|E'| = |B^\lambda|$ and by \eqref{fmp:5.12} we get that $E'$ is contained in a strip $\{\tau_1' < x_1 < \tau_2'\}$, with $\tau_2' - \tau_1' \le \sigma (\tau_2-\tau_1) \le l'$, where $l'=l'(n,\lambda)>0$. Let us now show that $E'$ satisfies~\eqref{fmp:5.3} for a suitable constant $C_1=C_1(n,\lambda)>0$ that may change from line to line. To this aim, since we are assuming $D_\lambda(E)$ small by \eqref{eq:zzSmallDeficit}, from~\eqref{fmp:5.12} we get that $1 \le \sigma \le 1 + C_0D_\lambda(E)$, with $C_0 = C_0(n)$. Thus, from \eqref{fmp:5.12} and \eqref{eq:zzSmallDeficit}, we get
\begin{equation*}
\begin{split}
P_\lambda(E') & = \sigma^{n - 1}P_\lambda(\tilde{E}) \le\sigma^{n - 1}(P_\lambda(E) + n|B^\lambda|D_\lambda(E)) \\ &  = \sigma^{n - 1}P_\lambda(B^\lambda)(1 + 2 D_\lambda(E))  \le P_\lambda(B^\lambda)(1 + C_1 D_\lambda(E)). \end{split}
\end{equation*} 
Hence, the second inequality in~\eqref{fmp:5.3} follows. 
To prove the first inequality, let us denote by $B^\lambda(|B^\lambda|, p)$, with $p \in \partial H$, a spherical cap such that $\alpha_\lambda(E') = \frac{|E' \Delta B^\lambda(|B^\lambda|, p)|}{|B^\lambda|}$. 
From the first inequality in~\eqref{fmp:5.12}, recalling that $|E|=|B^\lambda|$, we then get
\begin{equation*} 
\begin{split} 
\alpha_\lambda(E) & \le \frac{|E \Delta B^\lambda(|B^\lambda|, p/\sigma)|}{|B^\lambda|} \\ & \le \frac{|E \Delta \tilde{E}|}{|B^\lambda|} + \frac{|\tilde{E} \Delta B^\lambda(|B^\lambda|/\sigma^n, p/\sigma)|}{|B^\lambda|} + \frac{|B^\lambda(|B^\lambda|/\sigma^n, p/\sigma)\Delta B^\lambda(|B^\lambda|, p/\sigma)|}{|B^\lambda|} \\ & = \frac{|E \setminus \tilde{E}|}{|B^\lambda|} + \frac{\alpha_\lambda(E')}{\sigma^n} + \frac{|B^\lambda(|B^\lambda|) \setminus B^\lambda(|B^\lambda|/\sigma^n)|}{|B^\lambda|} \\ & \overset{\eqref{fmp:5.12}}{\le} C_1 D_\lambda(E) + \alpha_\lambda(E') + C_1 (\sigma - 1) \\ & \le \alpha_\lambda(E') + C_1 D_\lambda(E). \end{split} \end{equation*}
Thus the set $E'$ satisfies~\eqref{fmp:5.3} and points in $E'$ have first coordinate contained in an interval of length bounded by $l'$.

Starting from $E'$, we can repeat the same construction finitely many times with respect to the axes $x_2, \ldots, x_{n-1}$, thus getting a new set, still denoted by $E'$, satisfying \eqref{fmp:5.3}.

\medskip

It remains to adapt the construction with respect to the coordinate axis $x_n$. In this case we eventually aim at truncating the set $E'$ in some controlled slab of the form $\{0<x_n<\bar \tau_2\}$. Define
\[
\bar v(t) := \mathcal{H}^{n - 1}(\{x' \in \R^{n - 1} : (x', t) \in E'\}) ,
\qquad
\bar E_t^- := \{x \in E' : x_n < t\},
\qquad
 \bar g(t) := \frac{|\bar E_t^-|}{|B^\lambda|},
\]
for $t>0$. It is readily checked that, arguing as above, one estimates
\begin{equation}\label{fmp:5.6'}
\bar v(t) \ge \frac{1}{2} P_\lambda(B^\lambda)\left(\psi(\bar g(t)) - D_\lambda(E)\right),
\end{equation}
which is analogous to \eqref{fmp:5.6}, for any $t$ such that $\bar g(t) \in(0,1)$. Similarly as before, we define $0<\bar t_1<\bar t_2$ such that $\bar g(\bar t_1) = 1- \bar g(\bar t_2)$ and $\psi(\bar g(\bar t_1)) = \psi(\bar g(\bar t_2)) = 2 D_\lambda(E')$. Therefore, using \eqref{fmp:5.6'} and the concavity of $\psi$, arguing as before one estimates
\begin{equation}\label{fmp:5.10'}
    \bar v(t) \ge \frac{n|B^\lambda|}{4}\psi(\bar g(t)) \qquad\qquad\forall\,t\in[\bar t_1,\bar t_2],
\end{equation}
which is analogous to \eqref{fmp:5.10}.

Let
\begin{equation*}
    A:= \frac12 P_\lambda(B^\lambda) \left(  \left( \frac{\omega_n}{|B^\lambda|} \right)^{\frac1n} -1 \right)>0.
\end{equation*}
We claim that there exists $\bar{\epsilon}=\bar{\epsilon}(n,\lambda)>0$ such that if $D_\lambda(E)<\bar\epsilon$ then
\begin{equation}\label{eq:ClaimFette}
    \bar v(t) \ge \frac{A}{2} \qquad \text{ for a.e. $t \in (0,\bar t_1)$.}
\end{equation}
Indeed, since $D_\lambda(E') \le C_1 D_\lambda(E)$ and $\psi(\bar g(\bar t_1))= 2 D_\lambda(E')$, then for any $\omega>0$ there is $\bar{\epsilon}=\bar{\epsilon}(n,\lambda)>0$ such that $\bar g(\bar t_1)<\omega$ whenever $D_\lambda(E)<\bar\epsilon$. Applying \cref{lem:LowerBoundAreaFette} with $F=E'$, for almost every $t\in(0,\bar t_1)$ we find
\begin{equation}
\begin{split}
    \bar v(t) &\ge \frac12 P_\lambda(B^\lambda) \left(  \left( \frac{\omega_n}{|B^\lambda|} \right)^{\frac1n} \left( 1- \bar g(t) \right)^{\frac{n-1}{n}} -1 - D_\lambda(E') \right) \\
    &\ge \frac12 P_\lambda(B^\lambda) \left(  \left( \frac{\omega_n}{|B^\lambda|} \right)^{\frac1n} \left( 1- \bar g(\bar t_1) \right)^{\frac{n-1}{n}} -1 - D_\lambda(E') \right) \\
    &\ge \frac12 P_\lambda(B^\lambda) \left(  \left( \frac{\omega_n}{|B^\lambda|} \right)^{\frac1n} \left( 1- \omega \right)^{\frac{n-1}{n}} -1 - C_1\bar{\epsilon} \right) \ge \frac{A}{2}, 
\end{split}
\end{equation}
provided $\bar{\epsilon}$ is small enough.

Therefore, assuming without loss of generality that $D_\lambda(E)<\bar\epsilon$, since $\bar g'(t) = \bar v(t)/|B^\lambda|$ we estimate
\begin{equation}\label{eq:barT2Bounded}
    \begin{split}
        \bar t_2 \overset{\eqref{fmp:5.10'}}{\le} \bar t_1 + \frac{4}{n} \int_{\bar t_1}^{\bar t_2} \frac{\bar g'(t)}{\psi(\bar g(t))} \de t
        \overset{\eqref{eq:ClaimFette}}{\le} \frac2A\int_0^{\bar t_1} \bar v(t) \de t + \frac4n \int_0^1 \frac{1}{\psi} \le \frac{2 |B^\lambda|}{A} + \frac4n \int_0^1 \frac{1}{\psi} =: \alpha'(n,\lambda).
    \end{split}
\end{equation}
The rest of the construction follows analogously as above by defining
\begin{equation*} 
\bar\tau_2 := \min \left\{t \ge \bar t_2 \st \bar g(t)<1, \,\, \bar v(t) \le \frac{n |B^\lambda| D_\lambda(E')}{2}\right\},
\end{equation*}
estimating $\bar \tau_2 - \bar t_2 \le \beta'(n,\lambda)$, hence finally taking the set
\begin{equation}\label{eq:zzzz}
    \left( \frac{|B^\lambda|}{| E' \cap \{x_n < \bar \tau_2\}|} \right)^{\frac1n} \, ( E' \cap \{x_n < \bar \tau_2\}).
\end{equation}
Up to translation along $\partial H$, the set defined in \eqref{eq:zzzz} yields the final one satisfying the claim of the lemma.
\end{proof}

\begin{corollary}[Non-quantitative stability]\label{lemma:fmp2.3}
For any $\bar\epsilon > 0$ there exists $\bar\delta = \bar\delta(n, \lambda, \bar\epsilon) > 0$ such that if $E \subset \R^n \setminus H$ is a Borel set such that $D_\lambda(E) \le \bar\delta$, then $\alpha_\lambda(E) \le \bar\epsilon$. 
\end{corollary}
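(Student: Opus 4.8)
The plan is to argue by contradiction through a standard compactness argument, whose ingredients are all already available: the reduction to uniformly bounded sets (\cref{lem:ReductionToBounded}), the precompactness and lower semicontinuity for $P_\lambda$ (\cref{lemma:precompattezza}, \cref{lemma:LowerSemicontinuity}), the continuity of the Fraenkel asymmetry under $L^1$ convergence (\cref{lem:ContinuityAlpha}), and crucially the rigidity (equality case) in the isoperimetric inequality \eqref{eq:IsoperimetricaLambda} of \cref{thm:IsopIneq}.

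Assume the statement fails: then there exist $\bar\epsilon_0 > 0$ and Borel sets $E_j \subset \R^n \setminus H$ with $D_\lambda(E_j) \to 0$ but $\alpha_\lambda(E_j) > \bar\epsilon_0$ for every $j$. Since $\alpha_\lambda$ and $D_\lambda$ are scale invariant, I may rescale so that $|E_j| = |B^\lambda|$. Applying \cref{lem:ReductionToBounded} I obtain sets $E_j' \subset Q_l \cap (\R^n\setminus H)$, with $l = l(n,\lambda)$, such that $|E_j'| = |B^\lambda|$, together with the inequalities $\alpha_\lambda(E_j) \le \alpha_\lambda(E_j') + C_1 D_\lambda(E_j)$ and $D_\lambda(E_j') \le C_1 D_\lambda(E_j)$. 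Hence $D_\lambda(E_j') \to 0$, while for $j$ large enough $\alpha_\lambda(E_j') \ge \bar\epsilon_0 - C_1 D_\lambda(E_j) \ge \bar\epsilon_0/2$.

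Next, since $P_\lambda(E_j') = P_\lambda(B^\lambda)(1 + D_\lambda(E_j'))$ is uniformly bounded and all the $E_j'$ lie in the fixed compact set $Q_l$, \cref{lemma:precompattezza} provides, up to a subsequence, a set of finite perimeter $E_\infty \subset \R^n\setminus H$ with $E_j' \xrightarrow{L^1} E_\infty$ and $\liminf_j P_\lambda(E_j') \ge P_\lambda(E_\infty)$. The $L^1$ convergence gives $|E_\infty| = |B^\lambda|$, so $P_\lambda(E_\infty) \ge P_\lambda(B^\lambda)$ by \cref{thm:IsopIneq}; on the other hand $\liminf_j P_\lambda(E_j') = P_\lambda(B^\lambda)$ because $D_\lambda(E_j') \to 0$. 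Therefore $P_\lambda(E_\infty) = P_\lambda(B^\lambda)$ with $|E_\infty| = |B^\lambda|$, and the rigidity in \cref{thm:IsopIneq} forces $E_\infty = B^\lambda(|B^\lambda|) + x_0$ for some $x_0 \in \partial H$, up to a negligible set; in particular $\alpha_\lambda(E_\infty) = 0$.

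Finally, by \cref{lem:ContinuityAlpha} (applicable since $|E_\infty| = |B^\lambda| > 0$) we get $\alpha_\lambda(E_j') \to \alpha_\lambda(E_\infty) = 0$, which contradicts $\alpha_\lambda(E_j') \ge \bar\epsilon_0/2$. This proves the corollary, with the additional freedom of choosing $\bar\delta \le \bar c$ so that the running assumption $D_\lambda(E) < \bar c$ of this section is automatically met. The only genuinely nontrivial ingredient is the equality case of the isoperimetric inequality for $P_\lambda$ invoked above; all other steps are routine compactness and semicontinuity bookkeeping.
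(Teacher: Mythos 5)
Your proof is correct and takes essentially the same approach as the paper: reduce to $|E|=|B^\lambda|$, apply Lemma~\ref{lem:ReductionToBounded} to pass to uniformly bounded sets, extract an $L^1$-convergent subsequence via Lemma~\ref{lemma:precompattezza}, use lower semicontinuity plus the rigidity in Theorem~\ref{thm:IsopIneq} to identify the limit as a bubble, and contradict the assumed lower bound on the asymmetry. The only cosmetic difference is that you invoke Lemma~\ref{lem:ContinuityAlpha} for the final step, whereas the paper deduces $\alpha_\lambda(E_i')\to 0$ directly from $|E_i'\Delta B^\lambda(|B^\lambda|,x)|\to 0$; both are immediate.
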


\begin{proof} By scale-invariance of the asymmetry and of the deficit, it is sufficient to prove the claim assuming also $|E| = |B^\lambda|$. 
We argue by contradiction. 
Suppose there exist a number $\bar\epsilon > 0$ and a sequence of sets $\{E_i\}_i$, with $E_i \subset \R^n \setminus H$ and $|E_i| = |B^\lambda|$, such that $D_\lambda(E_i) < \frac{1}{i}$ and $\alpha_\lambda(E_i) > \bar\epsilon$ for all $i \in \N$. 
Let us consider the sequence of sets $\{E_i'\}_i$, with $E_i' \subset Q_l \cap(\R^n \setminus H)$ and $|E_i'| = |B^\lambda|$, given by \cref{lem:ReductionToBounded}. 
Moreover the Lemma assures that $\alpha_\lambda(E_i') > \bar\epsilon/2$ for large $i$, and $D_\lambda(E_i') \to 0$. 
Since each set $E_i'$ is contained in the same $Q_l$, by \cref{lemma:precompattezza} we can assume, up to a subsequence, that $E_i' \xrightarrow{L^1} E'$ for some set $E'$ of finite perimeter with $|E'| = |B^\lambda|$. 
By the lower semicontinuity of the perimeters we get $P_\lambda(E') \le P_\lambda(B^\lambda)$, hence $E'=B^\lambda(|B^\lambda|,x)$ for some $x \in \partial H \cap Q_l$ by uniqueness of minimizers. The convergence of $E_i'$ to $E'$ implies that $|E_i' \Delta E'| \to 0$, against the assumption $\alpha_\lambda(E_i') > \frac{\bar\epsilon}{2}$.
\end{proof}

\begin{corollary}\label{cor:DeficitBassoFetteGrosse}
There exist $A_\lambda, T_\lambda, \eta>0$ depending on $n,\lambda$ such that for any set of finite perimeter $E\subset \R^n\setminus H$ with $\left||E|-|B^\lambda|\right| \le \eta$ and $D_\lambda(E)\le\eta$ there holds
\begin{equation*}
    \hh^{n-1}(E \cap \{x_n=t\}) \ge A_\lambda,
\end{equation*}
for almost every $t \in (0,T_\lambda)$.
\end{corollary}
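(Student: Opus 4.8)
The plan is to combine the slice estimate of \cref{lem:LowerBoundAreaFette} with the qualitative stability of \cref{lemma:fmp2.3}, after a harmless rescaling that normalizes the volume. Since $D_\lambda$ is scale invariant while a dilation by $\sigma>0$ multiplies $(n-1)$-dimensional slices by $\sigma^{n-1}$ and rescales the slicing variable $t$ by $\sigma$, if $\left| |E|-|B^\lambda| \right|\le\eta$ with $\eta=\eta(n,\lambda)$ small then $\sigma:=(|B^\lambda|/|E|)^{1/n}$ stays in a fixed compact subinterval of $(0,+\infty)$; hence it suffices to prove the statement for sets with $|E|=|B^\lambda|$ and then absorb the bounded factors $\sigma^{\pm(n-1)}$ into the final constants. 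So assume $|E|=|B^\lambda|$, and set $g(t):=|E\cap\{x_n<t\}|/|B^\lambda|$ and $\kappa:=(\omega_n/|B^\lambda|)^{1/n}$, which satisfies $\kappa>1$ because $B^\lambda$ is a proper subset of the unit ball. Then \cref{lem:LowerBoundAreaFette}, applied with $F=E$, gives for a.e.\ $t>0$
\[
\hh^{n-1}(E\cap\{x_n=t\})\ \ge\ \tfrac12 P_\lambda(B^\lambda)\Big(\kappa\,(1-g(t))^{\frac{n-1}{n}}-1-D_\lambda(E)\Big).
\]

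The right-hand side is bounded below by a fixed positive quantity as soon as $D_\lambda(E)$ is small and $g(t)\le g_0$, where $g_0:=1-\big(\tfrac{\kappa+1}{2\kappa}\big)^{n/(n-1)}\in(0,1)$ is chosen precisely so that $\kappa(1-g_0)^{(n-1)/n}-1=(\kappa-1)/2$: indeed, for $g(t)\le g_0$ and $D_\lambda(E)\le(\kappa-1)/4$ the bracket above is at least $(\kappa-1)/4$, so the slice is at least $A_\lambda^0:=\tfrac18 P_\lambda(B^\lambda)(\kappa-1)>0$. The one genuinely nontrivial point, which I expect to be the main obstacle, is to guarantee $g(t)\le g_0$ for every $t$ in a fixed interval $(0,T_\lambda)$: a priori a set with small deficit could concentrate a definite fraction of its mass near $\partial H$, so this cannot be extracted from the deficit bound by itself.

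To overcome this I would use \cref{lemma:fmp2.3} with $\bar\epsilon=g_0/4$, which yields $\bar\delta=\bar\delta(n,\lambda)>0$ such that $D_\lambda(E)\le\bar\delta$ forces the existence of $x_E\in\partial H$ with $|E\,\Delta\,B^\lambda(|B^\lambda|,x_E)|\le(g_0/4)|B^\lambda|$. Since the slabs $\{x_n<t\}$ are invariant under translations along $\partial H$, one has $|B^\lambda(|B^\lambda|,x)\cap\{x_n<t\}|=\Phi(t):=|B^\lambda(|B^\lambda|)\cap\{x_n<t\}|$ for all $x\in\partial H$, and $\Phi(t)\to0$ as $t\to0^+$ because the sets $B^\lambda(|B^\lambda|)\cap\{x_n<t\}$ decrease to an $\hh^{n-1}$-negligible subset of $\partial H$ as $t\downarrow0$. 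Choosing $T_\lambda=T_\lambda(n,\lambda)>0$ with $\Phi(T_\lambda)\le(g_0/4)|B^\lambda|$, for every $t\in(0,T_\lambda)$ we then get
\[
g(t)\ \le\ \frac{|B^\lambda(|B^\lambda|,x_E)\cap\{x_n<t\}|}{|B^\lambda|}+\frac{|E\,\Delta\,B^\lambda(|B^\lambda|,x_E)|}{|B^\lambda|}\ \le\ \frac{g_0}{4}+\frac{g_0}{4}\ =\ \frac{g_0}{2}\ \le\ g_0.
\]
Taking $\eta:=\min\{\bar\delta,(\kappa-1)/4\}$, further shrunk so that the reduction's dilation factor $\sigma$ stays in $[\tfrac12,2]$, gives the uniform lower bound on slices when $|E|=|B^\lambda|$; undoing the rescaling then produces the asserted constants $A_\lambda,T_\lambda$ depending only on $n,\lambda$. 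Apart from the mass-concentration issue handled via \cref{lemma:fmp2.3}, everything else is elementary bookkeeping with the constants.
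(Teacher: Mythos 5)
Your argument is correct and takes essentially the same route as the paper's proof: you reduce to $|E|=|B^\lambda|$ by rescaling, invoke \cref{lemma:fmp2.3} together with the translation invariance of slab volumes to ensure $g(t)=|E\cap\{x_n<t\}|/|B^\lambda|$ stays below a threshold on a fixed interval $(0,T_\lambda)$, feed this into \cref{lem:LowerBoundAreaFette} to get a uniform lower bound on the slices, and undo the dilation at the end. The only differences are cosmetic (you pick an explicit threshold $g_0$ first and then choose $T_\lambda$, whereas the paper fixes $T_\lambda'$ first and then picks the tolerance), and the resulting constants differ only by harmless factors.
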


\begin{proof}
Let us prove the inequality assuming $|E|=|B^\lambda|$ first.
Fix $T_\lambda'>0$ such that
\[
 \left( \frac{\omega_n}{|B^\lambda|} \right)^{\frac1n} \left( 1- \frac{|B^\lambda(|B^\lambda|,0) \cap \{x_n<T_\lambda'\}|}{|B^\lambda|} \right)^{\frac{n-1}{n}} \ge 1+ a ,
\]
for some $a>0$.
By \cref{lemma:fmp2.3}, for any $\omega>0$ there is $\eta>0$ such that if $D_\lambda(E)<\eta$ then $|E \cap \{x_n<T_\lambda'\}| \le |B^\lambda(|B^\lambda|,0) \cap \{x_n<T_\lambda'\}|+\omega$. For such a set $E$, \cref{lem:LowerBoundAreaFette} implies that for almost every $t\in(0,T_\lambda')$ there holds
\begin{equation*}
\begin{split}
    \hh^{n-1}( E \cap \{x_n=t\}) &\ge \frac12 P_\lambda(B^\lambda) \left(  \left( \frac{\omega_n}{|B^\lambda|} \right)^{\frac1n} \left( 1- \frac{|E \cap \{x_n<t\}|}{|B^\lambda|} \right)^{\frac{n-1}{n}} -1 - D_\lambda(E) \right) \\
    &\ge 
    \frac12 P_\lambda(B^\lambda) \left(  \left( \frac{\omega_n}{|B^\lambda|} \right)^{\frac1n} \left( 1- \frac{|B^\lambda(|B^\lambda|,0) \cap \{x_n<T_\lambda'\}|+\omega}{|B^\lambda|} \right)^{\frac{n-1}{n}} -1 -\eta \right).
\end{split}
\end{equation*}
Hence for sufficiently small $\eta>0$ the right hand side in the previous estimate is bounded below by some constant $A_\lambda'(n,\lambda)>0$.\\
For a generic set $E$ such that $\left||E|-|B^\lambda|\right| \le \eta$ and $D_\lambda(E)\le\eta$, the set $E'=\left( |B^\lambda|^{\frac1n}/|E|^{\frac{1}{n}}\right) E$ has measure equal to $|B^\lambda|$ and deficit $D_\lambda(E')\le \eta$. Up to decreasing $\eta>0$, applying the first part of the proof to $E'$, the desired estimate holds on $E$ for $T_\lambda=T_\lambda'/2$ and $A_\lambda=A_\lambda'/2$.
\end{proof}

\subsection{Reduction to \texorpdfstring{$(n - 1)$}{(n - 1)}-symmetric sets}

In this section we prove that, in order to prove \cref{thm:FinalQuantitativeInequality}, it is sufficient to further reduce to show \eqref{eq:FinalQuantitativeInequality} among $(n-1)$-symmetric sets, i.e., sets which are symmetric with respect to reflection across $n-1$ orthogonal hyperplanes, each one orthogonal to $\{x_n=0\}$. The results are analogous to \cite[Section 6]{MaggiMethodsQuantitative08}.

\begin{lemma}\label{lemma:fmp2.2}
Let $E \subset \R^n \setminus H$ be a Borel set with finite measure, symmetric with respect to $k \in \{1, \dots, n - 1\}$ orthogonal half-hyperplanes $H_j = \left\{x \in \R^n \setminus H : \Braket{x, \nu_j} = 0\right\}$ for $1 \le j \le k$, where $|\nu_j|=1$ and $\Braket{\nu_j, e_n} = 0$ for any $1\le j \le k$. Then
\begin{equation}\label{dispense:4.16} \min_{x \in \partial H} |E \Delta B^\lambda(|E|, x)| \le \min_{y \in \partial H \cap\bigcap_{j = 1}^k H_j} |E \Delta B^\lambda(|E|, y)| \le 3 \min_{x \in \partial H} |E \Delta B^\lambda(|E|, x)|. \end{equation} \end{lemma}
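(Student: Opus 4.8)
The first inequality in \eqref{dispense:4.16} is immediate, since its left-hand side is the infimum of the same quantity over a larger set of translations. For the second inequality the plan is to run a reflection argument in the spirit of \cite[Section 6]{MaggiMethodsQuantitative08}, exploiting the rotational symmetry of the optimal bubbles about the $e_n$-axis; set $v:=|E|$ and assume $v>0$, the case $|E|=0$ being trivial. The one non-routine ingredient I would isolate is the following monotonicity of horizontal translates of $B^\lambda(v)$: for every $w\in\partial H$ there holds
\begin{equation}\label{eq:planPhi}
	|B^\lambda(v,0)\,\Delta\,B^\lambda(v,w)|=\Phi_v(|w|)
\end{equation}
for some nondecreasing function $\Phi_v\colon[0,+\infty)\to[0,+\infty)$ depending only on $n,\lambda,v$. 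To prove \eqref{eq:planPhi} I would slice with the hyperplanes $\{x_n=t\}$: since $B^\lambda(v,0)$ is Schwarz-symmetric (see \cref{def:SchwarzSymmetrization}), its slice at height $t$ is an $(n-1)$-dimensional ball of some radius $\rho_v(t)$ centred at $(0,t)$, while the corresponding slice of $B^\lambda(v,w)$ is that ball translated by $w$; hence by Fubini and the rotational invariance of balls the left-hand side of \eqref{eq:planPhi} equals $\int_0^{+\infty}\gamma_{n-1}(\rho_v(t),|w|)\de t$, where $\gamma_{n-1}(\rho,d)$ denotes the measure of the symmetric difference of two $(n-1)$-dimensional balls of radius $\rho$ whose centres are at distance $d$. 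Since the measure of the intersection of two balls of equal radius is a nonincreasing function of the distance between their centres, $\gamma_{n-1}(\rho,\cdot)$ is nondecreasing, and \eqref{eq:planPhi} follows.

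For the main step, let $x_0\in\partial H$ attain $m:=\min_{x\in\partial H}|E\,\Delta\,B^\lambda(v,x)|$, and write $x_0=z^*+w$ with $w:=\sum_{j=1}^k\Braket{x_0,\nu_j}\,\nu_j$ and $z^*:=x_0-w$; since $\nu_1,\dots,\nu_k$ are orthonormal and orthogonal to $e_n$, the point $z^*$ belongs to $\partial H\cap\bigcap_{j=1}^kH_j$. Let $g$ be the composition of the reflections $\sigma_1,\dots,\sigma_k$ across the hyperplanes $\{\Braket{x,\nu_j}=0\}\subset\R^n$. Then $g$ is a measure-preserving linear isometry, the $\sigma_j$ commute, $g$ fixes $e_n$ (so $g(\partial H)=\partial H$ and $g(H)=H$), and $g$ acts as $-\mathrm{Id}$ on $\mathrm{span}(\nu_1,\dots,\nu_k)$ and as the identity on its orthogonal complement; hence $g(x_0)=z^*-w$ and $|x_0-g(x_0)|=2|w|$. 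Moreover each $\sigma_j$ preserves the unit ball and fixes $e_n$, so it fixes $B^\lambda$ and therefore $B^\lambda(v)$, whence $g(B^\lambda(v,x))=B^\lambda(v,g(x))$ for every $x\in\partial H$; and $g(E)=E$ up to negligible sets, since $E$ is symmetric with respect to each $H_j$.

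Given these facts the conclusion is a short chain of inequalities. Since $g$ is a measure-preserving isometry,
\begin{equation*}
	|E\,\Delta\,B^\lambda(v,g(x_0))|=|g(E)\,\Delta\,g(B^\lambda(v,x_0))|=|E\,\Delta\,B^\lambda(v,x_0)|=m,
\end{equation*}
so the triangle inequality for the symmetric difference together with \eqref{eq:planPhi} yields
\begin{equation*}
	\Phi_v(2|w|)=|B^\lambda(v,x_0)\,\Delta\,B^\lambda(v,g(x_0))|\le|B^\lambda(v,x_0)\,\Delta\,E|+|E\,\Delta\,B^\lambda(v,g(x_0))|=2m;
\end{equation*}
as $\Phi_v$ is nondecreasing, $\Phi_v(|w|)\le\Phi_v(2|w|)\le2m$, and therefore
\begin{equation*}
	|E\,\Delta\,B^\lambda(v,z^*)|\le|E\,\Delta\,B^\lambda(v,x_0)|+|B^\lambda(v,x_0)\,\Delta\,B^\lambda(v,z^*)|=m+\Phi_v(|w|)\le3m,
\end{equation*}
which is exactly the second inequality in \eqref{dispense:4.16}, since $z^*\in\partial H\cap\bigcap_{j=1}^kH_j$. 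The main obstacle is the monotonicity \eqref{eq:planPhi}; granted that, the rest only uses the invariance of $E$, of the bubbles $B^\lambda(v)$, and of the half-space $H$ under the reflection group generated by $\sigma_1,\dots,\sigma_k$.
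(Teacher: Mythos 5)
Your argument is correct and follows the same route as the paper: reflect the minimizing center $x_0$ by the symmetry group of $E$ to obtain another asymmetry-realizing bubble, and conclude with the triangle inequality. Two points of comparison. First, you make explicit --- via Fubini slicing of the Schwarz-symmetric cap and monotonicity of the symmetric difference of two congruent $(n-1)$-balls --- that $d\mapsto|B^\lambda(v,0)\Delta B^\lambda(v,w)|$, $|w|=d$, is nondecreasing; the paper invokes this monotonicity implicitly without proof. Second, and more substantively, you compare $B^\lambda(v,x_0)$ with the translate $B^\lambda(v,z^*)$ centered at $z^*=\tfrac12(x_0+g(x_0))$, the orthogonal projection of $x_0$ onto $\partial H\cap\bigcap_j H_j$; the paper's displayed chain uses the origin-centered $B^\lambda(|E|)$ instead. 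Your choice is the one that makes the monotonicity step valid unconditionally, since $|x_0-z^*|=\tfrac12|x_0-g(x_0)|$, whereas $|x_0-0|$ need not be $\le|x_0-g(x_0)|$ when $k<n-1$ and the component of $x_0$ along $\bigcap_j H_j$ dominates the reflected part. In short, your write-up is a more carefully executed version of the same proof and in fact repairs a small imprecision in the paper's exposition.
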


\begin{proof}
We can suppose for simplicity that $\forall j \in \{1, \dots, k\}$ we have $\nu_j = e_j$. If $x^0 = (x^0_1, \dots, x^0_{n - 1}, 0)$ is such that $\alpha_\lambda(E)$ is achieved by $B^\lambda(|E|, x^0)$, then $\alpha_\lambda(E)$ is achieved also by $B^\lambda(|E|, \bar x^0)$, where $\bar x^0 = (- x^0_1, \dots, - x_k^0, x^0_{k + 1}, \dots,$ $x^0_{n - 1}, 0)$. Since $|B^\lambda(|E|, x^0) \Delta B^\lambda(|E|)| \le |B^\lambda(|E|, x^0) \Delta B^\lambda(|E|, \bar x^0)|$ we have \begin{equation*} \begin{split} |E \Delta B^\lambda(|E|)| & \le |E \Delta B^\lambda\left(|E|, x^0\right)| + |B^\lambda\left(|E|, x^0\right) \Delta B^\lambda(|E|)| \\ & \le |E \Delta B^\lambda\left(|E|, x^0\right)| + |B^\lambda\left(|E|, x^0\right) \Delta B^\lambda\left(|E|, \bar x^0\right)| \\ & \le |E \Delta B^\lambda\left(|E|, x^0\right)| + |B^\lambda\left(|E|, x^0\right) \Delta E| + |E \Delta B^\lambda\left(|E|, \bar x^0\right)| \\ & = 3 |E \Delta B^\lambda\left(|E|, x^0\right)|. \qedhere \end{split} \end{equation*} \end{proof}

Given a Borel set $E \subset \R^n \setminus H$ with finite measure and a unit vector $\nu$ with $\Braket{\nu, e_n} = 0$, we denote by $H^+_\nu = \{x \in \R^n : \Braket{x, \nu} > t\}$ an open half-space orthogonal to $\nu$ where $t \in \R$ is chosen in such a way that \begin{equation*} |E \cap H_\nu^+| = \frac{|E|}{2}. \end{equation*}
We also denote by $r_\nu : \R^n \setminus H \to \R^n \setminus H$ the reflection with respect to $H_\nu := \partial H_\nu^+$, and by $H_\nu^- := r_\nu(H_\nu^+)$ the open half-space complementary to $H_\nu^+$. Finally we write $E_\nu^{\pm} := E \cap H_\nu^\pm$. 

Observe that \begin{equation}\label{dispense:4.19} D_\lambda(E^\pm_\nu \cup r_\nu(E_\nu^\pm)) \le 2 D_\lambda(E). \end{equation}
Indeed \begin{equation*} \begin{split} P_\lambda(E^\pm_\nu\cup r_\nu(E_\nu^\pm)) - P_\lambda(B^\lambda(|E|)) & \le 2 P_\lambda(E) - P_\lambda(E^\mp_\nu \cup r_\nu(E_\nu^\mp)) - P_\lambda(B^\lambda(|E|)) \\ & = 2 (P_\lambda(E) - P_\lambda(B^\lambda(|E|))) + P_\lambda(B^\lambda(|E|)) - P_\lambda(E_\nu^\mp \cup r_\nu(E_\nu^\mp)) \\ & \le 2 (P_\lambda(E) - P_\lambda(B^\lambda(|E|))), \end{split} \end{equation*}
where in the last inequality we used the isoperimetric inequality of \cref{thm:IsopIneq}.

\begin{lemma}\label{lemma:fmp2.5}
There exist $\bar C_2, \bar\delta_2>0$ depending on $n, \lambda$ such that, if $E \subset \R^n \setminus H$ is a Borel set with finite measure such that $D_\lambda(E) \le \bar\delta_2$, and if $\nu_1$ and $\nu_2$ are two orthogonal vectors, with $\Braket{\nu_i, e_n} = 0$, such that $H_{\nu_1}$ and $H_{\nu_2}$ divide $E$ in four parts of equal measure, then there exist $i \in \{1, 2\}$ and $s \in \{+, -\}$ such that, setting $E' = E^s_{\nu_i} \cup r_{\nu_i}(E^s_{\nu_i})$, there holds \begin{equation}\label{fmp:2.2} \alpha_\lambda(E) \le \bar C_2 \alpha(E'). \end{equation}
\end{lemma}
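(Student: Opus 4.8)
The plan is to argue by contradiction, following the strategy of \cite[Section 6]{MaggiMethodsQuantitative08}. Suppose the conclusion fails: then there is a sequence of Borel sets $E_i \subset \R^n\setminus H$ with $D_\lambda(E_i)\to 0$, together with orthogonal unit vectors $\nu_1^i,\nu_2^i$ (orthogonal to $e_n$) whose associated half-spaces $H_{\nu_1^i},H_{\nu_2^i}$ quarter $E_i$, such that for every choice of $j\in\{1,2\}$ and $s\in\{+,-\}$ the reflected set $F^{j,s}_i := (E_i)^s_{\nu^i_j}\cup r_{\nu^i_j}((E_i)^s_{\nu^i_j})$ satisfies $\alpha_\lambda(E_i) > i\,\alpha_\lambda(F^{j,s}_i)$. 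By scaling we normalize $|E_i|=|B^\lambda|$. By \eqref{dispense:4.19} we have $D_\lambda(F^{j,s}_i)\le 2D_\lambda(E_i)\to 0$, so by the non-quantitative stability \cref{lemma:fmp2.3} each $F^{j,s}_i$ converges (up to a translation along $\partial H$ making it a bounded competitor, via \cref{lem:ReductionToBounded} and \cref{lemma:precompattezza}) to a bubble $B^\lambda(|B^\lambda|,p^{j,s}_i)$ in $L^1$, equivalently $\alpha_\lambda(F^{j,s}_i)\to 0$. Combined with the contradiction hypothesis this forces $\alpha_\lambda(E_i)\to 0$ as well, so $E_i$ itself converges in $L^1$ to a bubble; after a translation along $\partial H$ we may assume $E_i \to B^\lambda(|B^\lambda|,0)$.

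The heart of the argument is then a comparison of the asymmetries of $E_i$ and of its reflected children. Write $v_i := |E_i|$ and let $x_i\in\partial H$ be an optimal center for $E_i$, so $\alpha_\lambda(E_i) = |E_i\,\Delta\, B^\lambda(v_i,x_i)|/v_i$; since $E_i\to B^\lambda(v_i,0)$ we have $x_i\to 0$. The two quartering hyperplanes $H_{\nu_1^i},H_{\nu_2^i}$ pass through points $y_1^i,y_2^i$ on $\partial H$; by the symmetry of the limit bubble and the fact that $D_\lambda(E_i)\to0$, one checks that these hyperplanes get close to hyperplanes through $0$, and in particular $x_i$ is close to $H_{\nu_1^i}\cap H_{\nu_2^i}$. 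The key elementary geometric estimate is the following: for at least one index $j$, the point $x_i$ lies within distance $o(1)\cdot\mathrm{dist}(x_i, H_{\nu^i_j})$-comparable terms of $H_{\nu_j^i}$ is NOT what we need — rather, splitting $E_i$ across $H_{\nu_j^i}$ and reflecting produces a set whose optimal center is forced to lie on $H_{\nu_j^i}$, and the symmetric difference $|E_i \,\Delta\, B^\lambda(v_i,x_i)|$ is controlled by a dimensional constant times the largest of the four quantities $|F^{j,s}_i\,\Delta\,B^\lambda(v_i, \pi_{\nu_j^i}(x_i))|$, where $\pi_{\nu_j^i}$ is the orthogonal projection onto $H_{\nu_j^i}$. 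Indeed, on the half-space $H^+_{\nu_j^i}$ the set $E_i$ and $F^{j,+}_i$ coincide, and $B^\lambda(v_i,x_i)\cap H^+_{\nu_j^i}$ differs from $B^\lambda(v_i,\pi_{\nu_j^i}(x_i))\cap H^+_{\nu_j^i}$ by a set whose measure is comparable to $|x_i - \pi_{\nu_j^i}(x_i)|\cdot v_i^{(n-1)/n}$; running this on both sides and both children, and using that for one choice of $j$ the displacement $|x_i-\pi_{\nu_j^i}(x_i)|$ is at most a dimensional multiple of $\max_s \alpha_\lambda(F^{j,s}_i)^{1/?}$—more precisely, using the standard fact (as in \cite[Section 6]{MaggiMethodsQuantitative08}) that if $\alpha_\lambda(F^{j,s}_i)$ is small for all $j,s$ then $\alpha_\lambda(E_i)\le \bar C_2 \max_{j,s}\alpha_\lambda(F^{j,s}_i)$ with $\bar C_2 = \bar C_2(n)$—yields exactly \eqref{fmp:2.2} with some fixed $j,s$. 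This contradicts $\alpha_\lambda(E_i) > i\,\alpha_\lambda(F^{j,s}_i)$ for $i$ large.

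I expect the main obstacle to be the geometric step establishing $\alpha_\lambda(E)\le \bar C_2\max_{j,s}\alpha_\lambda(F^{j,s})$ when all asymmetries are small: one must show that the optimal center for $E$ can be taken close to $H_{\nu_1}\cap H_{\nu_2}$, then project onto one of the two hyperplanes (choosing the index $j$ for which the loss is smaller), and estimate the symmetric differences created by these two operations in terms of the children's asymmetries. The quantitative ingredient here is that $x \mapsto |B^\lambda(v,x)\,\Delta\,B^\lambda(v,x')|$ is comparable to $\min\{v, |x-x'|\,v^{(n-1)/n}\}$, which is elementary but needs the explicit shape of $B^\lambda$; everything else is a soft compactness argument built on \cref{lemma:fmp2.3}, \eqref{dispense:4.19}, \cref{lem:ContinuityAlpha}, and \cref{lemma:precompattezza}. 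This is precisely the scheme of \cite[Lemma 2.5, Section 6]{MaggiMethodsQuantitative08}, adapted to the capillarity bubbles and to centers constrained to $\partial H$.
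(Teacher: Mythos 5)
You correctly identify the overall scheme — normalize $|E|=|B^\lambda|$, invoke \cref{lemma:fmp2.3} and \eqref{dispense:4.19} to force all four children's asymmetries to vanish as the deficit vanishes, and reduce to a quantitative comparison between $\alpha_\lambda(E)$ and the children's asymmetries. However, the key geometric step you sketch differs from the paper's and, as written, is circular, and your fallback is to quote the lemma's own conclusion as a ``standard fact.''

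Your decomposition triangulates through $E$'s own optimal cap $B^\lambda(v,x_i)$ and then projects $x_i$ onto $H_{\nu_j^i}$, so the error you must absorb is $a_j:=|x_i-\pi_{\nu_j^i}(x_i)|$. But this displacement is controlled by $\alpha_\lambda(E_i)$ itself: the bisection constraint $|E_i\cap H^+_{\nu_j^i}|=\tfrac12 |E_i|$ together with $|(E_i\Delta B^\lambda(v,x_i))\cap H^+_{\nu_j^i}|\le \alpha_\lambda(E_i)v$ forces $a_j\cdot \hh^{n-1}\bigl(B^\lambda(v,x_i)\cap H_{\nu_j^i}\bigr)\lesssim \alpha_\lambda(E_i)v$, while the symmetric-difference loss from replacing $B^\lambda(v,x_i)$ by $B^\lambda(v,\pi_{\nu_j^i}(x_i))$ is comparable to the \emph{same} quantity $a_j\cdot\hh^{n-1}(\cdot)$. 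The decomposition therefore only yields $\alpha_\lambda(E_i)v\le (\text{children terms})+C\,\alpha_\lambda(E_i)v$ with $C$ of order one, which is vacuous. The sentence ``using the standard fact\,\dots\,that if $\alpha_\lambda(F^{j,s})$ is small for all $j,s$ then $\alpha_\lambda(E)\le\bar C_2\max_{j,s}\alpha_\lambda(F^{j,s})$'' is not a standard black box: it \emph{is} the content of the lemma.

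The paper avoids the circularity by triangulating through the \emph{children's} constrained-optimal caps $B_j^{\lambda,\pm}$ (centered on $H_{\nu_j}\cap\partial H$) rather than through $E$'s cap projected; see \eqref{dispense:4.22}. The error term then becomes $|B_j^{\lambda,+}\Delta B_j^{\lambda,-}|$, the discrepancy between the two children's best caps on the same hyperplane — a quantity that actually vanishes whenever the two children agree on their best cap, unlike $a_j$. The heart of the proof is showing that for at least one $j$ this discrepancy is controlled by the children's asymmetries (\eqref{dispense:4.23}); this is done by contradiction via the composite sets $S_1,S_2$ together with the geometric estimate \eqref{eq:DiffSimmSplittataUniforme}, which states that for nearby caps the symmetric difference restricted to any one coordinate quadrant captures a uniform fraction of the full symmetric difference (proved by a short flow argument). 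Your proposal has no analogue of \eqref{eq:DiffSimmSplittataUniforme} or the $S_1,S_2$ comparison; the projection-based decomposition you substitute for them does not close.
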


\begin{proof} By scale-invariance of Fraenkel asymmetry, it is sufficient to prove the claim assuming also $|E| = |B^\lambda|$. If $i \in \{1, 2\}$ and $s \in \{+, -\}$, let $E^{'s}_{\nu_i}$ denote the sets obtained by reflecting $E^s_{\nu_i}$ along $H_{\nu_i}$ and let $B_i^{\lambda, s} = B^\lambda(|B^\lambda|, x_i^{s})$ be four spherical caps such that
\begin{equation*} |E_{\nu_i}^{'s} \Delta B_i^{\lambda, s}| = \min_{x \in H_{\nu_i} \cap \partial H} |E_{\nu_i}^{' s} \Delta B^\lambda(|B^\lambda|, x)|. \end{equation*} 
For $i = 1, 2$, by the triangular inequality we have \begin{equation}\label{dispense:4.22} \begin{split} \min_{x \in \partial H} |E \Delta B^\lambda(|B^\lambda|, x)| & \le |E \Delta B_i^{\lambda, +}| \\ & = |(E \Delta B_i^{\lambda, +}) \cap H_{\nu_i}^+| + |(E \Delta B_i^{\lambda, +}) \cap H_{\nu_i}^-| \\ & \le |(E \Delta B_i^{\lambda, +}) \cap H^+_{\nu_i}| + |(E \Delta B_i^{\lambda, -}) \cap H^-_{\nu_i}| + |(B_i^{\lambda, +} \Delta B_i^{\lambda, -}) \cap H_{\nu_i}^-| \\ & = \frac{1}{2} |E^{'+}_{\nu_i} \Delta B_i^{\lambda, +}| + \frac{1}{2} |E_{\nu_i}^{'-} \Delta B_i^{\lambda, -}| + \frac{1}{2} |B_i^{\lambda, +} \Delta B_i^{\lambda, -}|. \end{split} \end{equation} 

Once we show that if $D_\lambda(E)$ is sufficiently small then there exists $c_{n,\lambda}>0$ such that at least one the following \begin{equation}\label{dispense:4.23} \begin{split} & |B_1^{\lambda, +} \Delta B_1^{\lambda, -}| \le 2c_{n,\lambda} \Big(|E_{\nu_1}^{'+} \Delta B_1^{\lambda, +}| + |E_{\nu_1}^{'-} \Delta B_1^{\lambda, -}|\Big) \\ 
& |B_2^{\lambda, +} \Delta B_2^{\lambda, -}| \le 2c_{n,\lambda} \Big(|E_{\nu_2}^{'+} \Delta B_2^{\lambda, +}| + |E_{\nu_2}^{'-} \Delta B_2^{\lambda, -}|\Big) \end{split} \end{equation}
holds, then we soon conclude the proof. 
Indeed, assume for example that the first inequality in~\eqref{dispense:4.23} holds. 
Then, from~\eqref{dispense:4.16} and~\eqref{dispense:4.22} with $i = 1$, we get
\begin{equation*}
\begin{split}
\min_{x \in \partial H} |E \Delta B^\lambda(|B^\lambda|, x)| &\overset{\eqref{dispense:4.22}}{\le} (c_{n,\lambda} + 1/2) \Big(|E_{\nu_1}^{'+} \Delta B_1^{\lambda, +}| + |E_{\nu_1}^{'-} \Delta B_1^{\lambda, -}|\Big) \\&
\overset{\eqref{dispense:4.16}}{\le} 3(c_{n,\lambda} + 1/2) \Big(\min_{x \in \partial H} |E_{\nu_1}^{'+} \Delta B^\lambda(|B^\lambda|, x)| + \min_{x \in \partial H} |E_{\nu_1}^{'-} \Delta B^\lambda(|B^\lambda|, x)|\Big),     
\end{split}
\end{equation*}
thus proving~\eqref{fmp:2.2} with $\bar C_2 = 6(c_{n,\lambda} + 1/2)$ and $E'$ equal to $E_{\nu_1}^{'+}$ or $E_{\nu_1}^{'-}$.

Observe that, given $\tilde\epsilon > 0$, \cref{lemma:fmp2.3},~\eqref{dispense:4.16} and~\eqref{dispense:4.19} imply that there exists $\bar\delta_2(n, \lambda) > 0$ such that if $D_\lambda(E) < \bar\delta_2$ then \begin{equation}\label{maggi:6.10} \max\left\{\alpha_\lambda(E), \frac{|E_{\nu_i}^{'\pm} \Delta B_i^{\lambda, \pm}|}{|B^\lambda|} \st i = 1, 2\right\} < \tilde\epsilon.
\end{equation} 
Thanks to~\eqref{maggi:6.10}, we can show that the caps $B_i^{\lambda, \pm}$ get closer and closer to the optimal ones for $E$, as $\bar{\delta}_2$ decreases.

Indeed, let us assume by contradiction that there exists $\eta > 0$ such that for every $j \in \N$ there exist $E_j$, with $|E_j|=|B^\lambda|$, $D_\lambda(E_j) < \frac{1}{j}$, with $B^\lambda_j:= B^\lambda(|B^\lambda|,x_j)$ realizing the asymmetry of $E_j$, but for $i \in \{1, 2\}$ and $s \in \{+, -\}$, if $B^{\lambda, s}_{i, j}:=B^\lambda(|B^\lambda|, x_{i,j}^s)$ is such that
\[
|E'^{s}_{j, \nu_i^j} \Delta B^{\lambda, s}_{i, j}| = \min_{x \in H_{\nu_i^j} \cap \partial H} |E'^{s}_{j, \nu_i^j} \Delta B^{\lambda}(|B^\lambda|, x)|, 
\]
where $E'^{s}_{j, \nu_i^j}$ is given by reflections of truncations of $E_j$ along orthogonal subspaces $H_{\nu_1^j}, H_{\nu_2^j}$, then $|x_{i, j}^s- x_j| > \eta$ for some $i\in\{1,2\}, s \in \{+,-\}$ and any $j$.
Without loss of generality we can assume that that $i = 1$ and $s = +$.\\
Let us translate every set in the above contradiction assumption by $-x_j$. Without relabeling the objects involved, up to subsequences, we have that $E_j \to B^\lambda_0:= B^\lambda(|B^\lambda|, 0)$ in $L^1$. We can show that $E'^+_{j, \nu_1^j} \to B^\lambda_0$ as well.
Indeed, up to a rotation we can further assume that
\begin{equation*} H_{\nu_1^j} = \left\{(a_j, 0, \dots, 0) + e_1^\perp\right\} , \qquad \nu_1^j = e_1 ,\qquad\forall\,j,
\end{equation*}
with $a_j \in \R$.
By the definition of $H_{\nu_1^j}$, we have
\begin{equation*}
\frac{|B^\lambda|}{2} = \frac{|E_j|}{2} = |E_j \cap \{x_1 > a_j\}|.
\end{equation*}
Then $\{a_j\}$ is bounded and, up to subsequence, converges to $a_\infty = 0$ because, if $a_\infty \neq 0$, then \begin{equation*} E_j \cap \{x_1 > a_j\} \to B_0^\lambda \cap \{x_1 > a_\infty\} \end{equation*} with $\left|B_0^\lambda \cap \{x_1 > a_\infty\}\right| \neq \frac{|B^\lambda|}{2}$.
In particular $E_{j, \nu_1^j}'^+ \to B^\lambda_0$.
Finally by~\eqref{maggi:6.10} \begin{equation*} \tilde \epsilon |B^\lambda| \ge \lim_j \left|E'^+_{j, \nu_1^j} \Delta B_{1, j}^{\lambda, +}\right| \ge \min_{x\in\partial H, |x|\ge \eta} \left\{|B^\lambda(|B^\lambda|, x) \Delta B^\lambda_0\right\} =:C(\eta) >0. \end{equation*}
But for $j$ sufficiently large, since $D_\lambda(E_j)\to0$, by \eqref{maggi:6.10} we can choose $\tilde \epsilon$ such that $\tilde \epsilon |B^\lambda| < C(\eta)/2$, getting a contradiction.

{
We observe that for $\tilde\epsilon > 0$ sufficiently small, that is for $\bar\delta_2 > 0$ sufficiently small, there exists $c_{n,\lambda}>0$ such that for all possible choices of $s,t\in \{+, -\}$  there holds
\begin{equation}\label{eq:DiffSimmSplittataUniforme}
|(B_1^{\lambda, s} \Delta B_2^{\lambda, t}) \cap (H_{\nu_1}^s \cap H_{\nu_2}^t)| > \frac{|B_1^{\lambda,s} \Delta B_2^{\lambda, t}|}{c_{n,\lambda}}.
\end{equation}
We only sketch the argument for \eqref{eq:DiffSimmSplittataUniforme}. Letting $Q:= (H_{\nu_1}^s \cap H_{\nu_2}^t)$ and $B_1(h):=B^{\lambda}(|B^\lambda|, h\, x_1^s), B_2(h):=B^{\lambda}(|B^\lambda|, h\, x_2^t)$ for $h\in[0,1]$, one can compute
\begin{equation*}
\begin{split}    
    \frac{\d}{\d h} \big|( B_1(h)& \Delta B_2(h)) \cap Q\big| = \int_{\partial B_1(h) \cap B_2(h) \cap  Q} \Braket{\nu^{B_1(h)}, x_2^t -x_1^s} \de \hh^{n-1} 
    + \int_{\partial B_2(h) \cap B_1(h) \cap  Q} \Braket{\nu^{B_2(h)}, x_1^s -x_2^t} \de \hh^{n-1} \\
    & = \left( \int_{\partial B_1(h) \cap B_2(h) \cap  Q} \left\langle\nu^{B_1(h)},\frac{x_2^t -x_1^s}{|x_1^s -x_2^t|} \right\rangle
    + \int_{\partial B_2(h) \cap B_1(h) \cap  Q} \left\langle\nu^{B_2(h)}, \frac{x_1^s -x_2^t}{|x_1^s -x_2^t|}  \right\rangle \right) |x_1^s -x_2^t| \\
    &=\int_{\partial (B_1(h) \cap B_2(h)) \cap  Q} \left\langle\nu^{B_1(h) \cap B_2(h)}, v_{12}^{s,t} \right\rangle \de \hh^{n-1} \, |x_1^s -x_2^t|
    \\
    &\ge c |x_1^s -x_2^t|,
\end{split}
\end{equation*}
where $v_{12}^{s,t}$ is obviously defined, provided $\tilde\epsilon$ is small enough, for some $c=c(n,\lambda)>0$ that will change from line to line. 
The last estimate follows since $\left\langle\nu^{B_1(h) \cap B_2(h)}, v_{12}^{s,t} \right\rangle\ge0$ pointwise and, for $\tilde\epsilon$ small, centers $x_1^s,x_2^t$ are so close that $\left\langle\nu^{B_1(h) \cap B_2(h)}, v_{12}^{s,t} \right\rangle$ can be estimated from below by a positive constant on a set of $\hh^{n-1}$-measure uniformly bounded from below away from zero. On the other hand one can estimate
\[
 |(B_1^{\lambda, s} \Delta B_2^{\lambda, t}) | \le c |x_1^s -x_2^t| .
\]
Hence
\[
|(B_1^{\lambda, s} \Delta B_2^{\lambda, t}) \cap (H_{\nu_1}^s \cap H_{\nu_2}^t)| = \int_0^1  \frac{\d}{\d h} \big|( B_1(h)\Delta B_2(h)) \cap Q\big| \de h \ge  c |x_1^s -x_2^t| \ge c |(B_1^{\lambda, s} \Delta B_2^{\lambda, t}) | ,
\]
and \eqref{eq:DiffSimmSplittataUniforme} follows.
}

Letting
\begin{equation*} S_1 = (B_1^{\lambda, +} \cap H_{\nu_1}^+) \cup (B_1^{\lambda, -} \cap H_{\nu_1}^-), \quad S_2 = (B_2^{\lambda, +} \cap H_{\nu_2}^+) \cup (B_2^{\lambda, -} \cap H_{\nu_2}^-),
\end{equation*} 
we deduce
\begin{equation*} |S_1 \Delta S_2| \ge |(S_1 \Delta S_2) \cap (H_{\nu_1}^s \cap H_{\nu_2}^t)| = |(B_1^{\lambda, s} \Delta B_2^{\lambda, t}) \cap (H_{\nu_1}^s \cap H_{\nu_2}^t)| > \frac{|B_1^{\lambda, s} \Delta B_2^{\lambda, t}|}{c_{n,\lambda}}. \end{equation*} 
In particular we have
\begin{equation}\label{eq:zzaw}
\begin{split}
    &|B_1^{\lambda, +} \Delta B_1^{\lambda, -}| \le |B_1^{\lambda, +} \Delta B_2^{\lambda, +}| + |B_2^{\lambda, +} \Delta B_1^{\lambda, -}| < 2 c_{n,\lambda} |S_1 \Delta S_2|,\\
    &|B_2^{\lambda, +} \Delta B_2^{\lambda, -}| \le |B_2^{\lambda, +} \Delta B_1^{\lambda, +}| + |B_1^{\lambda, +} \Delta B_2^{\lambda, -}| < 2 c_{n,\lambda} |S_1 \Delta S_2|.
\end{split}
 \end{equation}

If by contradiction \eqref{dispense:4.23} were false, then
\begin{equation}\label{dispense:4.24} |E_{\nu_1}^{'+} \Delta B_1^{\lambda, +}| + |E_{\nu_1}^{'-} \Delta B_1^{\lambda, -}| < \frac{|B_1^{\lambda, +} \Delta B_1^{\lambda, -}|}{2c_{n,\lambda}} \quad \text{and} \quad  |E_{\nu_2}^{'+} \Delta B_2^{\lambda, +}| + |E_{\nu_2}^{'-} \Delta B_2^{\lambda, -}| < \frac{|B_2^{\lambda, +} \Delta B_2^{\lambda, -}|}{2c_{n,\lambda}}.
\end{equation}
Hence
\begin{equation*}
\begin{split}
    |S_1 \Delta S_2| &\le |S_1 \Delta E| + |E \Delta S_2| = \frac{1}{2} \sum_{i = 1}^2 \left(|E_{\nu_i}^{'+} \Delta B_i^{\lambda, +}| + |E_{\nu_i}^{'-} \Delta B_i^{\lambda, -}|\right) \overset{\eqref{dispense:4.24}}{<} \frac{1}{4 c_{n,\lambda}} \sum_{i = 1}^2|B_i^{\lambda, +} \Delta B_i^{\lambda, -}| \overset{\eqref{eq:zzaw}}{\le} |S_1 \Delta S_2|,
\end{split}
\end{equation*} 
getting a contradiction.
\end{proof}

\begin{lemma}[Reduction to $(n-1)$-symmetric sets]\label{theorem:fmp2.1}
There exist $C_2, \delta_2>0$ depending on $n, \lambda$ such that, if $E$ is a Borel set with $E \subset \R^n \setminus H$, $E \subset Q_l$, $|E| = |B^\lambda|$ and $D_\lambda(E) \le \delta_2$, there exists a Borel set $F \subset \R^n \setminus H$, $F \subset Q_{2l}$, $|F| = |B^\lambda|$, symmetric with respect to $n - 1$ orthogonal half-hyperplanes (each orthogonal to $\partial H$) and such that \begin{equation*} \alpha_\lambda(E) \le C_2 \alpha_\lambda(F), \qquad D_\lambda(F) \le 2^{n-1} D_\lambda(E). \end{equation*} \end{lemma}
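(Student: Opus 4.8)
The plan is to obtain $F$ through a chain of at most $n-1$ successive reflections, gaining one hyperplane of symmetry at each step, in the spirit of \cite[Section 6]{MaggiMethodsQuantitative08}. By scale invariance we keep $|E|=|B^\lambda|$ throughout and we build sets $E=E_0,E_1,\dots,E_{n-1}=F$ such that $E_k$ is symmetric with respect to $k$ pairwise orthogonal half-hyperplanes orthogonal to $\partial H$, with unit normals $\nu^{(1)},\dots,\nu^{(k)}\in e_n^\perp$, and such that $|E_k|=|B^\lambda|$, $D_\lambda(E_k)\le 2D_\lambda(E_{k-1})$ and $\alpha_\lambda(E_{k-1})\le C\,\alpha_\lambda(E_k)$ for a constant $C=C(n,\lambda)$. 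Composing the $n-1$ steps then gives $D_\lambda(F)\le 2^{n-1}D_\lambda(E)$ and $\alpha_\lambda(E)\le C^{n-1}\alpha_\lambda(F)$, i.e.\ the statement with $C_2=C^{n-1}$. Everywhere we require $\delta_2$ so small that the deficit of every set in the chain, which never exceeds $2^{n-1}\delta_2$, stays below the threshold $\bar\delta_2$ of \cref{lemma:fmp2.5} and below the threshold of \cref{lemma:fmp2.3} (applied with some fixed small $\bar\epsilon$).

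For a generic step $k$ with $1\le k\le n-2$, set $W_k:=\{e_n,\nu^{(1)},\dots,\nu^{(k-1)}\}^\perp$, which has dimension $n-k\ge 2$. A standard topological argument — applying the intermediate value theorem to the sign-reversing map $\mu\mapsto |E_{k-1}\cap H^+_\mu\cap H^+_{\mu^\perp}|-|E_{k-1}|/4$ on a circle inside $W_k$, where $H^+_\mu$ is the half-space orthogonal to $\mu$ that bisects the measure of $E_{k-1}$ — produces two orthogonal unit vectors $\mu_1,\mu_2\in W_k$ whose bisecting half-spaces split $E_{k-1}$ into four pieces of equal measure. Applying \cref{lemma:fmp2.5} to $E_{k-1}$ with these two directions yields $i\in\{1,2\}$ and $s\in\{+,-\}$ for which $E_k:=(E_{k-1})^s_{\mu_i}\cup r_{\mu_i}((E_{k-1})^s_{\mu_i})$ satisfies $\alpha_\lambda(E_{k-1})\le \bar C_2\,\alpha_\lambda(E_k)$, while $|E_k|=|B^\lambda|$ and $D_\lambda(E_k)\le 2D_\lambda(E_{k-1})$ by \eqref{dispense:4.19}. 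Since $\mu_i\perp\nu^{(j)}$ for every $j<k$, the half-space $H^s_{\mu_i}$ and the reflection $r_{\mu_i}$ commute with the reflections across $H^{(1)},\dots,H^{(k-1)}$; as $E_{k-1}$ is symmetric with respect to those, so is $E_k$, which is moreover symmetric with respect to $H_{\mu_i}$ by construction. Thus $E_k$ carries the required $k$ symmetries, with normals $\nu^{(1)},\dots,\nu^{(k-1)},\mu_i$.

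The last step $k=n-1$ — the only step when $n=2$ — is the delicate one, because $W_{n-1}$ is one-dimensional: the direction $\nu^{(n-1)}$ is forced up to sign and there is no pair to feed into \cref{lemma:fmp2.5}. Instead I would exploit the symmetry already present in $E_{n-2}$. Let $H^0$ be the hyperplane orthogonal to $\nu^{(n-1)}$ bisecting the measure of $E_{n-2}$, with sides $H^\pm$, set $(E_{n-2})^\pm:=E_{n-2}\cap H^\pm$ and $F^\pm:=(E_{n-2})^\pm\cup r_{\nu^{(n-1)}}((E_{n-2})^\pm)$, so that $F^\pm\subset\{x_n>0\}$ (since $r_{\nu^{(n-1)}}$ preserves $x_n$), $|F^\pm|=|B^\lambda|$ and $D_\lambda(F^\pm)\le 2D_\lambda(E_{n-2})$ by \eqref{dispense:4.19}. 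Because $\nu^{(n-1)}\perp\nu^{(j)}$ for $j\le n-2$, each $F^\pm$ is symmetric with respect to the $n-1$ pairwise orthogonal half-hyperplanes $H^{(1)},\dots,H^{(n-2)},H^0$, all orthogonal to $\partial H$, whose common intersection with $\partial H$ is a single point $q$; put $B_q:=B^\lambda(|B^\lambda|,q)$. By \cref{lemma:fmp2.2} applied to the $(n-1)$-symmetric sets $F^\pm$ one gets $|F^\pm\Delta B_q|\le 3\alpha_\lambda(F^\pm)|B^\lambda|$, while, since $E_{n-2}\cap H^\pm=F^\pm\cap H^\pm$ and $F^\pm,B_q$ are symmetric across $H^0$, splitting $E_{n-2}\Delta B_q$ over $H^\pm$ gives
\[
\min_{x\in\partial H}|E_{n-2}\Delta B^\lambda(|B^\lambda|,x)|\le |E_{n-2}\Delta B_q|=\tfrac12\big(|F^+\Delta B_q|+|F^-\Delta B_q|\big).
\]
Hence $\alpha_\lambda(E_{n-2})\le 3\max\{\alpha_\lambda(F^+),\alpha_\lambda(F^-)\}$, and choosing $s$ attaining the maximum and $E_{n-1}:=F^s$ closes the induction, with $E_{n-1}$ symmetric with respect to $n-1$ orthogonal half-hyperplanes orthogonal to $\partial H$.

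Finally, the size of the sets in the chain must be controlled: each reflection is performed across a hyperplane meeting the bounded competitor, which a priori could enlarge the bounding cube; but $D_\lambda(E)\le\delta_2$ forces $E$, and inductively each $E_k$, to be $L^1$-close to a bubble by \cref{lemma:fmp2.3}, so the bisecting hyperplanes lie within $o_{\delta_2}(1)$ of that bubble's vertical axis (about which the bubble is symmetric) and each reflection enlarges the side of the bounding cube by a factor $\le 2^{1/(n-1)}$ once $\delta_2$ is small enough; hence $F\subseteq Q_{2l}$ up to a final translation along $\partial H$. The main obstacle is precisely the final step: \cref{lemma:fmp2.5} is unavailable there, and it must be replaced by the direct estimate above, which works only because $E_{n-2}$ already carries $n-2$ symmetries — this is what lets \cref{lemma:fmp2.2} pin both competing optimal bubbles to the single point $q$, making the error term $|F^+\Delta B_q|+|F^-\Delta B_q|$ comparable to the asymmetries rather than uncontrolled. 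A secondary but genuine point is checking that the reflections in the first $n-2$ steps preserve the previously gained symmetries together with the soft existence of the jointly bisecting orthogonal pairs.
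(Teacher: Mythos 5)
Your proof takes essentially the same route as the paper's: $n-2$ applications of \cref{lemma:fmp2.5} to gain $n-2$ mutually orthogonal symmetries, followed by a single reflection for the final symmetry handled via \cref{lemma:fmp2.2}, and your splitting identity $|E_{n-2}\Delta B_q|=\tfrac12(|F^+\Delta B_q|+|F^-\Delta B_q|)$ is precisely the display in the paper's proof. You helpfully make explicit the intermediate-value quartering argument that the paper leaves implicit by phrasing the first $n-2$ reflections in terms of coordinate hyperplanes; on the other hand your $2^{1/(n-1)}$-per-reflection heuristic for the containment $F\subseteq Q_{2l}$ is not justified as stated (a bisecting reflection can nearly double one side even when the deficit is small) — the paper's choice of coordinate-aligned hyperplanes $\{x_j=a_j\}$, $a_j\in(-l,l)$, gives $Q_{2l}$ directly after re-centering translations, though in any case only some cube $Q_{C(n)l}$ is needed downstream, so this affects only the harmless constant.
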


\begin{proof}
Let us define $\delta_2 :=\bar \delta_2  2^{-(n - 2)}$, where $\bar\delta_2$ is the constant appearing in \cref{lemma:fmp2.5}. 
We can apply \cref{lemma:fmp2.5} $n - 2$ times to different pairs of orthogonal vectors in $\{e_1, \ldots, e_{n-2}\}$ normal to corresponding pairs of affine hyperplanes splitting the measure of $E$ in two halves. Therefore, also recalling~\eqref{dispense:4.19}, we find an $(n-2)$-symmetric set $E'$ such that $|E'| = |B^\lambda|$ and \begin{equation*} \alpha_\lambda(E) \le \bar C_2^{n - 2} \alpha_\lambda(E'), \quad D_\lambda(E') \le 2^{n - 2}D_\lambda(E). \end{equation*} 
To perform the last symmetrization, let us consider a half-hyperplane $H_{n - 1}$ orthogonal to $e_{n - 1}$ and dividing $E'$ into two parts of equal measure. For simplicity let us assume that $H_{n - 1} = \{x_{n - 1} = 0\} \setminus H$. We denote by $E^{'+}$ (resp. $E^{'-}$) the set obtained by the union of $E' \cap\{ x_{n-1}>0\}$ (resp. $E' \cap\{ x_{n-1}<0\}$) with its reflection along $H_{n-1}$.
By \eqref{dispense:4.19} we have \begin{equation*} D_\lambda(E^{'\pm}) \le 2 D_\lambda(E') \le 2^{n - 1}D_\lambda(E). \end{equation*} 
Regarding the asymmetry of $E^{' \pm}$ note that since $E'$ is symmetric with respect to the first $n - 2$ coordinate hyperplanes, $E^{'+}$ and $E^{'-}$ are $(n - 1)$-symmetric. 
By \cref{lemma:fmp2.2} we get \begin{equation*} \begin{split} |B^\lambda| \alpha_\lambda(E') & \le |E' \Delta B^\lambda(|B^\lambda|)| \\ & = |(E' \Delta B^\lambda(|B^\lambda|)) \cap \{x_{n - 1} > 0\}| + |(E' \Delta B^\lambda(|B^\lambda|)) \cap \{x_{n - 1} < 0\}| \\ & = \frac{1}{2}\Big(|E^{'+} \Delta B^\lambda(|B^\lambda|)| + |E^{'-} \Delta B^\lambda(|B^\lambda|)|\Big) \\ & \le \frac{3 |B^\lambda|}{2}\Big(\alpha_\lambda(E^{'+}) + \alpha_\lambda(E^{'-})\Big).
\end{split} \end{equation*} 
Therefore at least one of the sets $E^{'+}, E^{'-}$ has asymmetry greater than $\frac{1}{3} \alpha_\lambda(E')$ and, denoting by $F$ this set, we have \begin{equation*} \begin{split} & D_\lambda(F) \le 2 D_\lambda(E') \le 2^{n - 1} D_\lambda(E) \\ & \alpha_\lambda(E) \le \bar C_2^{n - 2} \alpha_\lambda(E') \le 3 \bar C_2^{n - 2}\alpha_\lambda(F). \end{split} \end{equation*} 
Finally, the inclusion $F \subset Q_{2l}$ follows since $F$ was obtained by performing reflections of $E\subset Q_l$ along affine hyperplanes of the form $\{x_j=a_j\}$ for $j=1,\ldots,n-1$ with $a_j \in (-l,l)$.
\end{proof}

\subsection{Reduction to Schwarz-symmetric sets}

In this section we observe that, in order to prove \cref{thm:FinalQuantitativeInequality}, it is sufficient to further reduce to show \eqref{eq:FinalQuantitativeInequality} just among Schwarz-symmetric sets. The proof is analogous to \cite[Proposition 4.9]{FuscoDispensa}.

\begin{lemma}[Reduction to Schwarz-symmetric sets] \label{lem:Schwarz}
There exists $C_3=C_3(n, \lambda)>0$ such that the following holds.
Let $E\subset \R^n\setminus H$ be a set of finite perimeter with $|E| = |B^\lambda|$. Suppose that $E$ is symmetric with respect to the coordinate hyerplanes $\{x_1=0\}, \ldots, \{x_{n-1}=0\}$ and that
\begin{equation*} D_\lambda(E) < 1, \qquad E \subset Q_{2l}, \end{equation*}
where $l = l(n, \lambda)$ is as in in \cref{lem:ReductionToBounded}. 
Then
\begin{equation}\label{dispense:4.27}
|E \Delta E^*| \le C_3 \sqrt{D_\lambda(E)} \qquad \text{and} \qquad D_\lambda(E^*) \le D_\lambda(E), \end{equation}
where $E^*$ denotes the Schwarz symmetrization of $E$ with respect to the $n$-th axis.
\end{lemma}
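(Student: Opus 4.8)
Since $\alpha_\lambda(E)\le2$, one always has $|E\Delta E^*|\le|E|+|E^*|=2|B^\lambda|$, so the first estimate in~\eqref{dispense:4.27} is trivial whenever $D_\lambda(E)$ is bounded away from $0$; hence I may assume $D_\lambda(E)$ as small as I like. Using this together with \cref{lemma:Density}, I would further reduce, when proving the first estimate, to the case in which $\partial E\cap\{x_n>0\}$ is smooth, meets $\partial H$ orthogonally, $\hh^{n-1}(\{x\in\partial^*E\st\nu^E(x)=\pm e_n\})=0$, and $E$ stays symmetric with respect to $\{x_1=0\},\dots,\{x_{n-1}=0\}$ and lies in a fixed cube: it is enough to run the construction of \cref{lemma:Density} omitting the rotations $\mathcal{R}_{j,t}$ (unnecessary here, since property (3) is never used, and $\varphi_t$ preserves the symmetries and keeps the sets in a slab) and then rescale to volume $|B^\lambda|$. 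Both estimates pass to the limit, because $F\mapsto F^*$ is $1$-Lipschitz for the $L^1$ distance — from $|B(v)\Delta B(w)|=|v-w|$ for concentric balls one gets $|F^*\Delta G^*|\le\int_0^{+\infty}\bigl|\hh^{n-1}(F\cap\{x_n=t\})-\hh^{n-1}(G\cap\{x_n=t\})\bigr|\de t\le|F\Delta G|$ — while $D_\lambda$ is continuous along the approximating sequence by \cref{lemma:Density}(2).

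The second estimate, and a key identity for the first one, rest on the fact that Schwarz symmetrization preserves the wetted area, $\hh^{n-1}(\partial^*E\cap\partial H)=\hh^{n-1}(\partial^*E^*\cap\partial H)$. I would obtain this from $\hh^{n-1}(\partial^*F\cap\partial H)=\lim_{t\to0^+}\hh^{n-1}(F\cap\{x_n=t\})$ for $F\subset\{x_n>0\}$ of finite perimeter (the limit existing in the regularized situation above): the inequality ``$\le$'' is the $BV$ trace theorem combined with Fatou, and ``$\ge$'' follows since for a.e.\ $x'$ with $(x',t)\in F$ either $(x',0)$ is a point of density one for $F$, hence belongs to the trace of $F$ on $\partial H$, or the segment $\{x'\}\times(0,t)$ meets $\partial^*F$, so $\hh^{n-1}(F\cap\{x_n=t\})\le\hh^{n-1}(\partial^*F\cap\partial H)+P(F,\{0<x_n<t\})$ and $P(F,\{0<x_n<t\})\to0$. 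Applying this to $E$ and to $E^*$, which share the same horizontal slice measures, gives the claim. Writing $P_\lambda(F)=P(F,\{x_n>0\})-\lambda\hh^{n-1}(\partial^*F\cap\partial H)$, the boundary terms cancel and
\[
P_\lambda(E)-P_\lambda(E^*)=P(E,\{x_n>0\})-P(E^*,\{x_n>0\}).
\]
Doubling $E$ by reflection across $\partial H$ gives $P(E,\{x_n>0\})=\tfrac12P(\widetilde E)$ and $\widetilde{E^*}=(\widetilde E)^*$, so $P(E^*,\{x_n>0\})\le P(E,\{x_n>0\})$ reduces to the classical perimeter inequality for Schwarz symmetrization in $\R^n$; hence $P_\lambda(E^*)\le P_\lambda(E)$, i.e.\ $D_\lambda(E^*)\le D_\lambda(E)$, and moreover $0\le P_\lambda(E)-P_\lambda(E^*)\le P_\lambda(E)-P_\lambda(B^\lambda)=P_\lambda(B^\lambda)D_\lambda(E)$.

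For the first estimate I would slice. Let $f$ be the profile function of $E^*$, so $E^*=\{(x',t)\st|x'|<f(t)\}$, and let $b_r\subset\R^{n-1}$ be the ball of radius $r$ centred at the origin; then $|E\Delta E^*|=\int_0^{+\infty}|E_t\Delta b_{f(t)}|\de t$ with $E_t:=\{x'\in\R^{n-1}\st(x',t)\in E\}$. Each $E_t$ is symmetric with respect to every coordinate hyperplane of $\R^{n-1}$, which forces $|E_t\Delta b_{f(t)}|\le C(n)\,|E_t|\,\alpha(E_t)$ (if $b_r(x)$ realizes the Fraenkel asymmetry of $E_t$ so does $b_r(-x)$, and $|b_r(x)\Delta b_r(-x)|\le2|E_t|\alpha(E_t)$ forces $|x|\le C(n)|E_t|^{1/(n-1)}\alpha(E_t)$ when $\alpha(E_t)$ is small, the opposite case being trivial). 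The quantitative isoperimetric inequality in $\R^{n-1}$ gives $\alpha(E_t)^2\le C(n)\bigl(P(E_t)-P(b_{f(t)})\bigr)/P(b_{f(t)})$, and since $|E_t|=\omega_{n-1}f(t)^{n-1}$ and $P(b_{f(t)})=(n-1)\omega_{n-1}f(t)^{n-2}$ we obtain $|E_t\Delta b_{f(t)}|\le C(n)\,f(t)^{n/2}\bigl(P(E_t)-P(b_{f(t)})\bigr)^{1/2}$; Cauchy--Schwarz then yields
\[
|E\Delta E^*|\le C(n)\Bigl(\int_0^{+\infty}f^n\de t\Bigr)^{1/2}\Bigl(\int_0^{+\infty}\bigl(P(E_t)-P(b_{f(t)})\bigr)\de t\Bigr)^{1/2}.
\]
The first factor is $\le C(n,\lambda)$ because $\int_0^{+\infty}f^n\de t\le(\sup f)\int_0^{+\infty}f^{n-1}\de t=(\sup f)\,|B^\lambda|/\omega_{n-1}$ and $E$ lies in a fixed cube. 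For the second factor, split $D\chi_F=(D'\chi_F,D_n\chi_F)$ into its first $n-1$ and last components; the slicing of $BV$ functions gives $\int_0^{+\infty}P(E_t)\de t=|D'\chi_E|(\{x_n>0\})$ and $\int_0^{+\infty}P(b_{f(t)})\de t=|D'\chi_{E^*}|(\{x_n>0\})$, so that integral equals $|D'\chi_E|(\{x_n>0\})-|D'\chi_{E^*}|(\{x_n>0\})$, which is $\le P(E,\{x_n>0\})-P(E^*,\{x_n>0\})$ — namely the statement that Schwarz symmetrization does not increase the ``vertical part'' $\int_{\partial^*F\cap\{x_n>0\}}(1-|\nu'_F|)\de\hh^{n-1}=P(F,\{x_n>0\})-|D'\chi_F|(\{x_n>0\})$ of the relative perimeter. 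By the previous paragraph this is $\le P_\lambda(B^\lambda)D_\lambda(E)$, hence $|E\Delta E^*|\le C(n,\lambda)\sqrt{D_\lambda(E)}$; undoing the reductions finishes the proof.

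The hard part is precisely the comparison $|D'\chi_E|(\{x_n>0\})-|D'\chi_{E^*}|(\{x_n>0\})\le P(E,\{x_n>0\})-P(E^*,\{x_n>0\})$, i.e.\ that the vertical part of the relative perimeter does not grow under Schwarz symmetrization — a refinement of the classical symmetrization perimeter inequality, which I would either isolate as a separate lemma or invoke from the symmetrization literature. Everything else is either soft (the cancellation of the wetted areas, the reduction to regular symmetric sets in a fixed cube, the handling of slices of large asymmetry) or classical (the quantitative isoperimetric inequality in $\R^{n-1}$, the basic Schwarz symmetrization perimeter inequality).
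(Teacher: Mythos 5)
The step you flag as ``the hard part'' --- that the vertical part of the relative perimeter, $P(F,\{x_n>0\})-|D'\chi_F|(\{x_n>0\})$, does not grow under Schwarz symmetrization, i.e.\ that $\int_0^\infty\bigl(P(E_t)-P(E^*_t)\bigr)\de t\le P(E,\{x_n>0\})-P(E^*,\{x_n>0\})$ --- is false, even for $(n-1)$-symmetric sets. Take in $\R^3$ the pyramid $E=\{(x_1,x_2,x_3):0<x_3<1,\ \max(|x_1|,|x_2|)<1-x_3\}$: its horizontal slices are centered squares, so $E^*$ is a cone over a disk, and one computes $\int_0^1\bigl(P(E_t)-P(E^*_t)\bigr)\de t = 4-2\sqrt{\pi}\approx0.455$ while $P(E,\{x_3>0\})-P(E^*,\{x_3>0\})=4\sqrt{2}-2\sqrt{4+\pi}\approx0.312$; equivalently, the vertical part \emph{increases} from $4\sqrt{2}-4\approx1.66$ to $2\sqrt{4+\pi}-2\sqrt{\pi}\approx1.80$ under symmetrization. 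The structural reason is that \cite[Theorem 19.11]{MaggiBook} only yields $P(E,\{x_n>0\})-P(E^*,\{x_n>0\})\ge\int_0^\infty\bigl(\sqrt{p_E^2+v_E'^2}-\sqrt{p_{E^*}^2+v_E'^2}\bigr)\de t$, with $p_E,p_{E^*}$ the slice perimeters and $v_E$ the slice measure, and $\sqrt{a^2+b^2}-\sqrt{c^2+b^2}<a-c$ whenever $b\ne0$. Moreover the ratio between the two integrands degenerates near the tip of the set, where $|f'|\to\infty$, so the inequality $\int_0^\infty(p_E-p_{E^*})\de t\le C(n,\lambda)\,D_\lambda(E)$ cannot be rescued by a constant factor, and the Cauchy--Schwarz step as you set it up then only gives $|E\Delta E^*|$ bounded by something like $D_\lambda(E)^{1/4}$.

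The paper avoids this by a different Cauchy--Schwarz. Starting from the Maggi inequality above, it writes $\sqrt{p_E^2+v'^2}-\sqrt{p_{E^*}^2+v'^2}=\bigl(p_E^2-p_{E^*}^2\bigr)\big/\bigl(\sqrt{p_E^2+v'^2}+\sqrt{p_{E^*}^2+v'^2}\bigr)$ and applies Cauchy--Schwarz with the weight $\sqrt{p_E^2+v'^2}+\sqrt{p_{E^*}^2+v'^2}$ (whose integral is controlled by $P(E)+P(E^*)$), getting $c(n,\lambda)\sqrt{D_\lambda(E)}\ge\int_0^\infty\sqrt{p_E^2-p_{E^*}^2}\de t$. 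It then factors $\sqrt{p_E^2-p_{E^*}^2}\ge\sqrt{2}\,p_{E^*}\sqrt{(p_E-p_{E^*})/p_{E^*}}$, applies the $(n-1)$-dimensional quantitative isoperimetric inequality in the multiplicative form $\hh^{n-1}(E_t\Delta E^*_t)/\hh^{n-1}(E^*_t)\le C(n)\sqrt{(p_E-p_{E^*})/p_{E^*}}$, and finally uses $E\subset Q_{2l}$ to bound $p_{E^*}/\hh^{n-1}(E^*_t)$ from below by $1/l$ up to dimensional constants. The surrounding reductions in your plan (regularization via \cref{lemma:Density}, $L^1$-contractivity of Schwarz rearrangement, cancellation of the wetted areas, and the use of $(n-1)$-symmetry together with \cref{lemma:fmp2.2} to center the optimal disks in each slice) all match the paper's and are fine; it is precisely the Cauchy--Schwarz bookkeeping that must be redone as above.
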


\begin{proof}
The second inequality in~\eqref{dispense:4.27} follows from the fact $|E^*| = |E|$ and $P_\lambda(E^*)$ $\le$ $P_\lambda(E)$. 
Exploiting Lemma \ref{lemma:Density}, we may assume \begin{equation}\label{dispense:4.28} \hh^{n - 1}(\{x \in \partial^* E \setminus H \st \nu^E(x) = \pm e_n\}) = 0 ,\end{equation}
and thus that $v_E \in W^{1,1}(\R)$. 
Indeed, if $E_i$ is given by \cref{lemma:Density} and the claim is proved for $E_i$, by the contractivity of Schwartz rearrangement (\cite[Exercise 19.14]{MaggiBook}), for every $\tilde\epsilon > 0$ and with $i$ sufficiently large, we get \begin{equation*} \begin{split} |E \Delta E^*| & \le |E \Delta E_i| + |E_i \Delta E_i^*| + |E_i^* \Delta E^*| 
\le 2|E \Delta E_i| + |E_i \Delta E_i^*| 
\le 2 \tilde\epsilon + C_3\sqrt{D_\lambda(E_i)}, \end{split} \end{equation*}
and the last term tends to $C_3\sqrt{D_\lambda(E)}$ as $i\to\infty$.

For $\hh^1$-a.e. $t \in (0, \infty)$ denote
\begin{equation*} \begin{split} & v_E(t) := \hh^{n - 1}(\{x' \in \R^{n - 1} \st (x', t) \in E\}), \\ & p_E(t) := \hh^{n - 2}(\partial^*\{x' \in \R^{n - 1} \st (x', t) \in E\}),
\end{split} \end{equation*}
and employ analogous notation for $E^*$.
Since $|\partial^*E \cap \partial H| = |\partial^* E^* \cap \partial H|$, we get 
\begin{equation*}
P_\lambda(E) - P_\lambda(B^\lambda) \ge P_\lambda(E) - P_\lambda(E^*)  = P(E, \R^n \setminus H) - P(E^*, \R^n \setminus H).
\end{equation*}
It is therefore possible to reproduce the computation in \cite[Proposition 4.9]{FuscoDispensa} verbatim up to \cite[Eq. (4.29)]{FuscoDispensa} to estimate the right hand side in the last equation from below. We include the computation for the convenience of the reader. By \cite[Theorem 19.11, (19.30)]{MaggiBook} one has
\begin{equation*} \begin{split}
P_\lambda(E) &- P_\lambda(B^\lambda)  \ge  P(E, \R^n \setminus H) - P(E^*, \R^n \setminus H) \ge \int_0^\infty \left(\sqrt{p_E^2 + v_E'^2} - \sqrt{p^2_{E^*} + v_E'^2}\right)\de t \\ & = \int_0^\infty \frac{p^2_E - p^2_{E^*}}{\sqrt{p_E^2 + v_E'^2} + \sqrt{p^2_{E^*} + v_E'^2}}\de t  \ge \left(\int_0^\infty \sqrt{p_E^2 - p^2_{E^*}}\de t\right)^2\frac{1}{\int_0^\infty \sqrt{p_E^2 + v_E'^2} + \sqrt{p^2_{E^*} + v_E'^2}\de t } \\ & \ge \left(\int_0^\infty \sqrt{p_E^2 - p^2_{E^*}}\de t\right)^2\frac{1}{P(E, \R^n \setminus H) + P(E^*, \R^n \setminus H)} \\ & \ge c(\lambda) \left(\int_0^\infty \sqrt{p_E^2 - p^2_{E^*}}\de t\right)^2\frac{1}{P_\lambda(E) + P_\lambda(E^*)},
\end{split}
\end{equation*}
where in the last inequality we used \cref{remark:positivity}.
Since $D_\lambda(E)  < 1$ and $p_E \ge p_{E^*}$, we have $P_\lambda(E^*) \le P_\lambda(E) \le 2 P_\lambda(B^\lambda)$ and \begin{equation}\label{dispense:4.29} \begin{split} \sqrt{D_\lambda(E)} & \ge c \int_0^\infty \sqrt{p_E^2 - p^2_{E^*}}\de t  = c \int_0^\infty \sqrt{p_E + p_{E^*}} \sqrt{p_{E^*}} \sqrt{\frac{p_E - p_{E^*}}{p_{E^*}}} \de t \\ & \ge \sqrt{2} c \int_0^\infty p_{E^*} \sqrt{\frac{p_E - p_{E^*}}{p_{E^*}}} \de t, \end{split}
\end{equation}
for some constant $c=c(n,\lambda)>0$ changing from line to line.
Note that $(E^*)_t$ is a $(n - 1)$-dimensional ball with the same $\hh^{n - 1}$ measure of $E_t$. 
Then the quantity
\begin{equation*} \frac{p_E(t) - p_{E^*}(t)}{p_{E^*}(t)} \end{equation*}
is the classical isoperimetric deficit in $\R^{n - 1}$ of $E_t$ with respect to the standard perimeter. 
By standard the quantitative isoperimetric inequality in $\R^{n-1}$ \cite{FuscoMaggiPratelli}, the fact that $E_t$ is $n-1$ symmetric with $(E^*)_t$ centered at the center of symmetry of $E_t$ and \cref{lemma:fmp2.2}, we have
\begin{equation*}
\frac{\hh^{n-1}(E_t \Delta E^*_t)}{\hh^{n-1}((E^*)_t)} \le c(n) \sqrt{\frac{p_E(t) - p_{E^*}(t)}{p_{E^*}(t)}}. \end{equation*}
By~\eqref{dispense:4.29} and the inclusion $E \subset Q_{2l}$ we conclude
\begin{equation*}
\begin{split}
\sqrt{D_\lambda(E)} &\ge 
c \int_0^\infty \frac{p_{E^*}(t)}{\hh^{n-1}((E^*)_t)} \hh^{n-1}(E_t \Delta E^*_t) \de t
\ge\frac{c}{l} \int_{0}^\infty \hh^{n - 1}(E_t \Delta E_t^*)\de t = \frac{c}{l} |E \Delta E^*|.    
\end{split}
\end{equation*}
\end{proof}

Putting together \cref{lem:ReductionToBounded}, \cref{theorem:fmp2.1} and \cref{lem:Schwarz}, one immediately gets the following corollary.

\begin{corollary}\label{prop:ScharzSymmSufficienti}
There exist $\delta_4,\tilde l>0$ depending on $n, \lambda$ such that for every $0<\delta\le \delta_4$, if the quantitative isoperimetric inequality \eqref{eq:FinalQuantitativeInequality} holds (with some constant depending on $n,\lambda$) for every Schwarz-symmetric set $F$, with $|F| = |B^\lambda|$, contained in a cube $Q_{\tilde l}$ and with $D_\lambda(F)<\delta$, then~\eqref{eq:FinalQuantitativeInequality} holds for any measurable set of finite measure (up to changing the multiplicative constant, depending on $n,\lambda$ only).
\end{corollary}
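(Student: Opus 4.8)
The plan is to concatenate the three reduction lemmas \cref{lem:ReductionToBounded}, \cref{theorem:fmp2.1} and \cref{lem:Schwarz}, trading an arbitrary competitor for a Schwarz-symmetric set lying in a fixed cube at a controlled price in both asymmetry and deficit. First I would dispose of large-deficit sets: exactly as at the beginning of \cref{sec:Reductions}, if $D_\lambda(E)\ge\delta_0$ for a fixed $\delta_0=\delta_0(n,\lambda)>0$ then $\alpha_\lambda(E)^2\le 4\le (4/\delta_0)D_\lambda(E)$, so it is enough to treat $D_\lambda(E)<\delta_0$, where $\delta_0$ will be shrunk (still depending only on $n,\lambda$) to accommodate the thresholds appearing below; by scale invariance of $\alpha_\lambda$ and $D_\lambda$ I may also assume $|E|=|B^\lambda|$. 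I would then set $\delta_4:=\delta_0$. Finally, invoking once more $\alpha_\lambda\le 2$, the hypothesis can be upgraded to: \eqref{eq:FinalQuantitativeInequality} holds with a constant $\tilde c=\tilde c(n,\lambda,\delta,c)$ for \emph{every} Schwarz-symmetric set of volume $|B^\lambda|$ contained in $Q_{\tilde l}$, with no restriction on the deficit.

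Next I would run the chain. \cref{lem:ReductionToBounded} yields $E'\subset Q_l\cap(\R^n\setminus H)$ with $|E'|=|B^\lambda|$, $\alpha_\lambda(E)\le\alpha_\lambda(E')+C_1D_\lambda(E)$ and $D_\lambda(E')\le C_1D_\lambda(E)$. Choosing $\delta_0$ small enough that $C_1\delta_0\le\delta_2$, \cref{theorem:fmp2.1} applies to $E'$ and produces $F\subset Q_{2l}\cap(\R^n\setminus H)$ with $|F|=|B^\lambda|$, symmetric with respect to $n-1$ mutually orthogonal half-hyperplanes each orthogonal to $\partial H$, with $\alpha_\lambda(E')\le C_2\alpha_\lambda(F)$ and $D_\lambda(F)\le 2^{n-1}D_\lambda(E')\le 2^{n-1}C_1D_\lambda(E)$. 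Up to a rotation about the $x_n$-axis and a translation along $\partial H$ — operations that leave $\alpha_\lambda$, $D_\lambda$ and the Schwarz symmetrization with respect to the $x_n$-axis unchanged, and keep $F$ in a cube $Q_{\tilde l}$ with $\tilde l:=c(n)\,l$ — I may assume $F$ is symmetric with respect to the coordinate hyperplanes $\{x_1=0\},\dots,\{x_{n-1}=0\}$. Shrinking $\delta_0$ further so that $2^{n-1}C_1\delta_0<1$, \cref{lem:Schwarz} applies to $F$ (its conclusion being unaffected, up to the value of the constant $C_3$, when $Q_{2l}$ is replaced by $Q_{\tilde l}$) and gives its Schwarz symmetrization $F^*$: it is Schwarz-symmetric, $|F^*|=|B^\lambda|$, $F^*\subset Q_{\tilde l}$, $D_\lambda(F^*)\le D_\lambda(F)$ and $|F\Delta F^*|\le C_3\sqrt{D_\lambda(F)}$.

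It then remains to assemble the bounds. The upgraded hypothesis gives $\alpha_\lambda(F^*)^2\le\tilde c\,D_\lambda(F^*)\le\tilde c\,D_\lambda(F)$. Since $|F|=|F^*|=|B^\lambda|$, for every $x\in\partial H$ one has $|F\Delta B^\lambda(|B^\lambda|,x)|\le|F^*\Delta B^\lambda(|B^\lambda|,x)|+|F\Delta F^*|$, so $\alpha_\lambda(F)\le\alpha_\lambda(F^*)+|F\Delta F^*|/|B^\lambda|$, and hence
\begin{equation*}
\alpha_\lambda(F)^2\le 2\alpha_\lambda(F^*)^2+\frac{2}{|B^\lambda|^2}|F\Delta F^*|^2\le 2\tilde c\,D_\lambda(F)+\frac{2C_3^2}{|B^\lambda|^2}D_\lambda(F)=:C_4\,D_\lambda(F).
\end{equation*}
Chaining back through \cref{theorem:fmp2.1} and \cref{lem:ReductionToBounded}, and using $D_\lambda(E)<\delta_0\le1$ together with $D_\lambda(F)\le 2^{n-1}C_1D_\lambda(E)$, I would conclude
\begin{equation*}
\alpha_\lambda(E)^2\le 2C_2^2\,\alpha_\lambda(F)^2+2C_1^2D_\lambda(E)^2\le 2C_2^2C_4\,D_\lambda(F)+2C_1^2D_\lambda(E)\le\big(2^nC_1C_2^2C_4+2C_1^2\big)D_\lambda(E)=:c_{\rm iso}D_\lambda(E),
\end{equation*}
which is the assertion, with $c_{\rm iso}$ depending on $n,\lambda$ and on the data $\delta,c$ of the hypothesis.

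I do not anticipate a genuine obstacle: the statement is entirely bookkeeping around the three lemmas. The points demanding some care are (i) the normalization, since every lemma must be applied to a set of volume exactly $|B^\lambda|$, which forces the initial rescaling (and the one already built into \cref{lem:ReductionToBounded}); (ii) re-centering $F$ so that its symmetry hyperplanes are the coordinate hyperplanes through the origin, as required by \cref{lem:Schwarz}, together with the harmless enlargement of the confining cube; and (iii) the fact that \cref{lem:Schwarz} controls $|F\Delta F^*|$ only by $\sqrt{D_\lambda(F)}$ rather than by $D_\lambda(F)$ — this is inconsequential precisely because, once $\alpha_\lambda$ is squared, the contribution $|F\Delta F^*|^2$ becomes linear in the deficit and fits alongside the other terms. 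The real content of \cref{thm:FinalQuantitativeInequality} lies in the three reductions and in the Schwarz-symmetric case treated in \cref{sec:Quantitative}.
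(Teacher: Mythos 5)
Your proposal is correct and follows exactly the route the paper indicates (the paper leaves the chaining of \cref{lem:ReductionToBounded}, \cref{theorem:fmp2.1} and \cref{lem:Schwarz} as ``immediate''; you have spelled it out). The bookkeeping is right: you dispose of the large-deficit case first, apply the three reductions in order, account for the needed translation/rotation after \cref{theorem:fmp2.1} and the enlargement of the confining cube in \cref{lem:Schwarz}, and the squaring step correctly absorbs the $\sqrt{D_\lambda}$ from the Schwarz comparison so everything ends up linear in the deficit.
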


\section{First quantitative isoperimetric inequality}\label{sec:Quantitative}

\subsection{Coupling}

We start by recalling the general definition and properties of the restricted envelopes introduced in \cite{CGPROS}.

\begin{definition}[{\cite[Definition 3.1]{CGPROS}}]
Let $K \subset \R^n$ be a compact convex set, $E \subset \R^n$ be a bounded open set, and $u \in C^0(\overline{E}) \cap C^2(E)$. The $K$-envelope of $u$ is the function $\bar u^K : \R^n \to \R$ given by
\begin{equation*} \bar u^K(x) := \sup \{a + \Braket{\xi, x} \st \xi \in K, \quad a + \Braket{\xi, y} \le u(y) \quad \forall y \in \overline{E}\}. \end{equation*}
\end{definition}

The desired coupling corresponding to a competitor $E$ will be essentially the $K$-envelope of the solution to the elliptic problem \eqref{cabsur:3.2}, for $K$ equal to the closure of the optimal bubble $B^\lambda$.

\begin{definition}[{\cite[Definition 3.9]{CGPROS}}]
Let $K \subset \R^n$ be a compact convex set, $E \subset \R^n$ be a bounded open set and $u \in C^0(\overline E) \cap C^2(E)$. Given $x \in \R^n$ and $\xi \in K$, we define \begin{equation*} S_\xi := \arg\min_{x \in \overline E} \{u(x) - \Braket{\xi, x}\} \end{equation*} and \begin{equation*} H(x, \xi, K) := \left\{\sum_{i=1}^m \lambda_i \nabla^2 u(s_i) \st \begin{array}{lr} 1 \le m \le n + 1 \\ \lambda_i \ge 0, \; \sum\lambda_i = 1 \\ s_i \in S_\xi \cap E \\ x - \sum \lambda_is_i \in N(\xi, K) \end{array}\right\},
\end{equation*}
where $N(\xi, K)$ is the normal cone of $K$ at $\xi$, defined as \begin{equation*} N(\xi, K) := \{v \in \R^n \st \Braket{v ,\xi' - \xi} \le 0 \; \text{for all} \; \xi' \in K\}. \end{equation*} \end{definition}

We will need the following

\begin{proposition}[{\cite[Proposition 3.10]{CGPROS}}]\label{CGPROS:prop3.10} Let $K \subset \R^n$ be a compact convex set, $E \subset \R^n$ be a bounded open set, and $u \in C^0(\overline E) \cap C^2(E)$. Assume that for any $\xi \in K$ it holds $S_\xi \subset E$, then $\bar u^K : \R^n \to \R$ is a $C^{1, 1}$ convex function such that \begin{equation*} \nabla\bar u^K(\R^n) = \nabla\bar u^K(E) = K. \end{equation*} Moreover, for any $x \in \R^n$ and any $H_x \in H(x, \nabla\bar u^K(x), K) \neq \emptyset$, it holds $\nabla^2 \bar u^K(x) \le H_x$.
\end{proposition}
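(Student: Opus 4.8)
We outline a proof in four stages: elementary convex analysis for $\bar u^K$, the computation of its gradient image, the local quadratic upper bound (the technical core), and the conclusions.

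\medskip

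\emph{Reformulation and basic facts.} For fixed $\xi\in K$ the largest admissible constant in the definition of $\bar u^K$ is $a(\xi)=\min_{\overline E}(u-\Braket{\xi,\cdot})=-u^*_{\overline E}(\xi)$, where $u^*_{\overline E}(\xi):=\max_{y\in\overline E}(\Braket{\xi,y}-u(y))$ is finite and convex in $\xi$. Hence
\[
\bar u^K(x)=\sup_{\xi\in K}\big(\Braket{\xi,x}-u^*_{\overline E}(\xi)\big)=(u^*_{\overline E}+I_K)^*(x),
\]
where $I_K$ equals $0$ on $K$ and $+\infty$ elsewhere. This is a supremum of affine maps with slopes in the compact convex set $K$ and bounded intercepts, so $\bar u^K$ is convex, finite and Lipschitz on $\R^n$, with $\bar u^K\le u$ on $\overline E$. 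The assumption $S_\xi\subset E$ for every $\xi\in K$, together with the compactness of $K$ and the closed-graph property of $\xi\mapsto S_\xi$, forces the total contact set $\mathcal C:=\overline{\bigcup_{\xi\in K}S_\xi}$ to be a \emph{compact} subset of $E$; I fix a compact neighbourhood $N$ of $\mathcal C$ with $N\subset E$, set $\delta_0:=\mathrm{dist}(\mathcal C,\partial N)>0$ and $M_0:=\sup_N\|\nabla^2u\|$, and I let $\omega$ be a modulus of continuity of $\nabla^2u$ on $N$.

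\medskip

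\emph{Subdifferential bookkeeping and nonemptiness of $H$.} Being a supremum of affine functions over a compact index set with continuous data, $\partial\bar u^K(x)=\mathrm{conv}\{\xi\in K:\Braket{\xi,x}-u^*_{\overline E}(\xi)=\bar u^K(x)\}\subseteq K$, while $\partial u^*_{\overline E}(\xi)=\overline{\mathrm{conv}}\,S_\xi$ by the classical formula for the subdifferential of a max-function. For $\xi\in K$ and $s\in S_\xi$ one checks at once that $\bar u^K(s)=u(s)$ and that $\xi$ realises the supremum defining $\bar u^K(s)$, hence $\xi\in\partial\bar u^K(s)$ with $s\in E$; thus $\partial\bar u^K(\R^n)\subseteq K\subseteq\bigcup_{x\in E}\partial\bar u^K(x)$. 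Finally, fixing $x$ and an active slope $\xi_0$, first-order optimality for the concave problem $\max_{\xi\in K}(\Braket{\xi,x}-u^*_{\overline E}(\xi))$ attained at $\xi_0$ provides a point $x'\in\partial u^*_{\overline E}(\xi_0)=\overline{\mathrm{conv}}\,S_{\xi_0}$ with $x-x'\in N(\xi_0,K)$; Carathéodory's theorem writes $x'=\sum_{i\le n+1}\lambda_is_i$ with $\lambda_i\ge0$, $\sum\lambda_i=1$, $s_i\in S_{\xi_0}\subset E$, so $\sum_i\lambda_i\nabla^2u(s_i)\in H(x,\xi_0,K)$ and the latter set is nonempty.

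\medskip

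\emph{The local quadratic upper bound.} Fix $x_0$, an active slope $\xi_0$, and a decomposition $x_0'=\sum_i\lambda_is_i$ as above with $x_0-x_0'\in N(\xi_0,K)$. For $|h|\le\delta\le\delta_0$, Taylor's theorem at $s_i\in\mathcal C\subset N$ gives $u(s_i+h)\le u(s_i)+\Braket{\xi_0,h}+\tfrac12\,h^\top(\nabla^2u(s_i)+\omega(\delta)\,\mathrm{Id})h$; restricting the maximisation defining $u^*_{\overline E}$ to the balls $B_\delta(s_i)$ and taking the $\lambda_i$‑convex combination (which, being an average of localised suprema, lies below $u^*_{\overline E}$) produces a \emph{clean} quadratic minorant $m$ of $u^*_{\overline E}$ near $\xi_0$, with $m(\xi_0)=u^*_{\overline E}(\xi_0)$, $\nabla m(\xi_0)=x_0'$ and Hessian $Q_\delta:=\sum_i\lambda_i(\nabla^2u(s_i)+\omega(\delta)\mathrm{Id})^{-1}$. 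Passing to Legendre conjugates, and using the normal‑cone relation $x_0-x_0'\in N(\xi_0,K)$ both to absorb the constraint $\xi\in K$ and to dominate the contributions of the $\xi$ far from $\xi_0$ (harmless once $|x-x_0|$ is sufficiently small, with a threshold depending only on $\mathcal C$ and $N$), one obtains, for $x$ near $x_0$,
\[
\bar u^K(x)\le\bar u^K(x_0)+\Braket{\xi_0,\,x-x_0}+\tfrac12\,(x-x_0)^\top Q_\delta^{-1}(x-x_0).
\]
By the operator convexity of the matrix inverse, $Q_\delta^{-1}=\big(\sum_i\lambda_i(\nabla^2u(s_i)+\omega(\delta)\mathrm{Id})^{-1}\big)^{-1}\le\sum_i\lambda_i\nabla^2u(s_i)+\omega(\delta)\mathrm{Id}$. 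For $\delta=\delta_0$ this yields the bound in a \emph{fixed} neighbourhood of every $x_0$ with Hessian bounded by a uniform constant; letting $\delta\to0$ it yields, in possibly smaller neighbourhoods, the sharp Hessian $\sum_i\lambda_i\nabla^2u(s_i)=:H_{x_0}$.

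\medskip

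\emph{Conclusions and the main obstacle.} From the fixed‑neighbourhood bound, $\bar u^K$ is convex and locally semiconcave with a uniform constant, hence $\bar u^K\in C^{1,1}(\R^n)$; in particular $\bar u^K$ is everywhere differentiable with single‑valued subdifferential, so the inclusions of the second stage give $\nabla\bar u^K(\R^n)=\nabla\bar u^K(E)=K$. At every point $x$ where $\nabla^2\bar u^K$ exists (that is, a.e.), applying the refined bound with $\xi_0=\nabla\bar u^K(x)$ and an arbitrary admissible decomposition yields $\nabla^2\bar u^K(x)\le H_x$ for every $H_x\in H(x,\nabla\bar u^K(x),K)$, which completes the proof. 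I expect the real obstacle to be precisely the third stage: converting the merely interior $C^2$ regularity of $u$ into a uniform second‑order control on $\bar u^K$. It succeeds thanks to the interplay of two structural features — the compact containment $\mathcal C\subset E$, which supplies the constant $M_0$, the modulus $\omega$ and a neighbourhood on which the clean quadratic bound is valid, and the normal‑cone constraint built into $H(x,\xi,K)$, which is exactly what cancels the first‑order cross terms and makes the Legendre conjugation go through uniformly near $x_0$ — with the operator‑convexity inequality absorbing the non‑uniqueness of the Carathéodory decomposition.
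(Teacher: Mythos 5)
The paper does not prove this proposition; it is imported verbatim as \cite[Proposition~3.10]{CGPROS}, so there is no in-paper proof to compare your argument against. Your overall architecture --- the identity $\bar u^K=(u^*_{\overline E}+I_K)^*$, the compactness of the contact set $\mathcal C\subset E$, the subdifferential bookkeeping, Carath\'eodory for nonemptiness of $H$, the localized second-order Taylor bound at the contact points, conjugation, and the $\delta\to0$ limit --- is sound, and the final conclusions follow from the fixed-$\delta$ bound exactly as you describe.

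There is, however, a gap in stage three at the conjugation step. The $\lambda_i$-average of the localized suprema is the function $\xi\mapsto u^*_{\overline E}(\xi_0)+\Braket{x_0',\xi-\xi_0}+\sum_i\lambda_i\phi_i(\xi-\xi_0)$, where $\phi_i(\eta):=\sup_{|h|\le\delta}\big(\Braket{\eta,h}-\tfrac12\,h^\top A_i h\big)$ and $A_i:=\nabla^2u(s_i)+\omega(\delta)\,\mathrm{Id}$. This agrees with your pure quadratic $m$ only on a ball about $\xi_0$ of radius of order $\delta\,\omega(\delta)$ and is flatter beyond, so $m$ is \emph{not} a minorant of $u^*_{\overline E}$ on all of $K$, and one cannot substitute $m$ for $u^*_{\overline E}$ before conjugating. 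The assertion that the normal-cone relation ``dominates the contributions of the $\xi$ far from $\xi_0$'' is what fails: $x_0-x_0'\in N(\xi_0,K)$ gives only $\Braket{\xi-\xi_0,x_0-x_0'}\le0$, which serves to drop the constraint $\xi\in K$ and replace $x_0'$ by $x_0$ in the linear term, but supplies no coercivity whatsoever. What actually tames the far $\xi$ is the growth of $\sum_i\lambda_i\phi_i$ itself: choosing $h=\delta\eta/|\eta|$ yields $\phi_i(\eta)\ge\delta|\eta|-\tfrac12\delta^2\|A_i\|$, so $\sum_i\lambda_i\phi_i$ grows at least linearly with slope of order $\delta$, and this is what localizes the supremum over $\eta$ in $(\sum_i\lambda_i\phi_i)^*(x-x_0)$ near $\eta=0$ once $|x-x_0|$ is small, with the uniform threshold you want. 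Keeping the full minorant you in fact obtain the cleaner bound $(\sum_i\lambda_i\phi_i)^*(x-x_0)\le\tfrac12(x-x_0)^\top\big(\sum_i\lambda_iA_i\big)(x-x_0)$ for $|x-x_0|\le\delta$, because $\sum_i\lambda_i\phi_i(\eta)\ge\sup_{|h|\le\delta}\big(\Braket{\eta,h}-\tfrac12\,h^\top(\sum_i\lambda_iA_i)h\big)$ for every $\eta$ and the right-hand side is precisely the conjugate of $z\mapsto\tfrac12\,z^\top(\sum_i\lambda_iA_i)z+I_{\overline{B_\delta}}(z)$. This already delivers the desired Hessian bound $\sum_i\lambda_i\nabla^2u(s_i)+\omega(\delta)\,\mathrm{Id}$, and the operator-convexity detour through $Q_\delta^{-1}$ becomes superfluous. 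With that correction, the rest of your proof goes through.
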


We will need to associate a coupling only to Lipschitz-regular connected competitors, having $C^1$ relative boundary in $\R^n\setminus H$. This is established in the next result, whose proof is analogous to the one in \cite{CGPROS}.

\begin{proposition}\label{prop:EsistenzaCoupling}
There exists $\hat C=\hat C(n, \lambda)>0$ such that the following holds.
Let $E\subset \R^n\setminus H$ be a connected bounded open set. 
Suppose that $E$ has Lipschitz boundary and that $\partial E \cap \{x_n\ge0\}$ is a hypersurface of class $C^1$ with boundary.\\ 
Then there exists a $1$-Lipschitz convex function $\Psi : \R^n \to \R$ of class $C^{1, 1}$ such that  $\Delta \Psi \le \frac{P_\lambda(E)}{|E|}$ and such that $\nabla \Psi (\R^n) = \nabla \Psi (E) = B^\lambda$ up to negligible sets. 
Moreover, if $|E|=|B^\lambda|$ and $D_\lambda(E) \le 1$, then
\begin{equation}\label{CGPROS:4.7} \int_E |\nabla^2 \Psi - {\rm id}| \de x \le \hat C \sqrt{D_\lambda(E)} \end{equation} \begin{equation}\label{CGPROS:4.8} \int_{\partial^*E\cap (\R^n\setminus H)} (1 - |\nabla \Psi|) \de \hh^{n - 1} \le \hat C D_\lambda(E). \end{equation}
\end{proposition}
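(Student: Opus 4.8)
The plan is to apply the ABP machinery of \cref{thm:IsopIneq} together with the restricted-envelope calculus of \cref{CGPROS:prop3.10}, taking $K:=\overline{B^\lambda}$. First I would regularize: since $\partial E\cap\{x_n\ge0\}$ is only $C^1$ and meets $\partial H$ along a Lipschitz set, I would approximate $E$ by connected bounded open sets $E_j\supset E$ (or a nested exhaustion) with smooth relative boundary intersecting $\partial H$ orthogonally, so that the Neumann problem \eqref{cabsur:3.2bis} has a solution $u_j\in C^1(\overline{E_j})\cap C^\infty(E_j)$ by \cref{remark:regularity}. For each such $u_j$, the argument in the proof of \cref{thm:IsopIneq} shows that for every $p\in B^\lambda$ the function $y\mapsto u_j(y)-\langle p,y\rangle$ attains its minimum over $\overline{E_j}$ at an interior point; equivalently, in the language of the envelope, $S_p\subset E_j$ for all $p\in K$. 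Hence \cref{CGPROS:prop3.10} applies to $\bar u_j^{K}$, giving a convex $C^{1,1}$ function with $\nabla\bar u_j^{K}(\R^n)=\nabla\bar u_j^{K}(E_j)=K$, and the pointwise bound $\nabla^2\bar u_j^{K}(x)\le H_x$ for $H_x\in H(x,\nabla\bar u_j^K(x),K)$. Since each such $H_x$ is a convex combination of Hessians $\nabla^2 u_j(s_i)$ at contact points $s_i\in E_j$, and $\nabla^2 u_j\ge 0$ there with $\operatorname{tr}\nabla^2 u_j=\Delta u_j=P_\lambda(E_j)/|E_j|$, convexity of the trace gives $\Delta\bar u_j^K\le P_\lambda(E_j)/|E_j|$ and $0\le\nabla^2\bar u_j^K$. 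Also $\nabla\bar u_j^K$ takes values in $K=\overline{B^\lambda}\subset \overline{B_1}$, so $\bar u_j^K$ is $1$-Lipschitz. Passing $j\to\infty$ (using $P_\lambda(E_j)\to P_\lambda(E)$, $|E_j|\to|E|$, and $C^{1,1}$-compactness on compact sets) produces the desired $\Psi$ with $\Delta\Psi\le P_\lambda(E)/|E|$, $0\le\nabla^2\Psi$, $\|\nabla\Psi\|\le1$, and $\nabla\Psi(\R^n)=\nabla\Psi(E)=B^\lambda$ up to negligible sets.

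For the quantitative estimate \eqref{CGPROS:4.7}, assume $|E|=|B^\lambda|$ and $D_\lambda(E)\le1$. The key is that $\nabla\Psi$ pushes (a full-measure subset of) $E$ onto $B^\lambda$, so by the area formula $|B^\lambda|\le\int_E\det\nabla^2\Psi$, while $\det\nabla^2\Psi\le(\Delta\Psi/n)^n\le(P_\lambda(E)/(n|E|))^n$ pointwise on $E$ by arithmetic–geometric mean. Using $P_\lambda(E)=P_\lambda(B^\lambda)(1+D_\lambda(E))=n|B^\lambda|^{1/n}|E|^{(n-1)/n}(1+D_\lambda(E))$ and $|E|=|B^\lambda|$, one gets
\[
\int_E \left(\left(\tfrac{\Delta\Psi}{n}\right)^n-\det\nabla^2\Psi\right)\de x\le |B^\lambda|\big((1+D_\lambda(E))^n-1\big)\le C(n)\,D_\lambda(E),
\]
since $D_\lambda(E)\le1$. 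This controls the \emph{gap} in AM--GM, hence (by the standard quantitative AM--GM / Newton inequality argument, as in \cite{FigalliMaggiPratelli,CGPROS}) the $L^2$-deviation of the eigenvalues of $\nabla^2\Psi$ from their mean, together with the deviation of $\Delta\Psi$ from $n$; combining, $\int_E|\nabla^2\Psi-\mathrm{id}|^2\le C(n,\lambda)D_\lambda(E)$, and since $\nabla^2\Psi$ is bounded (eigenvalues in $[0,\ldots]$ via the $H_x$ bound and the bounded geometry of $K$) this upgrades to $\int_E|\nabla^2\Psi-\mathrm{id}|\le \hat C\sqrt{D_\lambda(E)}$ by Cauchy--Schwarz over the fixed-volume set $E$. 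Here one must be a little careful that the relevant integrals are over $E$ and that the contact set $\Gamma\subset E$ carries the pushforward; outside $\Gamma$ one still has $\nabla^2\Psi\ge0$, and the bound $\det\nabla^2\Psi\le(\Delta\Psi/n)^n\le(P_\lambda(E)/(n|E|))^n$ holds a.e.\ on $E$, so the displayed inequality is valid as stated.

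For \eqref{CGPROS:4.8}, the idea (again following \cite{FigalliMaggiPratelli,CGPROS}) is a divergence-theorem computation: since $\nabla\Psi(E)\supset B^\lambda$ with $\Delta\Psi\le P_\lambda(E)/|E|$, integrating $\operatorname{div}(\nabla\Psi)=\Delta\Psi$ over $E$ and comparing boundary terms,
\[
\int_{\partial^*E}\langle\nabla\Psi,\nu^E\rangle\de\hh^{n-1}=\int_E\Delta\Psi\le\frac{P_\lambda(E)}{|E|}|E|=P_\lambda(E),
\]
while on $\partial^*E\setminus H$ the capillarity structure gives $\int_{\partial^*E\setminus H}(1-\lambda\langle e_n,\nu^E\rangle)=P_\lambda(E)$ by \cref{rem:PlambdaConDivergenza}; subtracting, and using $\langle\nabla\Psi,\nu^E\rangle\le|\nabla\Psi|$, one isolates $\int_{\partial^*E}(1-|\nabla\Psi|)\de\hh^{n-1}\le P_\lambda(E)-P_\lambda(B^\lambda)\cdot(\text{normalization})+(\text{boundary terms on }\partial H)$, which is $\le\hat C D_\lambda(E)$; one handles the trace of $\nabla\Psi$ on $\partial^*E\cap\partial H$ using $\langle\nabla\Psi,-e_n\rangle\le\lambda$ there (the analogue of the Neumann condition for the envelope), so that no negative contribution is lost. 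I expect the main obstacles to be twofold: (i) justifying the approximation step, i.e.\ that the $K$-envelopes $\bar u_j^K$ converge in $C^{1,1}_{\rm loc}$ to a function with the stated pushforward property $\nabla\Psi(E)=B^\lambda$ (not just $\supseteq$), which requires knowing the contact sets do not escape $E$ in the limit — this is where the $C^1$-regularity of $\partial E\cap\{x_n\ge0\}$ and orthogonal intersection with $\partial H$ are used, exactly as in \cite{CGPROS}; and (ii) the careful bookkeeping of the boundary integrals in \eqref{CGPROS:4.8}, splitting $\partial^*E$ into its part in $\{x_n>0\}$ and its wetted part on $\partial H$, and matching each against the corresponding piece of $P_\lambda$. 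Both are handled verbatim as in \cite[Section~4]{CGPROS} once \cref{thm:IsopIneq} furnishes the ABP input with $K=\overline{B^\lambda}$.
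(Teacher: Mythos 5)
Your outline follows the same route as the paper: regularize $E$, solve the Neumann problem \eqref{cabsur:3.2bis}, form the restricted envelope via \cref{CGPROS:prop3.10}, pass to the limit, and deduce \eqref{CGPROS:4.7} from the AM--GM gap and \eqref{CGPROS:4.8} from the divergence theorem. There is, however, one genuine gap in the envelope step. You set $K:=\overline{B^\lambda}$ and assert ``$S_p\subset E_j$ for all $p\in K$'', but the ABP argument in the proof of \cref{thm:IsopIneq} only delivers this for $p\in B^\lambda$ (the \emph{open} cap): for $p\in\partial B^\lambda$ with $|p|=1$ and $p$ parallel to an outer normal at some $x\in\partial E_j\setminus\partial H$, the first-variation test at a boundary minimum gives $(\partial u_j/\partial\nu)(x)\le|p|=1$, which does not contradict the Neumann condition; similarly the test is inconclusive at $\partial E_j\cap\partial H$ when $\langle p,e_n\rangle=\lambda$. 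So $S_p$ can touch $\partial E_j$ and the hypothesis of \cref{CGPROS:prop3.10} is not verified for $K=\overline{B^\lambda}$. The paper fixes this by exhausting $B^\lambda$ with compact convex sets $K_i\subset\subset B^\lambda$ --- for which $S_\xi\subset E_j$ genuinely holds for all $\xi\in K_i$ --- applying the proposition to each $\bar u_j^{K_i}$, exploiting the uniform bound $0\le\nabla^2\bar u_j^{K_i}\le\frac{P_\lambda(E_j)}{|E_j|}\,\mathrm{id}$ to extract a $C^1_{\rm loc}$-limit, and only then letting $i\to\infty$.

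Two smaller points. For the pushforward property the nontrivial inclusion is $\nabla\Psi(E)\supseteq B^\lambda$ up to negligible sets (the reverse is automatic, since $\nabla\Psi$ takes values in $\overline{K}$), so your parenthetical ``not just $\supseteq$'' should read ``not just $\subseteq$''; the paper gets it by bounding $|\nabla\bar u_j^{K_i}(Z)|\ge|K_i|-c\,|E_j\setminus Z|$ for compacts $Z\subset E$ that eventually sit inside all the approximating $E_j$, then passing $Z\nearrow E$ after the limits in $i$ and $j$. Second, in the sketch of \eqref{CGPROS:4.8} the constraint on the wetted part should read $\langle\nabla\Psi,-e_n\rangle\le-\lambda$ (not $\le\lambda$), since $\nabla\Psi\in\overline{B^\lambda}\subset\{\langle\cdot,e_n\rangle\ge\lambda\}$; with the correct sign, $\langle\nabla\Psi,\nu^E\rangle\le|\nabla\Psi|$ on $\partial^*E\setminus H$ and $\langle\nabla\Psi,\nu^E\rangle\le-\lambda$ on $\partial^*E\cap\partial H$ plug into $\int_E\Delta\Psi=\int_{\partial^*E}\langle\nabla\Psi,\nu^E\rangle$ to give, in one chain, $\int_{\partial^*E\setminus H}(1-|\nabla\Psi|)\le\hat C\,D_\lambda(E)$, which is exactly the paper's bookkeeping without the subtraction you describe.
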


\begin{proof}
Let us assume first that $\partial E \setminus\partial H$ is a smooth hypersurface with smooth boundary intersecting $\partial H$ orthogonally. 
Let $u : E \to \R$ be a solution of~\eqref{cabsur:3.2} and $(K_i)_{i \in \N}$ a sequence of compact convex sets such that $K_i \subset\subset \mathring{K}_{i + 1}$ and $\cup_{i \in \N} K_i = B^\lambda$. 
We showed in the proof of \cref{thm:IsopIneq} that for any $\xi \in B^\lambda$ the minimum of $u(x) - \Braket{\xi, x}$ cannot be achieved on the boundary of $E$.
Moreover, at any point $x \in E$ such that $\nabla^2 u(x) \ge 0$, it holds \begin{equation*} 0 \le \nabla^2 u(x) \le \Delta u(x) \, {\rm id} = \frac{P_\lambda(E)}{|E|} \,{\rm id}. \end{equation*}
Therefore, recalling \cref{remark:regularity}, by \cref{CGPROS:prop3.10} we get that $\bar u^{K_i}$ is a sequence of $1$-Lipschitz functions, being suprema of $1$-Lipschitz functions, that is uniformly bounded in $C^{1, 1}$ on compact sets; hence up to  subsequence it converges to a limit function $\Psi$ in $C^1_{\rm loc}(\R^n)$. 
Since $\nabla\bar u^{K_i}(\R^n)=\nabla\bar u^{K_i}(E)=K_i$, then $\nabla \Psi (\R^n) = \nabla \Psi (\overline E) = \overline{B^\lambda}$, $\Psi$ is a convex function of class $C^{1,1}$, and writing the inequality $\Delta \bar u^{K_i} \le \frac{P_\lambda(E)}{|E|}$ in the sense of distributions, one checks that it readily passes to the limit as $i \to + \infty$ for the function $\Psi$.\\
To prove that $|\nabla\Psi(E) \Delta B^\lambda|=0$, let $Z\subset E$ be a compact set and notice that
\[
|\nabla\bar u^{K_i}(E\setminus Z) | \le c(n, |E|,P_\lambda(E)) |E\setminus Z|,
\]
\[
\begin{split}
    |\nabla\bar u^{K_i} (Z)| &= |\nabla\bar u^{K_i}(E \setminus (E\setminus Z))| \ge |\nabla\bar u^{K_i}(E)| - |\nabla\bar u^{K_i}(E\setminus Z)|
    = |K_i| - c |E\setminus Z|.
\end{split}
\]
Passing to the limit we find
\[
|\nabla \Psi(Z)| \ge \left| \limsup_i  \nabla\bar u^{K_i} (Z) \right| \ge \limsup_i |\nabla\bar u^{K_i} (Z)| \ge  |B^\lambda| - c |E\setminus Z|,
\]
hence letting $Z\nearrow E$, we get that  $\nabla \Psi (\R^n) = \nabla \Psi (E) = B^\lambda$ up to negligible sets.

Suppose now that $E$ is a generic connected set as in the assumptions. If $n\ge3$ we can apply the above argument to a sequence of sets $E_i$ approximating $E$ given by \cref{lemma:Density}, suitably modified connecting possibly disconnected components with thin tubes vanishing in the limit. If $n=2$, then $\partial E\setminus\partial H$ is a union of $C^1$ curves, which thus can be approximated by smooth ones touching $\partial H$ orthogonally preserving the connectedness of the set. Applying the first part of the proof on the approximating sequence $E_i$ we get a corresponding sequence of functions $\Psi_i$ uniformly bounded in $C^{1,1}$ on compact sets, hence converging in $C^1_{\rm loc}$ up to subsequence to a convex function $\Psi$ of class $C^{1,1}$ with $\Delta \Psi \le P_\lambda(E) /|E|$. Also, since $\nabla\Psi_i(\R^n)= \overline{B^\lambda}$, then $\nabla\Psi(\R^n)\subset \overline{B^\lambda}$ by $C^1_{\rm loc}$-convergence.
Moreover, since $E$ has Lipschitz boundary, there exists a sequence of compact sets $Z_j\subset E$ such that $Z_j\subset E_i$ for any $i\ge i_j$ and such that $Z_j\nearrow E$. Hence one can repeat the above argument with $\Psi_i, E, Z_j$ in place of $\bar u^{K_i}, E, Z$, respectively, to deduce that
\[
 |\nabla\Psi_i(Z_j)| \ge |B^\lambda| - c(n,|E_i|,P_\lambda(E_i)) |E_i\setminus Z_j| \ge |B^\lambda| - c(n,|E|,P_\lambda(E)) |E_i\setminus Z_j| .
\]
Letting $i\to\infty$ first, and then $j\to\infty$, we get that $\nabla \Psi (\R^n) = \nabla \Psi (E) = B^\lambda$ up to negligible sets.

We now prove \eqref{CGPROS:4.7} and \eqref{CGPROS:4.8}. The symbol $\hat C$ shall denote a positive constant depending on $n,\lambda$ changing from line to line.
By the area formula, the arithmetic-geometric mean inequality and the properties of $\Psi$ we get \begin{equation}\label{CGPROS:4.10} \begin{split} |B^\lambda| & = |\nabla\Psi(E)|  \le \int_E \det (\nabla^2 \Psi) \de \hh^{n} \le \int_E \left(\frac{\Delta \Psi}{n}\right)^n \de x  \le \int_E \left(\frac{P_\lambda(E)}{n |E|}\right)^n \de x  = \int_E \left(\frac{P_\lambda(E)}{n |B^\lambda|}\right)^n \de x \\ & = \left(\frac{P_\lambda(E)}{P_\lambda(B^\lambda)}\right)^n |B^\lambda|  = (1 + D_\lambda(E))^n |B^\lambda|. \end{split} \end{equation}
Hence
\begin{equation}\label{CGPROS:4.11} \int_E \left(\left(\frac{P_\lambda(E)}{n |E|}\right)^n - \det(\nabla^2\Psi)\right) \de x \le |B^\lambda|(1 + D_\lambda(E))^n - |B^\lambda| \le \hat C D_\lambda(E). \end{equation} 
By \cite[Lemma A.1]{CGPROS} applied with $m=n$, $\lambda_1=\ldots=\lambda_n =1$, $(x_1, \dots, x_n)$ equal to the eigenvalues of $\nabla^2\Psi$ and $c = \frac{P_\lambda(E)}{n|E|} = \frac{P_\lambda(E)}{P_\lambda(B^\lambda)} \ge 1$, we obtain \begin{equation}\label{CGPROS:4.12}
\begin{split}
    |\nabla^2\Psi - {\rm id}|^2 &\le
    2|\nabla^2\Psi - c\,{\rm id}|^2 + 2 |(c  -1) {\rm id}|^2
    \le \hat C\left(\left(\frac{P_\lambda(E)}{n|E|}\right)^n - \det(\nabla^2\Psi)\right) 
    +2 n \left(\frac{P_\lambda(E) - n|E|}{n|E|} \right)^2 \\
    &\le \hat C  \left(\left(\frac{P_\lambda(E)}{n|E|}\right)^n - \det(\nabla^2\Psi) + D_\lambda(E)^2\right) .
\end{split}
\end{equation} 
Therefore by~\eqref{CGPROS:4.11} and~\eqref{CGPROS:4.12} we get
\begin{equation*} \int_E |\nabla^2 \Psi - {\rm id}|^2 \de  x \le \hat C D_\lambda(E),
\end{equation*} 
which implies \eqref{CGPROS:4.7}.

Arguing as in~\eqref{CGPROS:4.10}, by the divergence theorem and using that $\Braket{ \nabla \Psi, -e_n} \le - \lambda$ since $\nabla\Psi(\R^n) \subset \overline{B^\lambda}$, we get
\begin{equation*} \begin{split} |B^\lambda| & \le \int_E \left(\frac{\Delta \Psi}{n}\right)^n \de x  \le \frac{P_\lambda(E)^{n - 1}}{n^n |E|^{n - 1}} \int_E \Delta\Psi \de x   = \frac{P_\lambda(E)^{n - 1}}{n^n |E|^{n - 1}} \int_{\partial^*E}\Braket{\nabla\Psi, \nu^E}\de\hh^{n - 1} \\
&\le \frac{P_\lambda(E)^{n - 1}}{n^n |E|^{n - 1}} \left(\int_{\partial^*E\cap (\R^n\setminus H)}\Braket{\nabla\Psi, \nu^E}\de\hh^{n - 1} + \int_{\partial^* E \cap \partial H} \Braket{ \nabla \Psi, -e_n} \de \hh^{n-1}
\right)\\
&\le \frac{P_\lambda(E)^{n - 1}}{n^n |E|^{n - 1}} \left(\int_{\partial^*E\cap (\R^n\setminus H)}\Braket{\nabla\Psi, \nu^E}-1\de\hh^{n - 1} + P_\lambda(E) 
\right)\\
&
\le \frac{P_\lambda(E)^{n}}{n^n |E|^{n - 1}} - \frac{P_\lambda(E)^{n - 1}}{n^n |E|^{n - 1}} \int_{\partial^*E\cap (\R^n\setminus H)} (1 - |\nabla\Psi|)\de\hh^{n - 1}. \end{split} \end{equation*}
Rearranging terms, since $D_\lambda(E)\le 1$ and $|E|=|B^\lambda|$, we obtain
\begin{equation*} \int_{\partial^*E\cap (\R^n\setminus H)} (1 - |\nabla\Psi|)\de\hh^{n - 1} 
\le \frac{P_\lambda(E)^n - P_\lambda(B^\lambda)^n}{P_\lambda(E)^{n-1}} 
\le \hat C D_\lambda(E). \end{equation*}
\end{proof}

We now want to translate the quantitative estimates obtained on the coupling in \cref{prop:EsistenzaCoupling} into quantitative estimates on the asymmetry of a competitor. We will need some technical results first.

The next lemma is analogous to \cite[Lemma 6.2]{CGPROS}, but with a varying range of parameters that here must depend on $\lambda$.

\begin{lemma}\label{CGPROS:lem6.2}
Let $E \subset [0, \infty)$ be a $1$-dimensional set of locally finite perimeter with $|E| < \infty$ and set
\begin{equation*}
r_\lambda := \min\left\{\sqrt{1 - \lambda^2}, 1 - \lambda\right\},
\qquad
R_\lambda := \max\left\{\sqrt{1 - \lambda^2}, 1 - \lambda\right\}.
\end{equation*}
There exists $\hat c=\hat c(n,\lambda)>0$ such that for any $\frac{7}{8}r_\lambda \le l \le \frac{9}{8}R_\lambda$ there holds \begin{equation}\label{CGPROS:6.1} \int_{E \Delta[0, l]} t^{n - 1} \de t \le \hat c\left(\int_{\left[0, \frac{r_\lambda}{2}\right]\setminus E}t^{n - 1}\de t + \int_{\partial^*E}t^{n - 1}|l - t|\de\hh^0\right). \end{equation}
\end{lemma}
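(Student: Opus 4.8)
The plan is to reduce to the case in which $E$ is, up to a null set, a finite union of bounded intervals, and then to bound the weighted measure of $F:=E\Delta[0,l]$ piece by piece, charging each piece either to the first term on the right-hand side of \eqref{CGPROS:6.1} or to a single boundary point of $E$ appearing in the last integral, arranging things so that every ingredient of the right-hand side of \eqref{CGPROS:6.1} is used only a bounded (dimensional) number of times.

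First I would dispose of the case in which $\int_{\partial^*E}t^{n-1}|l-t|\de\hh^0=+\infty$, where there is nothing to prove. Since $E$ has locally finite perimeter, $\partial^*E$ is finite in every bounded subset of $[0,+\infty)$, so its only possible accumulation point is $+\infty$; as the weight $t^{n-1}|l-t|$ diverges at $+\infty$, finiteness of $\int_{\partial^*E}t^{n-1}|l-t|\de\hh^0$ forces $\partial^*E$ to be finite, and since $|E|<\infty$ rules out an unbounded interval, $E$ agrees up to a negligible set with a finite disjoint union of bounded open intervals; I fix this representative, so that $F$ is again a finite union of bounded intervals. Using $\tfrac{r_\lambda}2<\tfrac78 r_\lambda\le l$ one has $[0,\tfrac{r_\lambda}2]\subset[0,l]$, hence $F\cap[0,\tfrac{r_\lambda}2]=[0,\tfrac{r_\lambda}2]\setminus E$ and
\[
\int_{F\cap[0,r_\lambda/2]}t^{n-1}\de t=\int_{[0,r_\lambda/2]\setminus E}t^{n-1}\de t ,
\]
which is exactly the first term on the right-hand side of \eqref{CGPROS:6.1}.

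Next I would bound $\int_{F\cap(l,+\infty)}t^{n-1}\de t$: on $(l,+\infty)$ one has $F=E$, so $F\cap(l,+\infty)$ is a finite union of intervals whose right endpoints $b$ lie in $\partial^*E$ and satisfy $b>l$, and by convexity of $t\mapsto t^n$ (which gives $b^n-l^n\le nb^{n-1}(b-l)$) the contribution of each is at most $b^{n-1}|l-b|$; summing over the distinct such $b$ controls this by $\int_{\partial^*E}t^{n-1}|l-t|\de\hh^0$. It remains to estimate $\int_{F\cap(r_\lambda/2,l)}t^{n-1}\de t$, and there $F\cap(r_\lambda/2,l)=(r_\lambda/2,l)\setminus E$ is a finite union of maximal gaps $(c_j,d_j)$ with $\tfrac{r_\lambda}2\le c_j<d_j\le l$. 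For those with $c_j>\tfrac{r_\lambda}2$ one has $c_j\in\partial^*E$ and $c_j^{n-1}\ge(\tfrac{r_\lambda}2)^{n-1}$, while $d_j\le l\le\tfrac98 R_\lambda$ and $d_j-c_j\le l-c_j$, so $\tfrac1n(d_j^n-c_j^n)\le d_j^{n-1}(d_j-c_j)\le(\tfrac98 R_\lambda)^{n-1}|l-c_j|\le C(n,\lambda)\,c_j^{n-1}|l-c_j|$, and I charge this to the boundary point $c_j$; these points are distinct and disjoint from those used above, so these contributions are controlled by $\int_{\partial^*E}t^{n-1}|l-t|\de\hh^0$.

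The one delicate case is the at most one gap with $c_j=\tfrac{r_\lambda}2$, whose maximal extension inside $[0,l]\setminus E$ is an interval $(a,d)$ with $0\le a\le\tfrac{r_\lambda}2<d\le l$ and $(a,d)\cap E=\emptyset$ (the case $a=\tfrac{r_\lambda}2$ reverts to the previous paragraph since then $a\in\partial^*E$ with $a^{n-1}$ bounded below). Here the boundary weight $t^{n-1}|l-t|$ near $l$ may degenerate, so the order-one contribution $\int_{r_\lambda/2}^{d}t^{n-1}\de t\le\tfrac1n l^n\le\tfrac1n(\tfrac98 R_\lambda)^n=:C_0$ must be absorbed differently: from $(a,\tfrac{r_\lambda}2)\subset[0,\tfrac{r_\lambda}2]\setminus E$ together with the elementary inequality
\[
\int_{[0,r_\lambda/2]\setminus E}t^{n-1}\de t+a^{n-1}|l-a|\ \ge\ \frac{(r_\lambda/2)^n-a^n}{n}+\frac38 r_\lambda\,a^{n-1}\ \ge\ \frac{(r_\lambda/2)^n}{n}=:c_0>0,
\]
to be read with $a^{n-1}|l-a|$ dropped when $a=0$, and valid because $n\ge2$ gives $\tfrac{r_\lambda}{2n}\le\tfrac{r_\lambda}4<\tfrac38 r_\lambda$ while $l\ge\tfrac78 r_\lambda$ gives $l-a\ge l-\tfrac{r_\lambda}2\ge\tfrac38 r_\lambda$; one then gets $\int_{r_\lambda/2}^{d}t^{n-1}\de t\le\tfrac{C_0}{c_0}\bigl(\int_{[0,r_\lambda/2]\setminus E}t^{n-1}\de t+a^{n-1}|l-a|\bigr)$, with $a^{n-1}|l-a|\le\int_{\partial^*E}t^{n-1}|l-t|\de\hh^0$. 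Collecting the four estimates, each ingredient of the right-hand side of \eqref{CGPROS:6.1} has been used a bounded number of times, which gives \eqref{CGPROS:6.1} for a suitable $\hat c=\hat c(n,\lambda)$. I expect this last configuration — $E$ absent on an interval abutting $l$ from below and re-entering $E$ essentially at $l$ — to be the main obstacle, and it is precisely there that the hypotheses $n\ge2$ and $l\ge\tfrac78 r_\lambda$ enter in an essential way; everything else is elementary bookkeeping relying only on the convexity of $t\mapsto t^n$ and the a priori bounds $\tfrac{r_\lambda}2\le c\le l\le\tfrac98 R_\lambda$.
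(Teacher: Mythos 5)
Your argument is correct, but it is not the route the paper takes, so let me compare. You first reduce to $\partial^*E$ finite (legitimate, since the weight $t^{n-1}\lvert l-t\rvert$ blows up at infinity and a $1$-dimensional set of locally finite perimeter in $\R$ cannot have boundary accumulating at a finite point), and then charge the three pieces of $E\Delta[0,l]$ — the part in $[0,\tfrac{r_\lambda}{2}]$, the part in $(l,\infty)$, and each maximal gap of $(\tfrac{r_\lambda}{2},l)\setminus E$ — either to the wetted-region integral or to a single boundary point, with the one gap abutting $\tfrac{r_\lambda}{2}$ handled by extending it left to its true left endpoint $a$ and observing that $\int_{[0,r_\lambda/2]\setminus E}t^{n-1}\de t + a^{n-1}\lvert l-a\rvert$ is bounded below away from zero. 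The paper instead avoids any reduction to finitely many intervals and bounds $\int_{[l,\infty)\cap E}$ and $\int_{[r_\lambda/2,l]\setminus E}$ each in one stroke: the first via the supremum $\bar t$ of $\partial^*E\cap[l,\infty)$; the second via a trichotomy on the interval $[\tfrac{r_\lambda}{4},\tfrac{3}{4}r_\lambda]$ — if it meets $\partial^*E$ the right-hand side is bounded below by a dimensional constant, if it misses $E$ entirely the first integral is bounded below, and if it is contained in $E$ the first exit point $\underline t\ge\tfrac{3}{4}r_\lambda$ dominates everything via $\int_{\underline t}^{l}t^{n-1}\le C\underline t^{\,n-1}\lvert l-\underline t\rvert$. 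Both proofs are elementary and of comparable length; yours is more granular and makes explicit that each unit of the right-hand side is used a bounded number of times, whereas the paper's is slightly more robust in that it never needs $\partial^*E$ to be finite, only that the supremum $\bar t$ exists when the right-hand side is finite. Note also that the degenerate configuration you single out ($E$ absent on a gap abutting $l$) is exactly what the paper's case $[\tfrac{r_\lambda}{4},\tfrac{3}{4}r_\lambda]\cap E=\emptyset$ handles, charged entirely to $\int_{[0,r_\lambda/2]\setminus E}t^{n-1}$, while you split the charge between that integral and the boundary point $a$; both work.
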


\begin{proof}
It holds $E \Delta [0, l] = ((l, \infty) \cap E) \cup ([0, l] \setminus E)$. 
We claim that \begin{equation}\label{CGPROS:claim} \max\left\{\int_{[l, \infty) \cap E} t^{n - 1}\de t, \int_{\left[\frac{r_\lambda}{2}, l\right]\setminus E} t^{n - 1} \de t\right\} \le \hat c(n, \lambda) \left(\int_{\left[0, \frac{r_\lambda}{2}\right]\setminus E}t^{n - 1}\de t + \int_{\partial^*E} t^{n - 1}|l - t|\de\hh^0(t)\right). \end{equation} 
We will estimate the two terms on the left-hand side separately.

Without loss of generality, suppose $[l, \infty) \cap E \neq \emptyset$. Since $|E| < \infty$, then $\partial^* E \cap [l, \infty)$ is nonempty and we can assume it has finite supremum $\bar t$ (otherwise the right hand side in \eqref{CGPROS:claim} equals $+\infty$). 
In particular the right-hand side in~\eqref{CGPROS:claim} is finite. 
It holds \begin{equation*} \int_{[l, \infty) \cap E} t^{n - 1} \de t \le \int_{l}^{\bar t} t^{n - 1} \de t \le \bar t^{n - 1} |\bar t - l| \le \int_{\partial^*E}t^{n - 1}|l - t| \de\hh^0(t), \end{equation*} and the first term in the left-hand side of~\eqref{CGPROS:claim} is bounded as wished.

Let us now consider $\int_{\left[\frac{r_\lambda}{2}, l\right]\setminus E} t^{n - 1} \de t$. 
Its value is a priori bounded by $\left(\frac{9}{8}R_\lambda\right)^{n}$. 
If $\partial^*E \cap\left[\frac{r_\lambda}{4}, \frac{3}{4}r_\lambda\right] \neq \emptyset$ and $\tau$ is one of its elements, then \begin{equation*} \begin{split}  \int_{\partial^*E} t^{n - 1}|l - t|\de\hh^0(t) & \ge \tau^n \left|l - \tau\right|  \ge \left(\frac{r_\lambda}{4}\right)^n \frac{1}{8}r_\lambda \ge \hat c(n, \lambda) \left(\frac{9}{8}R_\lambda\right)^{n - 1} \left|\frac{9}{8}R_\lambda - \frac{r_\lambda}{2}\right| \ge \hat c(n, \lambda) \int_{\left[\frac{r_\lambda}{2}, l\right] \setminus E} t^{n - 1} \de t. \end{split} \end{equation*} 
So from now on we can assume that $\partial^*E \cap\left[\frac{r_\lambda}{4}, \frac{3}{4}r_\lambda\right] = \emptyset$. 
If $\left[\frac{r_\lambda}{4}, \frac{3}{4}r_\lambda\right] \setminus E \neq \emptyset$, then $E \cap\left[\frac{r_\lambda}{4}, \frac{3}{4}r_\lambda\right] = \emptyset$ and 
\begin{equation*}
\begin{split}
\int_{\left[0, \frac{r_\lambda}{2}\right]\setminus E}t^{n - 1}\de t + \int_{\partial^*E} t^{n - 1}|l - t|\de\hh^0(t) & \ge \int_{\left[\frac{r_\lambda}{4}, \frac{r_\lambda}{2}\right]}t^{n - 1}\de t  = \hat c(n, \lambda)  \ge \hat c(n, \lambda) \left(\frac{9}{8}R_\lambda\right)^{n - 1} \left|\frac{9}{8}R_\lambda - \frac{r_\lambda}{2}\right| \\ & \ge \hat c(n, \lambda) \int_{\left[\frac{r_\lambda}{2}, l\right] \setminus E} t^{n - 1} \de t. \end{split} \end{equation*} 
So we can further assume $\left[\frac{r_\lambda}{4}, \frac{3}{4}r_\lambda\right] \subset E$. 
If $\partial^*E \cap\left[\frac{r_\lambda}{4}, l\right] = \emptyset$, then $\left[\frac{r_\lambda}{2}, l\right]$ $\subset$ $E$ and there is nothing to prove. 
Finally, if $\partial^*E \cap\left[\frac{r_\lambda}{4}, l\right] \neq \emptyset$, let us denote by $\underline t$ the infimum of $\partial^*E \cap \left[\frac{r_\lambda}{4}, l\right]$. 
Then \begin{equation*} \begin{split} \int_{\left[\frac{r_\lambda}{2}, l\right] \setminus E} t^{n - 1}\de t & \le \int_{\underline t}^{l}t^{n - 1} \de t \le l^{n - 1} |l - \underline t|  = \underline t^{n - 1} |l - \underline t| \left(\frac{l}{\underline t}\right)^{n - 1}  \le \hat c(n, \lambda) \underline t^{n - 1} |l - \underline t| \\ & \le \hat c(n, \lambda)\int_{\partial^*E}t^{n - 1}|l - t|\de\hh^0(t). \end{split} \end{equation*} 
This concludes the proof of the claim~\eqref{CGPROS:claim}. 
Hence
\begin{equation*}
\begin{split}
\int_{E \Delta [0, l]} t^{n - 1} \de t & \le 
\max\left\{\int_{[l, \infty) \cap E} t^{n - 1}\de t, \int_{\left[\frac{r_\lambda}{2}, l\right]\setminus E} t^{n - 1} \de t\right\} 
+  \int_{\left[0,\frac{r_\lambda}{2}\right]\setminus E} t^{n - 1} \de t
\\&\overset{\eqref{CGPROS:claim}}{\le} 
\hat c(n, \lambda) \left(\int_{\left[0, \frac{r_\lambda}{2}\right]\setminus E}t^{n - 1}\de t + \int_{\partial^*E} t^{n - 1}|l - t|\de\hh^0(t)\right) +  \int_{\left[0,\frac{r_\lambda}{2}\right]\setminus E} t^{n - 1} \de t, 
\end{split}\end{equation*}
and the proof follows.
\end{proof}

We shall also need the following standard technical result stating that a Vol'pert property holds for the intersections of a set of finite perimeter with rays from the origin, cf. \cite{Volpert}. The proof follows, for example, by adapting the proof of \cite[Theorem 3.21]{FuscoClassicalIsopProb} working in polar coordinates rather than in Cartesian coordinates.

\begin{lemma}\label{lemma:volpert}
Let $E \subset \R^n \setminus H$ be a set of finite perimeter with $|E|<+\infty$. If $\theta \in \mathbb{S}^{n - 1} \cap (\R^n \setminus H)$, we define \begin{equation*} E_\theta := \{t \ge 0 \st t\theta \in E\}. \end{equation*} Then, for $\hh^{n - 1}$-almost every $\theta \in \mathbb{S}^{n - 1} \cap (\R^n \setminus H)$, $E_\theta$ is a $1$-dimensional set of locally finite perimeter such that
\begin{equation*} \partial^*E_\theta \cap \{t > 0\} = \{t > 0 \st t\theta \in \partial^*E\}. \end{equation*}
Moreover, if $\eta \in L^1(\partial^*E)$ is nonnegative, we have \begin{equation}\label{volpert:area} \int_{\partial^*E \setminus H}\eta\de\hh^{n - 1} \ge \int_{\mathbb{S}^{n - 1} \setminus H} \left(\int_{\partial^*E_\theta}t^{n - 1}\eta(t\theta)\de\hh^0(t)\right)\de\hh^{n - 1}(\theta). \end{equation}
\end{lemma}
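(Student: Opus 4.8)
The plan is to transport $E$ to polar coordinates, use the classical Vol'pert slicing theorem in that picture to get the first assertion, and then deduce \eqref{volpert:area} from the coarea formula for the radial projection. Since $\{0\}$ is $\hh^{n-1}$-negligible and the weight $t^{n-1}$ vanishes at $t=0$ (recall $n\ge2$), none of the quantities above changes if the origin is discarded, so I work on $\R^n\setminus\{0\}$ throughout.

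First I would introduce $\Phi(t,\theta):=t\theta$, a $C^\infty$ diffeomorphism of $(0,\infty)\times\mathbb{S}^{n-1}$ onto $\R^n\setminus\{0\}$. Choosing finitely many smooth charts $\sigma_\alpha:V_\alpha\subset\R^{n-1}\to U_\alpha\subset\mathbb{S}^{n-1}$ covering the sphere and setting $\Psi_\alpha(t,y):=t\,\sigma_\alpha(y)$, one gets $C^\infty$ diffeomorphisms from open subsets of $\R_t\times\R^{n-1}_y$ onto open cones $W_\alpha$ with $\bigcup_\alpha W_\alpha=\R^n\setminus\{0\}$; crucially $\Psi_\alpha$ leaves the radial variable untouched, so $\Psi_\alpha^{-1}(E)_y=E_{\sigma_\alpha(y)}$ exactly. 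Pulling $E$ back through $\Psi_\alpha$ gives a set $\hat E_\alpha$ of locally finite perimeter in $\R_t\times\R^{n-1}_y$, and the Euclidean Vol'pert theorem \cite[Theorem 3.21]{FuscoClassicalIsopProb}, applied to $\hat E_\alpha$ while slicing in $t$, yields: for a.e.\ $y$, $(\hat E_\alpha)_y$ is a $1$-dimensional set of locally finite perimeter and $\partial^*((\hat E_\alpha)_y)=(\partial^*\hat E_\alpha)_y$. Since $\Psi_\alpha$ is a $C^1$ diffeomorphism, $\partial^*\hat E_\alpha$ and $\Psi_\alpha^{-1}(\partial^*E)$ agree up to an $\hh^{n-1}$-negligible set, which by the coarea (Eilenberg) inequality applied to the projection $(t,y)\mapsto y$ has empty $y$-slice for a.e.\ $y$. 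Letting $\alpha$ vary, this gives: for $\hh^{n-1}$-a.e.\ $\theta\in\mathbb{S}^{n-1}$, $E_\theta$ is a $1$-dimensional set of locally finite perimeter with $\partial^*E_\theta\cap\{t>0\}=\{t>0\st t\theta\in\partial^*E\}$.

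For \eqref{volpert:area} I would use the coarea formula for the locally Lipschitz map $\pi(x):=x/|x|$ restricted to $\partial^*E$, which is $(n-1)$-rectifiable with $\hh^{n-1}(\partial^*E)=P(E)<\infty$. A short computation gives the tangential Jacobian: for $\hh^{n-1}$-a.e.\ $x\in\partial^*E$, $d\pi_x$ acts on the approximate tangent plane $(\nu^E(x))^\perp$ as $|x|^{-1}$ times the orthogonal projection onto $(x/|x|)^\perp$, and the Jacobian of the orthogonal projection between two hyperplanes with unit normals $\nu$ and $\omega$ equals $|\langle\nu,\omega\rangle|$, whence
\[
J^{\partial^*E}\pi(x)=\frac{1}{|x|^{n-1}}\Big|\Big\langle\nu^E(x),\tfrac{x}{|x|}\Big\rangle\Big|.
\]
Thus the coarea formula for Lipschitz maps on rectifiable sets (applied to $\partial^*E\cap\{|x|>\epsilon\}$ and letting $\epsilon\to0$ by monotone convergence) gives, for every Borel $g\ge0$,
\[
\int_{\partial^*E}g(x)\,\frac{|\langle\nu^E(x),x/|x|\rangle|}{|x|^{n-1}}\de\hh^{n-1}(x)=\int_{\mathbb{S}^{n-1}}\Bigg(\int_{\partial^*E_\theta}g(t\theta)\de\hh^0(t)\Bigg)\de\hh^{n-1}(\theta),
\]
where on the right I used the fiber identity from the previous step. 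Choosing $g(x):=\eta(x)|x|^{n-1}$ turns the left side into $\int_{\partial^*E}\eta\,|\langle\nu^E,x/|x|\rangle|\de\hh^{n-1}$ and the right side into the double integral of \eqref{volpert:area}. Finally, for $\hh^{n-1}$-a.e.\ $x\in\partial^*E\cap\partial H$ one has $\nu^E(x)=-e_n$ and $x/|x|\in\partial H$, so $\langle\nu^E(x),x/|x|\rangle=0$; hence the left-hand side reduces to $\int_{\partial^*E\setminus H}\eta\,|\langle\nu^E,x/|x|\rangle|\de\hh^{n-1}\le\int_{\partial^*E\setminus H}\eta\de\hh^{n-1}$, by $\eta\ge0$ and $|\langle\nu^E,x/|x|\rangle|\le1$. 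Since $E\subset\{x_n>0\}$ forces $E_\theta=\emptyset$ for $\theta\in\partial H$, the $\theta$-integral may be restricted to $\mathbb{S}^{n-1}\setminus H$, which is \eqref{volpert:area}.

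I expect the main obstacle to be the slicing step: carefully reducing the statement on the sphere to the Euclidean Vol'pert theorem through charts, and verifying that the correspondence of reduced boundaries under the non-isometric changes of variables $\Psi_\alpha$ is \emph{exact} for $\hh^{n-1}$-a.e.\ $\theta$ (not merely up to an $(n-1)$-dimensional error), so that the pointwise fiber identity $\partial^*E_\theta\cap\{t>0\}=\{t>0\st t\theta\in\partial^*E\}$ is legitimate. Once this is in place, the Jacobian computation and the coarea step are routine.
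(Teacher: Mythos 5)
Your proof is correct and takes essentially the same approach the paper sketches in one line, namely running the Vol'pert slicing argument of \cite[Theorem 3.21]{FuscoClassicalIsopProb} in polar coordinates; your chart-based reduction for the fiber identity and the coarea computation with $J^{\partial^*E}\pi = |x|^{1-n}|\langle\nu^E,x/|x|\rangle|$ (together with the observation that $\nu^E=-e_n$ on $\partial^*E\cap\partial H$ kills that portion of the left-hand integral) are exactly the details the paper leaves implicit.
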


Combining \cref{CGPROS:lem6.2} with \cref{lemma:volpert} we get the following result that estimates the symmetric difference of a competitor with a bubble that is just close to a standard bubble $B^\lambda(v,x)$. The result is analogous to  \cite[Proposition 6.1]{CGPROS}.

\begin{lemma}\label{CGPROS:prop6.1}
There exist $\epsilon, \tilde c>0$ depending on $n,\lambda$ such that the following holds.
Let $E \subset \R^n\setminus H$ be a bounded set of finite perimeter.
If $\left|E \cap B_{\frac{r_\lambda}{2}}(0)\right| \ge \frac{1}{2} \left|B_{\frac{r_\lambda}{2}}(0) \setminus H\right|$, then
\begin{equation*} |E \Delta (B_1(x_0) \setminus H)| \le \tilde c\int_{\partial^*E \setminus H} ||x - x_0| - 1|\de\hh^{n - 1}(x), \end{equation*} for any $x_0 \in \R^n$ such that $|x_0 - (0, \dots, 0, - \lambda)| < \epsilon$.
\end{lemma}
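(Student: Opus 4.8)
The plan is to pass to polar coordinates centered at $x_0$, reduce the $n$-dimensional symmetric difference to a family of one-dimensional problems along rays, and then apply \cref{CGPROS:lem6.2} ray-by-ray, integrating the resulting estimate against $\hh^{n-1}$ on the sphere via the Vol'pert-type inequality \eqref{volpert:area} of \cref{lemma:volpert}. First I would fix $x_0$ with $|x_0-(0,\dots,0,-\lambda)|<\epsilon$ for $\epsilon$ to be chosen small, and translate so that $x_0$ becomes the origin; the key geometric point is that after this translation the half-space $H$ becomes $\{x_n\le \lambda - (x_0)_n\}$, which is at distance $O(\epsilon)$ from $\{x_n\le -\lambda\}$, hence the portion of the unit ball $B_1$ inside the translated $H$ is a spherical cap whose boundary slice along a ray $\theta$ starts at a radius $l(\theta)$ satisfying $\tfrac78 r_\lambda\le l(\theta)\le\tfrac98 R_\lambda$ for $\epsilon$ small; here $r_\lambda,R_\lambda$ are exactly as in \cref{CGPROS:lem6.2}, which is why that lemma was stated with the $\lambda$-dependent range. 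More precisely, for $\hh^{n-1}$-a.e. direction $\theta\in\mathbb S^{n-1}\setminus H$ (after translation), the set $(B_1(x_0)\setminus H)_\theta$ — the radii $t$ with $t\theta\in B_1(x_0)\setminus H$ — is an interval $[0,l(\theta)]$ (or possibly empty/degenerate for directions pointing almost tangentially into $H$, a set one handles separately or absorbs into error terms), with $l(\theta)$ in the admissible range.

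Next I would invoke \cref{lemma:volpert} applied to $E$ (which is a bounded set of finite perimeter), so that for a.e. $\theta$ the slice $E_\theta$ is a one-dimensional set of locally finite perimeter with $\partial^*E_\theta\cap\{t>0\}=\{t>0: t\theta\in\partial^*E\}$. For each such $\theta$ I would apply \cref{CGPROS:lem6.2} with $E$ replaced by $E_\theta$ and $l=l(\theta)$, obtaining
\begin{equation*}
\int_{E_\theta\Delta[0,l(\theta)]} t^{n-1}\de t \le \hat c\left(\int_{[0,r_\lambda/2]\setminus E_\theta} t^{n-1}\de t + \int_{\partial^*E_\theta} t^{n-1}|l(\theta)-t|\de\hh^0\right).
\end{equation*}
Now I integrate in $\theta$ over $\mathbb S^{n-1}\setminus H$. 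The left-hand side integrates, by the polar coordinate formula, to a quantity comparable to $|E\Delta(B_1(x_0)\setminus H)|$ up to the negligible or tangential directions mentioned above. The second term on the right integrates, by the inequality \eqref{volpert:area} of \cref{lemma:volpert} applied with $\eta(x)=||x-x_0|-1|$ (noting $|l(\theta)-t|$ is exactly $||t\theta - x_0|-1|$ on $\partial^*E_\theta$ once $l(\theta)$ is the boundary radius of the translated ball — here one must check that the relevant factor $1$ really is the translated-ball radius, which holds because $B_1(x_0)$ has radius $1$ and only the $H$-truncation changes the slice endpoint, leaving the "$t=1$" contact with $\partial B_1(x_0)$ intact), to at most $\int_{\partial^*E\setminus H}||x-x_0|-1|\de\hh^{n-1}(x)$, which is the desired right-hand side. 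The first term $\int_{[0,r_\lambda/2]\setminus E_\theta}t^{n-1}\de t$ integrates to $|B_{r_\lambda/2}(x_0)\setminus E|$ (times a constant from polar coordinates), so I need to control this by the same integral over $\partial^*E$; this is where the density hypothesis $|E\cap B_{r_\lambda/2}(0)|\ge\tfrac12|B_{r_\lambda/2}(0)\setminus H|$ enters.

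The main obstacle — and the step requiring real care rather than bookkeeping — is controlling the term $|B_{r_\lambda/2}(x_0)\setminus E|$ by $\int_{\partial^*E\setminus H}||x-x_0|-1|\de\hh^{n-1}$. The strategy here, following \cite[Proposition 6.1]{CGPROS}, is: the density assumption forces $|E\cap B_{r_\lambda/2}(x_0)|$ to be a definite fraction of $|B_{r_\lambda/2}(x_0)\setminus H|$ (using that $|x_0-(0,\dots,0,-\lambda)|<\epsilon$ so $B_{r_\lambda/2}(0)$ and $B_{r_\lambda/2}(x_0)$ overlap up to $O(\epsilon)$ error and the part of $B_{r_\lambda/2}(x_0)$ in $H$ is $O(\epsilon)$-close to that of $B_{r_\lambda/2}(0)$); meanwhile, if $\int_{\partial^*E\setminus H}||x-x_0|-1|$ were very small, then $\partial^*E$ would be concentrated near the sphere $\partial B_1(x_0)$, which together with a relative isoperimetric / De Giorgi-type argument in the ball $B_{r_\lambda/2}(x_0)$ forces $E$ to nearly fill or nearly vacate that ball — and the density hypothesis selects the "fill" alternative, yielding $|B_{r_\lambda/2}(x_0)\setminus E|$ small; quantifying this gives the bound with a constant $\tilde c=\tilde c(n,\lambda)$, after possibly shrinking $\epsilon$. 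One also has to deal with the small set of ``bad'' directions $\theta$ near-tangent to $\partial H$, where the ball slice is short or absent; these contribute a measure $O(\epsilon)\cdot(\text{diam})^n$ which, combined with a lower bound on $\int_{\partial^*E\setminus H}||x-x_0|-1|$ coming from the same concentration argument (or simply absorbed when the right-hand side is not small, in which case the inequality is trivial since the left-hand side is bounded), is harmless. Assembling these pieces and renaming constants yields the claim.
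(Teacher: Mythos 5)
Your strategy is the right one at the coarse level (polar decomposition, slice-by-slice application of \cref{CGPROS:lem6.2}, then the Vol'pert-type inequality), and the final step controlling the first term via the density hypothesis and a relative isoperimetric inequality is essentially correct. But there is a genuine gap in the setup: you translate so that $x_0$ becomes the origin and parametrize rays from $x_0$, whereas the argument actually needs rays to emanate from the \emph{original} origin $0\in\partial H$. Several things break with your choice. First, $|x_0-0|\approx|\lambda|$ is \emph{not} $O(\epsilon)$, so $B_{r_\lambda/2}(0)$ and $B_{r_\lambda/2}(x_0)$ are not close, and the translated half-space sits at $\{x_n\le -(x_0)_n\}\approx\{x_n\le\lambda\}$, not $O(\epsilon)$-close to $\{x_n\le-\lambda\}$ as you assert. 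Second, for $\lambda>0$ the point $x_0$ lies strictly inside $H$; after your translation the origin is in the interior of $H$, so for every $\theta$ the ray $\{t\theta\}$ is outside $\R^n\setminus H$ near $t=0$, the slices $E_\theta$ (with $E\subset\R^n\setminus H$) never meet a neighborhood of $0$, and the term $\int_{[0,r_\lambda/2]\setminus E_\theta}t^{n-1}\de t$ in \cref{CGPROS:lem6.2} is bounded below by a positive constant that no longer has anything to do with the density hypothesis placed at $B_{r_\lambda/2}(0)$. Third, the slices of $B_1(x_0)\setminus H$ along rays from $x_0$ are \emph{not} of the form $[0,l(\theta)]$ for those directions — they are truncated on both sides — so \cref{CGPROS:lem6.2} does not even apply ray by ray.

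The companion issue is your claim that ``$|l(\theta)-t|$ is exactly $||t\theta-x_0|-1|$''. That identity holds only after translating to $x_0$ and only if $l(\theta)=1$, which is incompatible with your other claim that $l(\theta)\in[\tfrac78 r_\lambda,\tfrac98 R_\lambda]$ (a range of numbers all strictly less than $1$ for $\lambda\ne0$). In the correct frame — rays from $0$, with $t(\theta)$ the exit radius of $B_1(x_0)$ along the ray, which does lie in the interval $[\tfrac78 r_\lambda,\tfrac98 R_\lambda]$ for $\epsilon$ small — the relation between $|t-t(\theta)|$ and $||t\theta-x_0|-1|$ is an \emph{inequality} $||t\theta-x_0|-1|\ge c(n,\lambda)\,|t-t(\theta)|$, and proving it requires a genuine argument (a three-regime case analysis near $t(\theta)$, for $t$ large, and for $t$ bounded away from $t(\theta)$). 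Writing it as an equality hides the step that actually carries the content. To repair the proof: keep the origin at $0\in\partial H$, set $t(\theta):=\sup\{t>0:|t\theta-x_0|<1\}$, verify $t(\theta)\in[\tfrac78 r_\lambda,\tfrac98 R_\lambda]$ for $\epsilon$ small, apply \cref{CGPROS:lem6.2} with $l=t(\theta)$, prove the lower bound on $||t\theta-x_0|-1|$, integrate against $\hh^{n-1}(\theta)$ using \cref{lemma:volpert}, and control the first term by the relative isoperimetric inequality in $B_{r_\lambda/2}(0)\setminus H$ together with the fact that $||x-x_0|-1|\ge c(n,\lambda)>0$ there.
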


\begin{proof}
If $\epsilon$ is sufficiently small, depending only on $n,\lambda$, then for any $\theta \in \mathbb{S}^{n - 1} \cap (\R^n \setminus H)$ the set $\{t > 0 \st |t\theta - x_0| < 1\}$ is an open segment $(0, t(\theta))$, with $t(\theta)$ close to the number
\begin{equation*} T_\theta \in \left[\min\left\{\sqrt{1 - \lambda^2}, 1 - \lambda\right\}, \max\left\{\sqrt{1 - \lambda^2}, 1 - \lambda\right\}\right] =: [ r_\lambda, R_\lambda]\end{equation*}
such that $|T_\theta \theta + \lambda e_n| = 1$. 
In particular, if $\epsilon$ is sufficiently small, then $\frac{9}{8} R_\lambda \ge t(\theta) \ge \frac{7}{8}r_\lambda$  for any $\theta \in \mathbb{S}^{n - 1} \cap (\R^n \setminus H)$.

As before, for any $\theta \in \mathbb{S}^{n - 1} \setminus H$ let \begin{equation*} E_\theta := \{t \ge 0 \st t\theta \in E\}. \end{equation*} 
By coarea formula we get \begin{equation}\label{CGPROS:coarea} |E \Delta B_1(x_0) \cap (\R^n \setminus H)| = \int_{\mathbb{S}^{n - 1} \setminus H} \left(\int_{E_\theta \Delta [0, t(\theta)]} t^{n - 1} \de t\right)\de \hh^{n - 1}(\theta) . \end{equation} 
By \cref{CGPROS:lem6.2} we obtain \begin{equation}\label{eq:zxsa}
|E \Delta B_1(x_0) \setminus H| \le c(n, \lambda) \int_{\mathbb{S}^{n - 1} \setminus H} \left(\int_{\left[0, \frac{r_\lambda}{2}\right]\setminus E_\theta}t^{n - 1}\de t + \int_{\partial^*E_\theta}t^{n - 1}|t(\theta) - t|\de\hh^0\right)\de \hh^{n - 1}(\theta).
\end{equation} 
For every $t > 0$, we claim that \begin{equation}\label{CGPROS:claim2} ||t\theta - x_0| - 1| \ge c(n, \lambda) |t - t(\theta)|. \end{equation} 
Note that there exists $\delta=\delta(n,\lambda,\epsilon)\in(0,r_\lambda/8)$ such that for $t \in [t(\theta)-\delta, t(\theta) + \delta]$ there holds \begin{equation*} \left|\frac{\d}{\d t}|t\theta - x_0|\right| = \left|\Braket{(t\theta - x_0)/|t\theta - x_0|, \theta}\right| \ge c(n, \lambda, \epsilon)>0. \end{equation*} 
Then, for $t \in [t(\theta)-\delta, t(\theta) + \delta]$,
\begin{equation*} |t(\theta) - t| \le c(n, \lambda) ||t\theta - x_0| - 1|, \end{equation*} and in this case the claim follows. 
Regarding the remaining cases, note that \begin{equation*} \frac{||t\theta - x_0| - 1|}{|t(\theta) - t|} \to 1 \qquad \text{as} \; |t| \to + \infty \end{equation*} and the claim follows for $t\ge R=R(n,\lambda, \epsilon)>0$ big enough. 
Finally, if $0<t < R$ and $t \not\in [t(\theta)-\delta, t(\theta) + \delta]$, then \begin{equation*} \left||t\theta - x_0| - 1\right| \ge c(n,\lambda,\delta)>0 \end{equation*} \begin{equation*} |t(\theta) - t| \le c(n,\lambda,R) \end{equation*}
hence the claim follows as well.

Therefore
\begin{equation}\label{eq:zxsa2}
\int_{\mathbb{S}^{n - 1} \setminus H} \int_{\partial^*E_\theta}t^{n - 1}|t - t(\theta)|\de\hh^0(t)\de\hh^{n - 1}(\theta) \overset{\eqref{CGPROS:claim2}}{\le} c(n, \lambda) \int_{\mathbb{S}^{n - 1} \setminus H}\int_{\partial^*E_\theta}t^{n - 1}||t\theta - x_0| - 1|\de\hh^0(t)\de\hh^{n - 1}(\theta).
\end{equation} 
By \cref{lemma:volpert} we deduce \begin{equation}\label{CGPROS:6.2} \begin{split} \int_{\partial^*E \setminus H}& ||x - x_0| - 1| \de\hh^{n - 1}(x) \ge \int_{\mathbb{S}^{n - 1} \setminus H}\left(\int_{\partial^*E_\theta}t^{n - 1}||t\theta - x_0| - 1|\de\hh^0(t)\right)\de\hh^{n - 1}(\theta). \end{split} \end{equation} 
Since $||x - x_0| - 1| \ge c(n, \lambda)>0$ in $B_{\frac{r_\lambda}{2}}(0)$, by coarea formula and relative isoperimetric inequality we get \begin{equation}\label{CGPROS:6.3} \begin{split} \int_{\mathbb{S}^{n - 1} \setminus H} \left(\int_{\left[0, \frac{r_\lambda}{2}\right]\setminus E_\theta} t^{n - 1} \de t\right)\de\hh^{n - 1}(\theta) & = \left|\left(B_{\frac{r_\lambda}{2}}(0) \setminus H\right) \setminus E\right|  = \left|\left(B_{\frac{r_\lambda}{2}}(0) \setminus H\right) \setminus E\right|^{\frac{n - 1}{n}} \left|\left(B_{\frac{r_\lambda}{2}}(0) \setminus H\right) \setminus E\right|^{\frac{1}{n}} \\ & \le c(n, \lambda) \int_{\partial^*E \cap \left(B_{\frac{r_\lambda}{2}}(0) \setminus H\right)} \de \hh^{n - 1} \\& \le c(n, \lambda) \int_{\partial^*E \setminus H} ||x - x_0| - 1|\de\hh^{n - 1}(x). \end{split} \end{equation} 
Putting together \eqref{eq:zxsa}, \eqref{eq:zxsa2}, \eqref{CGPROS:6.2} and~\eqref{CGPROS:6.3}, the proof follows.
\end{proof}

We can finally show that if a suitably regular Schwarz-symmetric set satisfies a trace inequality, then the quantitative estimates in \cref{prop:EsistenzaCoupling} imply a quantitative isoperimetric inequality.

\begin{proposition}\label{prop:QuantitativaDataCT}
There exists $\delta_5=\delta_5(n, \lambda) >0$ such that for any $c_T>0$ there exists $\gamma=\gamma(n,\lambda,c_T)>0$ such that the following holds.
Let $E \subset \R^n\setminus H$ be a bounded connected open set with $|E|=|B^\lambda|$. Suppose that $E$ has Lipschitz boundary and that $\partial E \cap \{x_n\ge0\}$ is a hypersurface of class $C^1$ with boundary. Assume that $E$ is Schwarz-symmetric with respect to the $n$-th axis and that there exists a constant $c_T$ such that for every function $f \in BV(\R^n) \cap L^\infty(\R^n)$ there is a constant $c \in \R$ such that the following holds
\begin{equation} \label{CGPROS:7.3}
\int_E \de|D f|(x) \ge c_T \int_{\partial^*E \cap (\R^n \setminus H)} {\rm tr}_E(|f - c|)\de\hh^{n - 1}(x). \end{equation}
If $D_\lambda(E) < \delta_5$, then
\begin{equation*} \alpha^2_\lambda(E) \le \gamma D_\lambda(E). \end{equation*} \end{proposition}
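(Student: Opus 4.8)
The plan is to adapt to the present setting the ABP--coupling plus trace-inequality scheme of \cite{CGPROS}, with the role of the optimal ``parabola'' played by the model coupling $x\mapsto\tfrac12|x+\lambda e_n|^2$, whose gradient $x\mapsto x+\lambda e_n$ maps $B^\lambda(|B^\lambda|,0)=B_1(-\lambda e_n)\setminus H$ onto $B^\lambda$. Assuming $\delta_5\le1$, I would first apply \cref{prop:EsistenzaCoupling} to obtain a $1$-Lipschitz convex function $\Psi\in C^{1,1}(\R^n)$ with $\nabla\Psi(\R^n)=\nabla\Psi(E)=B^\lambda$ up to null sets, $\Delta\Psi\le P_\lambda(E)/|E|$, and the bounds \eqref{CGPROS:4.7} and \eqref{CGPROS:4.8}; since $|\nabla\Psi|\le1$, the latter also gives $\int_{\partial^*E\setminus H}(1-|\nabla\Psi|)\de\hh^{n-1}\le\hat C D_\lambda(E)$.

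Next I would turn \eqref{CGPROS:4.7} into a trace estimate. For each $i\in\{1,\dots,n\}$, fix $\eta\in C_c^\infty(\R^n)$ with $\eta\equiv1$ on a neighbourhood of $\overline E$ and apply the assumption \eqref{CGPROS:7.3} to $f_i:=\eta\,(\partial_i\Psi-x_i)$, which is Lipschitz with compact support, hence in $BV(\R^n)\cap L^\infty(\R^n)$. Since on $E$ one has $\de|Df_i|=|(\nabla^2\Psi)_i-e_i|\de x$ and on $\partial^*E\setminus H$ one has $f_i=\partial_i\Psi-x_i$, there is $c_i\in\R$ such that
\[
c_T\int_{\partial^*E\setminus H}|\partial_i\Psi(x)-x_i-c_i|\de\hh^{n-1}(x)\le\int_E|\nabla^2\Psi-{\rm id}|\de x\le\hat C\sqrt{D_\lambda(E)}.
\]
Setting $x_0:=-(c_1,\dots,c_n)$ and summing over $i$ gives $\int_{\partial^*E\setminus H}|\nabla\Psi(x)-(x-x_0)|\de\hh^{n-1}\le n\hat C c_T^{-1}\sqrt{D_\lambda(E)}$. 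Combining this with the pointwise inequality $\left||x-x_0|-1\right|\le|\nabla\Psi(x)-(x-x_0)|+(1-|\nabla\Psi(x)|)$ on $\partial^*E\setminus H$ (valid since $|\nabla\Psi|\le1$) and with \eqref{CGPROS:4.8}, and using $D_\lambda(E)\le1$, I would obtain
\[
\int_{\partial^*E\setminus H}\left||x-x_0|-1\right|\de\hh^{n-1}(x)\le C(n,\lambda,c_T)\sqrt{D_\lambda(E)}.
\]

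I would then apply \cref{CGPROS:prop6.1}. Its density hypothesis holds because an elementary computation shows $B_{r_\lambda/2}(0)\setminus H\subset B^\lambda(|B^\lambda|,0)$ for every $\lambda\in(-1,1)$, while Schwarz-symmetry of $E$, the rotational-invariance argument of \cref{lemma:fmp2.2} and the non-quantitative stability \cref{lemma:fmp2.3} make $|E\Delta B^\lambda(|B^\lambda|,0)|$ arbitrarily small once $\delta_5$ is small, so that $|E\cap B_{r_\lambda/2}(0)|\ge|B_{r_\lambda/2}(0)\setminus H|-|E\Delta B^\lambda(|B^\lambda|,0)|\ge\tfrac12|B_{r_\lambda/2}(0)\setminus H|$. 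Its hypothesis $|x_0-(0,\dots,0,-\lambda)|<\epsilon$ I would prove by contradiction: along Schwarz-symmetric $E_j$ with $D_\lambda(E_j)\to0$ and a fixed $c_T$ one has $E_j\to B^\lambda(|B^\lambda|,0)$ in $L^1$, the couplings $\Psi_j$ (uniformly $C^{1,1}$ on compacts, since $0\le\nabla^2\Psi_j\le(P_\lambda(E_j)/|E_j|)\,{\rm id}$) converge in $C^1_{\rm loc}$ to a convex $C^{1,1}$ function $\Psi_\infty$ with $\nabla\Psi_\infty(x)=x+\lambda e_n$ on $B^\lambda(|B^\lambda|,0)$ (by \eqref{CGPROS:4.7}, the connectedness of $B^\lambda(|B^\lambda|,0)$ and $\nabla\Psi_j(E_j)=B^\lambda$), and $\hh^{n-1}\mres(\partial^*E_j\setminus H)\rightharpoonup\hh^{n-1}\mres(\partial B^\lambda(|B^\lambda|,0)\cap\{x_n>0\})$; then the $L^1$-smallness of $\partial_i\Psi_j-x_i-c_i^j$ on $\partial^*E_j\setminus H$ forces $c_i^j\to\lambda\delta_{in}$, i.e.\ $x_0^j\to-\lambda e_n$, a contradiction. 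Once both hypotheses hold, \cref{CGPROS:prop6.1} and the previous display give $|E\Delta(B_1(x_0)\setminus H)|\le C(n,\lambda,c_T)\sqrt{D_\lambda(E)}$. Writing $\mu:=-(x_0)_n$ and $y:=(x_0',0)\in\partial H$, one checks $B_1(x_0)\setminus H=B^\mu(|B^\mu|,y)$, with $|\mu-\lambda|<\epsilon$; since $|E|=|B^\lambda|$, the last estimate yields $\left||B^\lambda|-|B^\mu|\right|\le C\sqrt{D_\lambda(E)}$, and since $\mu\mapsto|B^\mu|$ has non-vanishing derivative for $\mu$ near $\lambda$ this gives $|\mu-\lambda|\le C(n,\lambda)\sqrt{D_\lambda(E)}$, hence $|B^\mu(|B^\mu|,y)\Delta B^\lambda(|B^\lambda|,y)|\le C(n)|\mu-\lambda|\le C\sqrt{D_\lambda(E)}$. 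By the triangle inequality $\alpha_\lambda(E)|B^\lambda|\le|E\Delta B^\lambda(|B^\lambda|,y)|\le C(n,\lambda,c_T)\sqrt{D_\lambda(E)}$, and squaring gives the claim with $\gamma=\gamma(n,\lambda,c_T)$ (and $\delta_5=\delta_5(n,\lambda)\le1$ chosen small enough for the two hypotheses above).

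The main obstacle is the identification of the centre $x_0$, produced abstractly by the trace inequality, with the centre $-\lambda e_n$ of the model configuration: one needs both the qualitative closeness $|x_0+\lambda e_n|<\epsilon$, indispensable to apply \cref{CGPROS:prop6.1} at all, and, afterwards, the sharp quantitative bound $|\mu-\lambda|\lesssim\sqrt{D_\lambda(E)}$ on the resulting parameter; the former rests on the compactness and coupling-convergence argument sketched above, while the latter crucially exploits the exact volume constraint $|E|=|B^\lambda|$.
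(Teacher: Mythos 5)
Your proof is correct and follows essentially the same strategy as the paper: apply \cref{prop:EsistenzaCoupling}, deduce from \eqref{CGPROS:7.3}, \eqref{CGPROS:4.7}, \eqref{CGPROS:4.8} the estimate $\int_{\partial^*E\setminus H}\left||x-x_0|-1\right|\de\hh^{n-1}\le C\sqrt{D_\lambda(E)}$, verify $|x_0+\lambda e_n|<\epsilon$ by a compactness/contradiction argument, apply \cref{CGPROS:prop6.1}, and conclude via the volume constraint. The only genuine difference is in the final bookkeeping: the paper uses Schwarz-symmetry once more to replace $x_0$ by its projection $(0,\ldots,0,x_0^n)$ onto the axis (noting this cannot increase $|E\Delta B_1(\cdot)|$), and then controls $|x_0^n+\lambda|$ via the derivative of $t\mapsto|B_1(0,\ldots,0,t)\setminus H|$; you instead rewrite $B_1(x_0)\setminus H=B^\mu(|B^\mu|,y)$ with $\mu:=-(x_0)_n$ and $y:=(x_0',0)\in\partial H$, control $|\mu-\lambda|$ via $\mu\mapsto|B^\mu|$, and conclude by comparing with $B^\lambda(|B^\lambda|,y)$. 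Since the two derivative computations are the same and $\alpha_\lambda$ is a minimum over all centers in $\partial H$, these endgames are equivalent; your version has the small advantage of not invoking Schwarz-symmetry in this last step. Your compactness argument for $|x_0+\lambda e_n|<\epsilon$ is also more explicit than the paper's terse ``passing to the limit,'' but it fills in the details in the intended way.
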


\begin{proof}
Let $\Psi$ be given by \cref{prop:EsistenzaCoupling}.
By~\eqref{CGPROS:7.3} and~\eqref{CGPROS:4.7} we get \begin{equation*}
\int_{\partial^*E \cap (\R^n \setminus H)} |(\nabla\Psi - x) + x_0| \de\hh^{n - 1}(x) \le c(n, \lambda, c_T) \sqrt{D_\lambda(E)}, \end{equation*} where $x_0 = (x_0^1, \dots, x_0^n)$ is the vector whose $i$-th component is the constant $c$ of~\eqref{CGPROS:7.3} corresponding to the $i$-th component of $\nabla\Psi - x$. Therefore
\begin{equation}\label{eq:zzw}
\begin{split} \int_{\partial^*E \cap (\R^n \setminus H)} \left||x - x_0| - 1\right| \de \hh^{n - 1}(x) &\le \int_{\partial^*E \cap (\R^n \setminus H)} |\nabla\Psi - (x - x_0)| + |1 - |\nabla\Psi||\de\hh^{n - 1}(x) \\ & \overset{\eqref{CGPROS:4.8}}{\le} c(n, \lambda, c_T)\sqrt{D_\lambda(E)}. \end{split}
\end{equation}
We observe that if $\epsilon=\epsilon(n,\lambda)$ is given by \cref{CGPROS:prop6.1}, then for $\delta_5$ small enough depending on $\epsilon$, we ensure that $|x_0-(0,\ldots,0,-\lambda)|<\epsilon$. Indeed, if for every $i \in \N$ there were $E_i$ satisfying the hypotheses of \cref{prop:QuantitativaDataCT} such that $D_\lambda(E) < \frac{1}{i}$ with corresponding $x_{0,i}$ verifying $|x_{0,i}-(0,\ldots,0,-\lambda)| \ge \epsilon$, passing to limit in~\eqref{eq:zzw} we would get a contradiction with the fact that $E_i$ converges to $B^\lambda(|B^\lambda|)$.

Hence we can apply \cref{CGPROS:prop6.1}. Since $E$ is Schwarz-symmetric, we get
\begin{equation}\label{eq:zzQuasiQuant}
\begin{split} |E \Delta B_1(0, \dots, 0, x_0^n) \cap (\R^n \setminus H)| & \le |E \Delta B_1(x_0) \cap (\R^n \setminus H)| \\ & \le c(n, \lambda, c_T) \sqrt{D_\lambda(E)}. \end{split}
\end{equation}

Arguing as above, up to taking a smaller $\delta_5$, we can assume that $|x_0^n +\lambda|\le |x_0- (0,\ldots,0,-\lambda)|$ is so small that
\begin{equation*}
     \left| \frac{\d}{\d t} |B_1(0,\ldots,0,t) \setminus H | \right| \ge \frac12 \omega_{n-1}(1-\lambda^2)^{\frac{n-1}{2}} ,
\end{equation*}
for any $t \in[-|x_0^n +\lambda|, |x_0^n +\lambda|]$. Hence
\begin{equation*}
\begin{split}
c(n, \lambda, c_T) \sqrt{D_\lambda(E)}
&\overset{\eqref{eq:zzQuasiQuant}}{\ge} \left||B_1(0, \dots, 0, x_0^n) \setminus H| - |E|\right| 
= \left||B_1(0, \dots, 0, x_0^n) \setminus H| - |B_1(0, \dots, 0, - \lambda) \setminus H|\right|\\
&\ge \frac12 \omega_{n-1}(1-\lambda^2)^{\frac{n-1}{2}} |x_0^n +\lambda|,
\end{split}
\end{equation*}
which implies
\begin{equation}\label{eq:zzVicinanzaXzeron}
    |x_0^n +\lambda| \le c(n, \lambda, c_T) \sqrt{D_\lambda(E)},
\end{equation}
for a suitable constant. Therefore
\begin{equation}\label{zz:QuasiQuant2}
    \begin{split}
        \left|\left(B_1(0, \dots, 0, x_0^n)\setminus H\right)\Delta\left(B_1(0, \dots, 0, -\lambda)\setminus H\right)\right| 
    &\le c(n,\lambda) |x_0^n+\lambda| \\&\overset{\eqref{eq:zzVicinanzaXzeron}}{\le} c(n, \lambda, c_T) \sqrt{D_\lambda(E)},
    \end{split}
\end{equation}
where in the first inequality we used that $t\mapsto \left|\left(B_1(0, \dots, 0, t)\setminus H\right)\Delta\left(B_1(0, \dots, 0, -\lambda)\setminus H\right)\right|$ is Lipschitz for some Lipschitz constant $c(n,\lambda)>0$.

Finally
\begin{equation*}
\begin{split}
|E & \Delta B_1(0, \dots, 0, x_0^n) \cap (\R^n \setminus H)| \\
& \ge |B^\lambda(|B^
\lambda|) \Delta E| - \left|\left(B_1(0, \dots, 0, x_0^n)\setminus H\right)\Delta\left(B_1(0, \dots, 0, -\lambda)\setminus H\right)\right| \\
& \overset{\eqref{zz:QuasiQuant2}}{\ge} \alpha_\lambda(E) - c(n, \lambda, c_T) \sqrt{D_\lambda(E)}.
\end{split}
\end{equation*}
\end{proof}

In the next lemma we observe that optimal bubbles do satisfy trace inequalities.

\begin{lemma}
There exists $\bar c=\bar c(n, \lambda)>0$ such that for every function $f \in BV(\R^n) \cap L^\infty(\R^n)$ there is a constant $c \in \R$ such that the following holds \begin{equation}\label{traccia:calotta} \int_{B^\lambda(|B^\lambda|)} \de|Df|(x) \ge \bar c \int_{\partial B^\lambda(|B^\lambda|) \setminus H} {\rm tr}_{B^\lambda(|B^\lambda|)}(|f - c|) \de\hh^{n - 1}(x). \end{equation}
\end{lemma}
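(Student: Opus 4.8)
The plan is purely soft: to recognize \eqref{traccia:calotta} as the standard $BV$-trace inequality on the fixed domain $\Omega := B^\lambda(|B^\lambda|)$. First I would record that
\[
\Omega = B^\lambda(|B^\lambda|) = B^\lambda - \lambda e_n = B_1(-\lambda e_n) \cap \{x_n>0\},
\]
so that $\Omega$ is a bounded open \emph{convex} set depending only on $n$ and $\lambda$; in particular it is a bounded connected Lipschitz domain (near the edge $\partial B_1(-\lambda e_n)\cap\{x_n=0\}$ the two smooth boundary pieces meet transversally, hence $\partial\Omega$ is locally a Lipschitz graph). Consequently all functional constants appearing below depend only on $n,\lambda$, and $\partial\Omega\setminus H = \partial B_1(-\lambda e_n)\cap\{x_n>0\}$ is exactly the ``spherical cap'' part of $\partial\Omega$ over which the trace term in \eqref{traccia:calotta} is integrated.

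Next I would invoke two classical inequalities on $\Omega$: the Poincar\'e--Wirtinger inequality in $BV$, giving $c_P=c_P(n,\lambda)>0$ such that $\int_\Omega |g - g_\Omega|\de x \le c_P\, |Dg|(\Omega)$ for every $g\in BV(\Omega)$, where $g_\Omega := |\Omega|^{-1}\int_\Omega g\de x$ (see, e.g., \cite[Theorem 3.44]{AmbrosioFuscoPallara} together with the boundedness of $\Omega$), and the $BV$-trace theorem, giving $c_{\rm tr}=c_{\rm tr}(n,\lambda)>0$ such that $\int_{\partial\Omega}|{\rm tr}_\Omega g|\de\hh^{n-1} \le c_{\rm tr}\big(\|g\|_{L^1(\Omega)} + |Dg|(\Omega)\big)$ for every $g\in BV(\Omega)$ (see, e.g., \cite[Theorem 3.87]{AmbrosioFuscoPallara}). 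Given $f\in BV(\R^n)\cap L^\infty(\R^n)$, since $\Omega$ is open the measure $Df\mres\Omega$ is the distributional gradient of $f|_\Omega\in BV(\Omega)$, so $\int_\Omega \de|Df| = |Df|(\Omega)$. I would then take $c := |\Omega|^{-1}\int_\Omega f\de x$ and set $g := f - c$, so that $|Dg|(\Omega) = |Df|(\Omega)$, ${\rm tr}_\Omega g = {\rm tr}_\Omega f - c$, and — since on a Lipschitz domain the $BV$-trace of a function is its $L^1$-Lebesgue value along the boundary — ${\rm tr}_\Omega(|f-c|) = |{\rm tr}_\Omega f - c|$ for $\hh^{n-1}$-a.e.\ point of $\partial\Omega$. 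Chaining the two inequalities,
\[
\int_{\partial\Omega\setminus H} {\rm tr}_\Omega(|f-c|)\de\hh^{n-1}
\le \int_{\partial\Omega} |{\rm tr}_\Omega g|\de\hh^{n-1}
\le c_{\rm tr}\Big(\|g\|_{L^1(\Omega)} + |Dg|(\Omega)\Big)
\le c_{\rm tr}(c_P+1)\,|Df|(\Omega),
\]
which is precisely \eqref{traccia:calotta} with $\bar c := \big(c_{\rm tr}(c_P+1)\big)^{-1}$; note that the $L^\infty$ hypothesis on $f$ plays no role here.

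Since both ingredients are completely standard, I do not expect a genuine obstacle. The only points deserving a line of justification are: that $\Omega = B^\lambda(|B^\lambda|)$ is indeed a bounded Lipschitz (in fact convex) domain, so that $c_P$ and $c_{\rm tr}$ depend only on $n,\lambda$; and the pointwise identity ${\rm tr}_\Omega(|f-c|) = |{\rm tr}_\Omega f - c|$, which follows from the characterization of the $BV$-trace on Lipschitz domains as the $L^1$-limit of averages over interior balls together with the elementary bound $\big||h| - |a|\big| \le |h-a|$. If one preferred to avoid even these remarks, \eqref{traccia:calotta} could alternatively be proved by a contradiction/compactness argument exploiting the compact embedding $BV(\Omega)\hookrightarrow L^1(\Omega)$ and compactness of the trace operator, but the direct combination above is shorter.
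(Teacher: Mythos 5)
Your proposal is correct and takes essentially the same route as the paper, which simply cites the Poincar\'e inequality \cite[Theorem 3.44]{AmbrosioFuscoPallara} and the boundary trace theorem \cite[Theorem 3.87]{AmbrosioFuscoPallara} without further elaboration. You have filled in the (correct) details: that $B^\lambda(|B^\lambda|)=B_1(-\lambda e_n)\cap\{x_n>0\}$ is a bounded convex, hence Lipschitz, domain, and that one should take $c$ to be the mean of $f$ over $\Omega$ and chain the two inequalities.
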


\begin{proof}
The proof follows combining the classical Poincaré inequality \cite[Theorem 3.44]{AmbrosioFuscoPallara} with the boundary trace theorem \cite[Theorem 3.87]{AmbrosioFuscoPallara}.
\end{proof}

We now introduce a notion of $C^1$-distance from $B^\lambda(|B^\lambda|)$ for sets in the half-space $\R^n\setminus H$, and we deduce that Schwarz-symmetric sets sufficiently close in $C^1$ to $B^\lambda(|B^\lambda|)$ enjoy a quantitative isoperimetric inequality.

\begin{definition}\label{def:VicinanzaC1Schwarz}
Let $\phi_\lambda : \partial B_1 \setminus H \to \R$ be such that $\partial B^\lambda(|B^\lambda|) \setminus H = \{ \varphi_\lambda(x) \, x \st x \in \partial B_1 \setminus H\}$.\\
Let $E \subset \R^n\setminus H$ be a bounded open set. Suppose that $E$ has Lipschitz boundary and that $\partial E \cap \{x_n\ge0\}$ is a hypersurface of class $C^1$ with boundary. 
Assume that $E$ is Schwarz-symmetric. Suppose that there exists a $C^1$ functions
\begin{equation*} \begin{split}
\phi : \partial B_1 \setminus H \to \R
\end{split} \end{equation*}
whose graph parametrizes the boundary of $E$ in $\R^n\setminus H$, that is
\[
\partial E \setminus H = \{ \varphi(x) \, x \st x \in \partial B_1 \setminus H\}.
\]
We define the $C^1$ distance of $E$ to $B^\lambda(|B^\lambda|)$ by $\d_{C^1}(E,B^\lambda(|B^\lambda|)):=\|\phi - \phi_\lambda\|_{C^1(\partial B_1 \cap \R^n \setminus H)}$.\\
A sequence of sets $E_j$ as above is said to converge to $B^\lambda(|B^\lambda|)$ in $C^1$ if $\d_{C^1}(E_j,B^\lambda(|B^\lambda|))\to0$ as $j\to+\infty$.
\end{definition}

\begin{corollary}\label{quantitative:nearball}
There exist $\hat\epsilon, \hat\gamma>0$ depending only on $n,\lambda$ such that the following holds. 
Let $E \subset \R^n\setminus H$ be as in \cref{def:VicinanzaC1Schwarz}. If $\d_{C^1}(E,B^\lambda(|B^\lambda|)) \le \hat\epsilon$, then
\begin{equation*} \alpha^2_\lambda(E) \le \hat\gamma(n, \lambda) D_\lambda(E). \end{equation*}
\end{corollary}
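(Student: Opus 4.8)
The plan is to deduce the statement from \cref{prop:QuantitativaDataCT}. Since $\alpha_\lambda$ and $D_\lambda$ are scale invariant, and since the rescaled set $\widetilde E:=(|B^\lambda|/|E|)^{1/n}E$ is again a bounded open Schwarz-symmetric set with Lipschitz boundary and $C^1$ relative boundary, with $|\widetilde E|=|B^\lambda|$ and $\d_{C^1}(\widetilde E,B^\lambda(|B^\lambda|))\le C(n,\lambda)\,\d_{C^1}(E,B^\lambda(|B^\lambda|))$ (the volume enclosed by a radial graph being a locally Lipschitz function of the graph in the $C^0$ norm), it suffices to prove the inequality when $|E|=|B^\lambda|$, up to shrinking the final $\hat\epsilon$ by the factor $C(n,\lambda)$. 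Under this normalization $E$ is connected (being star-shaped with respect to the origin by the radial graph structure), so it satisfies all the hypotheses of \cref{prop:QuantitativaDataCT} apart from the trace inequality \eqref{CGPROS:7.3} and the smallness $D_\lambda(E)<\delta_5$; both will be secured by taking $\hat\epsilon$ small. The heart of the proof is to show that \eqref{CGPROS:7.3} holds with a constant $c_T=c_T(n,\lambda)$ that does not depend on the particular $E$, provided $\d_{C^1}(E,B^\lambda(|B^\lambda|))$ is small enough depending on $n,\lambda$.

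To this end I would transfer the trace inequality \eqref{traccia:calotta} for the optimal bubble (the lemma preceding \cref{def:VicinanzaC1Schwarz}) to $E$ through an explicit bi-Lipschitz map. Let $\phi_\lambda$ and $\phi$ be the radial graph functions of $\partial B^\lambda(|B^\lambda|)\setminus H$ and $\partial E\setminus H$ over $\partial B_1\setminus H=\partial B_1\cap\{x_n>0\}$ as in \cref{def:VicinanzaC1Schwarz}, and set $\psi:=\phi/\phi_\lambda$, a positive $C^1$ function on the closed hemisphere $\partial B_1\cap\{x_n\ge0\}$ with $\|\psi-1\|_{C^1}\le C(n,\lambda)\,\d_{C^1}(E,B^\lambda(|B^\lambda|))$. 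Since $B^\lambda(|B^\lambda|)=B_1(-\lambda e_n)\cap\{x_n>0\}$ is star-shaped with respect to the origin and $E$, being the bounded domain enclosed by the radial graph of $\phi$, is likewise star-shaped with respect to the origin, the radial stretch $y\mapsto\psi(y/|y|)\,y$ maps $\overline{B^\lambda(|B^\lambda|)}$ onto $\overline E$, $\partial B^\lambda(|B^\lambda|)\setminus H$ onto $\partial E\setminus H$, and the wetted disk onto the wetted disk. Interpolating this map to the identity outside a ball of radius $R(n,\lambda)$ gives a bi-Lipschitz homeomorphism $\Phi$ of $\R^n$ with $\Phi(\overline{B^\lambda(|B^\lambda|)})=\overline E$, $\Phi(\partial B^\lambda(|B^\lambda|)\setminus H)=\partial E\setminus H$ and $\|\Phi-{\rm id}\|_{C^1}+\|\Phi^{-1}-{\rm id}\|_{C^1}\le C(n,\lambda)\,\d_{C^1}(E,B^\lambda(|B^\lambda|))$. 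Given $f\in BV(\R^n)\cap L^\infty(\R^n)$, I apply \eqref{traccia:calotta} to $g:=f\circ\Phi$ and change variables back: the change of variables formula for $BV$ functions under bi-Lipschitz maps bounds $|Dg|(B^\lambda(|B^\lambda|))$ by $(1+\omega)\,|Df|(E)$, while the area formula bounds the surface integral over $\partial B^\lambda(|B^\lambda|)\setminus H$ of the boundary trace of $|g-c|$ from below by $(1-\omega)$ times the surface integral over $\partial E\setminus H$ of the boundary trace of $|f-c|$, where $\omega=\omega(n,\lambda)\to0$ as $\d_{C^1}(E,B^\lambda(|B^\lambda|))\to0$. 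Hence, for $\d_{C^1}(E,B^\lambda(|B^\lambda|))$ small enough, \eqref{CGPROS:7.3} holds for $E$ with the same constant $c\in\R$ and with $c_T:=\bar c/2$, where $\bar c=\bar c(n,\lambda)$ is the constant in \eqref{traccia:calotta}. Note that only the relative boundary $\partial E\setminus H$ enters \eqref{CGPROS:7.3}, so a precise correspondence of the wetted parts under $\Phi$ is not even needed.

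Finally, to apply \cref{prop:QuantitativaDataCT} one also needs $D_\lambda(E)<\delta_5$, where $\delta_5=\delta_5(n,\lambda)$ is the threshold therein. This holds once $\d_{C^1}(E,B^\lambda(|B^\lambda|))$ is small: both $P_\lambda$ and the Lebesgue measure are continuous along $C^1$ convergence of the relative boundary, and $D_\lambda(B^\lambda(|B^\lambda|))=0$, so $D_\lambda(E)\to0$ as $\d_{C^1}(E,B^\lambda(|B^\lambda|))\to0$. Choosing $\hat\epsilon=\hat\epsilon(n,\lambda)>0$ small enough that both the uniform trace inequality of the previous paragraph holds and $D_\lambda(E)<\delta_5$, \cref{prop:QuantitativaDataCT} applied with $c_T=c_T(n,\lambda)$ yields $\alpha_\lambda^2(E)\le\gamma(n,\lambda,c_T)\,D_\lambda(E)$, which is the claim with $\hat\gamma:=\gamma(n,\lambda,c_T)$.

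The step I expect to be the main obstacle is the construction and quantitative analysis of $\Phi$: one must check that $\psi=\phi/\phi_\lambda$ is genuinely $C^1$ up to the equator $\partial B_1\cap\{x_n=0\}$ — which rests on the fact that, since $|\lambda|<1$, the optimal bubble meets $\partial H$ transversally, so $\phi_\lambda$ stays bounded away from $0$ and is $C^1$ up to the equator — and one must keep explicit track of how the errors in the $BV$ change of variables and in the area formula depend on $\|\Phi-{\rm id}\|_{C^1}$, so as to conclude that $c_T$ can be chosen uniformly in $E$. Everything else is a routine combination of scale invariance, continuity of $P_\lambda$, and \cref{prop:QuantitativaDataCT}.
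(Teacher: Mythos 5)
Your argument is correct and takes essentially the same route as the paper: both transfer the trace inequality \eqref{traccia:calotta} from $B^\lambda(|B^\lambda|)$ to $E$ via a near-identity bi-Lipschitz map built from the ratio $\phi/\phi_\lambda$, obtain a uniform constant $c_T(n,\lambda)$, and then invoke \cref{prop:QuantitativaDataCT}. The paper regularizes the radial stretch at the origin with a smooth cutoff $\chi(|x|)$ instead of your direct formula $y\mapsto \psi(y/|y|)y$, and it leaves the rescaling to $|E|=|B^\lambda|$ and the verification $D_\lambda(E)<\delta_5$ implicit where you spell them out, but these are only presentational differences.
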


\begin{proof}
Let $\varphi,\varphi_\lambda$ be as in \cref{def:VicinanzaC1Schwarz}.
If $\chi : [0, + \infty) \to [0, 1]$ is a smooth cut-off function such that $\chi(t)=0$ for $t<\tfrac14 \min\{ \sqrt{1 - \lambda^2}, 1-\lambda\}$ and such that $\chi(t) = 1$ for $t >\tfrac12 \min\{ \sqrt{1 - \lambda^2}, 1-\lambda\}$, we define the diffeomorphism \begin{equation*}
\psi : \R^n\setminus H \to \R^n\setminus H \qquad\qquad \psi (x) = \left(1 - \chi(|x|) + \chi(|x|) \frac{\phi\left(\frac{x}{|x|}\right)}{\phi_\lambda\left(\frac{x}{|x|}\right)}\right)x.
\end{equation*} 
Note that
\begin{equation*}
\|\psi - {\rm id}\|_{C^1} \le c \hat\epsilon,
\qquad\qquad
\psi(\partial B^\lambda(|B^\lambda|) \setminus H) = \partial E \setminus H,
\end{equation*} 
for some $c=c(n,\lambda,\chi)$, if $\d_{C^1}(E,B^\lambda(|B^\lambda|)) \le \hat\epsilon<1$.

Let $g \in {\rm Lip}_c(\R^n)$ and define $f := g \circ \psi$. If $c$ is the constant in~\eqref{traccia:calotta} corresponding to $f$, then by area formula and~\eqref{traccia:calotta} we get
\begin{equation*} \begin{split} \int_{\partial^*E \cap \setminus H}|g - c| \de\hh^{n - 1} & \le C(n, \lambda)\int_{\partial B^\lambda \setminus H} |f - c| \de\hh^{n - 1}  \le C(n, \lambda) \int_{B^\lambda} |\nabla f| \de x \le C(n, \lambda) \int_E |\nabla g| \de x. \end{split} \end{equation*}
Therefore, if $\hat\epsilon$ is small enough, we can apply \cref{prop:QuantitativaDataCT} with $c_T$ therein depending on $n,\lambda$ only, and we get
\begin{equation*} \alpha_\lambda(E) \le \hat\gamma(n, \lambda) D_\lambda(E).
\end{equation*}
\end{proof}

\subsection{Proof of the first quantitative isoperimetric inequality}

We are ready to prove the main quantitative isoperimetric inequality. Let us recall the following immediate result, completely analogous to \cite[Lemma 5.3]{FuscoDispensa}.
 
\begin{lemma}\label{dispense:lemma5.3}
The standard bubble $B^\lambda(|B^\lambda|)$ is the unique solution, up to translations along $\partial H$, of
\begin{equation*} \min\left\{P_\lambda(F) + \Lambda \left||F| - |B^\lambda|\right| \st F \subset \R^n \setminus H\right\}, \end{equation*} for any $\Lambda > n$. \end{lemma}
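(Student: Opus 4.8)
The plan is to reduce the problem to a one‑dimensional minimization in the volume variable, exploiting the sharp isoperimetric inequality for $P_\lambda$. First I would discard the degenerate competitors: if $F\subset\R^n\setminus H$ has $|F|=+\infty$ or $P_\lambda(F)=+\infty$, the energy is $+\infty$, which exceeds $P_\lambda(B^\lambda(|B^\lambda|))=n|B^\lambda|$ (the value being computed in \cref{lem:ValuePlambda}). For the remaining $F$, \cref{thm:IsopIneq} gives $P_\lambda(F)\ge n|B^\lambda|^{1/n}|F|^{(n-1)/n}$ (trivially true also when $|F|=0$), hence
\begin{equation*}
P_\lambda(F)+\Lambda\big||F|-|B^\lambda|\big|\;\ge\; g(|F|),\qquad
g(v):=n|B^\lambda|^{\frac1n}\,v^{\frac{n-1}{n}}+\Lambda\big|v-|B^\lambda|\big|,
\end{equation*}
so it suffices to show $g(v)\ge g(|B^\lambda|)=n|B^\lambda|$ for all $v\ge0$, with equality only at $v=|B^\lambda|$, and then to trace back the equality case.

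For the scalar estimate I would split at $v=|B^\lambda|$. On $[|B^\lambda|,+\infty)$ the map $v\mapsto n|B^\lambda|^{1/n}v^{(n-1)/n}$ is strictly increasing and $\Lambda(v-|B^\lambda|)\ge0$, so $g(v)>n|B^\lambda|$ for $v>|B^\lambda|$. On $(0,|B^\lambda|]$, since $n|B^\lambda|^{1/n}v^{(n-1)/n}=nv\,(|B^\lambda|/v)^{1/n}\ge nv$, with strict inequality for $0<v<|B^\lambda|$, one gets
\begin{equation*}
g(v)\;\ge\; nv+\Lambda(|B^\lambda|-v)\;=\;\Lambda|B^\lambda|-(\Lambda-n)\,v\;\ge\; n|B^\lambda|,
\end{equation*}
using $\Lambda>n$ and $v\le|B^\lambda|$; combined with $g(0)=\Lambda|B^\lambda|>n|B^\lambda|$ and the strictness just noted, this yields $g(v)>n|B^\lambda|$ for every $v\in[0,|B^\lambda|)$.

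Finally I would read off the uniqueness. If $F$ realizes the minimum, every inequality above must be an equality, so $|F|=|B^\lambda|$ and $P_\lambda(F)=n|B^\lambda|^{1/n}|F|^{(n-1)/n}$; the rigidity half of \cref{thm:IsopIneq} then forces $F$ to coincide, up to a negligible set, with a translate of $B^\lambda(|B^\lambda|)$. Among such translates contained in $\R^n\setminus H$, only those in directions parallel to $\partial H$ still attain the minimal value: a translate in any other direction either exits $\R^n\setminus H$ or detaches from $\partial H$, in which case its flat face is counted with full weight $1>-\lambda$ rather than $-\lambda$, so $P_\lambda$ strictly increases (cf. the identity \eqref{eq:RiscritturaPlambda}). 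Hence the minimizers are exactly the sets $B^\lambda(|B^\lambda|,x)$ with $x\in\partial H$.

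I do not anticipate a genuine obstacle here: the argument is the standard reduction of a volume‑penalized problem to its isoperimetric inequality. The only points requiring a little care are the elementary analysis of $g$ — in particular the comparison $n|B^\lambda|^{1/n}v^{(n-1)/n}\ge nv$ for $0\le v\le|B^\lambda|$ — making the equality discussion tight, and the (minor) observation that vertical translates of the optimal bubble are excluded from the equality case.
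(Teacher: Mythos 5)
Your proof is correct, and it follows exactly the standard scheme that the paper refers to (the paper cites \cite{FuscoDispensa} without reproducing the argument): reduce to a one-variable minimization via the sharp isoperimetric inequality of \cref{thm:IsopIneq}, observe that $g(v)=n|B^\lambda|^{1/n}v^{(n-1)/n}+\Lambda\lvert v-|B^\lambda|\rvert$ attains its strict minimum at $v=|B^\lambda|$ precisely because $\Lambda>n$, and then invoke the rigidity clause of \cref{thm:IsopIneq}. Your closing remark ruling out vertical translates (by comparing the weight $1$ versus $-\lambda$ on the flat face, via \eqref{eq:RiscritturaPlambda}) is a small but correct refinement that makes the equality discussion airtight.
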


\begin{proof}[Proof of \cref{thm:FinalQuantitativeInequality}]
Let $\hat\gamma(n, \lambda)$ be the constant given by \cref{quantitative:nearball} and let $\delta_4, \tilde l$ be given by \cref{prop:ScharzSymmSufficienti}.
By \cref{prop:ScharzSymmSufficienti} it is sufficient to prove that there exists $\delta \in(0,\delta_4)$ such that, if $E$ is a Schwarz-symmetric set contained in $Q_{\tilde l}$ such that $|E| = |B^\lambda|$ and $D_\lambda(E) < \delta$, then $\alpha_\lambda(E) \le 2 \hat \gamma(n, \lambda) \sqrt{D_\lambda(E)}$.

We argue by contradiction. Let $\{E_j\}_j$ be a sequence of Schwarz-symmetric sets contained in $Q_{\tilde l}$ such that $|E_j| = |B^\lambda|$, with $P_\lambda(E_j) \to P_\lambda(B^\lambda)$ and \begin{equation}\label{selection:contradiction} \alpha_\lambda(E_j) > 2 \hat\gamma(n, \lambda)\sqrt{D_\lambda(E_j)}. \end{equation} 
For every $j$ we consider a minimizer $F_j$ of the problem
\begin{equation}\label{eq:ProblemSelectionPrinciple}
\min\{P_\lambda(F) + |\alpha_\lambda(F) - \alpha_\lambda(E_j)| + \Lambda ||F| - |E_j|| \st F \;\text{Schwarz-symmetric contained in}\,Q_{\tilde l}\}, \end{equation}
for $\Lambda > 0$ to be chosen large. Up to subsequence, $F_j$ converges in $L^1$ to a minimizer of $F\mapsto P_\lambda(F) + \alpha_\lambda(F) + \Lambda\left| |F| - |B^\lambda| \right|$, hence, taking $\Lambda>n$, we have that $F_j$ converges to $B^\lambda(|B^\lambda|)$ by \cref{dispense:lemma5.3}. Also, by comparison with with $E_j$, we have that $P_\lambda(F_j) \to P_\lambda(B^\lambda)$.

We prove that $F_j$ is a local $(\Lambda_1, r_0)$-minimizer in $\R^n\setminus H$, for some $\Lambda_1 , r_0> 0$ and $j$ large. 
Let us consider a ball $B_r(x)\subset \subset \R^n\setminus H$, with $r < \min\{r_0, d(x, \partial H)\}$, and a set $G$ such that $F_j \Delta G \subset \subset B_r(x)$. 
Denoting by $(\cdot)^*$ the Schwarz symmetrization with respect to the $n$-th axis and by $Z:= G \cap Q_{\tilde l}$, we have
\begin{equation*} \begin{split}
P(F_j, \R^n\setminus H) & \le P(Z^*, \R^n\setminus H) + |\alpha_\lambda(Z^*) - \alpha_\lambda(E_j)| - |\alpha_\lambda(F_j) - \alpha_\lambda(E_j)| + \Lambda[||Z| - |E_j|| - ||F_j| - |E_j|| ] \\ & \le P(Z, \R^n\setminus H) + |\alpha_\lambda(Z^*) - \alpha_\lambda(F_j)| + \Lambda |Z \Delta F_j| \\
&\le P(G, \R^n\setminus H) + |\alpha_\lambda(Z^*) - \alpha_\lambda(F_j)| + \Lambda |G \Delta F_j|.
\end{split} \end{equation*}
For $r_0$ small enough and $j$ sufficiently large we have that $|G|\ge |Z| \ge c(n,\lambda)>0$. Assume for instance that $\alpha_\lambda(Z^*) \ge \alpha_\lambda(F_j)$ (the opposite case being symmetric), then
\[
\begin{split}
    \alpha_\lambda(Z^*) - \alpha_\lambda(F_j)
    & \le |Z|^{-1} \left( |Z^* \Delta F_j| + |F_j \Delta B^\lambda(|F_j|) | + \left| |Z| - |F_j| \right| \right) - |F_j|^{-1} |F_j \Delta B^\lambda(|F_j|) | \\
    &\le c(n,\lambda) |Z\Delta F_j| + c(n,\lambda)  \left( |F_j| - |Z| \right) + |Z\Delta F_j| \\
    &\le c(n,\lambda) |G \Delta F_j|.
\end{split}
\]
Arguing analogously in case $\alpha_\lambda(Z^*) < \alpha_\lambda(F_j)$, we deduce
\[
P(F_j, \R^n\setminus H)\le P(G, \R^n\setminus H) + \Lambda_1 |G \Delta F_j|,
\]
for some $\Lambda_1=(\Lambda, n, \lambda)$.\\
By \cref{thm:RegolaritaLambdaMinimizers} we know that $\partial^* F_j \cap \{x_n > 0\}$ is a $C^{1, \frac{1}{2}}$ manifold and $\partial F_j \cap \{x_n>0\} \setminus \partial^* F_j$ has Hausdorff dimension $\le n-8$. Since $F_j$ is Schwarz-symmetric, if there exists a point $(r\theta, t)\in \partial F_j \cap \{x_n>0\} \setminus \partial^* F_j$ for some $r,t>0, \theta \in \mathbb{S}^{n-2}$, then $(r\theta', t) \in\partial F_j \cap \{x_n>0\} \setminus \partial^* F_j$ for any $\theta' \in \mathbb{S}^{n-2}$. Hence $\partial F_j \cap \{x_n>0\} \setminus \partial^* F_j \subset \{ te_n \st t>0\}$. However by \cref{thm:RegolaritaLambdaMinimizers} for every $\epsilon > 0$ the set $\partial F_j \cap \{x_n \ge \epsilon\}$ converges to $\partial B^\lambda(|B^\lambda|) \cap \{x_n \ge \epsilon\}$ in $C^{1, \alpha}$ for any $0 < \alpha < \frac{1}{2}$. Also, for $j$ large we can apply \cref{cor:DeficitBassoFetteGrosse} which implies that $\hh^{n-1}(F_j \cap \{x_n = t\}) \ge A_\lambda$ for a.e. $t \in (0,T_\lambda)$, for some $A_\lambda,T_\lambda>0$ depending on $n,\lambda$. Then points $te_n$ for $t \in (0,T_\lambda/2)$ are points of density $1$ for $F_j$, hence they belong to the interior of $F_j$. Therefore, for $j$ large enough, $\partial F_j \cap \{x_n>0\} \setminus \partial^* F_j$ must be empty and $\partial F_j \cap \{x_n > 0\}$ is an axially symmetric hypersurface of class $C^{1, \frac{1}{2}}$.

By the minimality of the $F_j$,~\eqref{selection:contradiction} and \cref{dispense:lemma5.3} we observe that
\begin{equation}\label{dispense:5.13} \begin{split} P_\lambda(F_j) &+ \Lambda \left||F_j| - |B^\lambda|\right| + \left|\alpha_\lambda(F_j) - \alpha_\lambda(E_j)\right|  \le P_\lambda(E_j) \\ & \le P_\lambda(B^\lambda(|B^\lambda|)) + \frac{P_\lambda(B^\lambda(|B^\lambda|))}{4\hat\gamma^2(n, \lambda)} \alpha_\lambda^2(E_j) 
\le P_\lambda(F_j) + \Lambda\left||F_j| - |B^\lambda|\right| + \frac{P_\lambda(B^\lambda(|B^\lambda|))}{4\hat\gamma^2(n, \lambda)} \alpha_\lambda^2(E_j). \end{split} \end{equation}
Therefore, we have that \begin{equation*} |\alpha_\lambda(F_j) - \alpha_\lambda(E_j)| \le \frac{P_\lambda(B^\lambda(|B^\lambda|))}{4\hat\gamma^2(n, \lambda)} \alpha_\lambda^2(E_j). \end{equation*}
Since $\alpha_\lambda(E_j) \to 0$ we get that \begin{equation*} \frac{\alpha_\lambda(F_j)}{\alpha_\lambda(E_j)} \to 1. \end{equation*}
Let $\{\hat\lambda_j\} \subset (0, \infty)$ such that, setting $\tilde F_j := \hat\lambda_j F_j$, then $|\tilde F_j| = |B^\lambda|$.
Clearly $\hat\lambda_j \to 1$ since $|F_j|\to |B^\lambda|$.
Since $P_\lambda(F_j) \to P_\lambda(B^\lambda(|B^\lambda|))$ and $\Lambda > n$, for $j$ sufficiently large  we have $P_\lambda(F_j) < \Lambda |F_j|$ and \begin{equation*} \begin{split} \left|P_\lambda(\tilde F_j) - P_\lambda(F_j)\right| & = P_\lambda(F_j) \left|\hat\lambda_j^{n - 1} - 1\right|  \le P_\lambda(F_j) \left|\hat\lambda_j^n - 1\right|  \le \Lambda \left|\hat\lambda_j^n - 1\right|\,|F_j|  = \Lambda \left||\tilde F_j| - |F_j|\right|. \end{split} \end{equation*}
Hence, by definition of $\hat\lambda_j$ and by \eqref{dispense:5.13} we get
\begin{equation}\label{dispense:5.14} \begin{split} P_\lambda(\tilde F_j) & \le P_\lambda(F_j) + \Lambda\left||\tilde F_j| - |F_j|\right| = P_\lambda(F_j) + \Lambda \left||F_j| - |B^\lambda|\right| 
\\&\overset{\eqref{dispense:5.13}}{\le} P_\lambda(B^\lambda(|B^\lambda|)) + \frac{P_\lambda(B^\lambda(|B^\lambda|))}{4\hat\gamma^2(n, \lambda)} \alpha_\lambda^2(E_j). \end{split} \end{equation}
Since $\alpha_\lambda(F_j)/\alpha_\lambda(E_j) \to 1$ as $j \to \infty$ we have $\alpha_\lambda(E_j)^2 < 2 \alpha_\lambda (\tilde F_j)^2$ for $j$ sufficiently large.
Hence from~\eqref{dispense:5.14} we finally obtain \begin{equation}\label{selection:contradiction2}
\alpha_\lambda(\tilde F_j) > \sqrt{2} \hat\gamma(n, \lambda)\sqrt{D_\lambda(\tilde F_j)}.
\end{equation}

{
For $t>0$ let 
\begin{equation*} 
\phi_{\tilde F_j}^-(t) := 
\begin{cases}
    \min_{x \in \partial\tilde F_j \cap \{x_n = t\}} \left\{\left|x - t e_n\right|\right\} \qquad & {\rm if} \quad \partial\tilde F_j \cap \{x_n = t\}\neq \emptyset,\\
    0 &{\rm if} \quad \partial\tilde F_j \cap \{x_n = t\}= \emptyset.
\end{cases}
\end{equation*}
be the function measuring the distance of $\partial\tilde F_j \cap \{x_n = t\}$ from the $n$-th axis, set to zero in case $\partial\tilde F_j \cap \{x_n = t\}= \emptyset$. For $j$ large we can apply \cref{cor:DeficitBassoFetteGrosse} again to deduce that there exists $T_\lambda, A_\lambda>0$ such that $\hh^{n-1}( \tilde F_j \cap \{x_n =t\} ) \ge A_\lambda$ for almost every $t \in (0,T_\lambda)$. Since $\tilde F_j$ is Schwarz-symmetric and its relative boundary in $\{x_n>0\}$ is $C^1$ regular, then we can write that $\phi_{\tilde F_j}^-(t) \ge A'_\lambda >0$ for $j$ large and for any $t \in (0,T_\lambda)$.
}

Recalling that $F_j$ is a local $(\Lambda_1,r_0)$-minimizer, by \cref{prop:meancurvature} its boundary has generalized mean curvature bounded by $\Lambda_1$ for any $j$.
Since $\tilde F_j= \hat{\lambda}_j F_j$ with $\hat{\lambda}_j\to1$, then $\partial \tilde F_j \cap (\R^n\setminus H)$ is a hypersurface of class $C^{1,\frac12}$ with generalized mean curvature $H_{\partial \tilde F_j}$ bounded by $2 \Lambda_1$ for any $j$. Observe that if we locally parametrize $\partial \tilde F_j \cap (\R^n\setminus H)$ with the graph of a function $\Phi_j$, then $\Phi_j$ weakly solves the mean curvature equation
\begin{equation*}
 {\rm div} \left(\frac{\nabla\Phi_j}{\sqrt{1 + |\nabla\Phi_j|^2}}\right) = \Braket{ H_{\partial \tilde F_j} , N_{\Phi_j} },
 \end{equation*}
where $ N_{\Phi_j}$ is the unit normal corresponding to $\Phi_j$ and $H_{\partial \tilde F_j}$ is evaluated along the graph of $\Phi_j$. Since $H_{\partial \tilde F_j}$ is bounded, we get that $\Phi_j$ is of class $W^{2, p}$ for every $p<\infty$ (see \cite{GilbargTrudinger}).

Fix $p_0 \in \partial \tilde F_j \cap \{x_1>0, 0<x_n\le T_\lambda\} \cap {\rm span}\{e_1,e_n\}$. Since $\partial \tilde F_j \cap (\R^n\setminus H)$ is $C^{1,\frac12}$, there exists a curve $\gamma_j=(\alpha_j, 0,\ldots, 0, \beta_j):(a,b)\to {\rm span}\{e_1,e_n\} \setminus H$ such that the map $\mathbb{S}^{n-2}\times(a,b) \ni (\theta, t) \mapsto (\alpha_j(t)\theta, \beta_j(t))$ parametrizes $\partial \tilde F_j$ in a neighborhood of $p_0$. We claim that $\alpha_j ,\beta_j \in W^{2,p}$, up to reparametrization.\\
Indeed, we can also parametrize $\partial \tilde F_j$ in a neighborhood $U$ of $p_0$ as the graph of a function $\Phi_j$ with domain contained in some affine hyperplane of the form $p_0 + V$, and without loss of generality we can assume that either $V=\{x_1=0\}$ or $V=\{x_n=0\}$. If $n=2$, then the claimed regularity immediately follows from the regularity of $\Phi_j$. Then assume $n\ge 3$, and suppose for example that $V=\{x_1=0\}$.
The image of the curve $\gamma_j$ in $U$ can be parametrized as the graph of a function $t\mapsto(f(t), 0,\ldots,0,t)$. Writing as $(x',x_n)\in p_0+V$ the variable for $\Phi_j$, the 
fact that the distance from the $n$-th axis is constant on the intersection of $\partial \tilde F_j$ with any horizontal hyperplane yields the identity
\[
\left( \Phi_j(x',x_n) + {\rm dist}_{x_n}(p_0) \right)^2 + |x'|^2 =  f(x_n)^2,
\]
where ${\rm dist}_{x_n}(p_0)$ denotes distance of $p_0$ from the $n$-th axis. Since $\Phi_j$ is of class $C^1$ and $W^{2,p}$ and ${\rm dist}_{x_n}(p_0)>0$
because $\varphi_{\tilde F_j}^-\ge A'_\lambda>0$, inverting the above identity we find that $f$ is of class $W^{2,p}$, hence so is $\gamma_j$, up to reparametrization. In case $V=\{x_n=0\}$, the observation follows analogously relating $\Phi_j$ with a parametrization for $\gamma_j$.

We further observe that, for $\alpha_j$, $\beta_j : (a, b) \to (0, \infty)$ as above, since $\alpha_j$, $\beta_j$ are of class $W^{2,p}_{\rm loc}$, up to reparametrization by arclength we can apply \cref{curvature:revolution} to get that
\begin{equation*}
H_{\partial \tilde F_j} \big|_{(\alpha_j(t)\theta, \beta_j(t))}= \left(\Braket{k_{\gamma_j}, \nu} - (n - 2) \frac{\beta'_j}{\alpha_j}\right) (- \beta'_j\theta, \alpha'_j),
\end{equation*} 
in the notation of \cref{curvature:revolution}.
Recalling that $\varphi_{\tilde F_j}^-\ge A'_\lambda>0$ on $(0,T_\lambda)$, we have that $|\alpha_j|\ge A'_\lambda$ and thus
\begin{equation}\label{bounded:curvature}
|k_{\gamma_j}|  \le 2 \Lambda_1 + \frac{n-2}{A'_\lambda}.
\end{equation}
Observe that the upper bound in \eqref{bounded:curvature} is independent of $j$ and of the initially chosen point $p_0$.

Fix now $q_0 \in \partial  \tilde F_j \cap \{x_1>0, x_n= T_\lambda\} \cap {\rm span}\{e_1,e_n\}$, let $\gamma_j^0:[0,l_0)\to {\rm span}\{e_1,e_n\}$ be part of a curve defined as before, parametrized by arclength, such that $\Braket{\gamma_j^0(t), e_n} \le T_\lambda$ for any $t$. If $\lim_{t\to l_0^-}\gamma_j^0(t) \not\in \partial H$, the curve can be extended to a longer one, parametrized by arclength, by joining $\gamma_j^0$ with a curve defined as before for the choice $p_0= \lim_{t\to l_0^-}\gamma_j^0(t)$. Hence we can consider $\sigma_j:[0,L_j)\to {\rm span}\{e_1,e_n\}$ the maximal extension of $\gamma_j^0$ parametrized by arclength that parametrizes $\tilde F_j \cap \{x_1>0, 0<x_n\le T_\lambda\} \cap {\rm span}\{e_1,e_n\}$. Since the perimeter $P(\tilde F_j, \R^n\setminus H)$ is uniformly bounded, then $\sup_j L_j<+\infty$. Obviously $\lim_{t\to L_j}\sigma_j(t) \in \partial H$, for otherwise the curve could be further extended. By construction, the uniform bound in \eqref{bounded:curvature} holds pointwise for the curvature of $\sigma_j$. Therefore $\sigma_j$ can be extended to a curve $\gamma_j:[0,L_j]\to {\rm span}\{e_1,e_n\}$ such that
\begin{equation}\label{eq:BoundC11UpToBoundary}
\|\gamma_j\|_{C^{1, 1}([0, L_j])} \le C,
\end{equation}
with $C$ independent of $j$, depending only on $n,\lambda, \tilde l$ and the upper bound on the curvature given by \eqref{bounded:curvature}.

Up to a subsequence, since $\tilde F_j\to B^\lambda(|B^\lambda|)$ and we already know that for every $\epsilon > 0$ the set $\partial \tilde F_j \cap \{x_n \ge \epsilon\}$ converges to $\partial B^\lambda(|B^\lambda|) \cap \{x_n \ge \epsilon\}$ in $C^{1, \alpha}$ for any $0 < \alpha < \frac{1}{2}$, the bound \eqref{eq:BoundC11UpToBoundary} implies that $\tilde F_j$ converges in $C^1$ sense to $B^\lambda(|B^\lambda|)$ in the sense of \cref{def:VicinanzaC1Schwarz}.
Hence, by \cref{quantitative:nearball}, for $j$ sufficiently large there holds
\begin{equation*}
\alpha_\lambda^2(\tilde F_j) \le \hat\gamma(n , \lambda) D_\lambda(\tilde F_j),
\end{equation*}
in contradiction with~\eqref{selection:contradiction2}.
\end{proof}

\section{Second quantitative isoperimetric inequality}\label{sec:SecondQuantitative}

We will need the following technical lemma, proving that if the energy $P_\lambda(E_i)$ a sequence of sets $E_i$ converges to the energy of the limit, then the sequence strictly converges in the sense of $BV$ functions. The proof essentially follows by analyzing the equality case in the Reshetnyak lower semicontinuity theorem, see \cite[Theorem 2.38]{AmbrosioFuscoPallara}.

\begin{lemma}\label{lem:ConvergenzaForteSuccessioni}
Let $\{E_i\}_{i \in \N}$ be a sequence of sets of finite perimeter in $\R^n\setminus H$ such that $E_i\to E$ in $L^1$, for some set of finite perimeter $E$ with $|E|<+\infty$. If $P_\lambda(E_i)\to P_\lambda(E)$, then
\[
\lim_i P(E_i, \R^n\setminus H) = P(E, \R^n\setminus H) ,
\qquad
\lim_i \hh^{n-1}(\partial^*E_i \cap \partial H) =  \hh^{n-1}(\partial^*E \cap \partial H) .
\]
\end{lemma}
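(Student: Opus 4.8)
The plan is to pass from the set picture to a purely measure-theoretic one on the open half-space and then to run Reshetnyak's lower semicontinuity theorem not for the single integrand attached to $P_\lambda$, but for a whole one-parameter family of integrands, so as to squeeze out an \emph{upper} bound for the perimeters. Concretely, I would set $\Omega:=\R^n\setminus H=\{x_n>0\}$ and $\mu_i:=D\chi_{E_i}\mres\Omega$, $\mu:=D\chi_E\mres\Omega$, which are finite $\R^n$-valued Radon measures on $\Omega$ with $|\mu_i|=\hh^{n-1}\mres(\partial^*E_i\cap\Omega)$ and Besicovitch density $-\nu^{E_i}$. From $E_i\to E$ in $L^1$ and the distributional definition of $D\chi_{E_i}$ one gets $\mu_i\to\mu$ weakly-$*$ in $M(\Omega;\R^n)$; the uniform total-mass bound needed to make this a genuine weak-$*$ convergence comes from $P_\lambda(E_i)\to P_\lambda(E)$ together with \cref{remark:positivity}, which forces $\sup_i P(E_i,\R^n\setminus H)<\infty$. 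Using the divergence-theorem identity recorded in \cref{rem:PlambdaConDivergenza} one checks that $\Braket{e_n,\mu_i(\Omega)}=-\hh^{n-1}(\partial^*E_i\cap\partial H)$, and hence that for every $\epsilon$ the nonnegative, continuous, convex, positively $1$-homogeneous function $g_\epsilon(w):=(1-\epsilon)|w|+\lambda\Braket{e_n,w}$ satisfies
\[
\int_\Omega g_\epsilon\!\left(\frac{d\mu_i}{d|\mu_i|}\right)d|\mu_i|=(1-\epsilon)\,P(E_i,\R^n\setminus H)-\lambda\,\hh^{n-1}(\partial^*E_i\cap\partial H)=P_\lambda(E_i)-\epsilon\,P(E_i,\R^n\setminus H),
\]
with the analogous identity for $E$ and $\mu$; note $g_0$ reproduces $P_\lambda$.

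The core step is then the following trick. For $\epsilon\in(0,1-|\lambda|]$ the function $g_\epsilon$ is still nonnegative (exactly this range), so Reshetnyak's lower semicontinuity theorem \cite[Theorem 2.38]{AmbrosioFuscoPallara} applied on $\Omega$ to $\mu_i\to\mu$ gives
\[
(1-\epsilon)\,P(E,\R^n\setminus H)+\lambda\Braket{e_n,\mu(\Omega)}\le\liminf_i\Big[P_\lambda(E_i)-\epsilon\,P(E_i,\R^n\setminus H)\Big]=P_\lambda(E)-\epsilon\limsup_i P(E_i,\R^n\setminus H),
\]
where I used $P_\lambda(E_i)\to P_\lambda(E)=P(E,\R^n\setminus H)+\lambda\Braket{e_n,\mu(\Omega)}$. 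Cancelling $P(E,\R^n\setminus H)+\lambda\Braket{e_n,\mu(\Omega)}$ on both sides, the inequality collapses to $\limsup_i P(E_i,\R^n\setminus H)\le P(E,\R^n\setminus H)$, and together with the standard lower semicontinuity of the perimeter this proves $\lim_i P(E_i,\R^n\setminus H)=P(E,\R^n\setminus H)$, the first assertion.

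For the second assertion I would argue that, the perimeters in $\Omega$ having converged, $\mu_i\to\mu$ weakly-$*$ with $|\mu_i|(\Omega)\to|\mu|(\Omega)$, so Reshetnyak's continuity theorem \cite[Theorem 2.39]{AmbrosioFuscoPallara} applies to the nonnegative, continuous, positively $1$-homogeneous integrand $f(w):=|w|+\Braket{e_n,w}$ and yields
\[
P(E_i,\R^n\setminus H)-\hh^{n-1}(\partial^*E_i\cap\partial H)=\int_\Omega f\!\left(\frac{d\mu_i}{d|\mu_i|}\right)d|\mu_i|\ \longrightarrow\ \int_\Omega f\!\left(\frac{d\mu}{d|\mu|}\right)d|\mu|=P(E,\R^n\setminus H)-\hh^{n-1}(\partial^*E\cap\partial H);
\]
subtracting the convergence $P(E_i,\R^n\setminus H)\to P(E,\R^n\setminus H)$ just established gives $\hh^{n-1}(\partial^*E_i\cap\partial H)\to\hh^{n-1}(\partial^*E\cap\partial H)$. (Alternatively, when $\lambda\neq0$ this is immediate from $P_\lambda(E_i)=P(E_i,\R^n\setminus H)-\lambda\,\hh^{n-1}(\partial^*E_i\cap\partial H)$ and the first assertion, with no need for the continuity theorem.)

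I do not expect a serious obstacle here: once one notices that the family $\{g_\epsilon\}$ is the right object, the argument is short. The points that require care are bookkeeping ones: getting the sign in $\Braket{e_n,\mu_i(\Omega)}=-\hh^{n-1}(\partial^*E_i\cap\partial H)$ right via \cref{rem:PlambdaConDivergenza} (so that the quantities produced by Reshetnyak's theorems are exactly the ones in the statement), keeping track of the precise range $\epsilon\le 1-|\lambda|$ for which $g_\epsilon\ge0$, and checking that the $L^1$ convergence together with $\sup_iP(E_i,\R^n\setminus H)<\infty$ really upgrades testing against $C_c(\Omega;\R^n)$ to weak-$*$ convergence in $M(\Omega;\R^n)$, as required to invoke \cite[Theorems 2.38 and 2.39]{AmbrosioFuscoPallara}.
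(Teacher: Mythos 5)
Your proof is correct and takes a genuinely different route from the paper's. The paper's argument lifts $|D\chi_{E_i}|$ to Young-type measures $\nu_i = |D\chi_{E_i}|\otimes\delta_{\nu^{E_i}}$ on $\Omega\times\mathbb{S}^{n-1}$, applies the disintegration theorem to a weak-$*$ limit, and exploits the equality case of Jensen's inequality (using that $f(v)=|v|-\lambda\Braket{e_n,v}$ is strictly convex) to conclude that the limit of $|D\chi_{E_i}|$ is exactly $|D\chi_E|$ and not merely some $\mu\geq|D\chi_E|$; only then does it deduce $P(E_i,\Omega)\to P(E,\Omega)$ by an exhaustion of $\Omega$ by Lipschitz subdomains $\Omega_j$ and Reshetnyak continuity on each. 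You sidestep the lift and the disintegration entirely: the observation that the capillarity integrand can afford to lose $\epsilon|w|$ and remain nonnegative, convex, and positively $1$-homogeneous precisely for $\epsilon\le 1-|\lambda|$ means a single application of Reshetnyak lower semicontinuity to $g_\epsilon$ already produces the $\limsup$ bound $\limsup_i P(E_i,\Omega)\leq P(E,\Omega)$, and the standard $\liminf$ bound closes the first assertion. The second assertion then follows from Reshetnyak continuity (now available since the total masses converge), or, when $\lambda\neq 0$, trivially by linear combination. The sign bookkeeping ($d\mu_i/d|\mu_i|=-\nu^{E_i}$, $\Braket{e_n,\mu_i(\Omega)}=-\hh^{n-1}(\partial^*E_i\cap\partial H)$), the weak-$*$ upgrade from $L^1$ convergence plus the mass bound via \cref{remark:positivity}, and the range of $\epsilon$ all check out. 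A cosmetic remark: you do not need the whole family $\{g_\epsilon\}$; fixing $\epsilon=1-|\lambda|$ once suffices. Net, your route is shorter and dodges the exhaustion argument, at the price of being a trick tailored to this specific integrand; the paper's longer argument identifies the weak-$*$ limit of $|D\chi_{E_i}|$ explicitly, which is a stronger piece of information.
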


\begin{proof}
Let $f(v):= |v| - \lambda\braket{e_n,v}$, for any $v \in \R^n$, and let $\nu_i:= |D\chi_{E_i}| \otimes \delta_{\nu^{E_i}}$ be a measure on $\R^n\setminus H \times \mathbb{S}^{n-1}$. Since $|\nu_i|(\R^n\setminus H \times \mathbb{S}^{n-1}) \le P(E_i) \le 2P_\lambda(E_i)/(1-\lambda)$ by \cref{remark:positivity}, up to subsequence, $\nu_i$ weakly* converges to a finite measure $\nu$. Up to subsequence, also $|D\chi_{E_i}|$ weakly* converges to a finite measure $\mu$ on $\R^n\setminus H$.
Denoting by $\pi:\R^n\setminus H \times \mathbb{S}^{n-1}\to \R^n\setminus H$ the natural projection, we have $\pi_\sharp \nu_i = |D\chi_{E_i}| \to \mu = \pi_\sharp \nu$. Moreover, $\mu \ge |D\chi_E|$ by lower semicontinuity. By the disintegration theorem \cite[Theorem 2.28]{AmbrosioFuscoPallara}, we can write $\nu= \mu \otimes \nu_x$, for a $\mu$-measurable map $\R^n\setminus H \ni x\mapsto \nu_x$, where $\nu_x$ is a probability measure on $\mathbb{S}^{n-1}$. Analogously to \cite[Eq. (2.30)]{AmbrosioFuscoPallara}, we observe that
\begin{equation}\label{bbb}
    \int_{\mathbb{S}^{n-1}} v \de \nu_x(v) = \nu^E(x) \frac{|D\chi_E|}{\mu}(x),
\end{equation}
at $\mu$-a.e. $x \in \R^n\setminus H$. Indeed, for any continuous function $g$ with ${\rm spt}(g) \subset\subset \R^n\setminus H$ we find
\begin{equation*}
    \begin{split}
        \int_{\R^n\setminus H} g(x) &\int_{\mathbb{S}^{n-1}} v \de \nu_x(v) \de \mu(x) 
        =
        \int_{\R^n\setminus H \times \mathbb{S}^{n-1}} g(x) \, v \de \nu(x,v)
        = \lim_i
         \int_{\R^n\setminus H \times \mathbb{S}^{n-1}} g(x) \, v \de \nu_i(x,v) \\
         &=- \lim_i
         \int_{\R^n\setminus H} g(x) \de D\chi_{E_i}(x)
         =
         - \int_{\R^n\setminus H} g(x) \de D\chi_{E}(x)
         = \int_{\R^n\setminus H} g(x) \nu^E(x) \frac{|D\chi_E|}{\mu}(x) \de \mu(x).
    \end{split}
\end{equation*}
Since $f$ is nonnegative, convex and continuous, by \cref{rem:PlambdaConDivergenza} we find
\begin{equation}\label{bbb2}
\begin{split}
    \lim_i P_\lambda(E_i) &=
    \lim_i \int_{\R^n\setminus H} f(\nu^{E_i}) \de |D\chi_{E_i}| 
    =
    \lim_i \int_{\R^n\setminus H\times \mathbb{S}^{n-1}} f(v) \de \nu_i(x,v)
    \ge \int_{\R^n\setminus H\times \mathbb{S}^{n-1}} f(v) \de \nu(x,v) 
    \\&
    = \int_{\R^n\setminus H} \int_{\mathbb{S}^{n-1}} f(v) \de \nu_x(v) \de \mu(x)
    \ge 
     \int_{\R^n\setminus H} f \left( \int_{\mathbb{S}^{n-1}} v \de \nu_x(v) \right)\de \mu(x) \\
     &
     \overset{\eqref{bbb}}{=}   
        \int_{\R^n\setminus H} f \left( \nu^E(x) \frac{|D\chi_E|}{\mu}(x) \right)\de \mu(x) =
         \int_{\R^n\setminus H} f( \nu^E(x) ) \de |D\chi_E|(x) = P_\lambda(E),
\end{split}  
\end{equation}
where in the second inequality we applied Jensen inequality, and where the last equality follows since $f$ is positively $1$-homogeneous. Since $\lim_i P_\lambda(E_i)=P_\lambda(E)$ by assumption and since $f$ is not affine, equality in Jensen inequality implies that the identity map $\mathbb{S}^{n-1}\ni v \mapsto v$ is constant $\nu_x$-a.e., for $\mu$-a.e. $x\in \R^n\setminus H$. This means that $\nu_x=\delta_{v_x}$ for some $v_x \in \mathbb{S}^{n-1}$ for $\mu$-a.e. $x\in \R^n\setminus H$. Hence \eqref{bbb} implies
\[
v_x = \nu^E(x) \frac{|D\chi_E|}{\mu}(x),
\]
at $\mu$-a.e. $x \in \R^n\setminus H$, and since $|v_x|=|\nu^E(x)|=1$, then $|D\chi_E|/\mu(x)=1$ at $\mu$-a.e. $x \in \R^n\setminus H$, and $v_x=\nu^E(x)$ $\mu$-almost everywhere. Inserting in \eqref{bbb2} we deduce
\[
\int_{\R^n\setminus H} f \left(\nu^E(x) \right)\de \mu(x) = 
\int_{\R^n\setminus H} f \left( \int_{\mathbb{S}^{n-1}} v \de \nu_x(v) \right)\de \mu(x) = \int_{\R^n\setminus H} f( \nu^E(x) ) \de |D\chi_E|(x) .
\]
Since $f(\nu^E(x))>0$ and $\mu\ge |D\chi_E|$, we deduce that $\mu=|D\chi_E|$, and then $|D\chi_{E_i}|$ weakly* converges to $|D\chi_E|$.

We can now fix an increasing sequence of Lipschitz bounded open sets $\Omega_j \subset\subset \R^n\setminus H$ such that $\cup_j\Omega_j = \R^n\setminus H$ and $P(E_i, \partial\Omega_j) = P(E, \partial\Omega_j) =0$ for every $i,j$. Hence $\lim_i P(E_i, \Omega_j) = P(E, \Omega_j)$ for any $j$. Moreover
\[
P(E_i, \R^n\setminus(H\cup \Omega_j))
\le \frac{1}{1-|\lambda|}\left( P_\lambda(E_i) - \int_{\Omega_j} f(\nu^{E_i}) \de |D\chi_{E_i}| \right),
\]
for any $i,j$.
Applying Reshetnyak continuity theorem \cite[Theorem 2.39]{AmbrosioFuscoPallara} on $\Omega_j$ we get
\[
\limsup_i P(E_i, \R^n\setminus(H\cup \Omega_j) )
\le \frac{1}{1-|\lambda|} \int_{\R^n\setminus(H\cup \Omega_j)} f(\nu^{E}) \de |D\chi_{E}| \le \frac{1+|\lambda|}{1-|\lambda|} P(E, \R^n\setminus(H\cup \Omega_j)),
\]
for any $j$. Therefore
\[
\limsup_i P(E_i, \R^n\setminus H) \le P(E, \Omega_j) + \frac{1+|\lambda|}{1-|\lambda|} P(E,\R^n\setminus(H\cup \Omega_j)),
\]
for any $j$. Letting $j\to\infty$, the proof follows.
\end{proof}

We will also exploit the concept of $(K,r_0)$-quasiminimal set.

\begin{definition}\label{def:QuasiminimalSets}
    Let $E\subset \R^n\setminus H$ be a set of finite perimeter with finite measure, and let $K\ge1, r_0>0$. We say that $E$ is a $(K,r_0)$-quasiminimal set (relatively in $\R^n\setminus H$) if
    \[
    P(E,\R^n\setminus H) \le K P(F, \R^n\setminus H),
    \]
    for any $F\subset \R^n\setminus H$ such that $E\Delta F \subset\subset B_r(x)$, for some ball $B_r(x)\subset \R^n$ with $r \le r_0$ and $x \in \{x_n\ge0\}$.
\end{definition}

Quasiminimal sets have well-known topological regularity properties following from uniform density estimates at boundary points. We recall these facts in the following statement.
The proof follows, for example, by repeatedly applying \cite[Theorem 4.2]{ShanmugalingamQuasiminimizers} with $X=\{x_n\ge0\}$ in domains $\Omega=X \cap B_{r_0}(x)$ for $x \in X$, in the notation of \cite[Theorem 4.2]{ShanmugalingamQuasiminimizers}. Observe that in \cite{ShanmugalingamQuasiminimizers}, the perimeter functional coincides with the relative perimeter in $\R^n\setminus H$, hence the definition of quasiminimal set in \cite[Definition 3.1]{ShanmugalingamQuasiminimizers} coincides with our \cref{def:QuasiminimalSets}. Alternatively, the proof follows by adapting the proof of \cite[Theorem 21.11]{MaggiBook} working with $(K,r_0)$-quasiminimal sets instead of $(\Lambda,r_0)$-minimizers.

\begin{theorem}\label{thm:RegolaritaQuasiminimal}
    Let $E\subset \R^n\setminus H$ be a $(K,r_0)$-quasiminimal set, for some $K\ge1, r_0>0$. Then there exist $m=m(n,K,r_0)\in(0,1)$ and $r_0'=r_0'(n,K,r_0)\in(0,r_0]$ such that
    \begin{equation*}
        m\le\frac{|E \cap B_r(x)|}{|B_r(x) \setminus H|} \le 1-m \qquad \forall x \in \overline{\partial E \setminus H}, \,\, \forall\, r \in (0,r_0'].
    \end{equation*}
    In particular the set $E^{(1)}$ of points of density $1$ for $E$ is an open representative for $E$.
\end{theorem}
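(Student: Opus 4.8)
The plan is to derive \cref{thm:RegolaritaQuasiminimal} from two one‑sided density estimates,
\[
|E\cap B_r(x)|\ge m\,|B_r(x)\setminus H|\qquad\text{and}\qquad |(B_r(x)\setminus H)\setminus E|\ge m\,|B_r(x)\setminus H|,
\]
valid for every $x\in\overline{\partial E\setminus H}$ and every $r\le r_0'$, by the classical truncation/comparison argument for quasiminimizers (as in \cite[Theorem 21.11]{MaggiBook} or \cite{ShanmugalingamQuasiminimizers}), the only new feature being that all competitors must stay inside $\R^n\setminus H$ and that the reference domains are the sets $B_r(x)\setminus H$, which are balls if $r\le x_n$ and spherical caps otherwise. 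After rescaling these form a compact family of bounded extension domains, so they satisfy a relative isoperimetric inequality $\min\{|F\cap (B_r(x)\setminus H)|,|(B_r(x)\setminus H)\setminus F|\}^{\frac{n-1}{n}}\le c(n)\,P(F;B_r(x)\cap\{x_n>0\})$ with $c(n)$ independent of $x$.

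For the lower bound, fix $x\in\overline{\partial E\setminus H}$ and set $u(r):=|E\cap B_r(x)|$, which is nondecreasing, satisfies $u(r)\le|B_r(x)\setminus H|$, and has $u'(r)=\hh^{n-1}(E^{(1)}\cap\partial B_r(x))$ for a.e. $r$. Since $x$ is a limit of points of $\partial^*E\setminus H$, and each such point has positive density of both $E$ and $(\R^n\setminus H)\setminus E$ at every scale, one gets $u(r)>0$ and $w(r):=|B_r(x)\setminus H|-u(r)>0$ for all $r>0$. Comparing $E$ with $E\setminus\overline{B_r(x)}$, an admissible competitor in \cref{def:QuasiminimalSets} contained in $\R^n\setminus H$, and arguing as in \cite[Theorem 21.11]{MaggiBook}, the quasiminimality yields $P(E;B_r(x)\cap\{x_n>0\})\le C(n,K)\,u'(r)$ for a.e. $r\le r_0$. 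Inserting this into the relative isoperimetric inequality gives $\min\{u(r),w(r)\}^{\frac{n-1}{n}}\le C(n,K)\,u'(r)$. Now fix $r\le r_0'$: if $u(\rho)\le\tfrac12|B_\rho(x)\setminus H|$ for all $\rho\in(0,r]$, then $\min\{u,w\}=u$ there and integrating $\tfrac{\de}{\de\rho}u(\rho)^{1/n}\ge c(n,K)$ on $(0,r]$ gives $u(r)\ge c(n,K)\,r^n$; otherwise let $\rho_0\in(0,r]$ be the largest radius with $u(\rho_0)\ge\tfrac12|B_{\rho_0}(x)\setminus H|\ge\tfrac14\omega_n\rho_0^n$, and integrating the same differential inequality on $[\rho_0,r]$ gives $u(r)^{1/n}\ge u(\rho_0)^{1/n}+c(n,K)(r-\rho_0)\ge c(n,K)\,r$. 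In both cases $u(r)\ge c(n,K)\,r^n\ge m\,|B_r(x)\setminus H|$.

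The upper bound follows symmetrically by comparing $E$ with $E\cup(B_\rho(x)\cap\{x_n>0\})$, which gives $\min\{u(r),w(r)\}^{\frac{n-1}{n}}\le C(n,K)\,w'(r)$ and hence, exactly as above, $w(r)\ge m\,|B_r(x)\setminus H|$; since $|E\cap B_r(x)|=|B_r(x)\setminus H|-w(r)$, this is the claimed estimate after relabelling $m$. Finally, for the last assertion, $E^{(1)}\subset\{x_n>0\}$ because at a point of $\partial H$ one has $|E\cap B_r|\le|B_r\setminus H|=\tfrac12\omega_nr^n$, so the density is $\le\tfrac12$; and any $x\in E^{(1)}$ cannot belong to $\overline{\partial E\setminus H}$, since there the upper estimate forces density $\le 1-m<1$ at small scales. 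Hence some ball $B_\rho(x)\subset\{x_n>0\}$ avoids $\partial E$, so $B_\rho(x)\subset E^{(1)}$, proving $E^{(1)}$ open (and it is a representative of $E$ by the Lebesgue density theorem). I expect the only genuinely new point — as opposed to a verbatim transcription of the Euclidean argument — to be checking that the comparison bound $P(E;B_r(x)\cap\{x_n>0\})\le C(n,K)u'(r)$ still holds when $x\in\partial H$ (it does, since removing $B_r(x)$ from $E$ or filling $B_r(x)\cap\{x_n>0\}$ keeps the competitor in $\R^n\setminus H$) together with the $x$‑uniformity of the relative isoperimetric constant on $B_r(x)\setminus H$; both are routine.
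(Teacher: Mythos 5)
Your proof is the standard De\,Giorgi--type truncation/comparison argument for density estimates (Maggi, Thm.\ 21.11; Shanmugalingam, Thm.\ 4.2), adapted to the half-space, which is exactly the approach the paper cites without giving details; the relative isoperimetric inequality with a uniform constant on the convex domains $B_r(x)\setminus H$, the differential inequality, the integration in the two cases, the upper bound by filling in $B_\rho(x)\cap\{x_n>0\}$, and the deduction that $E^{(1)}$ is open are all carried out correctly.

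There is, however, one step you should not gloss over. You write ``the quasiminimality yields $P(E;B_r(x)\cap\{x_n>0\})\le C(n,K)\,u'(r)$,'' but this local estimate does not follow from \cref{def:QuasiminimalSets} as literally written. That definition gives the \emph{global} comparison $P(E,\R^n\setminus H)\le K P(F,\R^n\setminus H)$; testing with $F=E\setminus\overline{B_\rho(x)}$ and using that the perimeters of $E$ and $F$ agree outside $\overline{B_\rho(x)}$, one only obtains
\[
P\bigl(E;B_\rho(x)\cap\{x_n>0\}\bigr)\ \le\ K\,u'(\rho)\;+\;(K-1)\,P\bigl(E;(\R^n\setminus H)\setminus\overline{B_\rho(x)}\bigr),
\]
and the last term does not tend to zero as $\rho\to 0$ when $K>1$. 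The local inequality you invoke needs the local comparison $P(E;B_\rho(x)\cap(\R^n\setminus H))\le K\,P(F;B_\rho(x)\cap(\R^n\setminus H))$ — which is what Shanmugalingam's Definition 3.1 actually states, and which is genuinely stronger than the global one (a disk joined to a thin spike of width $\epsilon\to 0$ satisfies the global comparison for fixed $K>1$ and $r_0$, yet the density at the tip of the spike goes to $0$, so the theorem is false for the literal global definition). Thus your derivation silently uses the local version. This is arguably a discrepancy inherited from the paper — which declares its definition equivalent to Shanmugalingam's — but your write-up should make the point explicit: one either adopts the local comparison from the start, or one notes that in the application the $F_j$ are verified in a way that in fact yields the local estimate.
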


We will identify a $(K,r_0)$-quasiminimal set with its open representative $E^{(1)}$. In order to prove \cref{thm:QuantitativaBagnata} we need two preparatory lemmas.

\begin{lemma}\label{lem:StimaBetaQuasiminimal}
    For any $K\ge 1, r_0>0$ there exist $\delta_6, C_5, C_6>0$ depending on $n,\lambda, K,r_0$ such that the following holds. If $E\subset \R^n\setminus H$ is a bounded $(K,r_0)$-quasiminimal set with $|E|=|B^\lambda|$ and $D_\lambda(E) \le \delta_6$, then
    \begin{equation}\label{eq:StimaDistHaus}
        d_\hh\left(\overline{\partial E \setminus H}, \overline{\partial B^\lambda(|B^\lambda|, x) \setminus H} \right) \le C_5 \alpha_\lambda(E)^{\frac1n},
    \end{equation}
    where $B^\lambda(|B^\lambda|, x)$ is a bubble realizing the asymmetry of $E$. Moreover
    \begin{equation}\label{eq:StimaBetaQuasiminimalSets}
        \beta_\lambda(E) \le C_6 D_\lambda(E)^{\frac{1}{2n}}.
    \end{equation}
\end{lemma}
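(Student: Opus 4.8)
The plan is to establish \eqref{eq:StimaDistHaus} first, using only the uniform density estimates of \cref{thm:RegolaritaQuasiminimal}, and then to deduce \eqref{eq:StimaBetaQuasiminimalSets} from it by a topological argument on the hyperplane $\{x_n=0\}$ together with \cref{thm:FinalQuantitativeInequality}. Throughout I identify $E$ with its open representative $E^{(1)}$, and I write $B:=B^\lambda(|B^\lambda|,x)$ for the optimal bubble realizing the asymmetry, so that $|E\Delta B|=\alpha_\lambda(E)|B^\lambda|$.

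For \eqref{eq:StimaDistHaus} I would prove both inclusions defining the Hausdorff distance by the same scheme. Given $y\in\overline{\partial E\setminus H}$, set $r:=\mathrm{dist}(y,\overline{\partial B\setminus H})$ and $\varrho:=\min\{r,r_0'\}$ with $r_0'=r_0'(n,K,r_0)$ from \cref{thm:RegolaritaQuasiminimal}; since $y_n\ge0$, the set $B_\varrho(y)\cap\{x_n>0\}$ is nonempty, convex, hence connected, and disjoint from $\partial B\cap\{x_n>0\}$, so it is either contained in $B$ or contained in the complement of $B$. In either case one of $|B\setminus E|$, $|E\setminus B|$ is at least $m\,|B_\varrho(y)\setminus H|\ge\tfrac{m}{2}\omega_n\varrho^n$ by the density estimate at $y$, whence $\varrho\le C(n,\lambda,K,r_0)\,\alpha_\lambda(E)^{1/n}$. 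If $r>r_0'$ this would force $\alpha_\lambda(E)$ above a fixed positive threshold, which I exclude by taking $\delta_6$ small and invoking the non-quantitative stability \cref{lemma:fmp2.3}; so $r\le C\,\alpha_\lambda(E)^{1/n}$. The reverse inclusion is obtained identically, replacing the density estimate for $E$ by the elementary density estimate for the fixed translated cap $B$, valid with constants $m'=m'(n,\lambda)$, $\varrho_0=\varrho_0(n,\lambda)$ uniformly up to the contact sphere $\partial D_x$ precisely because $\lambda\in(-1,1)$ makes the cap meet $\{x_n=0\}$ transversally. This proves \eqref{eq:StimaDistHaus}; combined with $\alpha_\lambda(E)^2\le c_{\rm iso}D_\lambda(E)$ from \cref{thm:FinalQuantitativeInequality} it gives $d:=d_\hh(\overline{\partial E\setminus H},\overline{\partial B\setminus H})\le C_5\,\alpha_\lambda(E)^{1/n}\le C\,D_\lambda(E)^{1/(2n)}$.

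For \eqref{eq:StimaBetaQuasiminimalSets}, let $D_x:=\partial^*B\cap\{x_n=0\}$, which up to an $\hh^{n-1}$-null set is the $(n-1)$-disk of radius $\sqrt{1-\lambda^2}$ centered at $x$, and let $W:=\partial^*E\cap\{x_n=0\}$, $\overline W:=\overline{E^{(1)}}\cap\{x_n=0\}$, which agree up to an $\hh^{n-1}$-null set by quasiminimal regularity. First I would check by a short segment argument that $\partial\overline W\subseteq\overline{\partial E\setminus H}$: a point $z\in\partial\overline W$ is a limit both of points of $E^{(1)}$ and of points of $\{x_n=0\}\setminus\overline{E^{(1)}}$, and any segment joining two such nearby points stays in $\{x_n\ge0\}$ and crosses $\partial E^{(1)}$ at a point which, being an interior point of the segment, cannot lie on $\{x_n=0\}$. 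By the Hausdorff estimate and the transversality of the cap near $\partial D_x$, this places $\partial\overline W$ inside the tube $T:=\{w\in\{x_n=0\}\st\mathrm{dist}(w,\partial D_x)\le C(n,\lambda)d\}$, once $\delta_6$ is small. Then $\overline W\setminus T$ is relatively clopen in $\{x_n=0\}\setminus T$, which for $\delta_6$ small has exactly two connected components, the inner disk and the unbounded region; the latter is excluded since $\hh^{n-1}(\overline W)=\hh^{n-1}(W)\le P(E)<\infty$, and the case $\overline W\setminus T=\emptyset$ is excluded because it would force $\hh^{n-1}(W)\le\hh^{n-1}(T)\le C(n,\lambda)d$, against the lower bound $\hh^{n-1}(W)=\hh^{n-1}(\partial^*E\cap\partial H)\ge c(n,\lambda)>0$ provided by \eqref{eq:LowerBoundAreaBagnata} for $\delta_6$ small. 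Hence $\overline W$ contains the inner disk and is contained in its union with $T$, so $\hh^{n-1}(W\Delta D_x)\le\hh^{n-1}(T)\le C(n,\lambda)d$; dividing by $\hh^{n-1}(D_x)=\omega_{n-1}(1-\lambda^2)^{(n-1)/2}$ and recalling $d\le C D_\lambda(E)^{1/(2n)}$ yields $\beta_\lambda(E)\le C_6 D_\lambda(E)^{1/(2n)}$.

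The main obstacle will be this last step: Hausdorff closeness of the \emph{full} relative boundaries does not by itself control the $\hh^{n-1}$-symmetric difference of their \emph{traces} on $\{x_n=0\}$. The two ingredients that make it work are the transversality of the optimal cap with $\partial H$, which converts closeness in height into quantitative closeness to $\partial D_x$, and the quantitative positivity \eqref{eq:LowerBoundAreaBagnata} of the wetted area, which rules out the degenerate configuration in which $E$ wets only a thin annulus. A secondary care point is that \cref{thm:RegolaritaQuasiminimal} supplies density estimates only at points of $\overline{\partial E\setminus H}$, so the last step must be argued through the topological boundary $\partial\overline W$ rather than at interior points of the wetted region, where no such estimates are available a priori.
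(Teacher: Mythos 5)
Your proof is correct and takes essentially the same route as the paper: density estimates for the quasiminimal set $E$ and for the bubble yield the Hausdorff bound \eqref{eq:StimaDistHaus}, and the resulting tube inclusion for $\overline{\partial E\setminus H}$ is then used to control the trace symmetric difference. The paper passes from the tube inclusion to the $\hh^{n-1}$ bound on $\partial^*E \Delta \partial B^\lambda(|B^\lambda|) \cap \partial H$ rather tersely, while your connectedness argument (the relative boundary of the wetted region lies in $\overline{\partial E\setminus H}$ and hence in the tube, and the clopen dichotomy together with the lower bound \eqref{eq:LowerBoundAreaBagnata} pins down the wetted region) is a correct and more explicit justification of that step.
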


\begin{proof}
Up to translation, we can assume that $x=0$. Also, letting $m, r_0'$ be given by \cref{thm:RegolaritaQuasiminimal}, up to decreasing $r_0$ we can assume that $r_0=r_0'$.
Let $p \in \overline{\partial E \setminus H}$ be such that
\[
d_0:= {\rm dist}\left(p,  \overline{\partial B^\lambda(|B^\lambda|) \setminus H}\right) = \max \left\{ {\rm dist}\left(y,  \overline{\partial B^\lambda(|B^\lambda|) \setminus H}\right) \st y \in \overline{\partial E \setminus H} \right\}.
\]
Hence $B_{d_0}(p) \cap \overline{\partial B^\lambda(|B^\lambda|) \setminus H}=\emptyset$. Then either $B_{d_0}(p) \setminus H \subset B^\lambda(|B^\lambda|)$ or $B_{d_0}(p) \setminus H \subset \R^n \setminus (H \cup B^\lambda(|B^\lambda|))$. In the first case \cref{thm:RegolaritaQuasiminimal} implies
\[
m|B_r(x)\setminus H| \le | B_r(x) \setminus (H \cup E)| \le |B^\lambda(|B^\lambda|) \setminus E| = \frac12 \alpha_\lambda(E) \qquad \forall\, r \in (0, \min\{d_0, r_0\}),
\]
while in the second case \cref{thm:RegolaritaQuasiminimal} implies
\[
m|B_r(x)\setminus H| \le | B_r(x) \cap E| \le |E \setminus B^\lambda(|B^\lambda|)| = \frac12 \alpha_\lambda(E)
\qquad \forall\, r \in (0, \min\{d_0, r_0\}).
\]
Since $|B_R(x) \setminus H| \ge C r^n$, then $\min\{d_0, r_0\}^n \le C\alpha_\lambda(E)$, for $C=C(n,\lambda,K,r_0)$. So by \cref{lemma:fmp2.3}, choosing $\delta_6$ small enough we have that $\alpha_\lambda(E)$ is so small that $\min\{d_0, r_0\}=d_0$ and then
\[
d_0^n \le C \alpha_\lambda(E).
\]
Since density estimates as those in \cref{thm:RegolaritaQuasiminimal} hold for $B^\lambda(|B^\lambda|)$, repeating the above argument exchanging the roles of $E$ and $B^\lambda(|B^\lambda|)$, \eqref{eq:StimaDistHaus} follows.

From \eqref{eq:StimaDistHaus}, we deduce that
\[
\overline{\partial E \setminus H} \subset \left\{ y \in \{x_n\ge0\} \st {\rm dist}\left(y,  \overline{\partial B^\lambda(|B^\lambda|) \setminus H}\right) \le C_5 \alpha_\lambda(E)^{\frac1n}\right\}.
\]
Hence
\[
\begin{split}
    \hh^{n-1}&\left( \partial^*E \Delta \partial B^\lambda(|B^\lambda|) \cap \partial H \right) \\&\le  \hh^{n-1}\left( \left\{
    (x',0)\in\R^n \st (1-\lambda^2)^{\frac12} - C_5 \alpha_\lambda(E)^{\frac1n} \le |x'|
    \le  (1-\lambda^2)^{\frac12} + C_5 \alpha_\lambda(E)^{\frac1n} 
    \right\} \right)
    \\&
    \le C \alpha_\lambda(E)^{\frac1n} \le C D_\lambda(E)^{\frac{1}{2n}},
\end{split}
\]
for some $C=C(n,\lambda,K,r_0)$, where we used \cref{thm:FinalQuantitativeInequality} in the last inequality. Hence \eqref{eq:StimaBetaQuasiminimalSets} follows.
\end{proof}

\begin{lemma}\label{lem:QuantitativaBetaDeficitBasso}
    There exists $\delta_7, C_7>0$ depending on $n,\lambda$ such that for any measurable set $E\subset\R^n\setminus H$ with $|E|=|B^\lambda|$ and $D_\lambda(E) \le \delta_7$ there holds
    \begin{equation}
        \beta_\lambda(E) \le C_7 D_\lambda(E)^{\frac{1}{2n}}.
    \end{equation}
\end{lemma}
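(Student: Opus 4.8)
The plan is to run a selection-type argument analogous to the one used in the proof of \cref{thm:FinalQuantitativeInequality}, but now with $\beta_\lambda$ playing the role of the Fraenkel asymmetry. First I would reduce, by the usual scale-invariance argument, to sets $E$ with $|E|=|B^\lambda|$ and $D_\lambda(E)$ smaller than any fixed threshold; indeed for sets with deficit bounded below, \eqref{eq:QuantitativaBagnata} is trivial since $\beta_\lambda \le 2$. So it suffices to find $\delta_7>0$ and a constant $C_7$ such that any measurable $E$ with $|E|=|B^\lambda|$ and $D_\lambda(E)<\delta_7$ satisfies $\beta_\lambda(E)\le C_7 D_\lambda(E)^{1/(2n)}$.

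Arguing by contradiction, suppose there is a sequence $E_j$ with $|E_j|=|B^\lambda|$, $D_\lambda(E_j)\to 0$, and $\beta_\lambda(E_j) > j\, D_\lambda(E_j)^{1/(2n)}$. Using the reduction to bounded sets (\cref{lem:ReductionToBounded}) — after checking that the truncation construction does not decrease $\beta_\lambda$ by more than a controlled amount, or alternatively absorbing this into the contradiction hypothesis — I would replace $E_j$ by uniformly bounded competitors contained in a fixed cube $Q_{\tilde l}$, still contradicting the inequality. Then, for each $j$, I would introduce the penalized minimization problem
\begin{equation*}
\min\Big\{ P_\lambda(F) + \big|\beta_\lambda(F) - \beta_\lambda(E_j)\big| + \Lambda\big||F|-|E_j|\big| \ \st\ F\subset Q_{\tilde l}\cap(\R^n\setminus H) \Big\},
\end{equation*}
for $\Lambda>n$ fixed. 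A minimizer $F_j$ exists by the direct method using \cref{lemma:precompattezza} and the lower semicontinuity of $P_\lambda$ and of $\beta_\lambda$ (the latter following from a continuity statement for $\beta_\lambda$ under $L^1$ convergence of sets with converging perimeters, via \cref{lem:ConvergenzaForteSuccessioni}, which gives strict $BV$ convergence and hence control of the boundary traces on $\partial H$). By comparison with $E_j$ one gets $P_\lambda(F_j)\to P_\lambda(B^\lambda)$, so $F_j\to B^\lambda(|B^\lambda|)$ in $L^1$ up to subsequence and translation, using \cref{dispense:lemma5.3}. The penalization shows $F_j$ is a $(K,r_0)$-quasiminimal set with constants independent of $j$: for any competitor $G$ with $F_j\Delta G\subset\subset B_r(x)$, $r\le r_0$, the terms $|\beta_\lambda(G)-\beta_\lambda(F_j)|$ and $\Lambda||G|-|F_j||$ are controlled by $C|G\Delta F_j|\le C r P(G,\R^n\setminus H)$ (using the isoperimetric-type bound $|G\Delta F_j|\lesssim r\, P(\cdot)$ and that $|F_j|$ is bounded below), which for $r$ small is absorbed into a factor $K$ in front of $P(G,\R^n\setminus H)$.

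Now the key point: since $F_j$ is $(K,r_0)$-quasiminimal with uniform constants, $|F_j|=|B^\lambda|$ and $D_\lambda(F_j)\to 0$, \cref{lem:StimaBetaQuasiminimal} applies and gives $\beta_\lambda(F_j)\le C_6 D_\lambda(F_j)^{1/(2n)}$ for $j$ large. On the other hand, as in the proof of \cref{thm:FinalQuantitativeInequality}, the minimality inequality comparing $F_j$ with $E_j$ yields $|\beta_\lambda(F_j)-\beta_\lambda(E_j)|\le \tfrac{1}{j}\,\beta_\lambda(E_j)^{?}$-type control — more precisely, combining $P_\lambda(F_j)+\Lambda||F_j|-|B^\lambda||+|\beta_\lambda(F_j)-\beta_\lambda(E_j)|\le P_\lambda(E_j) = P_\lambda(B^\lambda)(1+D_\lambda(E_j))$ with the contradiction hypothesis $D_\lambda(E_j)<(\beta_\lambda(E_j)/j)^{2n}$ and $P_\lambda(F_j)+\Lambda||F_j|-|B^\lambda||\ge P_\lambda(B^\lambda)$, one obtains $|\beta_\lambda(F_j)-\beta_\lambda(E_j)|\le C P_\lambda(B^\lambda)(\beta_\lambda(E_j)/j)^{2n}$, hence $\beta_\lambda(F_j)/\beta_\lambda(E_j)\to 1$. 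Rescaling $F_j$ to $\tilde F_j$ with $|\tilde F_j|=|B^\lambda|$ (scaling factor $\to 1$) and comparing deficits as in \eqref{dispense:5.14}, one derives $\beta_\lambda(\tilde F_j) > (j/2)\, D_\lambda(\tilde F_j)^{1/(2n)}$ eventually, while \cref{lem:StimaBetaQuasiminimal} applied to $\tilde F_j$ (still quasiminimal with uniform constants, $|\tilde F_j|=|B^\lambda|$, $D_\lambda(\tilde F_j)\to0$) gives $\beta_\lambda(\tilde F_j)\le C_6 D_\lambda(\tilde F_j)^{1/(2n)}$, a contradiction for $j$ large.

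The main obstacle I anticipate is establishing the lower semicontinuity and near-continuity of $\beta_\lambda$ under $L^1$ convergence of sets with $P_\lambda$ converging to the limit, which is needed both to get existence of the minimizer $F_j$ and to pass $\beta_\lambda(F_j)$ to the limit; this is precisely where \cref{lem:ConvergenzaForteSuccessioni} is used, since strict $BV$ convergence allows one to control the traces of the characteristic functions on $\partial H$ and hence the $(n-1)$-dimensional measure of the symmetric differences defining $\beta_\lambda$. A secondary technical point is verifying that the reduction to bounded sets (or a direct argument avoiding it) is compatible with the functional $\beta_\lambda$; if this turns out delicate, one can instead incorporate the boundedness into the penalized problem directly, as written above, since the minimization is already restricted to $Q_{\tilde l}$, and verify separately that the original sequence $E_j$ can be assumed uniformly bounded without loss of generality — or simply run the contradiction directly on possibly unbounded $E_j$ and use that the penalized minimizers $F_j$ are automatically bounded.
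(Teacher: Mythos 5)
Your overall selection-principle architecture matches the paper's, but there is a genuine gap in the quasiminimality step that the paper avoids through a more careful choice of penalization. You claim that for competitors $G$ with $F_j\Delta G\subset\subset B_r(x)$, the term $|\beta_\lambda(G)-\beta_\lambda(F_j)|$ is controlled by $C|G\Delta F_j|\le Cr\,P(G,\R^n\setminus H)$, which would let you absorb it into a $(K,r_0)$-quasiminimality estimate for $r_0$ small. This is false: $\beta_\lambda$ involves the traces $\hh^{n-1}(\partial^*G\cap\partial H)$ and $\hh^{n-1}(\partial^*F_j\cap\partial H)$, and these are \emph{not} controlled by the volume of $G\Delta F_j$. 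Concretely, removing from $F_j$ a thin slab $B_r(x)\cap\{0<x_n<\eta\}$ changes $|G\Delta F_j|$ by $O(\eta)$ while potentially changing $\hh^{n-1}(\partial^*G\cap\partial H)$ by a fixed positive amount as $\eta\to0$. The correct estimate, which the paper derives in \eqref{eq:zzqqa1} via the $BV$ trace inequality, controls $|\beta_\lambda(Z)-\beta_\lambda(F_j)|$ only by $\widetilde{C}(n,\lambda)\big(P(G,\R^n\setminus H)+P(F_j,\R^n\setminus H)\big)$, with a constant $\widetilde C$ that does \emph{not} vanish as $r\to0$ and may exceed $1-|\lambda|$. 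With the unit weight you place on the $\beta_\lambda$-penalization, the would-be quasiminimality inequality then reads $(1-|\lambda|-\widetilde C-\Lambda\overline C r_0)\,P(F_j,\cdot)\le(1+|\lambda|+\widetilde C+\Lambda\overline C r_0)\,P(G,\cdot)$, and the left-hand coefficient may be nonpositive, so you cannot conclude that $F_j$ is $(K,r_0)$-quasiminimal with uniform constants.

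The paper's fix is to insert a small parameter $\epsilon_0$ in front of $|\beta_\lambda(\cdot)-\beta_\lambda(E_j)|$ in the penalized problem \eqref{eq:ProbMinimoQuasiminimal}. The coefficient becomes $1-|\lambda|-\epsilon_0\widetilde C-\Lambda\overline C r_0$, which is made positive by choosing $\epsilon_0, r_0$ small \emph{before} anything else, producing a fixed $K=K(n,\lambda,\Lambda)$. Only after $K,r_0$ are fixed does the paper invoke \cref{lem:StimaBetaQuasiminimal} to obtain $C_6$, and then the contradiction hypothesis is posed with the explicit constant $2C_6$ rather than your $j\to\infty$ (a cosmetic difference, but forced by this order of choices). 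The remaining steps in your plan, namely the comparison showing $\beta_\lambda(F_j)/\beta_\lambda(E_j)\to1$, the rescaling to $\tilde F_j$, and the contradiction with \cref{lem:StimaBetaQuasiminimal}, go through once the $\epsilon_0$-weight is carried along. So: replace the erroneous bound $|\beta_\lambda(G)-\beta_\lambda(F_j)|\le C|G\Delta F_j|$ with the trace-inequality bound in terms of perimeters, add the small coefficient $\epsilon_0$ to the $\beta_\lambda$-penalization, and fix the order of the constants accordingly.
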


\begin{proof}
Fix $\Lambda>n$. Let $Q\subset \R^n$ be a large cube whose interior contains the closure of $B^\lambda(|B^\lambda|)$, and let $F\subset Q \setminus H$ be such that $|B^\lambda|/2 \le |F| \le 2 |B^\lambda|$. Let $G\subset\R^n\setminus H$ be such that $G \Delta F \subset\subset B_{r_0}(x)$, for $x \in \{x_n\ge0\}$ and $r_0\in(0,1)$ to be chosen small. Let $Z := G \cap Q$. Observe that
\begin{equation}\label{eq:zzqqa}
    \begin{split}
        \left||Z| - |F| \right|
        & \le |Z \Delta F| 
        \le |G \Delta F|^{\frac1n} |G \Delta F|^{\frac{n-1}{n}} \le \omega_n^{\frac1n} r \, \left(|G|^{\frac{n-1}{n}} + |F|^{\frac{n-1}{n}} \right)\\
        &\le  \overline{C}(n) r_0  \left(P(G,\R^n\setminus H) + P(F, \R^n\setminus H )\right),
    \end{split}
\end{equation}
where in the last inequality we used the relative isoperimetric inequality in a half-space (see \cite{ChoeGhomiRitoreInequality} for the sharp inequality). Let $y,z \in \partial H$ be such that
\[
\beta_\lambda(F) = \frac{\hh^{n-1}\left( \partial^* F \Delta \partial B^\lambda(|F|,y) \cap \partial H\right)}{\hh^{n-1}\left( \partial B^\lambda(|F|,y) \cap \partial H\right)},
\qquad
\beta_\lambda(Z) = \frac{\hh^{n-1}\left( \partial^* Z \Delta \partial B^\lambda(|Z|,z) \cap \partial H\right)}{\hh^{n-1}\left( \partial B^\lambda(|Z|,z) \cap \partial H\right)},
\]
Observe that $y,z$ exist since $F, Z \subset Q$. Suppose, for instance, that $\beta_\lambda(Z)\ge \beta_\lambda(F)$. Then
\begin{equation}\label{eq:zzqqa1}
    \begin{split}
        &\beta_\lambda(Z)-\beta_\lambda(F)
        \le
        \frac{\hh^{n-1}\left( \partial^* Z\Delta \partial B^\lambda(|Z|,y) \cap \partial H\right)}{\hh^{n-1}\left( \partial B^\lambda(|Z|,y) \cap \partial H\right)}
        - 
        \frac{\hh^{n-1}\left( \partial^* F \Delta \partial B^\lambda(|F|,y) \cap \partial H\right)}{\hh^{n-1}\left( \partial B^\lambda(|F|,y) \cap \partial H\right)}
        \\
        & \le 
        \frac{
        \hh^{n-1}\left( \partial^* Z\Delta \partial^* F \cap \partial H\right)
        +
        \hh^{n-1}\left( \partial^* F\Delta \partial B^\lambda(|F|,y) \cap \partial H\right)
        +
        \hh^{n-1}\left( \partial B^\lambda(|F|,y) \Delta \partial B^\lambda(|Z|,y) \cap \partial H\right)       
        }{\hh^{n-1}\left( \partial B^\lambda(|Z|,y) \cap \partial H\right)}
        + \\
        &\qquad
        - 
        \frac{\hh^{n-1}\left( \partial^* F \Delta \partial B^\lambda(|F|,y) \cap \partial H\right)}{\hh^{n-1}\left( \partial B^\lambda(|F|,y) \cap \partial H\right)} \\
        &= 
        \frac{
        \hh^{n-1}\left( \partial^* Z\Delta \partial^* F \cap \partial H\right)      
        }{\hh^{n-1}\left( \partial B^\lambda(|Z|,y) \cap \partial H\right)}
        + 
        \frac{
        \left|
        \hh^{n-1}\left( \partial B^\lambda(|F|,y) \cap \partial H \right)
        -
        \hh^{n-1}\left( \partial B^\lambda(|Z|,y) \cap \partial H\right) 
        \right|       
        }{\hh^{n-1}\left( \partial B^\lambda(|Z|,y) \cap \partial H\right)} + \\
        &\qquad
        + 
        \hh^{n-1}\left( \partial^* F \Delta \partial B^\lambda(|F|,y) \cap \partial H\right) \left( 
        \frac{1}{\hh^{n-1}\left( \partial B^\lambda(|Z|,y) \cap \partial H\right)}
        -
        \frac{1}{\hh^{n-1}\left( \partial B^\lambda(|F|,y) \cap \partial H\right)}
        \right)
    \end{split}
\end{equation}
By a trace inequality \cite[Theorem 3.87]{AmbrosioFuscoPallara} we estimate
\[
\begin{split}
    \hh^{n-1}\left( \partial^* Z\Delta \partial^* F \cap \partial H\right) &\le
    C(n)\left(   
    |Z \Delta F| + P(Z, \R^n\setminus H) + P(F, \R^n\setminus H) 
    \right)\\
    &\le C(n)\left(   
    |G \Delta F| + P(G, \R^n\setminus H) + P(F, \R^n\setminus H) 
    \right)\\
    &\le C(n)\left(   
     P(G, \R^n\setminus H) + P(F, \R^n\setminus H) 
    \right),
\end{split}
\]
where the last inequality follows as in \eqref{eq:zzqqa}, and $C$ denotes a constant depending on suitable parameters that changes from line to line. For $r_0$ small, depending only on $n,\lambda$, we can ensure that
\[
\hh^{n-1}\left( \partial B^\lambda(|Z|,y) \cap \partial H\right) \ge C(n,\lambda)>0.
\]
Finally
\[
\begin{split}
    \left|
        \hh^{n-1}\left( \partial B^\lambda(|F|,y) \cap \partial H \right)
        -
        \hh^{n-1}\left( \partial B^\lambda(|Z|,y) \cap \partial H\right) 
        \right| 
        & \le
        L \left| |Z| - |F| \right| \\
        &\overset{\eqref{eq:zzqqa}}{\le} C(n,\lambda) \left(   
     P(G, \R^n\setminus H) + P(F, \R^n\setminus H) 
    \right),
\end{split}
\]
for a suitable Lipschitz constant $L=L(n,\lambda)$. Therefore \eqref{eq:zzqqa1} becomes
\[
\beta_\lambda(Z)-\beta_\lambda(F)
        \le C(n,\lambda) \left(   
     P(G, \R^n\setminus H) + P(F, \R^n\setminus H) 
    \right).
\]
In case $\beta_\lambda(Z)<\beta_\lambda(F)$, the very same argument leads to an analogous estimate. Hence
\begin{equation}\label{eq:zzqqa2}
    \left|\beta_\lambda(Z)-\beta_\lambda(F) \right|
        \le \widetilde{C}(n,\lambda) \left(   
     P(G, \R^n\setminus H) + P(F, \R^n\setminus H) 
    \right).
\end{equation}

Up to taking a smaller $r_0$, we fix $r_0=r_0(n,\lambda, \Lambda)\in(0,1)$ and $\epsilon_0=\epsilon_0(n,\lambda, \Lambda)\in(0,1)$ such that $1-|\lambda| - \epsilon_0 \widetilde C(n,\lambda) - \Lambda \overline{C}(n) r_0 >0$, and we define
\begin{equation*}
    K:= \frac{1+|\lambda| + \epsilon_0 \widetilde C(n,\lambda) + \Lambda \overline{C}(n) r_0 }{1-|\lambda| - \epsilon_0 \widetilde C(n,\lambda) - \Lambda \overline{C}(n) r_0 } >1.
\end{equation*}

Let $\delta_6, C_6$ be given by \cref{lem:StimaBetaQuasiminimal} corresponding to the parameters $K, r_0/2$. We want to prove that if $\delta_7$ is sufficiently small, then
\[
\beta_\lambda(E) \le 2 C_6 D_\lambda(E)^{\frac{1}{2n}}.
\]
We argue by contradiction assuming that there exist sets $E_j \subset \R^n\setminus H$ with $|E_j|=|B^\lambda|$ and $D_\lambda(E_j)\le 1/j$ such that
\begin{equation}
    \beta_\lambda(E_j) > 2 C_6 D_\lambda(E_j)^{\frac{1}{2n}},
\end{equation}
for any $j$. Up to translation, $E_j\to B^\lambda(|B^\lambda|,0)$ and $P_\lambda(E_j)\to P_\lambda(B^\lambda)$. Since the trace operator is continuous with respect to strict convergence of $BV$ functions, see \cite[Theorem 3.88]{AmbrosioFuscoPallara}, by \cref{lem:ConvergenzaForteSuccessioni} we deduce that $\beta_\lambda(E_j)\to0$.\\
Let $F_j$ be a minimizer of the problem
\begin{equation}\label{eq:ProbMinimoQuasiminimal}
    \min \left\{ 
    P_\lambda(E) + \epsilon_0 |\beta_\lambda(E) - \beta_\lambda(E_j) | + \Lambda \left| |E| - |E_j| \right| \st E \subset Q
    \right\}.
\end{equation}
By \cref{lemma:precompattezza}, up to subsequence $F_j$ converges to a limit set $F$ in $L^1$. If by contradiction $\beta_j(F_j)\not\to 0$, by \cref{dispense:lemma5.3} for large $j$ we would have that
\[
P_\lambda(B^\lambda(|B^\lambda|)) + \epsilon_0 \beta_\lambda(E_j) < P_\lambda(F_j) + \epsilon_0 |\beta_\lambda(F_j) - \beta_\lambda(E_j) | + \Lambda \left| |F_j| - |E_j| \right| ,
\]
contradicting the minimality of $F_j$. Hence $\beta_\lambda (F_j)\to0$. It follows that, up to translation, $F_j$ converges to $B^\lambda(|B^\lambda|)$ in $L^1$. Comparing with $E_j$, we also see that $P_\lambda(F_j) \to P_\lambda(B^\lambda)$.

We want to show that $F_j$ is $(K,r_0)$-quasiminimal for $j$ large. Indeed, $|B^\lambda|/2 \le |F_j| \le 2 |B^\lambda|$ for $j$ large. Hence we can apply \eqref{eq:zzqqa} and \eqref{eq:zzqqa2} with $F=F_j$. Letting $G\subset\R^n\setminus H$ such that $G \Delta F \subset\subset B_{r_0}(x)$, for $x \in \{x_n\ge0\}$, denoting $Z := G \cap Q$, by minimality of $F_j$ for \eqref{eq:ProbMinimoQuasiminimal} we find
\begin{equation*}
    \begin{split}
         (1-|\lambda|)&P(F_j, \R^n\setminus H) 
         \le 
          (1+|\lambda|)P(Z, \R^n\setminus H) 
          +\epsilon_0\left|\beta_\lambda(Z)-\beta_\lambda(F_j) \right| + \Lambda \left| |Z| - |F_j| \right| \\
          &\le 
          (1+|\lambda|)P(G, \R^n\setminus H) 
          +\left( \epsilon_0 \widetilde{C}(n,\lambda) + \Lambda \overline{C}(n) r_0 \right) \left(   
     P(G, \R^n\setminus H) + P(F_j, \R^n\setminus H) 
    \right),
    \end{split}
\end{equation*}
proving that $F_j$ is $(K,r_0)$-quasiminimal.

By minimality of $F_j$, we have
\begin{equation}\label{aaa}
\begin{split}
P_\lambda(F_j) &+ \Lambda \left||F_j| - |B^\lambda|\right| + \epsilon_0\left|\beta_\lambda(F_j) - \beta_\lambda(E_j)\right|  \le P_\lambda(E_j) \\ & \le P_\lambda(B^\lambda) + \frac{P_\lambda(B^\lambda)}{(2 C_6)^{2n}} \beta_\lambda^{2n}(E_j) 
\le P_\lambda(F_j) + \Lambda\left||F_j| - |B^\lambda|\right| + 
\frac{P_\lambda(B^\lambda)}{(2 C_6)^{2n}} \beta_\lambda^{2n}(E_j) 
\end{split}
\end{equation}
Therefore
\begin{equation*} |\beta_\lambda(F_j) - \beta_\lambda(E_j)| \le \frac{P_\lambda(B^\lambda)}{\epsilon_0(2 C_6)^{2n}} \beta_\lambda^{2n}(E_j) ,
\end{equation*}
and then
\[
\frac{\beta_\lambda(F_j)}{\beta_\lambda(E_j)} \to 1.
\]
Next we select $\{\hat\lambda_j\} \subset (0, \infty)$ such that, setting $\tilde F_j := \hat\lambda_j F_j$, then $|\tilde F_j| = |B^\lambda|$.
Clearly $\hat\lambda_j \to 1$ since $|F_j|\to |B^\lambda|$.
Since $P_\lambda(F_j) \to P_\lambda(B^\lambda))$ and $\Lambda > n$, for $j$ sufficiently large  we have $P_\lambda(F_j) < \Lambda |F_j|$ and \begin{equation*} \begin{split} \left|P_\lambda(\tilde F_j) - P_\lambda(F_j)\right| & = P_\lambda(F_j) \left|\hat\lambda_j^{n - 1} - 1\right|  \le P_\lambda(F_j) \left|\hat\lambda_j^n - 1\right|  \le \Lambda \left|\hat\lambda_j^n - 1\right|\,|F_j|  = \Lambda \left||\tilde F_j| - |F_j|\right|. \end{split} \end{equation*}
Hence, by definition of $\hat\lambda_j$ and by \eqref{aaa} we get
\begin{equation}\label{aaa1} \begin{split} P_\lambda(\tilde F_j) & \le P_\lambda(F_j) + \Lambda\left||\tilde F_j| - |F_j|\right| = P_\lambda(F_j) + \Lambda \left||F_j| - |B^\lambda|\right| 
\overset{\eqref{aaa}}{\le} P_\lambda(B^\lambda) + \frac{P_\lambda(B^\lambda)}{(2 C_6)^{2n}} \beta_\lambda^{2n}(E_j).
\end{split}
\end{equation}
Since $\beta_\lambda(F_j)/\beta_\lambda(E_j) \to 1$ as $j \to \infty$ and $\beta_\lambda$ is scale invariant, we have $\beta_\lambda(E_j)^{2n} < 2 \beta_\lambda (\tilde F_j)^{2n}$ for $j$ sufficiently large.
Hence from~\eqref{aaa1} we obtain
\[
\beta_\lambda(\tilde F_j)^{2n} \ge 2^{2n-1} C_6^{2n} D_\lambda(\tilde F_j),
\]
that is $\beta_\lambda(\tilde F_j)\ge 2^{1-\frac{1}{2n}} C_6 D_\lambda(\tilde F_j)^{\frac{1}{2n}}$. On the other hand, for $j$ large, $\tilde F_j$ is $(K, \hat \lambda_j r_0)$-quasiminimal. As $\hat \lambda_j \to 1$, then $\tilde F_j$ is $(K, r_0/2)$-quasiminimal for $j$ large. Moreover $D_\lambda(\tilde F_j)\to0$. By the choice of $C_6$ above, \cref{lem:StimaBetaQuasiminimal} implies that
\[
\beta_\lambda(\tilde F_j) \le C_6  D_\lambda(\tilde F_j)^{\frac{1}{2n}},
\]
giving a contradiction.    
\end{proof}

\begin{proof}[Proof of \cref{thm:QuantitativaBagnata}]
By \cref{lem:QuantitativaBetaDeficitBasso} it follows that for any $A>0$ there exists $C_A>0$ such that for any set $E\subset\R^n\setminus H$ with $|E|=|B^\lambda|$ and $\hh^{n-1}(\partial^*E \cap \partial H)\le A$ there holds
\begin{equation}\label{qqq}
    \beta_\lambda(E) \le C_A D_\lambda
    (E)^{\frac{1}{2n}}.
\end{equation}
Indeed, if $D_\lambda(E)\le \delta_7$, for $\delta_7$ as in \cref{lem:QuantitativaBetaDeficitBasso}, then \eqref{qqq} follows with $C_A=C_7$. Otherwise we just have
\[
\beta_\lambda(E) \le C(n,\lambda) \left( \hh^{n-1}(\partial^*E \cap \partial H)+ \hh^{n-1}(\partial^*B^\lambda(|B^\lambda|,0) \cap \partial H)\right)  \le C(n,\lambda,A) \frac{\delta_7^{\frac{1}{2n}}}{\delta_7^{\frac{1}{2n}}} \le C(n,\lambda,A) D_\lambda^{\frac{1}{2n}}.
\]

Next we observe that, letting $C_\lambda$ such that $P_\lambda(B^\lambda) \le C_\lambda \hh^{n-1}(\partial B^\lambda(|B^\lambda|) \cap \partial H)$, then for any set $E\subset\R^n\setminus H$ with $|E|=|B^\lambda|$ and $\hh^{n-1}(\partial^*E \cap \partial H) \ge \frac{2C_\lambda}{1-\lambda}\hh^{n-1}(\partial B^\lambda(|B^\lambda|) \cap \partial H)$ there holds
\begin{equation}\label{qqq2}
    \beta_\lambda(E) \le C_8 D_\lambda(E),
\end{equation}
for a constant $C_8=C_8(n,\lambda)>0$.\\
Indeed
\[
P_\lambda(E) -P_\lambda(B^\lambda)
\ge (1-\lambda) \hh^{n-1}(\partial^*E \cap \partial H)
- C_\lambda \hh^{n-1}(\partial B^\lambda(|B^\lambda|) \cap \partial H)
\ge \frac{1-\lambda}{2}\hh^{n-1}(\partial^*E \cap \partial H),
\]
and
\[
\beta_\lambda(E)  \le C(n,\lambda) \left( \hh^{n-1}(\partial^*E \cap \partial H)+ \hh^{n-1}(\partial^*B^\lambda(|B^\lambda|,0) \cap \partial H)\right) \le C(n,\lambda,C_\lambda) \hh^{n-1}(\partial^*E \cap \partial H).
\]

Setting now $A:= \frac{2C_\lambda}{1-\lambda}\hh^{n-1}(\partial B^\lambda(|B^\lambda|) \cap \partial H)$ in \eqref{qqq}, taking into account \eqref{qqq2} we conclude that for any set $E\subset\R^n\setminus H$ with $|E|=|B^\lambda|$ there holds
\[
\begin{split}
    \beta_\lambda(E) \le \max\{C_A, C_8\} \, \max\left\{ D_\lambda(E), D_\lambda(E)^{\frac{1}{2n}} \right\}.
\end{split}
\]
\end{proof}

\appendix

\section{Auxiliary results}
\label{sec:Appendix}

\subsection{Regularity of \texorpdfstring{$(\Lambda, r_0)$}{}-minimizers}

We recall definitions and basic properties of local $(\Lambda, r_0)$-minimizers of the perimeter. A detailed account on the theory of $(\Lambda, r_0)$-minimizers can be found in \cite{MaggiBook}.

\begin{definition}\label{def:LambdaMin}
Let $\Omega \subset \R^n$ be an open set and let $E \subset \R^n$ be a set of finite perimeter. We say that $E$ is a local $(\Lambda, r_0)$-minimizer of the perimeter in $\Omega$, with $\Lambda, r_0>0$, if
\begin{equation*}
P(E, B_r(x)) \le P(F, B_r(x)) + \Lambda |E \Delta F|, \end{equation*}
whenever $E \Delta F \subset\subset B_r(x) \subset\subset \Omega$ and $r \le r_0$.    
\end{definition}

It is well-known that local $(\Lambda, r_0)$-minimizers have bounded mean curvature in a generalized sense. We could not find an explicit reference in the literature, hence we provide a proof in the following result.

\begin{lemma}\label{prop:meancurvature}
Let $\Omega \subset \R^n$ be an open set and let $E \subset \R^n$ be a local $(\Lambda,r_0)$-minimizer of the perimeter in $\Omega$. 
Then there exists $H \in L^\infty(P(E, \cdot), \R^n)$ such that $\|H\|_{L^\infty} \le \Lambda$ and \begin{equation*} \int_{\partial^*E} {\rm div}_T X = - \int_{\partial^*E}\Braket{X,H} \qquad \forall X \in C_c^1(\Omega, \R^n), \end{equation*}
where ${\rm div}_T X$ is the tangential divergence of $X$ along the ($\hh^{n-1}$-a.e. defined) tangent space of $\partial^* E$.
We shall refer to $H$ as to the {\em (generalized) mean curvature} of $E$.
\end{lemma}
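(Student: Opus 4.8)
The plan is to test the minimality inequality against the flow of a compactly supported vector field, read off a first--variation inequality, and then identify the resulting bounded linear functional with an $L^\infty$ mean curvature vector via duality.

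First I would fix $X\in C_c^1(\Omega;\R^n)$ whose support is contained in a ball $B_r(x)\subset\subset\Omega$ with $r\le r_0$, and let $\{\Phi_t\}_t$ be the flow of $X$. Since $\Phi_t$ equals the identity off ${\rm spt}\,X$, one has $E\,\Delta\,\Phi_t(E)\subset\subset B_r(x)$, so \cref{def:LambdaMin} gives $P(E)\le P(\Phi_t(E))+\Lambda|E\,\Delta\,\Phi_t(E)|$. Using the classical first variation of perimeter for sets of finite perimeter together with the standard rate for the symmetric difference swept by the flow (see, e.g., \cite[Chapter 17]{MaggiBook}),
\[
P(\Phi_t(E))=P(E)+t\int_{\partial^*E}{\rm div}_T X\de\hh^{n-1}+o(t),\qquad |E\,\Delta\,\Phi_t(E)|=|t|\int_{\partial^*E}|\Braket{X,\nu^E}|\de\hh^{n-1}+o(t)
\]
as $t\to0$, I would divide the minimality inequality by $t>0$, let $t\to0^+$, and then apply the outcome also to $-X$, obtaining
\[
\left|\int_{\partial^*E}{\rm div}_T X\de\hh^{n-1}\right|\le\Lambda\int_{\partial^*E}|\Braket{X,\nu^E}|\de\hh^{n-1}\le\Lambda\int_{\partial^*E}|X|\de\hh^{n-1}.
\]
To remove the restriction on ${\rm spt}\,X$, I would cover the support of a general $X\in C_c^1(\Omega;\R^n)$ by finitely many balls $B_{r_i}(x_i)\subset\subset\Omega$ with $r_i\le r_0$, take a subordinate smooth partition of unity $\{\psi_i\}$ with $\psi_i\ge0$ and $\sum_i\psi_i\equiv1$ near ${\rm spt}\,X$, apply the previous bound to each $\psi_i X\in C_c^1$, and sum, using $\sum_i{\rm div}_T(\psi_i X)={\rm div}_T X$ and $\sum_i|\psi_i X|=|X|$ on $\partial^*E$; this yields $\bigl|\int_{\partial^*E}{\rm div}_T X\de\hh^{n-1}\bigr|\le\Lambda\int_{\partial^*E}|X|\de\hh^{n-1}$ for every $X\in C_c^1(\Omega;\R^n)$.

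Finally I would conclude by duality: since $\hh^{n-1}\mres\partial^*E=P(E,\cdot)$ is a Radon measure on $\Omega$, the space $C_c^1(\Omega;\R^n)$ is dense in $L^1(P(E,\cdot);\R^n)$, so the functional $X\mapsto\int_{\partial^*E}{\rm div}_T X\de\hh^{n-1}$ extends to a bounded linear functional on $L^1(P(E,\cdot);\R^n)$ of norm $\le\Lambda$, hence is represented by some $-H\in L^\infty(P(E,\cdot);\R^n)$ with $\|H\|_{L^\infty}\le\Lambda$ by the Riesz representation theorem; this is precisely the asserted identity.

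The only delicate point is the first--variation expansion for a generic set of finite perimeter, and especially the sharp rate $|E\,\Delta\,\Phi_t(E)|=|t|\int_{\partial^*E}|\Braket{X,\nu^E}|\de\hh^{n-1}+o(t)$: it is the exact coefficient $\int_{\partial^*E}|\Braket{X,\nu^E}|\le\int_{\partial^*E}|X|$, rather than a cruder bound $|E\,\Delta\,\Phi_t(E)|=O(t)$, that produces the sharp constant $\|H\|_{L^\infty}\le\Lambda$. This is classical and I would quote it from \cite{MaggiBook} (or \cite{AmbrosioFuscoPallara}).
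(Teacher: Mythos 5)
Your proposal is correct and follows essentially the same strategy as the paper: test $(\Lambda,r_0)$-minimality against the flow of a compactly supported $C^1$ field, pass to the first variation as $t\to0$, extend by a partition of unity, and conclude by Riesz representation on $L^1(P(E,\cdot);\R^n)$. The one place where you diverge is the control of $|E\,\Delta\,\Phi_t(E)|$. You invoke the sharp asymptotics
\[
|E\,\Delta\,\Phi_t(E)| = |t|\int_{\partial^*E}|\Braket{X,\nu^E}|\de\hh^{n-1} + o(t),
\]
and label it as classical, deferring to \cite{MaggiBook} or \cite{AmbrosioFuscoPallara}. While this formula is true, I am not aware of it being stated there in this exact form for arbitrary sets of finite perimeter; \cite[Chapter 17]{MaggiBook} gives the expansions of $P(\Phi_t(E))$ and of $|\Phi_t(E)|$, but the exact first-order coefficient of the symmetric difference with the absolute value $|\Braket{X,\nu^E}|$ takes an extra argument. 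The paper sidesteps this by proving the weaker (but sufficient) one-sided bound
\[
|E\,\Delta\,\Phi_t(E)| \le (1+o(1))\,t\int |X|\de P(E,\cdot)
\]
directly: one writes $\chi_{F_t}(y)=\chi_E(f_t(y))$ with $f_t=g_{-t}$, inserts a mollification $u_\epsilon=\chi_E\ast\rho_\epsilon$, estimates $\int|u_\epsilon\circ f_t - u_\epsilon|$ by the fundamental theorem of calculus plus the area formula (which costs only a $(1+o(1))$ Jacobian factor), and uses $|\nabla u_\epsilon|\mathcal L^n \rightharpoonup P(E,\cdot)$ to pass to the limit. Both routes give the same constant $\Lambda$, since you immediately relax $|\Braket{X,\nu^E}|$ to $|X|$ anyway; the extra sharpness in your expansion buys nothing here, and the paper's version is more elementary and self-contained. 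If you keep your formulation, you should either locate a precise citation for the sharp symmetric-difference expansion in the generality of sets of finite perimeter, or replace it with the direct mollification estimate.
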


\begin{proof} Let $X \in C_c^1(B_r(x))$, with $B_r(x) \subset\subset \Omega$ and $r \le r_0$. Let $\{g_t\}_{t < |\eta|}$ be the flow of the vector field $X$, and define $F_t = g_t(E)$. Then $E \Delta F_t \subset\subset B_r(x)$. Let $f_t = g_t^{- 1} = g_{- t}$. 
If $u \in C^1(\Omega)$, for $t>0$ we have \begin{equation}\label{eq:zaq} \begin{split} \int_{B_r(x)} |u(f_t(y)) - u(y)|\de y & = \int_{B_r(x)}\left|\int_0^t\partial_s[u(f_s(y))]\de s\right| \de y
 \le \int_{B_r(x)}\int_0^t|\nabla u(f_s(y))|\; |X(f_s(y))|\de s \de y \\ & = \int_0^t \int_{B_r(x)} |\nabla u(f_s(y))|\;|X(f_s(y))| \frac{Jf_s(y)}{Jf_s(y)} \de y \de s \\ & \le (1 + o(1)) \int_0^t\int_{B_r(x)}|\nabla u(f_s(y))|\;|X(f_s(y))|\;Jf_s(y)\de y\de s \\ & = (1 + o(1)) \int_0^t\int_{B_r(x)}|\nabla u(z)|\;|X(z)|\de z \de s \\ & = (1 + o(1)) t\int_{B_r(x)} |\nabla u(z)|\;|X(z)|\de z,
\end{split} \end{equation}
where $o(1)\to0$ as $t\to0^+$.\\
Setting $u = u_\epsilon = \chi_E \star \rho_\epsilon$, then $|\nabla u_\epsilon| \mathcal{L}^n \to P(E, \cdot)$ as $\epsilon\to0$ by \cite[Proposition 12.20]{MaggiBook}. Also \begin{equation*} \int_{B_r(x)}\left|u_\epsilon(f_t(y)) - \chi_{f_{- t}(E)}(y)\right|\frac{Jf_t}{Jf_t}\de y \le 2 \int_{B_r(x)}|u_\epsilon(z) - \chi_E(z)|\de z.
\end{equation*}
Hence setting $u=u_\epsilon$ in \eqref{eq:zaq} and letting $\epsilon \to 0$ implies
\begin{equation*}
|E \Delta F_t| = 
\int_{B_r(x)}|\chi_{F_t} - \chi_E| \le (1 + o(1)) t \int_{B_r(x)}|X| \de P(E, \cdot). \end{equation*} 
By $(\Lambda, r_0)$-minimality we deduce
\begin{equation*}
P(E, B_r(x)) - P(F_t, B_r(x)) \le (1 + o(1)) \Lambda t \int_{B_r(x)} |X| \de P(E, \cdot).
\end{equation*}
Dividing by $t>0$ and letting $t\to 0^+$ we get
\begin{equation*}
-\int_{\partial^*E}{\rm div}_T X\de P(E, \cdot) \le \Lambda\int_{\partial^*E}|X| \de P(E, \cdot). \end{equation*}
Up to changing $X$ with $- X$ we obtain \begin{equation*} \left|\int_{\partial^*E}{\rm div}_T X \de P(E, \cdot)\right| \le \Lambda \int_{\partial^*E}|X|\de P(E, \cdot), \end{equation*}
that implies the existence of the generalized mean curvature $H \in L^\infty(P(E, \cdot)\mres B_r(x))$ for $E$ in $B_r(x)$ with $\|H\|_{L^\infty} \le \Lambda$. Since $B_r(x)$ was arbitrary in $\Omega$, by a partition of unity argument the claim follows. 
\end{proof}

Let us further recall the following fundamental regularity properties of local $(\Lambda, r_0)$-minimizers.

\begin{theorem}[{\cite{Tamanini}, \cite[Theorem 26.3, Theorem 26.6]{MaggiBook}}] \label{thm:RegolaritaLambdaMinimizers}
Let $\Omega \subset \R^n$ be an open set. Let $E \subset \Omega$ be a local $(\Lambda,r_0)$-minimizer in $\Omega$. Then the set $E^{(1)}$ of points of density $1$ for $E$ is an open representative for $E$. Moreover, representing $E$ with $E^{(1)}$, we have that $\partial^* E \cap \Omega$ is a $C^{1,\frac12}$ manifold and $\hh^d(\partial E \cap \Omega\setminus \partial^*E)=0$ for any $d > n-8$.\\
Let $E_i \subset \Omega$ be a sequence of local $(\Lambda,r_0)$-minimizers in $\Omega$ that converges to $E$ in $L^1(\Omega)$. If $\partial E \cap B_r(x)$ is of class $C^2$, for some $B_r(x)\subset \Omega$, then $\partial E_i \cap B_{r/2}(x)$ is of class $C^{1,\frac12}$ for large $i$ and converges to $\partial E \cap B_{r/2}(x)$ in $C^{1,\alpha}$ for any $\alpha \in (0,1/2)$.
\end{theorem}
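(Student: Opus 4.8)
\emph{Remark on the proof.} The statement is the classical regularity and compactness theory for almost-minimizers of the perimeter specialized to $(\Lambda,r_0)$-minimizers, and in the paper it is simply quoted from \cite{Tamanini} and \cite[Theorems 26.3 and 26.6]{MaggiBook}; for completeness we indicate the structure of the argument one would run to prove it.

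The plan is as follows. First I would establish uniform density estimates: comparing $E$ with $E\cup B_r(x)$ and with $E\setminus B_r(x)$, and absorbing the volume term $\Lambda|E\Delta F|$ into the perimeter for radii $r\le\min\{r_0,c(n)/\Lambda\}$, one obtains $c_0=c_0(n)>0$ and $\Lambda_0=\Lambda_0(n,\Lambda,r_0)>0$ with $c_0\le |E\cap B_r(x)|/|B_r(x)|\le 1-c_0$ and $P(E,B_r(x))\ge c_0 r^{n-1}$ for every $x$ in the topological boundary of the density-one representative and every $r\le\Lambda_0$. These estimates force $E^{(1)}$ to be an open representative, give that $\hh^{n-1}\mres\partial E$ is concentrated on $\partial^*E$ (so that $\hh^{n-1}((\partial E\setminus\partial^*E)\cap\Omega)=0$), and provide the compactness needed below.

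Second, I would run De Giorgi's $\epsilon$-regularity scheme: for the spherical excess $\mathbf{e}(E,x,r):=r^{1-n}\int_{\partial^*E\cap B_r(x)}\tfrac12|\nu^E-\nu_{x,r}|^2\,\de\hh^{n-1}$ one proves an excess-improvement lemma — if $\mathbf{e}(E,x,r)$ is below a dimensional threshold then it decays as a power of the radius, with a correction term of order $r$ produced by the $(\Lambda,r_0)$-minimality. Iterating this yields that $\partial^*E\cap\Omega$ is locally a $C^{1,\gamma}$ graph for every $\gamma<1/2$, and the sharpened Campanato-type iteration of \cite{Tamanini}, which tracks the interplay of the decaying excess with the non-decaying $O(r)$ perturbation, upgrades the exponent to $1/2$. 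The bound $\hh^d(\partial E\cap\Omega\setminus\partial^*E)=0$ for $d>n-8$ then follows from Federer's dimension-reduction argument: blow-ups of $E$ at singular points are perimeter-minimizing cones (the $\Lambda$-term scales away), and by Simons' theorem such cones are half-spaces in dimension $\le 7$, so the singular set is empty for $n\le 7$ and has Hausdorff dimension at most $n-8$ in general.

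For the convergence statement, I would use that $(\Lambda,r_0)$-minimality passes to $L^1$ limits and, more usefully here, that it is possessed uniformly by the whole sequence $E_i$; hence the density estimates above hold uniformly in $i$, and together with $E_i\to E$ in $L^1(\Omega)$ this gives $\partial E_i\to\partial E$ locally in Hausdorff distance and $\hh^{n-1}\mres\partial E_i\rightharpoonup\hh^{n-1}\mres\partial E$. Since $\partial E\cap B_r(x)$ is $C^2$, for large $i$ the boundaries $\partial E_i\cap B_{r/2}(x)$ lie in a thin tubular neighbourhood of $\partial E$, so the excess of $E_i$ at every point of $B_{r/2}(x)$ is small and the $\epsilon$-regularity of the previous step applies uniformly: $\partial E_i\cap B_{r/2}(x)$ is $C^{1,1/2}$ and is the graph of a function $u_i$ over a portion of $\partial E$ with $\|u_i\|_{C^{1,1/2}}$ bounded and $\|u_i\|_{C^0}\to0$. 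By \cref{prop:meancurvature} each $u_i$ solves a prescribed-mean-curvature equation with right-hand side bounded by $\Lambda$, so interior Schauder and $W^{2,p}$ estimates bound $u_i$ in $W^{2,p}$ for all $p<\infty$; Arzel\`a--Ascoli extracts a $C^{1,\alpha}$-convergent subsequence for every $\alpha<1/2$, the limit is forced to be $0$ by the uniform convergence $u_i\to0$, and uniqueness of the limit upgrades this to convergence of the full sequence. The delicate point, were one to carry this out rather than cite it, is precisely the sharp $C^{1,1/2}$ exponent and the dimension bound on the singular set — both available off the shelf in \cite{Tamanini} and \cite[Part III]{MaggiBook}.
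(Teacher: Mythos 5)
The paper does not prove this theorem — it is quoted directly from \cite{Tamanini} and \cite[Theorems 26.3, 26.6]{MaggiBook} — and your proposal correctly recognizes this and gives a faithful outline of the standard arguments in those references (density estimates, De Giorgi excess decay with Tamanini's sharp $C^{1,1/2}$ iteration, Federer dimension reduction, and uniform $\epsilon$-regularity plus graph representation for the improved convergence). Nothing in your sketch conflicts with the cited theory, so there is nothing to correct.
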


Thanks to \cref{thm:RegolaritaLambdaMinimizers}, we will always identify a local $(\Lambda, r_0)$-minimizer $E$ with the open set $E^{(1)}$.

\subsection{Axially symmetric hypersurfaces}

We recall a formula for the mean curvature of axially symmetric hypersurfaces of class $W^{2,p}$.

\begin{lemma}\label{curvature:revolution}
Let $a<b$. Let $\alpha$, $\beta : (a, b) \to (0, \infty)$ be $W^{2, p}$ functions, with $p \in (1,  \infty]$, parametrizing the curve $\gamma : (a, b) \to {\rm span}\{e_1, e_n\} \subset \R^n$ given by $\gamma(t) = (\alpha(t), 0,\ldots,0, \beta(t))$, and assume that $|\gamma'(t)|=1$ and that $\inf_{(a,b)} \alpha>0$. Let $S$ be the axially symmetric hypersurface around the $n$-th axis parametrized by
\begin{equation*} \begin{split} & \phi_S : \mathbb{S}^{n - 2} \times (a, b) \to \R^n \\ & \phi_{S} (\theta, t) = (\alpha(t) \theta, \beta(t)). \end{split} \end{equation*}
Then the vector
\begin{equation*} H = \left( \Braket{k_\gamma, \nu} - (n - 2) \frac{\beta'}{\alpha}\right) \left(-\beta' \theta, \alpha' \right),
\end{equation*} 
for every $\theta$ and a.e. $t$, where $k_\gamma$ is the curvature of $\gamma$ and $\nu(t)= (-\beta',0,\ldots,0,\alpha')$, is the (generalized) mean curvature of $S$. More precisely
\begin{equation}\label{zz:zz}
    \int_S {\rm div}_T X = - \int_S \braket{X, H},
\end{equation}
for any $X \in C^1_c(\R^n,\R^n)$ such that ${\rm spt} X \cap \partial S = \emptyset$, where ${\rm div}_T X$ is the tangential divergence of $X$ along $S$.
\end{lemma}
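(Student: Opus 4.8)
The plan is to verify the identity \eqref{zz:zz} by a direct computation, first assuming $\alpha,\beta$ are $C^2$ and then removing this assumption by approximation. First I would set up the surface parametrization $\phi_S(\theta,t)=(\alpha(t)\theta,\beta(t))$ on $\mathbb S^{n-2}\times(a,b)$; since $|\gamma'|=1$ and $\alpha>0$, this is an immersion, and the induced metric is the product of the round metric on $\mathbb S^{n-2}$ rescaled by $\alpha(t)^2$ with $\mathrm dt^2$. A unit normal to $S$ at $\phi_S(\theta,t)$ is $N(\theta,t)=(-\beta'(t)\theta,\alpha'(t))$, which is well-defined pointwise once $\gamma\in C^1$ and is of class $W^{1,p}$ in $t$. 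I would then compute the scalar mean curvature (sum of principal curvatures) of $S$ as the trace of the second fundamental form: the "meridian" direction $\partial_t$ contributes the signed curvature $\langle k_\gamma,\nu\rangle$ of the profile curve $\gamma$ (where $k_\gamma=\gamma''$ and $\nu=(-\beta',0,\dots,0,\alpha')$ is the normal to $\gamma$ in $\mathrm{span}\{e_1,e_n\}$), while each of the $n-2$ "parallel" directions tangent to the sphere $\mathbb S^{n-2}$ contributes $-\beta'/\alpha$ (this is the standard computation for a surface of revolution: the parallel circles of radius $\alpha$ have geodesic curvature in $S$ governed by the component of $N$ along the axis direction, normalized by $\alpha$). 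Hence the scalar mean curvature is $h:=\langle k_\gamma,\nu\rangle-(n-2)\beta'/\alpha$, and the mean curvature vector is $H=h\,N=\big(\langle k_\gamma,\nu\rangle-(n-2)\beta'/\alpha\big)(-\beta'\theta,\alpha')$, matching the statement.

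Next I would justify the distributional/weak form \eqref{zz:zz}. In the $C^2$ case this is the classical first variation formula for the area functional: for $X\in C^1_c(\R^n,\R^n)$ with $\mathrm{spt}\,X\cap\partial S=\emptyset$, one has $\int_S \mathrm{div}_T X = -\int_S\langle X,H\rangle$ with $H$ the mean curvature vector, and the boundary terms vanish because $X$ is supported away from $\partial S$ (and from the singular set, if $\alpha$ does not vanish there is no singularity). The content of the lemma is that this persists under the weaker hypothesis $\alpha,\beta\in W^{2,p}$ with $\inf\alpha>0$. To handle this, I would approximate $\gamma$ by smooth curves: mollify $(\alpha,\beta)$ to obtain $(\alpha_\varepsilon,\beta_\varepsilon)\to(\alpha,\beta)$ in $W^{2,p}_{\mathrm{loc}}$ (hence in $C^1_{\mathrm{loc}}$ by Sobolev embedding, since $p>1$), reparametrize by arclength so that $|\gamma_\varepsilon'|=1$, and keep $\inf\alpha_\varepsilon>0$ for $\varepsilon$ small. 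The corresponding surfaces $S_\varepsilon$ satisfy \eqref{zz:zz} with mean curvature vector $H_\varepsilon$ built from $(\alpha_\varepsilon,\beta_\varepsilon)$. Now $C^1_{\mathrm{loc}}$ convergence of the parametrizations gives convergence of $\int_{S_\varepsilon}\mathrm{div}_T X\to\int_S\mathrm{div}_T X$ (the tangential divergence involves only first derivatives of the parametrization), while $W^{2,p}_{\mathrm{loc}}$ convergence of $(\alpha_\varepsilon,\beta_\varepsilon)$ gives $k_{\gamma_\varepsilon}=\gamma_\varepsilon''\to\gamma''=k_\gamma$ in $L^p_{\mathrm{loc}}$, hence $H_\varepsilon\to H$ in $L^p_{\mathrm{loc}}$ of the surface measure, which together with the uniform (local) bounds on the Jacobians and the compact support of $X$ yields $\int_{S_\varepsilon}\langle X,H_\varepsilon\rangle\to\int_S\langle X,H\rangle$. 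Passing to the limit in \eqref{zz:zz} for $S_\varepsilon$ gives the result.

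The main obstacle I expect is purely bookkeeping rather than conceptual: carefully justifying the reparametrization by arclength of the mollified curves while retaining $W^{2,p}$ convergence and the lower bound on $\alpha_\varepsilon$, and making sure the area formula / change of variables on $\mathbb S^{n-2}\times(a,b)$ is applied with the correct Jacobian factor $\alpha(t)^{n-2}$ (up to the constant $\mathcal H^{n-2}(\mathbb S^{n-2})$) so that the weights in $\mathrm{div}_T X$ and in $\langle X,H\rangle$ match. One should also check that $\partial S$ plays no role: since $\mathrm{spt}\,X$ is a compact subset of $\R^n$ disjoint from $\partial S$, for $\varepsilon$ small the intersection $\mathrm{spt}\,X\cap S_\varepsilon$ stays in a fixed compact subset of the open surface, so no boundary contributions arise in the integration by parts at any stage. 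Everything else is the classical surface-of-revolution mean curvature computation, which I would present in the coordinates above without grinding through every term.
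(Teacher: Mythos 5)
Your proposal follows essentially the same approach as the paper: establish the formula by direct computation in the smooth case, then approximate $(\alpha,\beta)$ in $W^{2,p}$ and pass to the limit, using $C^1$ convergence of the surfaces for the left side of \eqref{zz:zz} and $L^p$ convergence of the curvatures (hence weak-$*$ convergence of $H_i\mathcal H^{n-1}\mres S_i$) for the right side. Your write-up is more explicit about the bookkeeping (arclength reparametrization of the mollified curves, Jacobian factor $\alpha^{n-2}$, absence of boundary terms), but the argument is the same.
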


\begin{proof}
If $\alpha, \beta$ are smooth, the claimed formula follows by a direct computation. The statement then follows by approximating $\alpha, \beta$ in $W^{2,p}$. The approximating hypersurfaces $S_i$ converge to $S$ in $C^1$ and the measures $H_i\hh^{n-1}\mres S_i$ converge to $H\hh^{n-1}\mres S$ in duality with compactly supported continuous fields. Hence \eqref{zz:zz} passes to the limit.
\end{proof}

\section*{Declarations}

\noindent\textbf{Acknowledgments.} The authors are grateful to Nicola Fusco for many suggestions and for stimulating discussions.

\medskip
\noindent\textbf{Funding.}  The authors are members of INdAM - GNAMPA. The second author is partially supported by the PRIN Project 2022E9CF89 - PNRR Italia Domani, funded by EU Program NextGenerationEU.

\medskip
\noindent\textbf{Data availability statement.} The present paper has no associated data.

\medskip
\noindent\textbf{Conflict of interest.} The authors declare no conflict of interest.

\medskip
\noindent\textbf{Ethics approval.} This research did not involve any studies with human participants or animals. Therefore, ethical approval was not required for this study.

\medskip
\noindent\textbf{Consent.} The authors agreed with the content of the paper and they gave consent to the present submission.

\printbibliography[title={References}]
\end{document}